\documentclass[reqno]{amsart}[utf8, 11pt]%
\usepackage{amsmath}
\usepackage{amsfonts}
\usepackage{amssymb}
\usepackage{mathrsfs}
\usepackage{graphicx}
\usepackage{hyperref}
\usepackage{stmaryrd}
\usepackage{enumerate}
\usepackage{bbm}
\usepackage{color}%
\usepackage{esint}
\usepackage{mathtools}
\usepackage{etoolbox}
\DeclareFontFamily{U}{matha}{\hyphenchar\font45}
\DeclareFontShape{U}{matha}{m}{n}{
	<5> <6> <7> <8> <9> <10> gen * matha
	<10.95> matha10 <12> <14.4> <17.28> <20.74> <24.88> matha12
}{}
\DeclareSymbolFont{matha}{U}{matha}{m}{n}
\DeclareFontSubstitution{U}{matha}{m}{n}

\DeclareFontFamily{U}{mathx}{\hyphenchar\font45}
\DeclareFontShape{U}{mathx}{m}{n}{
	<5> <6> <7> <8> <9> <10>
	<10.95> <12> <14.4> <17.28> <20.74> <24.88>
	mathx10
}{}
\DeclareSymbolFont{mathx}{U}{mathx}{m}{n}
\DeclareFontSubstitution{U}{mathx}{m}{n}

\makeatletter
\def\l@subsection{\@tocline{2}{0pt}{2.5pc}{5pc}{}}
\makeatother

\DeclareMathDelimiter{\vvvert}{0}{matha}{"7E}{mathx}{"17}
\apptocmd{\thebibliography}{\renewcommand{\sc}{}}{}{}
\usepackage{xpatch,amsthm}
\makeatletter
\xpatchcmd{\@thm}{\fontseries\mddefault\upshape}{}{}{} 
\makeatother
\newtheorem{theorem}{Theorem}

\newtheorem{case}[theorem]{Case}
\newtheorem{claim}[theorem]{Claim}

\newtheorem{corollary}[theorem]{Corollary}

\newtheorem{definition}[theorem]{Definition}

\newtheorem{lemma}[theorem]{Lemma}

\newtheorem*{question*}{Question}
\newtheorem{proposition}[theorem]{Proposition}
\newtheorem{remark}[theorem]{Remark}

\newcommand{\X}{\mathfrak X}
\newcommand{\Xu}{\mathfrak X u}

\usepackage{newunicodechar} 
\newunicodechar{​}{\hspace{0pt}}

\begin{document}
\author[A. Mallick]{Arka Mallick}
\email{arkamallick@iisc.ac.in (A. Mallick)}

\author[S. Sil]{Swarnendu Sil}
\email{swarnendusil@iisc.ac.in (S.Sil)}

\address{Department of Mathematics\\
Indian Institute of Science\\
Bangalore, India}

\subjclass[2020]{Primary 35B65, 35H20, 35J62; secondary 35J92, 35J70, 35J75 }
\keywords{p-Laplacian; Quasilinear subelliptic equations; Variable exponent; p(·)-Laplacian;
Continuity of gradient}

	\title{A unified approach to nonlinear Stein theorems}	
	

	\begin{abstract}
		We study regularity for the variable exponent quasilinear $\mathfrak{p}\left(x\right)$-Laplace type equation on domains in Heisenberg groups and Euclidean spaces. We establish borderline continuity estimates for the appropriate first order derivatives of solutions. More precisely, we show that for any weak solution $u \in 	\mathbb{E}W^{1, \mathfrak{p}(\cdot)}\left(\Omega\right)$  of 
		\begin{align*}
			\operatorname{div}_{\mathbb{E}} \left( \mathfrak{a}(x) \lvert \nabla_{\mathbb{E}} u \rvert^{\mathfrak{p}\left(x\right)-2} \nabla_{\mathbb{E}} u \right) = f \qquad \text{ in } \Omega, 
		\end{align*}
		where $\Omega \subset \mathbb{E}^{n}$, $\mathfrak{a}$ is a uniformly positive bounded scalar function and the exponent function $\mathfrak{p}$ is uniformly bounded away from $1$ and $\infty,$ $\nabla_{\mathbb{E}}u$ is continuous in $\Omega$ as soon as $f \in L^{\left( Q_{\mathbb{E}}, 1\right)}\left(\Omega\right)$ and $\mathfrak{a}, \mathfrak{p}$ satisfies some conditions regarding the summability of their mean-oscillations. Here $\mathbb{E}^{n}$ is either the Heisenberg group $\mathbb{H}_{n}$ or the Euclidean space $\mathbb{R}^{n}$ and $\operatorname{div}_{\mathbb{E}}$, $\nabla_{\mathbb{E}}$, $Q_{\mathbb{E}}$ stands for the corresponding divergence, gradient and homogeneous dimension, respectively. We treat the elliptic and subelliptic cases in a unified manner using Euclidean techniques and our conditions on $\mathfrak{a}$ and $\mathfrak{p}$ are new and weaker than all the known sufficient conditions even in the Euclidean case. However, all the known sufficient conditions imply our conditions, achieving yet another unification.  
	\end{abstract}
	\maketitle 
	\smallskip 
	
	\tableofcontents
	
	\smallskip

	\section{Introduction and main results}
	\subsection{History of the problem}
	\subsubsection{History of the problem in Euclidean spaces}
	Let $U \subset \mathbb{R}^n$ be an open subset. The prototypical quasilinear elliptic equation is the $p$-Laplacian, given by 
	\begin{align}\label{p-laplace}
		-\operatorname{div}\left(\left\lvert \nabla u \right\rvert^{p-2}\nabla u\right) &= f &&\text{ in } U, 
	\end{align}
	where $1 < p < \infty$ is a real number and $f:U \rightarrow \mathbb{R}$ is a given real-valued function. When $f \equiv 0,$ this equation arises as the Euler-Lagrange equation for the $p$-Dirichlet integral $I[u]=\int_{U}\left\lvert \nabla u \right\rvert^{p}/p$ and is linear and uniformly elliptic when $p=2$, but quasilinear otherwise and degenerate elliptic for $p>2$ and singular elliptic for $1< p <2.$ Unlike the case of harmonic functions, $p$-harmonic functions (when $f\equiv 0$) are not in general smooth for $p \neq 2.$ As is well-known, in general, $p$-harmonic functions  are locally $C^{1, \alpha}$ for some $0< \alpha < 1,$ as established by Uraltceva \cite{Uralceva}, Uhlenbeck \cite{uhlenbecknonlinearelliptic},  Evans \cite{Evans_pLaplacian}, DiBenedetto \cite{DiBenedetto-C1-alpha-83}, Lewis \cite{Lewis83}, Tolksdorff \cite{Tolksdorf_regularity} (see also Giaquinta-Modica \cite{Giaquinta_Modica_Uhlenbeck_result}, Hamburger \cite{hamburgerregularity}). 
	
	\par For the inhomogeneous equation, the minimum regularity for $f$ that forces weak solutions of \eqref{p-laplace} to be $C^1$ is a question that has attracted a lot of contributions since 2010. When $p=2,$ combining a celebrated result of Stein \cite{Stein_steintheorem} with the usual linear Calderon-Zygmund estimates \cite{Calderon_Zygmund_CZestimate}, one can prove that $u$ is $C^1$ when $f$ is in the Lorentz space $L^{\left(n,1\right)}.$ This condition on $f$ is known to be sharp (see \cite{Cianchi_SharpLorentzexponent}). Several important contributions by Duzaar-Mingione and Mingione (see \cite{DuzaarMingione_gradientcontinuityestimates}, \cite{Duzaar_Mingione_linear_nonlinear_potentials}, \cite{Duzaar_Mingione_nonlinearpotentials}, \cite{Mingione_gradient_potential_estimate}) culminated in a remarkable \emph{nonlinear Stein theorem} in Kuusi-Mingione \cite{KuusiMingione_Steinequationslinearpotentials} (see also \cite{KuusiMingione_nonlinearStein} for a vectorial version). The result in \cite{KuusiMingione_Steinequationslinearpotentials} states: Let $u \in W^{1,p}\left(U\right)$ be a weak solution of the following equation 
	\begin{align*}
		-\operatorname{div}\left(a(x)\left\lvert \nabla u \right\rvert^{p-2}\nabla u\right) &= f &&\text{ in } U, 
	\end{align*} 
	where $1 < p < \infty$ is a real number and $a: U \rightarrow [\nu, L]$ for some constants $0 < \nu \le L < \infty.$ Then $\nabla u$ is continuous in $U$ as soon as $a$ is Dini continuous and $f \in L^{\left(n,1\right)}\left(U\right).$ Notably, if $a$ is merely uniformly continuous, then one can construct a non-Lipschitz solution even in the linear case, as shown in \cite{JinMazyaVanSchaftingen_dinisharp}.

	On the other hand, the study of `variable growth' problems in the Euclidean case started gaining attention after Zhikov investigated them in \cite{Zhikov_Lavrentiev}, \cite{Zhikov_higherintegrability}, where he studied the variational problem on bounded open subsets $U \subset \mathbb{R}^n$
	\begin{align*}
		\inf \left\lbrace \int_{U} \frac{1}{p(x)}\left\lvert \nabla u \right\rvert^{p(x)}\ \mathrm{d}x: u \in u_{0} + W_{0}^{1, p\left(\cdot\right)}\left( U \right)\right\rbrace.  
	\end{align*}
	The Euler-Lagrange equation is the so-called $p(x)$-Laplacian, i.e. 
	\begin{align*}
		-\operatorname{div} \left(\left\lvert  \nabla u \right\rvert^{p(x)-2} \nabla u \right) &= 0 &&\text{ in } U. 
	\end{align*}
	Apart from being mathematically interesting, these types of variable growth problem arise in a variety of applications, e.g. in the theory of electrorheological fluids, thermistor problem, fluid flow in porous media, magnetostatics, etc. Zhikov \cite{Zhikov_Lavrentiev} showed that the minimizers need not have any additional regularity if the exponent function $p$ is allowed to be too irregular (see \cite{Balci_Diening_Surnachev_LavrentievGap} and the references therein for the state-of-the-art at present). On the other hand, when $p$ is regular enough, one can establish regularity results for the equation 
	\begin{align*}
		-\operatorname{div} \left(\left\lvert  \nabla u \right\rvert^{p(x)-2} \nabla u \right) &= f &&\text{ in } U. 
	\end{align*}
	H\"{o}lder continuity and $L^{q}$ estimates of $\nabla u$ are proved by Acerbi and Mingione in \cite{Acerbi_Mingione_C1alpha} and \cite{Acerbi_Mingione_CZforpxLaplacian}, respectively. The question of $C^1$ regularity is tackled in Ok \cite{Ok_pxStein} (see also \cite{Ok_C1minima_px}), where the main result states: Let $u \in W^{1,p(\cdot)}\left(U\right)$ be a weak solution of the following equation 
	\begin{align*}
		-\operatorname{div}\left(a(x)\left\lvert \nabla u \right\rvert^{p(x)-2}\nabla u\right) &= f &&\text{ in } U, 
	\end{align*} 
	where $p:U \rightarrow [\gamma_{1}, \gamma_{2}]$ and $a:U \rightarrow [\nu, L]$ are continuous functions  with $1 < \gamma_{1} \leq \gamma_{2} < \infty$ and $ 0 < \nu < L < \infty.$ Then $\nabla u$ is continuous in $U$ if $a$ is Dini, $p$ is log-Dini and $f \in L^{\left(n,1\right)}\left(U\right).$ Recently, Baroni \cite{Baroni_pxStein} proved a related result for the homogeneous equation (i.e. with $f \equiv 0$) with different assumptions on $a$ and $p$. Baroni's main result states: Let $p:U \rightarrow [\gamma_{1}, \gamma_{2}]$ and $a:U \rightarrow [\nu, L]$ be measurable functions  with $1 < \gamma_{1} \leq \gamma_{2} < \infty$ and $ 0 < \nu < L < \infty.$ If $u \in W^{1,p(\cdot)}\left(U\right)$ is a weak solution to  
	\begin{align*}
		-\operatorname{div}\left(a(x)\left\lvert \nabla u \right\rvert^{p(x)-2}\nabla u\right) &= 0 &&\text{ in } U, 
	\end{align*} 
	then $\nabla u$ is continuous if $\nabla a \in L^{(n,1)}$ and $\nabla p \in L^{(n,1)}\log L.$
	
	\subsubsection{History of the problem in Heisenberg groups}
	On the subelliptic side, the PDE of interest for the constant exponent $p$ is the subelliptic $p$-Laplacian, given by 
	\begin{align*}
		\operatorname{div}_{\mathbb{H}}\left[ \lvert \X u \rvert^{p-2} \X u\right]    = f   &&\text{ in } \Omega,
	\end{align*}
	where $\Omega \subset \mathbb{H}^{n}$ is open, $\Xu$ denotes the horizontal gradient of a function $u:\Omega \subset \mathbb{H}^{n} \rightarrow \mathbb{R}$ and $\operatorname{div}_{\mathbb{H}}$ denotes the horizontal divergence. Unlike the Euclidean derivatives, which commute, here the horizontal vector fields $\mathfrak{X}_{1}, \ldots, \mathfrak{X}_{2n}$ do not always commute and this presents a significant obstacle to proving regularity for horizontal derivatives. 
	
	\par Regularity results for this problem have been investigated in a number of important works. Notable contributions include the works of Capogna-Danielli-Garofalo \cite{Capogna_Danielli_Garofalo}, Capogna \cite{Capogna_regularity, Capogna_quasiconformal}, Capogna-Garofalo \cite{Capogna_Garofalo_partialregularity_step2} , Domokos \cite{Domokos_differentiabilityofTu}, Domokos-Manfredi \cite{Domokos_Manfredi_pnear2}, Mingione-Zatorska-Goldstein-Zhong \cite{Mingione_ZatorskaGoldstein_Zhong} (see also the books Capogna-Danielli et.al. \cite{Capogna_Danielli_book} and Riccotti \cite{Ricciotti_pLaplaceHeisenberg}). H\"{o}lder continuity of the horizontal gradient in full generality was established in the unpublished work by Zhong \cite{zhong2018regularityvariationalproblemsheisenberg} for $p>2$, which marks an important breakthrough. This was extended to the full range $1 < p < \infty$ in  Mukherjee-Zhong \cite{Mukherjee_Zhong}. Several subsequent contributions are Mukherjee-Sire \cite{Mukherjee_Sire}, Mukherjee \cite{Mukherjee_Lipschitz, Mukherjee_C1alpha}, Citti-Mukherjee \cite{Citti_Mukherjee_Step2}. A  relatively recent survey can be found in Capogna-Citti-Zhong \cite{Capogna_Citti_Zhong}. 
	
	In spite of this progress, the question of minimal regularity of $f$ to ensure continuity of $\Xu$ remained open for a while. The main reason for this discrepancy is the lack of excess decay estimates. In the Euclidean case, the excess decay estimates for $p$-harmonic functions are well-known and different versions are established in Hamburger \cite{hamburgerregularity}, DiBenedetto-Manfredi \cite{DiBenedettoManfredi} and Lieberman \cite{Lieberman_expoentdecreasing}. However, while the method of Zhong \cite{zhong2018regularityvariationalproblemsheisenberg} and Mukherjee-Zhong \cite{Mukherjee_Zhong} established an `energy-to-excess decay estimate', the `excess-to-excess decay estimate' remained unknown in the subelliptic context. This significant barrier is finally broken through in \cite{Mallick_Sil_ExcessDecay}, where the present authors have established a Euclidean-like excess-to-excess decay estimate. As a consequence, a subelliptic analogue for the Kuusi-Mingione's nonlinear Stein theorem is established in \cite{Mallick_Sil_ExcessDecay}. Later, the present authors proved a subelliptic analogue of Baroni's result for variable-exponent setting in \cite{Mallick_Sil_continuityvariablegrowth}. 
	
	\subsection{Main results}
	The following two results are our main contributions. 
	\begin{theorem}\label{elliptic main them}
			Let $\Omega \subset \mathbb{R}^{n}$ be open and bounded. Let $\mathfrak{p}:\Omega \rightarrow [\gamma_{1}, \gamma_{2}]$ and $\mathfrak{a}:\Omega \rightarrow [\nu, L]$ be measurable functions  with $1 < \gamma_{1} \leq \gamma_{2} < \infty$ and $ 0 < \nu < L < \infty,$ with the property that there exists $\tau_{0} \in (0, 1/4)$ such for every $ 0 < \tau < \tau_{0} $ and every $1< q< \infty,$ we have 
			\begin{align*}
				\lim\limits_{r \rightarrow 0}	\sup\limits_{x \in K} \sum\limits_{j=0}^{\infty}\left(\fint_{B_{\tau^{j}r}(x)} \left\lvert \mathfrak{a} - \left(\mathfrak{a}\right)_{x, \tau^{j}r} \right\rvert^{q}\right)^{\frac{1}{q}} = 0, \text{ and }
				\end{align*}
				\begin{align*}  
				\lim\limits_{r \rightarrow 0}	\sup\limits_{x \in K} \sum\limits_{j=0}^{\infty}\log \left(\frac{1}{\tau^{j}r}\right)\left(\fint_{B_{\tau^{j}r}(x)} \left\lvert \mathfrak{p} - \left(\mathfrak{p}\right)_{x, \tau^{j}r} \right\rvert^{q}\right)^{\frac{1}{q}} = 0, 
			\end{align*}
			whenever $K \subset \subset \Omega$ is a compact subset. If $u \in W_{\text{loc}}^{1,p\left(\cdot\right)} \left(\Omega\right)$ is a local weak solution to the equation 		\begin{align*}
				\operatorname{div}\left[ \mathfrak{a}(x) \lvert \nabla u \rvert^{\mathfrak{p}\left(x\right)-2} \nabla u\right]    = f   &&\text{ in } \Omega, 
			\end{align*}
			for some $f \in L^{\left(n,1\right)}\left( \Omega\right),$ then $\nabla u$ is continuous in $\Omega.$
		\end{theorem}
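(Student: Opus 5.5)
The plan is a comparison (freezing) argument at each scale, fed into the excess-decay machinery of Kuusi--Mingione type, with every perturbation error controlled by a mean oscillation of $\mathfrak{a}$ or $\mathfrak{p}$ rather than by a pointwise modulus of continuity. Throughout, fix $K\subset\subset\Omega$. By the known $L^{q}$/higher integrability theory for $\mathfrak{p}(x)$-Laplace equations (Acerbi--Mingione), $\lvert\nabla u\rvert^{\mathfrak{p}(\cdot)}\in L^{1+\delta}_{\rm loc}$. Because $\mathfrak{p}$ has vanishing mean oscillation in the $\log$-weighted sense, we may first secure a local gradient bound. We expect this bound to come out of the same iteration below via a standard bootstrap, and it plays the role of local boundedness in the log-Hölder theory. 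It lets us treat $\lvert\nabla u\rvert^{\mathfrak{p}(x)-\mathfrak{p}_{B}}$ as comparable to $1$ up to an error $\lesssim \lvert \mathfrak{p}-\mathfrak{p}_B\rvert\log(1/r)$ on $B=B_r(x_0)$.

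On a ball $B=B_r(x_0)$ we proceed in two comparison steps. First we let $w$ solve $\operatorname{div}(\mathfrak{a}\lvert\nabla w\rvert^{\mathfrak{p}(x)-2}\nabla w)=0$ with $w=u$ on $\partial B$; the standard test with $u-w$ and the Lorentz embedding bound $\fint_B\lvert\nabla u-\nabla w\rvert$ by a power of $r\,\lvert f\rvert_{L^{(n,1)}}$ type Riesz/Wolff quantities. Next we let $v$ solve the frozen equation $\operatorname{div}((\mathfrak{a})_{B}\lvert\nabla v\rvert^{(\mathfrak{p})_B-2}\nabla v)=0$ in $B/2$ with $v=w$ on the boundary. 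Testing with $w-v$ and using the monotonicity of $\xi\mapsto\lvert\xi\rvert^{p_B-2}\xi$, the error is
\[
\int \big\lvert \mathfrak{a}\lvert\nabla w\rvert^{\mathfrak{p}-2}\nabla w-(\mathfrak{a})_B\lvert\nabla w\rvert^{\mathfrak{p}_B-2}\nabla w\big\rvert\,\lvert\nabla w-\nabla v\rvert .
\]
We bound this via Hölder with the higher integrability exponent by
\[
\Big(\fint_B\lvert\mathfrak{a}-(\mathfrak{a})_B\rvert^{q}\Big)^{1/q}+\log\tfrac1r\Big(\fint_B\lvert\mathfrak{p}-(\mathfrak{p})_B\rvert^{q}\Big)^{1/q}
\]
times energy, using $\lvert t^{\mathfrak{p}}-t^{\mathfrak{p}_B}\rvert\lesssim\lvert\mathfrak{p}-\mathfrak{p}_B\rvert\,t^{\mathfrak{p}_B\pm\epsilon}(1+\lvert\log t\rvert)$ together with the energy bound. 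This is exactly why the hypothesis is stated for every $q<\infty$: $q$ is dual to the higher-integrability exponent.

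For the frozen $v$ we use the excess-to-excess decay for constant-coefficient $p$-harmonic functions (DiBenedetto--Manfredi/Lieberman), which holds uniformly for $p_B\in[\gamma_1,\gamma_2]$:
\[
\fint_{B_{\tau r}}\lvert\nabla v-(\nabla v)_{\tau r}\rvert\le c\tau^{\alpha}\fint_{B_{r}}\lvert\nabla v-(\nabla v)_{r}\rvert .
\]
Combining this with the two comparison estimates and choosing $\tau<\tau_0$ small gives a one-step inequality for $E_j=\fint_{B_{\tau^j r}}\lvert\nabla u-(\nabla u)_{\tau^jr}\rvert$ of the form $E_{j+1}\le\frac12E_j+c\,\omega_j\lambda+c\,(\text{Riesz term of } f)_j$. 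Here $\omega_j$ is the $j$-th summand of the hypotheses and $\lambda$ is a local bound on $\lvert\nabla u\rvert$. To handle the degenerate/singular regimes we would follow the Kuusi--Mingione case split at each scale: either $\lvert(\nabla u)_{\tau^jr}\rvert$ dominates $E_j$ (non-degenerate, linearization) or not (degenerate, controlled directly).

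Summing over $j$ bounds $\sum_j E_j$ by $E_0+c\lambda\sum_j\omega_j+c\,\lVert f\rVert_{L^{(n,1)}}$-tails, and the same sum yields, by a standard induction, the uniform bound $\lvert\nabla u\rvert\le\lambda$ on $K$. Hence $(\nabla u)_{x,\tau^jr}$ converges uniformly in $x\in K$. The hypotheses say $\sup_{x\in K}\sum_j\omega_j\to0$ as $r\to0$, and the $L^{(n,1)}$ tails vanish likewise, so the Lebesgue-point limits form a uniform limit of continuous functions and $\nabla u$ is continuous.

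The main obstacle is the $\mathfrak{p}$-freezing step. Converting an integral oscillation of the exponent into an error without pointwise log-Hölder control requires both the a priori gradient bound and a careful $t^{\mathfrak{p}}$ versus $t^{\mathfrak{p}_B}$ estimate. This creates a circularity, since the gradient bound is itself derived from the iteration. We would break it with a smallness/continuity argument in $\lambda$: run the iteration on scales where the oscillation sum is below a threshold fixed in terms of $\gamma_1,\gamma_2,\nu,L$.
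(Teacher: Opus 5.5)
Your architecture (two freezings, excess decay for the frozen $p$-Laplacian, summation over scales, uniform convergence of averages) is the right one. However, the step that makes the hypotheses sufficient is missing.

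The one-step inequality $E_{j+1}\le\frac12E_j+c\,\omega_j\lambda+c\,(\text{Riesz term})_j$ needs comparison errors that are \emph{linear} in $\omega_j$ and in $\mathfrak{F}_{r_j,\theta}=r_j(\fint_{B_{r_j}}|f|^\theta)^{1/\theta}$. The comparisons you describe do not give this:
\begin{itemize}
\item Bounding the freezing error by ``oscillation times energy'' gives only $\fint|V(\nabla w)-V(\nabla v)|^2\lesssim\omega_j\lambda^{p}$.
\item Even with Young absorption you reach only $\omega_j^2\lambda^{p}$, i.e.\ a gradient error of order $\omega_j^{2/p}\lambda$.
\item The $u\mapsto w$ step gives $\mathfrak{F}_{r_j,\theta}^{1/(p-1)}$.
\end{itemize}
For exponents above $2$, neither error is summable under $\sum_j\omega_j<\infty$ and $f\in L^{(n,1)}$.

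The paper linearizes by combining the quadratic $V$-error of Lemma~\ref{comparison lemma 1} with a \emph{pointwise} two-sided bound $\lambda/(4A)\le|\nabla w_{j-1}|\le A\lambda$ on $B_j$. This bound is used as a weight across two consecutive scales (Lemma~\ref{linearized comparison}). It comes from an exit-time index, beyond which the $L^\gamma$-energy of $\nabla u$ stays of order $\lambda$, together with the sup bound and small-oscillation estimate of Theorem~\ref{homogenous eqn estimate} for $w_{j-1}$.

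This is exactly why the paper freezes $\mathfrak{a}$ already in the \emph{first} comparison. Then $w_{j-1}$ solves $\operatorname{div}(|\nabla w|^{\mathfrak{p}(x)-2}\nabla w)=0$, for which such $C^1$ estimates are available under the $\mathfrak{p}$-hypothesis alone. Your $w$ keeps the measurable $\mathfrak{a}$, and no such estimates are available for it; they amount to the $f\equiv0$ case of the theorem and need the same machinery. A case split on whether $|(\nabla u)_{B}|$ dominates $E_j$, as you state it, gives only averaged nondegeneracy. That does not supply the pointwise weight your monotonicity-based comparisons require, and you give no other mechanism.

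Second, the circularity you flag in the $\mathfrak{p}$-freezing is left unresolved. ``A smallness/continuity argument in $\lambda$'' does not say how a bound that is both used and derived in the same iteration can close. The circularity is in fact an artifact: no a priori gradient bound is needed. The factor $\log(1/r)$ does not come from $|\nabla u|\le\lambda$, which would give $\log\lambda$. It comes from the energy on $B_r$ being at most polynomial in $1/r$.

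Concretely, with $\Phi=|\nabla u|+|\nabla w|$, the paper bounds $\fint_{B_R}\Phi^{\mathfrak{p}(x)}|\Phi^{\bar{\mathfrak{p}}-\mathfrak{p}(x)}-1|^2$ using:
\begin{itemize}
\item the mean value theorem;
\item H\"older's inequality against $|\bar{\mathfrak{p}}-\mathfrak{p}|^{2(1+\sigma)/\sigma}$;
\item the $L\log L$ estimate of Lemma~\ref{Llog beta L lemma}, which yields the factor $\log(1/R)$ because $\lambda_1\le c\,\mathcal{K}_1R^{-n}$ with $R\mathcal{K}_1<1$.
\end{itemize}
The leftover $\lambda_1^{c\,\omega_{\mathfrak{p}}(R)}$ is bounded by the log-H\"older continuity that the hypothesis implies (Lemma~\ref{local vanishing log holder}). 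Only local and global higher integrability enter (Theorems~\ref{higher integrability f} and~\ref{global_higher_integrability}). The $L^\infty$ bound on $\nabla u$ is then \emph{derived} by the exit-time induction, with $\lambda$ defined from $(\fint_{B_R}|\nabla u|^\gamma)^{1/\gamma}$ and the Lorentz sum of $f$.
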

		\begin{theorem}\label{subelliptic main them}
			Let $\Omega \subset \mathbb{H}^{n}$ be open and bounded. Let $\mathfrak{p}:\Omega \rightarrow [\gamma_{1}, \gamma_{2}]$ and $\mathfrak{a}:\Omega \rightarrow [\nu, L]$ be measurable functions  with $1 < \gamma_{1} \leq \gamma_{2} < \infty$ and $ 0 < \nu < L < \infty,$ with the property that there exists $\tau_{0} \in (0, 1/4)$ such for every $ 0 < \tau < \tau_{0} $ and every $1< q< \infty,$ we have 
			\begin{align*}
				\lim\limits_{r \rightarrow 0}	\sup\limits_{x \in K} \sum\limits_{j=0}^{\infty}\left(\fint_{B_{\tau^{j}r}(x)} \left\lvert \mathfrak{a} - \left(\mathfrak{a}\right)_{x, \tau^{j}r} \right\rvert^{q}\right)^{\frac{1}{q}} = 0, \text{ and }
				\end{align*}
				\begin{align*} 
				\lim\limits_{r \rightarrow 0}	\sup\limits_{x \in K}\sum\limits_{j=0}^{\infty}\log \left(\frac{1}{\tau^{j}r}\right)\left(\fint_{B_{\tau^{j}r}(x)} \left\lvert \mathfrak{p} - \left(\mathfrak{p}\right)_{x, \tau^{j}r} \right\rvert^{q}\right)^{\frac{1}{q}} = 0, 
			\end{align*}
			whenever $K \subset \subset \Omega$ is a compact subset.  If $u \in HW_{\text{loc}}^{1,p\left(\cdot\right)} \left(\Omega\right)$ is a local weak solution to the equation 		\begin{align*}
				\operatorname{div}_{\mathbb{H}}\left[ \mathfrak{a}(x) \lvert \X u \rvert^{\mathfrak{p}\left(x\right)-2} \X u\right]    = f   &&\text{ in } \Omega, 
			\end{align*}
			for some $f \in L^{\left(Q,1\right)}\left( \Omega\right),$ then $\X u$ is continuous in $\Omega.$
	\end{theorem}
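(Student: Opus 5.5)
We prove Theorem \ref{subelliptic main them} and Theorem \ref{elliptic main them} simultaneously, working on $\mathbb{E}^{n}$ with $Q_{\mathbb{E}}$ the homogeneous dimension, and write $\nabla_{\mathbb{E}}$ for $\nabla$ or $\X$. The argument is a perturbation (comparison) scheme on a dyadic-type sequence of balls $B_{j}=B_{\tau^{j}r}(x_{0})$, producing a Campanato-type decay of the excess
\[
E_{j}:=\fint_{B_{j}}\lvert \nabla_{\mathbb{E}}u-(\nabla_{\mathbb{E}}u)_{B_{j}}\rvert .
\]
The constant exponent case is handled by the excess-to-excess decay of \cite{Mallick_Sil_ExcessDecay} (and Hamburger/DiBenedetto--Manfredi in $\mathbb{R}^{n}$). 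Variable coefficients and exponents enter only through errors controlled by the mean oscillations in the hypotheses.

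\textbf{Step 1 (preliminaries).} First we establish local boundedness of $\nabla_{\mathbb{E}}u$ and a higher-integrability (Gehring) estimate $\nabla_{\mathbb{E}}u\in L^{\mathfrak p(\cdot)(1+\delta)}_{\text{loc}}$. The log-Dini type condition on $\mathfrak p$ implies vanishing log-oscillation, i.e.\ $\operatorname{osc}_{B_{\rho}}\mathfrak p\,\log(1/\rho)\to 0$ in an averaged sense. This gives $\rho^{-(\mathfrak p^{+}_{B_{\rho}}-\mathfrak p^{-}_{B_{\rho}})}\le C$, so energy quantities at exponents $\mathfrak p(x)$ and $(\mathfrak p)_{x,\rho}$ are comparable. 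Since only integral oscillations are assumed, $\mathfrak p$ need not be continuous. I would therefore phrase every estimate with $p_{j}:=(\mathfrak p)_{x_{0},\tau^{j}r}$ and handle $\lvert \mathfrak p-p_{j}\rvert$ through Hölder's inequality in $q$, which is exactly where the hypothesis for every $q>1$ is used.

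\textbf{Step 2 (two-step comparison).} On $B_{j}$, let $w_{j}$ solve $\operatorname{div}_{\mathbb{E}}(\mathfrak a\lvert\nabla_{\mathbb{E}} w\rvert^{\mathfrak p-2}\nabla_{\mathbb{E}} w)=0$ with $w_{j}=u$ on $\partial B_{j}$. Then let $v_{j}$ solve $\operatorname{div}_{\mathbb{E}}((\mathfrak a)_{B_{j}}\lvert\nabla_{\mathbb{E}} v\rvert^{p_{j}-2}\nabla_{\mathbb{E}} v)=0$ with $v_{j}=w_{j}$ on $\partial B_{j}$. The first comparison uses the standard estimate in terms of $\lvert B_{j}\rvert^{-1}\int_{B_{j}}\lvert f\rvert$ scaled, i.e.\ $\bigl(\tau^{j}r\,\fint_{B_{j}}\lvert f\rvert\bigr)^{1/(p-1)}$, with the usual $p<2$ modification. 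Summed over $j$, this is bounded by the Riesz/Wolff-type potential of $f$, which is finite and small uniformly because $f\in L^{(Q_{\mathbb{E}},1)}$; here Stein's Lorentz-space argument is used. The second comparison tests with $w_{j}-v_{j}$ and uses the monotonicity of the vector field. The error is bounded by
\[
\Bigl(\fint_{B_{j}}\lvert\mathfrak a-(\mathfrak a)_{B_{j}}\rvert^{q}\Bigr)^{1/q}+\log\Bigl(\tfrac{1}{\tau^{j}r}\Bigr)\Bigl(\fint_{B_{j}}\lvert \mathfrak p-p_{j}\rvert^{q}\Bigr)^{1/q},
\]
multiplied by a power of the (bounded) gradient. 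The logarithm comes from $\bigl\lvert\lvert z\rvert^{\mathfrak p-2}-\lvert z\rvert^{p_{j}-2}\bigr\rvert\lesssim\lvert\mathfrak p-p_{j}\rvert\,\lvert z\rvert^{\cdot}(1+\lvert\log\lvert z\rvert\rvert)$. Controlling $\log\lvert\nabla_{\mathbb{E}} w\rvert$ needs the higher integrability to absorb the gradient side, and the scale-invariant normalization turns it into $\log(1/\tau^{j}r)$. Hölder's inequality with exponent $q$ on the oscillation and $q'$ on the gradient then closes this step.

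\textbf{Step 3 (iteration and continuity).} Apply the excess decay for $v_{j}$: $\fint_{B_{j+1}}\lvert\nabla_{\mathbb{E}}v_{j}-(\cdot)\rvert\le C\tau^{\alpha}E(v_{j},B_{j})$. Combined with Step 2 this gives
\[
E_{j+1}\le \tfrac12 E_{j}+C\,\omega_{j},
\]
where $\omega_{j}$ is the sum of the three error terms, after fixing $\tau<\tau_{0}$ small. Summing gives $\sum_{j}E_{j}\lesssim E_{0}+\sum_{j}\omega_{j}$. The hypotheses say precisely that $\sup_{x_{0}\in K}\sum_{j}\omega_{j}\to0$ as $r\to0$. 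Hence $(\nabla_{\mathbb{E}}u)_{B_{j}}$ is Cauchy uniformly in $x_{0}$, the limits define the precise representative, and uniform convergence of the continuous averages $x_{0}\mapsto(\nabla_{\mathbb{E}}u)_{B_{\rho}(x_{0})}$ yields continuity.

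\textbf{Main obstacle.} The hardest step is the second comparison in Step 2 in the degenerate/singular regime. The error must be linear in the oscillation, not a power of it, because summability is only assumed for the oscillations themselves. In the subelliptic case the higher-integrability and $L^{\infty}$ bounds for $w_{j}$ must also be uniform in $j$. I would first establish a uniform Lipschitz bound for $w_{j}$ via Steps 1--3 applied with $f=0$, bootstrapping the boundedness of $\nabla_{\mathbb{E}}u$. Using this $L^{\infty}$ bound, the logarithmic term is controlled directly and the error stays linear.
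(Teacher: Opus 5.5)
Your architecture is right: freeze, compare, use the constant-exponent excess-to-excess decay, then sum the errors. The gap is that Step 2 and your ``main obstacle'' fix do not produce errors that are linear in the data, and for $p>2$ that linearity is the whole difficulty.

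An upper bound $\lvert\nabla_{\mathbb{E}}w_j\rvert\le\lambda$ does not linearize anything. Testing with $w_j-v_j$, the monotone weight $(\lvert\nabla_{\mathbb{E}}w_j\rvert+\lvert\nabla_{\mathbb{E}}v_j\rvert)^{p_j-2}$ can only be bounded below by $\lvert\nabla_{\mathbb{E}}w_j-\nabla_{\mathbb{E}}v_j\rvert^{p_j-2}$. You therefore get at best $\fint\lvert\nabla_{\mathbb{E}}w_j-\nabla_{\mathbb{E}}v_j\rvert^{p_j}\lesssim\lambda^{p_j-1}\,\mathrm{osc}_j\,\lVert\nabla_{\mathbb{E}}w_j-\nabla_{\mathbb{E}}v_j\rVert$, that is $\lVert\nabla_{\mathbb{E}}w_j-\nabla_{\mathbb{E}}v_j\rVert\lesssim\lambda\,\mathrm{osc}_j^{1/(p_j-1)}$. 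Lemma \ref{comparison lemma 1}(ii) has the same feature: it only yields $(\mathfrak{A}+\mathfrak{P})^{2/\bar{\mathfrak p}}$. Summability of $\mathrm{osc}_j$ does not give summability of $\mathrm{osc}_j^{1/(p-1)}$.

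The same defect breaks your treatment of $f$. For $p>2$, the series $\sum_j(r_j\fint_{B_j}\lvert f\rvert)^{1/(p-1)}$ is a Wolff potential, and it is not controlled by $\lVert f\rVert_{L^{(Q,1)}}$. Take $f=\rho(x)^{-1}\log^{-\beta}(1/\rho(x))$ with $1<\beta\le p-1$: then $f\in L^{(Q,1)}$, but the terms behave like $j^{-\beta/(p-1)}$ and the series diverges. Stein's Lorentz argument (Lemma \ref{Lorentz series estimate lemma combined}) controls only the linear series $\sum_j r_j(\fint_{B_j}\lvert f\rvert^{\theta})^{1/\theta}$.

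The missing idea is non-degeneracy obtained through an exit-time argument. The paper works as follows.
\begin{enumerate}[(i)]
\item It freezes $\mathfrak a$ first, so $w_j$ is $\mathfrak p(x)$-harmonic with constant coefficient. Theorem \ref{homogenous eqn estimate} then gives sup bounds for $\nabla_{\mathbb{E}}w_j$ and smallness of its oscillation.
\item It runs a Kuusi--Mingione exit-time induction. While the $L^\gamma$ averages of $\nabla_{\mathbb{E}}u$ stay above $\lambda/1000$ (or $\lambda\bar\varepsilon/50$), the comparison with $u$ and that oscillation smallness give $\lambda/A\le\lvert\nabla_{\mathbb{E}}w_{j-1}\rvert\le A\lambda$ on $B_j$ (Claim \ref{energy bound implies oscillation bound claim}).
\item Only with this two-sided bound does Lemma \ref{linearized comparison} give errors $C(\mathfrak A_{r_{j-1},\bar\sigma}+\mathfrak P_{r_{j-1},\bar\sigma})\lambda+C\lambda^{2-\mathfrak p_0}\mathfrak F_{r_{j-1},\theta}$, linear in all the data.
\item When the averages drop below the threshold, the excess is trivially small.
\end{enumerate}

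In the paper, the $L^\infty$ bound on $\nabla_{\mathbb{E}}u$ is the output of this induction (Step 1 of Theorem \ref{unified main them}), not a preliminary. Your route to it is circular. Your $w_j$ keeps the variable coefficient $\mathfrak a$, so a uniform Lipschitz bound for $w_j$ is exactly the $f=0$ case of the theorem. ``Steps 1--3 with $f=0$'' already presuppose both that bound and linear errors.

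A smaller point: you say $\mathfrak p$ need not be continuous, but the hypotheses force $\mathfrak p$ to be vanishing log-H\"older on compact sets and $\mathfrak a$ to be continuous (Lemma \ref{local vanishing log holder}). That pointwise control is what justifies your bound $\rho^{-(\mathfrak p^{+}-\mathfrak p^{-})}\le C$.
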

We present a unified treatment of both the elliptic and subelliptic cases, to illustrate that the perturbation arguments we use in insensitive to commutativity. The summability assumptions on the mean oscillations of $\mathfrak{a}$ and $\mathfrak{p}$ is new already in the Euclidean case.  These assumptions are verified (see Proposition \ref{derivative to summability} and Proposition \ref{dini to summability}) if we assume either of the following 
	\begin{itemize}
		\item \emph{Ok's condition \cite{Ok_pxStein}:} $\mathfrak{a}$ is Dini and $\mathfrak{p}$ is log-Dini (for both elliptic and subelliptic cases).  
		\item \emph{Baroni's condition \cite{Baroni_pxStein}:} $\X \mathfrak{a} \in L^{(Q,1)}$ and $\X \mathfrak{p} \in L^{(Q,1)}\log L$ for the subelliptic case and $\nabla \mathfrak{a} \in L^{(n,1)}$ and $\nabla \mathfrak{p} \in L^{(n,1)}\log L$ for the elliptic case. 
	\end{itemize}
	Thus, already in the Euclidean case, our results extend Baroni's results to inhomogeneous equations. Moreover, our conditions are more general than either of these conditions. As a trivial example, one can allow the cross-possibilities, i.e. $\mathfrak{a}$ is Dini and $\nabla \mathfrak{p} \in L^{(n,1)}\log L$ or $\nabla \mathfrak{a} \in L^{(n,1)}$ and $\mathfrak{p}$ is log-Dini. Furthermore, any condition that interpolates between the pointwise conditions of Ok and the first derivatives conditions of Baroni would also satisfy our assumptions.

	The main technical novelty of our work is a new linearization.  Our linearized comparison estimates are linearized at $L^{\gamma}$ scale, where $\gamma \le \min \lbrace \gamma_{1}, \gamma^{'}_{2}\rbrace.$  This is in contrast to Baroni \cite{Baroni_pxStein}, where estimates are linearized at $L^1$ scale. Ok \cite{Ok_pxStein} also used estimates at $L^{\gamma}$ scale, but since we do not have pointwise control of oscillations of $\mathfrak{a}$ and $\mathfrak{p}$, deriving these linearized comparison estimates are significantly more involved in our case. As the lack of pointwise control for oscillations of $\mathfrak{a}$ and $\mathfrak{p}$ rules out comparing with homogeneous equations where both $\mathfrak{a}$ and $\mathfrak{p}$ are fixed at $\mathfrak{a}(x_{0})$ and $\mathfrak{p}(x_{0})$ for some fixed $x_{0},$ we adopt a hybrid strategy instead. To be more precise, in each ball of a shrinking family of concentric metric balls, the constant coefficient and constant exponent comparison systems are different, depending on the integral average of $\mathfrak{a}$ and $\mathfrak{p}$ on that ball. To handle this additional complexity, our higher integrability estimates needed to be far more robust and flexible. Due to the presence of a non-zero right hand side, we also need a global higher integrability result (see Theorem \ref{global_higher_integrability}), which is new in the subelliptic case. 
	
	\subsection{Exposition and organization}
	To conclude the introduction, we would like to comment on our exposition.  Even the Euclidean version of our main result covers all the known cases, which are proved in multiple articles, with numerous references to several others for intermediate steps and arguments. 
    
	\par On the other hand, the literature on the regularity theory of quasilinear equations in the subelliptic case is so nascent at present that practically all of our intermediate lemmas are, strictly speaking, new results.  However, a lot of them can be proved by a reasonably routine adaptation of their Euclidean counterparts and an expert reader with sufficient familiarity with the proofs in the Euclidean case can easily fill in the details.  Trying to spell out those adaptations of standard arguments would serve little purpose and would perhaps, make the presentation quite boring and repetitive to read for experts. Instead, we focus on the genuinely new parts of the arguments, where actual novelty is required and would only sketch and sometimes even altogether skip furnishing the details for the rest. 
	\par Thus, we deduce the key linearized comparison estimate in Lemma \ref{linearized comparison} in detail.  Our proof of Theorem \ref{unified main them} is then concluded via an exit-time argument, which, though quite nontrivial in itself, is quite well-known since Kuusi-Mingione \cite{KuusiMingione_nonlinearStein} (see also \cite{KuusiMingione_Steinequationslinearpotentials}). So we have only sketched how to finish off the proof rather than presenting an almost verbatim repetition of their arguments. We believe that this choice, would not only allow us to limit our exposition to a reasonable length, but also improve the readability by a significant factor.

	\section{Notations}\label{notations} 
	\subsection{Notations for Heisenberg groups}
	We shall follow the notations in \cite{Mallick_Sil_ExcessDecay}, \cite{Mallick_Sil_continuityvariablegrowth}. 
	For $n\geq 1$, the \textit{Heisenberg Group} denoted by $\mathbb{H}_{n}$, is identified with the Euclidean space 
	$\mathbb{R}^{2n+1}$ with the group operation 
	\begin{equation}\label{eq:group op}
		x\cdot y\, := \Big(x_1+y_1,\ \dots,\ x_{2n}+y_{2n},\ t+s+\frac{1}{2}
		\sum_{i=1}^n (x_iy_{n+i}-x_{n+i}y_i)\Big)
	\end{equation}
	for every $x=(x_1,\ldots,x_{2n},t),\, y=(y_1,\ldots,y_{2n},s)\in \mathbb{H}_{n}$. The left invariant vector fields 
	\[ X_i=  \partial_{x_i}-\frac{x_{n+i}}{2}\partial_t, \quad
	X_{n+i}=  \partial_{x_{n+i}}+\frac{x_i}{2}\partial_t,\] 
	for every $1\leq i\leq n$ and $\X f  = (X_1f,\ldots, X_{2n}f)$ denotes the \textit{Horizontal gradient} of $f$. 
	 For a vector field $F$, $ \operatorname{div}_{\mathbb{H}} (F)  =  \sum_{i=1}^{2n} X_i f_i $ denotes the 
	\textit{Horizontal divergence} of $F$. The bi-invariant Haar measure of $\mathbb{H}_{n}$ is just the Lebesgue 
	measure of $\mathbb{R}^{2n+1}$. The \textit{Carnot-Carath\`eodory metric} (CC-metric) is a left-invariant metric on $\mathbb{H}_{n},$ denoted by $d_{\text{CC}},$ is defined as the length of the shortest horizontal curves, connecting two points. This metric is equivalent to the \textit{Kor\`anyi metric} $d_{Kor}$, defined by the gauge function $\rho(x,t)\coloneqq (\lvert x \rvert ^4 +t^2)^{1/4}.$ 	
	\subsection{Unified notations}
	We now describe our unified notations. \smallskip

 \textbf{Domain space:} $\mathbb{E}^{n}$ is either $\mathbb{R}^{n} \text{ or } \mathbb{H}_{n}.$ $\mathbb{E}^{n}$ will be equipped with a metric $d_{\mathbb{E}}$, which is just the Euclidean metric when $\mathbb{E}^{n} = \mathbb{R}^{n}$ and the \textit{Kor\`anyi metric} $d_{Kor}$ when $\mathbb{E}^{n} = \mathbb{H}_{n}.$ If not mentioned explicitly, $\Omega \subset \mathbb{E}^{n}$ will always be an open and (metric-)bounded subset.\smallskip 
	
	\textbf{Divergence, Gradient and Homogeneous dimension:} We set 
	\begin{align*}
		\operatorname{div}_{\mathbb{E}} &= \operatorname{div}, & \nabla_{\mathbb{E}} &= 	\nabla  &\text{ and } && Q_{\mathbb{E}} &= n  &\text{ if } \quad\mathbb{E}^{n} = \mathbb{R}^{n} ,\\
		\operatorname{div}_{\mathbb{E}} &= \operatorname{div}_{\mathbb{H}}, & \nabla_{\mathbb{E}} &= 	\X &\text{ and } && Q_{\mathbb{E}} &= 2n+2  &\text{ if } \quad\mathbb{E}^{n} = \mathbb{H}_{n}. 
	\end{align*} 
	
	\textbf{Metric ball and Measure:}  For any $x \in \mathbb{E}^{n}$ and any $r>0,$ $B_{r}(x)$ will denote the metric ball of radius $r$ centered at $x$, defined by $B_r(x) = \left\lbrace y\in\mathbb{E}^{n}: d_{\mathbb{E}}(x,y)<r \right\rbrace.$ 
	The bi-invariant Haar measure on $\mathbb{E}^{n}$ is the Lebesgue measure in either case: $n$-dimensional for $\mathbb{R}^{n}$ and $(2n+1)$-dimensional for $\mathbb{H}_{n}.$ For any measurable set $A \subset \mathbb{E}^{n},$ $\left\lvert A \right\rvert$ will denote the corresponding measure of $A$. For any metric ball $B_r \subset \mathbb{E}^{n}$ with $r>0,$ there exists a constant $c_{n} = c_{n}(n)>0$ such that 
	\begin{align}\label{measure of balls}
		\left\lvert B_{r} \right\rvert = c_n r^{Q_{\mathbb{E}}}. 
	\end{align}
	\textbf{Average and Mean:} For $A \subset \mathbb{E}^{n}$ measurable with $\left\lvert A \right\rvert >0$ and a measurable function $g:A \rightarrow \mathbb{R}^{N}$, taking values in any Euclidean space $\mathbb{R}^{N},$ $N \geq 1, $ its integral average will be denoted by the notation
	\begin{align*}
		\left(g\right)_{A} := \fint_{A} g\left(x\right)\ \mathrm{d}x := \frac{1}{\left\lvert A \right\rvert} \int_{A} g\left(x\right)\ \mathrm{d}x . 
	\end{align*} 
	If $A=B_R(x_0)$ for $x_0\in \mathbb{E}^n$ and $R>0$, then $\left(g\right)_{x_0, R}$ and $\left(g\right)_A$ denotes the same quantity. If $x_0$ is fixed in the context, then $\left(g\right)_{x_0, R}$ and $\left(g\right)_{R}$ denotes the same quantity. An important property of the integral averages that we would use throughout the rest 
	is 
	\begin{align}\label{minimality of mean}
		\left( \fint_{A} \left\lvert g - \left(g\right)_{A}\right\rvert^{q}\ \mathrm{d}x \right)^{\frac{1}{q}} \leq c_{n} 	\left( \fint_{A} \left\lvert g - \xi \right\rvert^{q}\ \mathrm{d}x \right)^{\frac{1}{q}}, 
	\end{align} 
	for some constant $c_{n}>0$, depending only on $n$, for any $ \xi \in \mathbb{R}^{N} $ and any $ 1 \leq q < \infty.$ 
	  
	\textbf{Lebesgue Spaces:}  All integrals and $L^{p}$ spaces would be defined with respect to the bi-invariant Haar measure on $\mathbb{E}^{n}$. Weak derivatives are defined the usual way. For Sobolev spaces, we use the notation 
	\begin{align*}
		\mathbb{E}W^{1, \mathfrak{p}(\cdot)}\left(\Omega\right) = \left\lbrace \begin{aligned}
			&W^{1, \mathfrak{p}(\cdot)}\left(\Omega\right) &&\text{ if } \mathbb{E}^{n} = \mathbb{R}^{n} ,\\
			&HW^{1, \mathfrak{p}(\cdot)}\left(\Omega\right) &&\text{ if } \mathbb{E}^{n} = \mathbb{H}_{n}. 
		\end{aligned}\right. 
	\end{align*}
	Similar notation scheme will be used for $\mathbb{E}W_{0}^{1, \mathfrak{p}(\cdot)}\left(\Omega\right)$ and $\mathbb{E}W_{\text{loc}}^{1, \mathfrak{p}(\cdot)}\left(\Omega\right).$\smallskip 
	
	\textbf{Modulus of Continuity:} For any uniformly continuous function $a:\Omega \rightarrow \mathbb{R},$ the modulus of continuity of $a$, denoted $\omega_{a, \Omega}$ is a concave nondecreasing function $\omega_{a, \Omega}: [0, \operatorname*{diam} \Omega ) \rightarrow [0, \infty),$ defined by 
	\begin{align*}
		\omega_{a, \Omega}\left( r \right):=  \sup\limits_{\substack{x,y \in \Omega,\\ d_{\mathbb{E}}(x,y) \leq r }} \left\lvert a\left(x\right) - a\left( y\right)\right\rvert \qquad \text{ for all  } 0 < r < \operatorname*{diam}\Omega ,
	\end{align*}
	and where $\operatorname*{diam}\Omega$ denotes the (metric) diameter of $\Omega$ and we set $\omega_{a, \Omega}\left(0\right)= 0$ by convention. We would often denote the modulus of continuity by $\omega_{a}$ when the domain is clear and also just $\omega$ when the function is also clear from the context. 
	We say $a:\Omega \rightarrow \mathbb{R}$ is \emph{Dini-continuous} if $a$ is uniformly continuous and 
	\begin{align*}
		\int_{0}^{\frac{1}{2}\operatorname*{diam}\Omega}\omega_{a}\left(\rho\right)\frac{\mathrm{d}\rho}{\rho} < \infty.  
	\end{align*}
	More generally, for $\beta \in \lbrace 0, 1 \rbrace,$ we say $a:\Omega \rightarrow \mathbb{R}$ is \emph{$\log^{\beta}$-Dini continuous} if $a$ is uniformly continuous and 
	\begin{align}\label{log beta Dini}
		\int_{0}^{\frac{1}{2}\operatorname*{diam}\Omega} \omega_{\mathfrak{p}}\left(\rho\right)\log^{\beta} \left(\frac{1}{\rho}\right)\ \frac{\mathrm{d}\rho}{\rho} < \infty.  
	\end{align}
	This reduces to Dini-continuity when $\beta=0$.	As usual, a uniformly continuous function $a:\Omega \rightarrow \mathbb{R}$ is called 
	\begin{itemize}
		\item \emph{H\"{o}lder-continuous} with exponent $\alpha \in (0,1)$ if there exists $C>0$ such that 
		\begin{align*}
			\omega_{a}\left(\rho\right)\le C\rho^{\alpha} \qquad \text{ for all } 0 < \rho < \operatorname{diam} \Omega.
		\end{align*}
		\item \emph{Lipschitz continuous} if if there exists $C>0$ such that 
		\begin{align*}
			\omega_{a}\left(\rho\right)\le C\rho \qquad \text{ for all } 0 < \rho < \operatorname{diam} \Omega.  
		\end{align*}
	\end{itemize} 
	The smallest such constants are denoted by $\left[\mathfrak{a}\right]_{C^{0, \alpha}}$ and $\operatorname{Lip}\left({\mathfrak{a}}\right)$, respectively.  
		
	\textbf{The exponent $\gamma$:}	Given two real numbers $1 < \gamma_{1} \le \gamma_{2} < \infty$ and a homogeneous dimension $Q_{\mathbb{E}},$ we set 
	
	\begin{align}\label{def gamma}
		\gamma:= \min \left\lbrace 1 + \frac{1}{2}\left[ \min \left\lbrace \gamma_{1}, \frac{\gamma_{2}}{\gamma_{2}-1}\right\rbrace - 1 \right], \frac{2Q_{\mathbb{E}}}{2Q_{\mathbb{E}}-1} \right\rbrace. 	
	\end{align}
	
	\section{Preliminaries}\label{prelim}
	\subsection{Algebraic lemmas}
	In this subsection, we shall record some elementary algebraic facts. As is standard in the literature, given any real number $1 < p < \infty,$ we shall use the following auxiliary mapping $V_{p}: \mathbb{R}^{2n} \rightarrow \mathbb{R}^{2n}$ 
	defined by $V_{p}(z) : = \left\lvert z \right\rvert^{(p-2)/2} z $, which is a locally Lipschitz bijection from $\mathbb{R}^{2n}$ into itself.  
	\begin{lemma}\label{prop of V}
		For any $p >1$ there exists a constant $c_{V} \equiv c_{V}(n,p) > 0$ such that 
		\begin{equation}\label{constant cv}
			\frac{\left\lvert z_{1} - z_{2}\right\rvert}{c_{V}} \leq  \frac{\left\lvert V_{p}(z_{1}) - V_{p}(z_{2})\right\rvert}{\left( \left\lvert z_{1} \right\rvert + 
				\left\lvert z_{2}\right\rvert \right)^{\frac{p-2}{2}}} \leq c_{V}\left\lvert z_{1} - z_{2}\right\rvert,
		\end{equation}
		for any $z_{1}, z_{2} \in \mathbb{R}^{2n},$ not both zero. This implies the classical monotonicity estimate 
		\begin{equation}\label{monotonicity}
			\left( \left\lvert z_{1} \right\rvert + \left\lvert z_{2}\right\rvert \right)^{p-2} \left\lvert z_{1} - z_{2}\right\rvert^{2} \leq c(n,p) 
			\left\langle  \left\lvert z_{1} \right\rvert^{p-2} z_{1} - \left\lvert z_{2} \right\rvert^{p-2} z_{2}, z_{1} - z_{2} \right\rangle ,  
		\end{equation}
		with a constant $c(n,p) > 0$ for all $p >1$ and all $z_{1}, z_{2} \in \mathbb{R}^{2n}.$
		Moreover, if $1 < p \leq 2,$ there exists a constant $c \equiv c(n,p) > 0$ such that for any  $z_{1}, z_{2} \in \mathbb{R}^{2n},$
		\begin{equation}\label{v estimate p less 2}
			\left\lvert z_{1} - z_{2}\right\rvert \leq c \left\lvert V_{p}(z_{1}) - V_{p}(z_{2})\right\rvert^{\frac{2}{p}} + c \left\lvert V_{p}(z_{1}) - V_{p}(z_{2})\right\rvert 
			\left\lvert z_{2}\right\rvert^{\frac{2-p}{2}}.
		\end{equation}
	\end{lemma}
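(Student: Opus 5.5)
The plan is to reduce everything to one-dimensional integral identities along the segment joining $z_2$ to $z_1$. Set $z_t = z_2 + t(z_1-z_2)$ for $t\in[0,1]$. We then have
\begin{align*}
V_p(z_1)-V_p(z_2) = \int_0^1 DV_p(z_t)\,(z_1-z_2)\,\mathrm{d}t,
\end{align*}
where $DV_p(z) = |z|^{\frac{p-2}{2}}\bigl(I + \tfrac{p-2}{2}\,\tfrac{z\otimes z}{|z|^2}\bigr)$. Its eigenvalues are $|z|^{(p-2)/2}$ and $\tfrac p2 |z|^{(p-2)/2}$, so $DV_p(z)$ is symmetric and satisfies
\begin{align*}
\min\{1,\tfrac p2\}\,|z|^{\frac{p-2}{2}}|\xi|^2 \le \langle DV_p(z)\xi,\xi\rangle, \qquad |DV_p(z)\xi| \le \max\{1,\tfrac p2\}\,|z|^{\frac{p-2}{2}}|\xi|.
\end{align*}
If the segment passes through the origin, the integrand is singular at one point. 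Since $(p-2)/2>-1/2$, the singularity is integrable, so the formula still holds.

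\textbf{Two-sided bound \eqref{constant cv}.} For the upper bound, the operator-norm estimate gives $|V_p(z_1)-V_p(z_2)|\le c\,|z_1-z_2|\int_0^1|z_t|^{(p-2)/2}\,\mathrm{d}t$. For the lower bound, pair with $z_1-z_2$ and use Cauchy–Schwarz to get $|V_p(z_1)-V_p(z_2)|\ge c\,|z_1-z_2|\int_0^1|z_t|^{(p-2)/2}\,\mathrm{d}t$. Both directions then reduce to the standard integral comparison
\begin{align*}
\int_0^1 |z_t|^{\alpha}\,\mathrm{d}t \approx (|z_1|+|z_2|)^{\alpha}, \qquad \alpha=\tfrac{p-2}{2}>-1,
\end{align*}
with constants depending only on $\alpha$. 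I expect this comparison to be the only genuinely delicate step.

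To prove it, use $|z_t|\le |z_1|+|z_2|$ together with the following lower estimate. Assume without loss of generality $|z_1|\ge|z_2|$. For $t\in[3/4,1]$ we have $|z_t|\ge |z_1|-\tfrac14|z_1-z_2|\ge \tfrac12|z_1|\ge\tfrac14(|z_1|+|z_2|)$. Now split by the sign of $\alpha$.
\begin{itemize}
\item For $\alpha\ge0$, the upper bound on $\int_0^1|z_t|^\alpha$ follows directly from $|z_t|\le |z_1|+|z_2|$, and the lower bound from integrating over $[3/4,1]$.
\item For $-1<\alpha<0$, the lower bound follows from $|z_t|\le|z_1|+|z_2|$. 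For the upper bound, note that $|z_t|\ge \operatorname{dist}(0,\text{line})$ and that along the line $|z_t|\ge |t-t_0|\,|z_1-z_2|$, where $t_0$ is the parameter of the closest point. This gives
\begin{align*}
\int_0^1|z_t|^\alpha\,\mathrm{d}t \le |z_1-z_2|^{\alpha}\int_0^1|t-t_0|^{\alpha}\,\mathrm{d}t \le c\,|z_1-z_2|^\alpha.
\end{align*}
This handles the case $|z_1-z_2|\gtrsim |z_1|$. In the opposite case $|z_1-z_2|\le \tfrac12|z_1|$, we have $|z_t|\ge\tfrac12|z_1|$ for all $t$, so $\int_0^1|z_t|^\alpha\le c(|z_1|+|z_2|)^\alpha$ directly.
\end{itemize}

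\textbf{Monotonicity \eqref{monotonicity}.} Apply the same identity to $A(z)=|z|^{p-2}z$, whose derivative has eigenvalues $|z|^{p-2}$ and $(p-1)|z|^{p-2}$. This gives
\begin{align*}
\langle A(z_1)-A(z_2),z_1-z_2\rangle \ge \min\{1,p-1\}\,|z_1-z_2|^2\int_0^1|z_t|^{p-2}\,\mathrm{d}t,
\end{align*}
and the integral comparison with $\alpha=p-2>-1$ finishes the proof.

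\textbf{Estimate \eqref{v estimate p less 2} for $1<p\le2$.} Let $s=|z_1|+|z_2|$ and $W=|V_p(z_1)-V_p(z_2)|$. By \eqref{constant cv},
\begin{align*}
|z_1-z_2|\le c\,W s^{\frac{2-p}{2}}\le c\,W\bigl(|z_1-z_2|+2|z_2|\bigr)^{\frac{2-p}{2}}.
\end{align*}
Since $0\le\frac{2-p}{2}<1$, subadditivity gives $(a+b)^{\frac{2-p}{2}}\le a^{\frac{2-p}{2}}+b^{\frac{2-p}{2}}$, hence
\begin{align*}
|z_1-z_2|\le cW|z_1-z_2|^{\frac{2-p}{2}}+cW|z_2|^{\frac{2-p}{2}}.
\end{align*}
Apply Young's inequality to the first term with exponents $\frac{2}{p}$ and $\frac{2}{2-p}$, and absorb $\tfrac12|z_1-z_2|$ into the left-hand side. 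This leaves $|z_1-z_2|\le cW^{2/p}+cW|z_2|^{(2-p)/2}$, which is \eqref{v estimate p less 2}. When $p=2$ the claim is trivial.
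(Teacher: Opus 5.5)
Your proof is correct. The segment identity with the eigenvalue bounds for $DV_p$ and $D(|z|^{p-2}z)$ is right. So is the comparison $\int_0^1|z_t|^{\alpha}\,\mathrm{d}t\approx(|z_1|+|z_2|)^{\alpha}$ for $\alpha>-1$, including your split at $|z_1-z_2|=\tfrac12|z_1|$ when $-1<\alpha<0$. The subadditivity-plus-Young absorption for \eqref{v estimate p less 2} also checks out.

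The paper gives no argument of its own. It cites Hamburger's Lemma 2.1 for \eqref{constant cv} and \eqref{monotonicity}, and Kuusi--Mingione's Lemma 2 for \eqref{v estimate p less 2}. What you wrote is therefore a self-contained derivation of these standard facts. Your constants come out depending only on $p$, not on $n$.

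You obtain \eqref{monotonicity} directly from the segment identity for $|z|^{p-2}z$, rather than deducing it from \eqref{constant cv} as the phrase ``this implies'' suggests. This is harmless, since deducing it from \eqref{constant cv} alone would in any case need the extra comparison $\langle |z_1|^{p-2}z_1-|z_2|^{p-2}z_2,z_1-z_2\rangle\gtrsim|V_p(z_1)-V_p(z_2)|^2$, which your argument makes unnecessary.

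The one step worth an extra line is the fundamental theorem of calculus when the segment passes through the origin at parameter $t_0$. There you should also note that $V_p$ (and $|z|^{p-2}z$) is continuous at $0$ because $p>1$. Then integrate over $[0,t_0-\delta]\cup[t_0+\delta,1]$ and let $\delta\to0$; integrability of the derivative alone does not quite close this.
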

	For proofs, we refer \cite[Lemma 2.1]{hamburgerregularity} for \eqref{constant cv} and \eqref{monotonicity}  and \cite[Lemma 2]{KuusiMingione_nonlinearStein} for \eqref{v estimate p less 2}. 

	\subsection{Variable exponent spaces on \texorpdfstring{$\mathbb{E}^{n}$}{En}}\label{variable exponent spaces}
	Let $1 < \gamma_{1} \leq \gamma_{2} < \infty.$ 
	\begin{definition}
		Let $\mathfrak{p}:\Omega \rightarrow [\gamma_{1}, \gamma_{2}], $ uniformly continuous function. We say 
		\begin{itemize}
			\item $\mathfrak{p}$ is \emph{log-H\"{o}lder continuous},  denoted by $\mathfrak{p} \in \mathcal{P}^{\log}_{[\gamma_{1}, \gamma_{2}]}\left(\Omega \right),$ if  
			\begin{align}\label{log holder def}
				\lim\limits_{R \rightarrow 0} \omega_\mathfrak{p}\left(R\right)\log \left(\frac{1}{R}\right) < +\infty.  
			\end{align}
			The finite value of this limit is called the \emph{log-H\"{o}lder} constant of $\mathfrak{p}$ on $\Omega$.
			\item $\mathfrak{p}$ is \emph{vanishing log-H\"{o}lder continuous} if 
			\begin{align}\label{log holder vanishing}
				\lim\limits_{R \rightarrow 0} \omega_{\mathfrak{p}}\left(R\right)\log \left(\frac{1}{R}\right) = 0 . 
			\end{align}
			\item  $\mathfrak{p}$ is \emph{log-Dini continuous} if there exists $R>0$ such that we have 
			\begin{align}\label{log Dini}
				\int_{0}^{\frac{1}{2}\operatorname*{diam}\Omega} \omega_{\mathfrak{p}}\left(\rho\right)\log \left(\frac{1}{\rho}\right)\ \frac{\mathrm{d}\rho}{\rho} < \infty.  
			\end{align}	
		\end{itemize}
	\end{definition}
	Note that \eqref{log beta Dini} reduces to log-Dini continuity when $\beta=1.$ These definitions are standard and indeed defined analogously to the Euclidean setting (see \cite{Diening_et_al_variable_exponent} and \cite[Section 3.1.3]{Mallick_Sil_continuityvariablegrowth} for details) when $\mathbb{E}^{n} = \mathbb{H}_{n}.$  Furthermore, as noted in \cite[Remark 12]{Mallick_Sil_continuityvariablegrowth}, all the three concepts in $\mathbb{H}_n$ are equivalent to corresponding ones in $\mathbb{R}^{2n+1}$	.
	
	\begin{definition}
		The space $L^{\mathfrak{p}(\cdot)}\left( \Omega\right)$ with exponent $\mathfrak{p} \in \mathcal{P}^{\log}_{[\gamma_{1}, \gamma_{2}]}\left(\Omega \right)$ is defined as 
		\begin{align*}
			L^{\mathfrak{p}(\cdot)}\left( \Omega\right):= \left\lbrace u:\Omega \rightarrow \mathbb{R} \text{ measurable}: \int_{\Omega} \left\lvert u\left(x\right)\right\rvert^{\mathfrak{p}\left(x\right)}\ \mathrm{d}x < \infty\right\rbrace. 
		\end{align*}
	\end{definition}
	Equipped with the usual Luxemburg norm $\left\lVert \cdot \right\rVert_{L^{\mathfrak{p}(\cdot)}\left( \Omega\right)}$, $L^{\mathfrak{p}(\cdot)}\left( \Omega\right)$ is a reflexive, separable Banach space (cf. \cite{Diening_et_al_variable_exponent}). The definition extends in the obvious manner to vector-valued functions. When $\mathbb{E}^n= \mathbb{R}^n$, the variable exponent Sobolev spaces are defined as 
	\begin{align}\label{W1px norm}
		W^{1, \mathfrak{p}(\cdot)}\left( \Omega \right):= \left\lbrace u \in L^{\mathfrak{p}(\cdot)}\left( \Omega\right): \left\lVert u \right\rVert_{L^{\mathfrak{p}(\cdot)}\left( \Omega\right)} + \left\lVert \nabla u \right\rVert_{L^{\mathfrak{p}(\cdot)}\left(\Omega; \mathbb{R}^{n}\right)}  < \infty \right\rbrace,
	\end{align}
	where $u$ is implicitly assumed to be weakly differentiable in $U$ and $\nabla u$ stands for the weak gradient of $u.$ The space $	HW^{1, \mathfrak{p}(\cdot)}_{0}\left( \Omega \right)$ denotes the closure of $C^{1}_{c}\left(\Omega\right)$ under the norm \eqref{W1px norm}. When $\mathbb{E}^n= \mathbb{H}_n$, the horizontal variable exponent Sobolev space is 
	\begin{align*}
		HW^{1, \mathfrak{p}(\cdot)}\left( \Omega \right):= \left\lbrace u \in L^{\mathfrak{p}(\cdot)}\left( \Omega\right): \left\lVert u \right\rVert_{L^{\mathfrak{p}(\cdot)}\left( \Omega\right)} + \left\lVert \X u \right\rVert_{L^{\mathfrak{p}(\cdot)}\left(\Omega; \mathbb{R}^{2n}\right)}  < \infty \right\rbrace,
	\end{align*}
    equipped with the norm 
	\begin{align}\label{HW1px norm}
		\left\lVert u \right\rVert_{HW^{1, \mathfrak{p}(\cdot)}\left( \Omega \right)}:= \left\lVert u \right\rVert_{L^{\mathfrak{p}(\cdot)}\left( \Omega\right)} + \left\lVert \X u \right\rVert_{L^{\mathfrak{p}(\cdot)}\left(\Omega; \mathbb{R}^{2n}\right)},
	\end{align}
	where $\X u$ denotes the weak horizontal gradient.  The space $	HW^{1, \mathfrak{p}(\cdot)}_{0}\left( \Omega \right)$ denotes the closure of $C^{1}_{c}\left(\Omega\right)$ under the norm \eqref{HW1px norm}. \emph{We would not assume $\mathfrak{p}$ to be log-H\"{o}lder continuous in $\Omega$. However, our assumption will always imply (see Lemma \ref{local vanishing log holder}) that $\mathfrak{p} \in \mathcal{P}^{\log}_{[\gamma_{1}, \gamma_{2}]}\left(\Omega' \right),$ for every compactly contained open subdomain $\Omega' \subset \subset \Omega$. Thus, we interpret  the notation $ u \in \mathbb{E}W_{\text{loc}}^{1, \mathfrak{p}(\cdot)}\left(\Omega\right)$  as meaning $u \in \mathbb{E}W^{1,\mathfrak{p}(\cdot)}\left(\Omega'\right)$ for every compactly contained open subdomain $\Omega' \subset \subset \Omega$.}	As we are concerned with interior estimates, here onwards we would often disregard this technical point.  
	\subsection{Poincar\'{e}-Sobolev inequalities}
	\begin{proposition}[Poincar\'{e}-Sobolev inequality with means]\label{poincaresobolevwithmeans}
		Let $B_{R} \subset \mathbb{E}^{n}$ be any ball of radius $R>0.$ Let $1 \leq s <  Q_{\mathbb{E}}.$ Then there exists a constant $c >0,$ depending only on $n$ and $s$ such that for any $u \in \mathbb{E}W^{1,s}\left( B_{R} \right),$ we have
		\begin{equation}\label{poincaresobolevineqwithmeans}
			\left( \fint_{B_{R}} \lvert u - \left( u\right)_{B_{R}} \rvert^{\frac{Q_{\mathbb{E}}s}{Q_{\mathbb{E}}-s}} \right)^{\frac{Q_{\mathbb{E}}-s}{Q_{\mathbb{E}}s}} \leq c R \left( \fint_{B_{R}} \lvert \nabla_{\mathbb{E}} u \rvert^{s} \right)^{\frac{1}{s}}. 
		\end{equation}
	\end{proposition}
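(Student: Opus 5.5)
This is the classical Poincaré–Sobolev inequality, and the plan is to reduce it to the unit ball by scaling and then combine a pointwise potential estimate with fractional integration. The reduction first. Since $\mathbb{E}^{n}$ is a group with translations and dilations preserving the structure, it suffices to treat $B_{1}(0)$. Left translations $\tau_{x_0}(y)=x_0\cdot y$ preserve Lebesgue measure, the metric $d_{\mathbb{E}}$, and the vector fields $\nabla_{\mathbb{E}}$; in the Heisenberg case this holds because $X_i$ are left invariant and $d_{Kor}$ is left invariant. The dilations $\delta_R(x,t)=(Rx,R^2t)$, or $x\mapsto Rx$ in $\mathbb{R}^n$, satisfy
\[
\nabla_{\mathbb{E}}(u\circ\delta_R)=R\,(\nabla_{\mathbb{E}}u)\circ\delta_R
\]
and map $B_1$ onto $B_R$ with Jacobian $R^{Q_{\mathbb{E}}}$. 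Averages are invariant under these maps, so applying the unit-ball estimate to $u\circ\tau_{x_0}\circ\delta_R$ produces exactly the factor $R$ in \eqref{poincaresobolevineqwithmeans}. By density of smooth functions in $\mathbb{E}W^{1,s}(B_1)$ (Meyers–Serrin type, valid for Hörmander vector fields, e.g. Friedrichs mollification along the group), we may assume $u$ is smooth up to this approximation.

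The core step is a representation formula on the ball,
\[
|u(x)-(u)_{B_1}|\le c\int_{B_1}\frac{|\nabla_{\mathbb{E}}u(y)|}{d_{\mathbb{E}}(x,y)^{Q_{\mathbb{E}}-1}}\,dy \qquad \text{for a.e. } x\in B_1 .
\]
In $\mathbb{R}^n$ this follows from the standard argument of integrating along segments over the convex ball. In $\mathbb{H}_n$ it is the Jerison/Franchi–Lu–Wheeden estimate. The route I would take there is: first obtain the $(1,1)$-Poincaré inequality on balls, which follows from Jerison's theorem or from integrating along horizontal geodesics and using that Koranyi balls are comparable to CC balls. Then apply the chaining/telescoping argument of Hajłasz–Koskela over dyadic balls $B_{2^{-k}}(x)$, using that Koranyi balls are John domains, which yields the Riesz-potential bound above. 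I expect this step to be the main obstacle, since it is where the noncommutativity and the geometry of the balls enter. In a paper of this kind I would cite it, e.g. Jerison's Poincaré inequality together with Franchi–Lu–Wheeden or Hajłasz–Koskela's "Sobolev met Poincaré".

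Finally I would apply the Hardy–Littlewood–Sobolev inequality for the Riesz potential $I_1$ on the homogeneous space $(\mathbb{E}^n,d_{\mathbb{E}},dx)$, which is Ahlfors $Q_{\mathbb{E}}$-regular by \eqref{measure of balls}. This gives $I_1:L^s\to L^{Q_{\mathbb{E}}s/(Q_{\mathbb{E}}-s)}$ for $1<s<Q_{\mathbb{E}}$, proved via Hedberg's trick with the maximal function. For the endpoint $s=1$, HLS only gives a weak-type bound, so I would use the weak-type $(1,Q_{\mathbb{E}}/(Q_{\mathbb{E}}-1))$ estimate combined with the truncation argument of Maz'ya (as in Hajłasz–Koskela) to upgrade it to the strong type. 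Since the means are taken over $B_1$ and $|B_1|=c_n$, passing between integrals and averages costs only dimensional constants.
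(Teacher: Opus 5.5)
Your proposal is correct and is essentially what the paper does. The paper gives no argument of its own beyond citing Lu's Poincar\'e--Sobolev inequality for H\"ormander vector fields and the Garofalo--Nhieu approximation theorem, and your outline is the standard proof behind those citations: reduction by translations and dilations, density, a representation formula via Jerison/Franchi--Lu--Wheeden and chaining, then fractional integration with Maz'ya truncation at $s=1$. Your observation that Kor\'anyi balls are John domains is a useful addition, because the cited results are usually stated for Carnot--Carath\'eodory balls, and it is this property that keeps the same ball on both sides.
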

	The Heisenberg group version of \eqref{poincaresobolevineqwithmeans} follows from \cite{lu-poincare-sobolev},  and \cite{Garofalo-Nhieu-Global-Approximation}.  
	\begin{remark}\label{p uniform constants Sobolev inequalities}
		The constant above depends on $s.$ However, the proofs are constructive and the constant is explicitly quantifiable. Thus, if $1 < \gamma_{1} \leq s \leq \gamma_{2}< \infty,$ the constants can all be chosen (by replacing them with larger or smaller constants if necessary) to depend only $\gamma_{1}, \gamma_{2}$ such that the estimates are uniform in $s \in [\gamma_{1}, \gamma_{2}].$ 
	\end{remark}	
	\subsection{Lorentz-Zygmund spaces}
	\subsubsection{Lorentz spaces:} We start by defining Lorentz spaces. See \cite{Hunt_LorentzSpaces} and \cite{SteinWeiss_Fourieranalysis}.
	\begin{definition}
		For $1\leq p <  \infty,$ and $0 < q \leqslant \infty,$ A measurable function $u: \Omega \rightarrow \mathbb{R}$ is said to belong to the Lorentz space $L^{(p,q)} \left(\Omega\right)$ if 
		$$ \left\lVert u \right\rVert_{L^{(p,q)}\left(\Omega\right)}^{q} 
		:= \int_{0}^{\infty} \left( t^{p} \left\lvert \left\lbrace x \in \Omega : \left\lvert u \right\rvert > t \right\rbrace\right\rvert \right)^{\frac{q}{p}} \frac{\mathrm{d}t}{t} < \infty $$
		for $0 < q < \infty$ and to $L^{(p,\infty)} \left(\Omega\right)$ if 
		$$ \left\lVert u \right\rVert_{L^{(p,\infty)}\left(\Omega\right)}^{p} 
		:= \sup_{t >0} \left( t^{p} \left\lvert \left\lbrace x \in \Omega : \left\lvert u \right\rvert > t \right\rbrace\right\rvert \right) < \infty .$$
	\end{definition}

	
	\subsubsection{$L\log^{\beta}L$ spaces:}
	For $\beta \geq 0 ,$ the Orlicz space $L\log^{\beta}L \left( \Omega; \mathbb{R}^{m}\right)$ is defined as 
	\begin{align*}
		L\log^{\beta}L \left( \Omega; \mathbb{R}^{m}\right): = \left\lbrace f \in L^{1}\left( \Omega; \mathbb{R}^{m}\right): \int_{\Omega} \left\lvert f \right\rvert \log^{\beta}\left( e + \left\lvert f \right\rvert \right)\ \mathrm{d}x < \infty \right\rbrace.  
	\end{align*}
	This is a Banach space equipped with the Luxemburg norm $\lVert \cdot \rVert_{L\log^{\beta}L \left( \Omega\right)}$. Iwaniec (\cite{IwaniecPharmonic}, \cite{Iwaniec_et_al_MappingsBMOboundeddistortion}, \cite{IwaniecVerde_LlogL}) showed that this norm is equivalent to the following 
	\begin{align*}
		\left[ f\right]_{L\log^{\beta}L \left( \Omega\right)}:= \fint_{\Omega} \left\lvert f \right\rvert \log^{\beta}\left( e + \frac{\left\lvert f \right\rvert}{\fint_{\Omega}\left\lvert f \right\rvert\ \mathrm{d}x} \right)\ \mathrm{d}x. 
	\end{align*}
	For any $p >1,$ $L^{p}\left(\Omega; \mathbb{R}^{m}\right)$ embeds into $L\log^{\beta}L \left( \Omega; \mathbb{R}^{m}\right)$ and we have the following inequality (cf. estimate $(28)$ in \cite{Acerbi_Mingione_CZforpxLaplacian}) 
	\begin{align*}
		\fint_{\Omega} \left\lvert f \right\rvert \log^{\beta}\left( e + \frac{\left\lvert f \right\rvert}{\fint_{\Omega}\left\lvert f \right\rvert\ \mathrm{d}x} \right)\ \mathrm{d}x \leq c\left(p, \beta\right) \left( \fint_{\Omega} \left\lvert f \right\rvert^{p}\ \mathrm{d}x\right)^{\frac{1}{p}}. 
	\end{align*}
	In the same spirit, we also have the following lemma (see \cite[Lemma 2.1]{Baroni_Coscia}). 
	\begin{lemma}\label{Llog beta L lemma}
		Let $\zeta >1$, $\delta, \beta, \tau \geq 0$ and $f$ be a nonnegative function in $L^{\zeta}\left(B_{R}\right)$ for some ball $B_{R}$ with $R \leq 1/e.$ Then there exists a constant $c= c(n, \beta, \delta, \tau, \zeta) >0$ such that 
		\begin{align*}
			\fint_{B_{R}}  f \log^{\beta}\left( e + f^{\delta} \right)\ \mathrm{d}x \leq c \left( 1 + R^{\tau} \left\lVert f \right\rVert_{L^{1}\left(B_{R}\right)}\right)^{\beta} \log^{\beta}\left(\frac{1}{R}\right)\left( \fint_{B_{R}} \left\lvert f \right\rvert^{\zeta}\ \mathrm{d}x\right)^{\frac{1}{\zeta}}. 
		\end{align*}
	\end{lemma}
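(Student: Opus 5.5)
We reduce to the $L^{\zeta}$-embedding estimate for $L\log^{\beta}L$ recorded just above (estimate $(28)$ in \cite{Acerbi_Mingione_CZforpxLaplacian}). The idea is to split the logarithm of $f^{\delta}$ into a part that depends only on the normalized function $f/(f)_{B_R}$ and a part that depends only on the average $(f)_{B_R}$. If $(f)_{B_R}=0$ then $f=0$ a.e. and there is nothing to prove, so assume $m:=(f)_{B_R}>0$.

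\textbf{Step 1: splitting the logarithm.} We use the elementary inequalities
\begin{align*}
\log(e+ab)\le \log(e+a)+\log(e+b),\qquad \log(e+s^{\delta})\le c(\delta)\log(e+s),
\end{align*}
valid for $a,b,s\ge 0$. Writing $f^{\delta}=(f/m)^{\delta}m^{\delta}$, these give pointwise
\begin{align*}
\log(e+f^{\delta})\le c(\delta)\Big[\log\Big(e+\frac{f}{m}\Big)+\log(e+m)\Big].
\end{align*}
Raising this to the power $\beta$ and using $(x+y)^{\beta}\le c(\beta)(x^{\beta}+y^{\beta})$, we obtain
\begin{align*}
\fint_{B_R} f\log^{\beta}(e+f^{\delta})\le c\fint_{B_R} f\log^{\beta}\Big(e+\frac{f}{m}\Big)+c\,m\log^{\beta}(e+m).
\end{align*}
The first term is bounded by $c(\zeta,\beta)\big(\fint_{B_R}f^{\zeta}\big)^{1/\zeta}$ by the cited estimate. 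Since $R\le 1/e$ gives $\log(1/R)\ge 1$, this is already of the desired form.

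\textbf{Step 2: the average term.} By \eqref{measure of balls}, $m=c_n^{-1}R^{-Q_{\mathbb{E}}}\|f\|_{L^1(B_R)}$. We write $m=A\cdot b$ with
\begin{align*}
A:=c_n^{-1}R^{-Q_{\mathbb{E}}-\tau},\qquad b:=R^{\tau}\|f\|_{L^1(B_R)}.
\end{align*}
Applying the splitting inequality once more, together with $\log(e+b)\le 1+b$, yields
\begin{align*}
\log(e+m)\le \log(e+A)+\log(e+b)\le c(n,\tau)\log\Big(\frac1R\Big)+1+b\le c\,(1+b)\log\Big(\frac1R\Big).
\end{align*}
Here we used $R\le 1/e$ twice: to absorb the constants into $\log(1/R)$, and to get $\log(e+A)\le c\log(1/R)$. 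Finally, Hölder's inequality gives $m\le\big(\fint_{B_R}f^{\zeta}\big)^{1/\zeta}$. Hence
\begin{align*}
m\log^{\beta}(e+m)\le c\,(1+R^{\tau}\|f\|_{L^1(B_R)})^{\beta}\log^{\beta}\Big(\frac1R\Big)\Big(\fint_{B_R}f^{\zeta}\Big)^{1/\zeta}.
\end{align*}

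\textbf{Step 3: conclusion.} Adding the bounds from Steps 1 and 2 gives the claimed inequality, with a constant depending only on $n,\beta,\delta,\tau,\zeta$. The only delicate point is the bookkeeping in Step 2. The factor $R^{\tau}$ must be inserted artificially so that the scale-dependent part $R^{-Q_{\mathbb{E}}-\tau}$ produces only $\log(1/R)$. Everything else is routine.
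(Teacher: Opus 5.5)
Your proof is correct. The paper does not prove this lemma itself; it quotes it from \cite[Lemma 2.1]{Baroni_Coscia}. Your argument therefore gives a self-contained justification that relies only on the normalized estimate (28) of \cite{Acerbi_Mingione_CZforpxLaplacian}, which the paper records just before the statement.

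The elementary inequalities you use all hold:
\begin{itemize}
\item $\log(e+ab)\le\log(e+a)+\log(e+b)$, because $(e+a)(e+b)\ge e+ab$.
\item $\log(e+s^{\delta})\le\max\{2,\delta\}\log(e+s)$. Use $s^{\delta}\le 1+s$ when $\delta\le 1$, and $e+s^{\delta}\le (e+s)^{\delta}$ when $\delta\ge 1$.
\item $\log(e+A)\le\bigl(\log(e+c_n^{-1})+Q_{\mathbb{E}}+\tau\bigr)\log(1/R)$, since $R^{-Q_{\mathbb{E}}-\tau}\ge 1$ and $\log(1/R)\ge 1$.
\end{itemize}

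Your route makes clear where the extra factors on the right-hand side come from. Peeling off the average $m=(f)_{B_R}$ leaves a scale-invariant $L\log^{\beta}L$ term, which (28) controls. Both $\log^{\beta}(1/R)$ and $(1+R^{\tau}\lVert f\rVert_{L^1(B_R)})^{\beta}$ arise only from the factor $\log^{\beta}(e+m)$. In particular, $\tau\ge 0$ is a free bookkeeping parameter coming from the splitting $R^{-Q_{\mathbb{E}}}=R^{-Q_{\mathbb{E}}-\tau}\cdot R^{\tau}$. This is exactly how the lemma is used in the proof of Lemma \ref{comparison lemma 1}, where $\tau$ is chosen so that $R^{\tau}\int_{B_R}(\cdots)$ stays bounded. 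Your argument also covers noninteger $\beta$, such as the $\beta=2(1+\sigma)$ used there, without any change.
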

	\subsubsection{$L^{p,q}\log L$ spaces:}
	A good reference for these materials is \cite[Chapter 9]{Pick_Kufner_Fucik_Function_spaces}. 	Let $\Omega \subset \mathbb{E}^{n}$ be a measurable set and let $g: \Omega \rightarrow \mathbb{R}$ be a measurable function. Let $\mu_{f}: [0, +\infty) \rightarrow [0, +\infty)$ denote the  distribution function of $f$, and $f^{\ast}: [0, +\infty) \rightarrow [0, +\infty]$ denote the nonincreasing rearrangement of $f$. 	\begin{definition} Let $1\leq s < \infty,$ $\mu \in (0, \infty]$ and $\beta \in \mathbb{R}.$ Given a measurable function $f:\Omega \rightarrow \mathbb{R},$ we define its  $L^{(s, \mu)}\log^{\beta}L \left(\Omega\right)$ quasinorm as 
		\begin{align*}
			\left\vvvert f \right\vvvert_{L^{(s, \mu)}\log^{\beta}L \left(\Omega\right)} := \left\lbrace \begin{aligned}
				&\left( \int_{0}^{\left\lvert \Omega \right\rvert} \left[ \rho^{\frac{1}{s}}\left( 1+ \left\lvert \log \rho \right\rvert\right)^{\beta}f^{\ast}\left(\rho\right)\right]^{\mu}\ \frac{\mathrm{d}\rho}{\rho}\right)^{\frac{1}{\mu}} &&\text{ if } \mu < \infty, \\
				&\sup\limits_{\rho \in \left(0, \left\lvert \Omega \right\rvert\right)}  \rho^{\frac{1}{s}}\left( 1+ \left\lvert \log \rho \right\rvert\right)^{\beta}f^{\ast}\left(\rho\right) &&\text{ if } \mu = \infty. 
			\end{aligned}\right. 
		\end{align*}
		We say $f \in L^{(s, \mu)}\log^{\beta}L \left(\Omega\right)$ if $	\left\vvvert f \right\vvvert_{L^{(s, \mu)}\log^{\beta}L \left(\Omega\right)}  < \infty.$
	\end{definition}
	This quasinorm  is equivalent to the following norm when $s>1$ and $\mu <\infty$.     
	\begin{align*}
		\left\lVert f \right\rVert_{L^{(s, \mu)}\log^{\beta}L \left(\Omega\right)} := \left( \int_{0}^{\left\lvert \Omega \right\rvert} \left[ \rho^{\frac{1}{s}}\left( 1+ \left\lvert \log \rho \right\rvert\right)^{\beta}f^{\ast\ast}\left(\rho\right)\right]^{\mu}\ \frac{\mathrm{d}\rho}{\rho}\right)^{\frac{1}{\mu}}, 
	\end{align*}
	where $f^{\ast\ast} \left( r\right) = \fint_{0}^{r} f^{\ast}$, for $r \in (0, \infty)$. Moreover, the norm is absolutely continuous, i.e. for any $f \in L^{(s, \mu)}\log^{\beta}L \left(\Omega\right)$ and $E \subset \Omega$ measurable,  we have 
	\begin{align}\label{abs cont Lorentz zygmund norm}
		\left\lVert f \right\rVert_{L^{(s, \mu)}\log^{\beta}L \left(E\right)} \rightarrow 0 \qquad \text{ as } \qquad \left\lvert E \right\rvert \rightarrow 0. 
	\end{align}
		\subsection{Some important series}\label{series section}
		Let $m \in \mathbb{N}$ and let $x_{0} \in \mathbb{E}^{n}$.  Let $r>0$, $\tau \in (0, 1/4)$,  $1 < \theta < Q_{\mathbb{E}}$ and $1 < q < \infty$ be real numbers and let $\beta \in \left\lbrace 0, 1 \right\rbrace.$ Given any $g \in L^{\theta}\left( B_{r}(x_{0}); \mathbb{R}^{m} \right),$ we define 
		\begin{align*}
			\mathcal{S}_{\left[g, \theta, \tau, \beta \right]}\left(x_{0}, r \right) &:= \sum\limits_{j=0}^{\infty} r_{j} \log^{\beta} \left(\frac{1}{r_{j}}\right)\left(  \fint_{B_{r_{j}}\left(x_{0}\right)}\left\lvert   g \right\rvert^{\theta} \right)^{\frac{1}{\theta}} \end{align*} and given any $h \in L^{q}\left( B_{r}(x_{0}); \mathbb{R}^{m} \right),$ we set 
		\begin{align*}
			\mathfrak{S}_{\left[h, q, \tau, \beta\right]}\left(x_{0}, r \right) &:= \sum\limits_{j=0}^{\infty} \log^{\beta} \left(\frac{1}{r_{j}}\right)\left(  \fint_{B_{r_{j}}\left(x_{0}\right)}\left\lvert   h - \left(h\right)_{B_{r_{j}}\left(x_{0}\right)} \right\rvert^{q} \right)^{\frac{1}{q}} ,
		\end{align*}
		where $r_{j} = \tau^{j}r.$ The first series is summable when $g$ is in a Lorentz-Zygmund class. 
		\begin{lemma}\label{Lorentz series estimate lemma combined}
			Let $\beta \in \left\lbrace 0, 1 \right\rbrace$ and $g \in L^{(Q_{\mathbb{E}},1)}\log^{\beta} L\left(\Omega; \mathbb{R}^{m}\right).$  Then for any $1 < \theta < Q_{\mathbb{E}}$ and any $\tau \in (0, 1/4),$ there exists a constant $c \equiv c\left(Q_{\mathbb{E}}, \theta, \tau ,m\right)>0$ such that  
			\begin{align}\label{Lorentz series estimate combined}
				\mathcal{S}_{\left[ g, \theta, \tau, \beta\right]}\left( x_{0}, r \right) \leq c \left\lVert g \right\rVert_{L^{(Q_{\mathbb{E}},1)}\log^{\beta} L\left(B_{r}(x_0); \mathbb{R}^{m}\right)}, 
			\end{align}
			whenever $B_{r}(x_0) \subset \Omega.$
		\end{lemma}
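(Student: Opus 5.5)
We work with the rearrangement $g^{\ast}$ of $\lvert g\rvert$ restricted to $B_{r}(x_0)$; all constants below may depend on $Q_{\mathbb{E}},\theta,\tau,m$. The plan has three steps: bound each average by $g^{\ast}$, swap the order of summation, and then compare the resulting weight with the Lorentz--Zygmund weight.

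\emph{Step 1: averages in terms of $g^{\ast}$.} By the Hardy--Littlewood rearrangement inequality and \eqref{measure of balls},
\begin{align*}
\fint_{B_{r_j}(x_0)}\lvert g\rvert^{\theta} \le \frac{1}{c_n r_j^{Q_{\mathbb{E}}}}\int_{0}^{c_n r_j^{Q_{\mathbb{E}}}} \left(g^{\ast}(\rho)\right)^{\theta}\,\mathrm{d}\rho .
\end{align*}
Write $\rho_j = c_n r_j^{Q_{\mathbb{E}}} = c_n r^{Q_{\mathbb{E}}}\tau^{jQ_{\mathbb{E}}}$. These form a geometric sequence with ratio $\tau^{Q_{\mathbb{E}}}$. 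Split $(0,\rho_j]$ into the dyadic-type annuli $(\rho_{k+1},\rho_k]$ for $k\ge j$. Since $g^{\ast}$ is nonincreasing, $g^{\ast}\le g^{\ast}(\rho_{k+1})$ on the $k$-th annulus, so
\begin{align*}
\left(\fint_{B_{r_j}}\lvert g\rvert^{\theta}\right)^{1/\theta} \le c\left(\sum_{k\ge j} \tau^{(k-j)Q_{\mathbb{E}}}\, g^{\ast}(\rho_{k+1})^{\theta}\right)^{1/\theta} \le c\sum_{k\ge j}\tau^{(k-j)Q_{\mathbb{E}}/\theta}\, g^{\ast}(\rho_{k+1}).
\end{align*}
The last inequality uses $\ell^{\theta}\subset\ell^{1}$.

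\emph{Step 2: interchange the sums.} Multiply by $r_j\log^{\beta}(1/r_j)$, sum over $j$, and swap the order of summation. The coefficient of $g^{\ast}(\rho_{k+1})$ becomes
\begin{align*}
\sum_{j\le k} r\tau^{j}\log^{\beta}\!\left(\tfrac{1}{r\tau^{j}}\right)\tau^{(k-j)Q_{\mathbb{E}}/\theta}
= r\tau^{k}\sum_{j\le k}\tau^{-(k-j)(1-Q_{\mathbb{E}}/\theta)}\log^{\beta}\!\left(\tfrac{1}{r\tau^{j}}\right).
\end{align*}
\textbf{This is the crucial place where $\theta<Q_{\mathbb{E}}$ enters:} since $1-Q_{\mathbb{E}}/\theta<0$, the factor $\tau^{-(k-j)(1-Q_{\mathbb{E}}/\theta)}$ decays geometrically in $k-j$. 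Moreover $\log(1/r_j)\le\log(1/r_k)$ for $j\le k$ (assuming $r\le 1$, which we may arrange, and using $\log$ in the sense of $1+\lvert\log\rvert$ for safety). Hence the coefficient is at most $c\, r_k\log^{\beta}(1/r_k)$.

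\emph{Step 3: compare with the Lorentz--Zygmund quasinorm.} The previous step gives
\begin{align*}
\mathcal{S}_{[g,\theta,\tau,\beta]}(x_0,r)\le c\sum_{k\ge 0} r_k\log^{\beta}(1/r_k)\, g^{\ast}(\rho_{k+1}).
\end{align*}
Note that $r_k\simeq\rho_{k+1}^{1/Q_{\mathbb{E}}}$ and $\log(1/r_k)\simeq 1+\lvert\log\rho_{k+1}\rvert$, with constants depending on $\tau,Q_{\mathbb{E}}$, and on $r$ only through harmless bounds when $r\le 1$. On $(\rho_{k+2},\rho_{k+1}]$ the rearrangement satisfies $g^{\ast}(\rho)\ge g^{\ast}(\rho_{k+1})$, and $\int_{\rho_{k+2}}^{\rho_{k+1}}\mathrm{d}\rho/\rho = Q_{\mathbb{E}}\log(1/\tau)$. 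Therefore each summand is bounded by
\begin{align*}
c\int_{\rho_{k+2}}^{\rho_{k+1}}\rho^{1/Q_{\mathbb{E}}}\left(1+\lvert\log\rho\rvert\right)^{\beta}g^{\ast}(\rho)\,\frac{\mathrm{d}\rho}{\rho}.
\end{align*}
Summing over $k$ yields the quasinorm $\vvvert g\vvvert_{L^{(Q_{\mathbb{E}},1)}\log^{\beta}L(B_r(x_0))}$, which is dominated by the norm (since $g^{\ast}\le g^{\ast\ast}$).

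\emph{Main obstacle.} The only delicate point is the uniform comparison of the logarithmic weights $\log(1/r_k)$ versus $1+\lvert\log\rho\rvert$ across annuli. This needs $r$ bounded, e.g.\ $r\le\min\{1,\operatorname{diam}\Omega\}$, and should otherwise be absorbed into the convention for $\log$. Everything else is the standard Hardy-type interchange in which $\theta<Q_{\mathbb{E}}$ produces geometric decay.
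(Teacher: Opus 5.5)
Your proof is correct and follows the standard argument the paper defers to (Kuusi--Mingione, Lemma~1): bound each average through the Hardy--Littlewood rearrangement inequality, then pass to the $L^{(Q_{\mathbb{E}},1)}\log^{\beta}L$ quasinorm by a Hardy-type averaging estimate, which holds precisely because $\theta<Q_{\mathbb{E}}$. The only difference is that you carry out the Hardy step discretely, by interchanging the sums, rather than through a continuous Hardy inequality; this keeps the case $\beta=1$ transparent, since $\log(1/r_j)$ is nondecreasing in $j$ once $r\le 1$.
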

		For $\beta=0,$ Lemma \ref{Lorentz series estimate lemma combined} is proved in \cite[Lemma 1]{KuusiMingione_nonlinearStein} and the proof can easily be adopted for $\beta=1$ (see also \cite{Baroni_pxStein} and \cite[Lemma 2.2]{Baroni2025}). 
		\begin{corollary}\label{vanishing corollary derivative version}
			Let $\beta \in \left\lbrace 0, 1 \right\rbrace$ and $g \in L^{(Q_{\mathbb{E}},1)}\log^{\beta} L\left(\Omega; \mathbb{R}^{m}\right).$ Then given any $\varepsilon>0$, $1< \theta < Q_{\mathbb{E}}$, $\tau \in (0, 1/4)$ and any compact subset $K \subset \Omega,$ there exists a radius $r_{\varepsilon} = r_{\varepsilon}\left(K, \theta, \tau, \left\lVert g \right\rVert_{L^{(Q_{\mathbb{E}},1)}\log^{\beta} L\left(\Omega\right)}\right) >0$ such that if $0 <r \le r_{\varepsilon},$ we have 
			\begin{align*}
				\mathcal{S}_{\left[ g, \theta, \tau, \beta\right]}\left( x, r \right) < \varepsilon \qquad \text{ for a.e. } x \in K .
			\end{align*}
		\end{corollary}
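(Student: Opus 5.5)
The plan is to combine Lemma \ref{Lorentz series estimate lemma combined} with the absolute continuity of the Lorentz--Zygmund norm \eqref{abs cont Lorentz zygmund norm}. Fix $\varepsilon>0$, $\theta$, $\tau$ and the compact set $K\subset\Omega$. Set $d:=\operatorname{dist}(K,\partial\Omega)>0$ in the metric $d_{\mathbb{E}}$, and restrict to radii $r<d/2$. Then $B_{r}(x)\subset\Omega$ for every $x\in K$, so Lemma \ref{Lorentz series estimate lemma combined} applies on each such ball and gives
\begin{align*}
\mathcal{S}_{\left[ g, \theta, \tau, \beta\right]}\left( x, r \right) \leq c\left(Q_{\mathbb{E}},\theta,\tau,m\right) \left\lVert g \right\rVert_{L^{(Q_{\mathbb{E}},1)}\log^{\beta} L\left(B_{r}(x)\right)} .
\end{align*}
It therefore suffices to make the right-hand side smaller than $\varepsilon$ uniformly in $x\in K$.

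For this we use \eqref{measure of balls}, which says $\left\lvert B_{r}(x)\right\rvert = c_{n} r^{Q_{\mathbb{E}}}$ independently of the centre $x$. We also use \eqref{abs cont Lorentz zygmund norm}, which says that $\left\lVert g\right\rVert_{L^{(Q_{\mathbb{E}},1)}\log^{\beta}L(E)}\to 0$ as $\left\lvert E\right\rvert\to 0$, uniformly over measurable $E\subset\Omega$. Here $s=Q_{\mathbb{E}}>1$ and $\mu=1<\infty$, so the quasinorm is equivalent to the norm and absolute continuity is available. Choose $\delta>0$ such that $\left\lvert E\right\rvert<\delta$ implies $c\left\lVert g\right\rVert_{L^{(Q_{\mathbb{E}},1)}\log^{\beta}L(E)}<\varepsilon$. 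Then pick $r_{\varepsilon}<d/2$ with $c_{n}r_{\varepsilon}^{Q_{\mathbb{E}}}<\delta$. For $0<r\le r_{\varepsilon}$ and every $x\in K$ this gives $\mathcal{S}_{\left[ g, \theta, \tau, \beta\right]}(x,r)<\varepsilon$, which is stronger than the a.e. statement. The dependence of $r_{\varepsilon}$ is on $K$ through $d$, on $\theta$ and $\tau$ through $c$, and on $g$ through its modulus of absolute continuity.

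The only delicate point is uniformity of the absolute continuity. It must hold as a statement about $\left\lvert E\right\rvert$ alone, not about a particular shrinking family of sets. I would justify it through rearrangements: $(g\chi_{E})^{*}\le g^{*}\chi_{[0,\left\lvert E\right\rvert)}$. Hence the restricted norm is bounded by
\begin{align*}
\int_{0}^{\left\lvert E\right\rvert}\rho^{\frac{1}{Q_{\mathbb{E}}}}\left(1+\left\lvert\log\rho\right\rvert\right)^{\beta}g^{*}(\rho)\,\frac{\mathrm{d}\rho}{\rho},
\end{align*}
up to constants from the quasinorm/norm equivalence. This is the tail of a convergent integral, so it tends to $0$ with $\left\lvert E\right\rvert$ by dominated convergence.

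The dependence of $r_{\varepsilon}$ on $\left\lVert g\right\rVert$ listed in the statement should really be read as dependence on $g$ itself, through this tail integral. A norm bound alone does not control smallness on small sets. I would remark on this rather than try to remove it.
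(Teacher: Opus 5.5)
Your proof is correct and follows the same route as the paper: apply Lemma~\ref{Lorentz series estimate lemma combined} on $B_r(x)\subset\Omega$, then invoke the absolute continuity \eqref{abs cont Lorentz zygmund norm}, with uniformity over $x\in K$ coming from the fact that neither the constant nor $\lvert B_r(x)\rvert$ depends on the centre. Your rearrangement bound $(g\chi_E)^{*}\le g^{*}\chi_{[0,\lvert E\rvert)}$ supplies the smallness uniform in $\lvert E\rvert$ that the paper's two-line proof takes for granted, and your caveat is right: $r_{\varepsilon}$ depends on $g$ itself, not only on its norm (for $\beta=0$, norm-preserving concentrating dilates of a fixed profile defeat any radius chosen from the norm alone).
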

		\begin{proof}
			Observe that since the norm on the left  in \eqref{Lorentz series estimate combined} is absolutely continuous,  $\mathcal{S}_{\left[ g, \theta, \tau, \beta\right]}\left( x_{0}, r \right) \rightarrow 0$ as $r \rightarrow 0$, for a.e. $x_{0} \in \Omega.$ Moreover, as the constant is independent of $x_{0},$ the convergence is locally uniform in $x_{0} \in \Omega.$
		\end{proof}
		Set 
		\begin{align*}
			 m_{\mathbb{E}}=	\left\lbrace \begin{aligned}
				&n &&\text{ if } \mathbb{E}^{n} = \mathbb{R}^{n}, \\
				&2n &&\text{ if } \mathbb{E}^{n} = \mathbb{H}_{n}. 
			\end{aligned}\right. \qquad \text{ and } \qquad \theta^{\ast} := \frac{Q_{\mathbb{E}}\theta}{Q_{\mathbb{E}}- \theta}. 
		\end{align*}
		\begin{proposition}\label{derivative to summability}
			Let $\beta \in \left\lbrace 0, 1 \right\rbrace$ and $g$ be a function in $L^{1}\left(\Omega\right)$ such that $\nabla_{\mathbb{E}}g \in L^{(Q_{\mathbb{E}},1)}\log^{\beta} L\left(\Omega; \mathbb{R}^{m_{\mathbb{E}}}\right).$ Then for any $1 < \theta < Q_{\mathbb{E}}$ and any $\tau \in (0, 1/4),$ we have the estimates 
			\begin{align*}
				\mathfrak{S}_{\left[g, \theta^{\ast}, \tau, \beta\right]}\left(x_{0}, r \right) \leq c	\mathcal{S}_{\left[\nabla_{\mathbb{E}}g, \theta, \tau, \beta\right]}\left(x_{0}, r \right) \le c \left\lVert \nabla_{\mathbb{E}}g \right\rVert_{L^{(Q_{\mathbb{E}},1)}\log^{\beta} L\left(B_{r}(x_0); \mathbb{R}^{m_{\mathbb{E}}}\right)},
			\end{align*}
			whenever $B_{r}(x_0) \subset \Omega.$ In particular, for any $1 < q < \infty$ and $\tau \in (0, 1/4),$ we have 
			\begin{align*}
				\mathfrak{S}_{\left[g, q, \tau, \beta\right]}\left(x_{0}, r \right) \rightarrow 0 \quad \text{ as } r \rightarrow 0 \qquad  \text{ locally uniformly in } x_{0} \in \Omega.	
			\end{align*}
		\end{proposition}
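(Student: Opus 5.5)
The estimate is termwise Poincaré--Sobolev followed by Lemma \ref{Lorentz series estimate lemma combined}. The only point needing care is the scaling factor $r_j$ and the case $q>\theta^{\ast}$ in the final claim.

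\textbf{First inequality.} Set $r_{j}=\tau^{j}r$ and fix $j\ge 0$. Apply Proposition \ref{poincaresobolevwithmeans} on $B_{r_{j}}(x_{0})$ with $s=\theta\in(1,Q_{\mathbb{E}})$, so that $Q_{\mathbb{E}}s/(Q_{\mathbb{E}}-s)=\theta^{\ast}$. This gives
\begin{align*}
\left(\fint_{B_{r_{j}}(x_{0})}\left\lvert g-(g)_{B_{r_{j}}(x_{0})}\right\rvert^{\theta^{\ast}}\right)^{\frac{1}{\theta^{\ast}}}\le c\, r_{j}\left(\fint_{B_{r_{j}}(x_{0})}\left\lvert\nabla_{\mathbb{E}}g\right\rvert^{\theta}\right)^{\frac{1}{\theta}},
\end{align*}
with $c=c(n,\theta)$ independent of $j$, $x_{0}$ and $r$. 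Two things must be checked for this step to be legitimate.
\begin{itemize}
\item The right-hand side is finite: $\nabla_{\mathbb{E}}g\in L^{(Q_{\mathbb{E}},1)}\log^{\beta}L\subset L^{(Q_{\mathbb{E}},1)}\subset L^{\theta}$ locally, since $\theta<Q_{\mathbb{E}}$.
\item The hypotheses of the proposition hold: $g\in L^{1}$ with $\nabla_{\mathbb{E}}g\in L^{\theta}$ gives $g\in\mathbb{E}W^{1,\theta}(B_{r_{j}})$ by a Poincaré bootstrap. If needed, one can instead apply the inequality first with $s=1$ to get $g\in L^{Q_{\mathbb{E}}/(Q_{\mathbb{E}}-1)}$, and iterate.
\end{itemize}
Multiply both sides by $\log^{\beta}(1/r_{j})$ and sum over $j$. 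The left side becomes $\mathfrak{S}_{[g,\theta^{\ast},\tau,\beta]}(x_{0},r)$ and the right side becomes $c\,\mathcal{S}_{[\nabla_{\mathbb{E}}g,\theta,\tau,\beta]}(x_{0},r)$. This is exactly why the series $\mathcal{S}$ carries the extra factor $r_{j}$.

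\textbf{Second inequality.} This is Lemma \ref{Lorentz series estimate lemma combined} applied to $\nabla_{\mathbb{E}}g$ with $m=m_{\mathbb{E}}$.

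\textbf{Vanishing for every $q\in(1,\infty)$.} First treat $q\le\theta^{\ast}$ for some admissible $\theta$. By Hölder, $\mathfrak{S}_{[g,q,\tau,\beta]}\le\mathfrak{S}_{[g,\theta^{\ast},\tau,\beta]}$. Now use the chain just proved together with Corollary \ref{vanishing corollary derivative version}, or equivalently the absolute continuity \eqref{abs cont Lorentz zygmund norm} with $|B_{r}(x_{0})|\to 0$. Because the constant does not depend on $x_{0}$, the convergence to $0$ is locally uniform in $x_{0}$.

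Since $\theta^{\ast}\to\infty$ as $\theta\uparrow Q_{\mathbb{E}}$, every $q<\infty$ satisfies $q\le\theta^{\ast}$ for some $\theta<Q_{\mathbb{E}}$. Choosing such a $\theta$ covers all $q$. This choice of $\theta$ close to $Q_{\mathbb{E}}$ is the only step requiring any thought.

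The main obstacle is not the analysis, which is routine, but justifying that the Poincaré--Sobolev constant is uniform across all balls $B_{r_{j}}(x_{0})$. This uniformity follows from left-invariance and dilation invariance of the metric balls, in both the Euclidean and Heisenberg cases.
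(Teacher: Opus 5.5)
Your proof is correct and follows essentially the same route as the paper. It applies termwise Poincar\'e--Sobolev with $s=\theta$ on each $B_{r_j}(x_0)$, multiplies by $\log^{\beta}(1/r_j)$ and sums, then applies Lemma~\ref{Lorentz series estimate lemma combined}, and finally picks $\theta$ close to $Q_{\mathbb{E}}$ so that $q\le\theta^{\ast}$. Your extra check that $g\in\mathbb{E}W^{1,\theta}$ locally on balls, rather than on all of $\Omega$ as the paper asserts, is a slight improvement; the uniformity of the constant you single out as the main obstacle is already built into Proposition~\ref{poincaresobolevwithmeans}, whose constant depends only on $n$ and $s$.
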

		\begin{proof}
			Clearly the hypothesis implies $g \in \mathbb{E}W^{1, \theta}\left(\Omega\right)$ for any $1 < \theta < Q_{\mathbb{E}}.$ But then by the Poincar\'{e}-Sobolev inequality (see Proposition \ref{poincaresobolevwithmeans}), for every $j \geq 0,$ we have 
			\begin{align*}
				\left(  \fint_{B_{r_{j}}\left(x_{0}\right)}\left\lvert   g - \left(g\right)_{B_{r_{j}}\left(x_{0}\right)} \right\rvert^{\theta^{\ast}} \right)^{\frac{1}{\theta^{\ast}}} 
				\le C r_{j} \left(  \fint_{B_{r_{j}}\left(x_{0}\right)}\left\lvert   \nabla_{\mathbb{E}}g \right\rvert^{\theta} \right)^{\frac{1}{\theta}}.
			\end{align*}
			The rest is just Lemma \ref{Lorentz series estimate lemma combined}. For the last part, choose $\theta$ such that $q \le \theta^{\ast}.$
		\end{proof}
		Continuity properties can also imply summability of these series. 
		\begin{proposition}\label{dini to summability}
			Let $\beta \in \left\lbrace 0, 1 \right\rbrace$ and let $g$ be $\log^{\beta}$-Dini continuous in $\Omega.$ Then for any $1 < q < \infty$ and any $\tau \in (0, 1/4),$ $\mathfrak{S}_{\left[g, q, \tau, \beta \right]}\left(x_{0}, r \right)$ is finite whenever $r \in (0,1/4)$ and $B_{4r}(x_0) \subset \Omega.$  Furthermore,  
			\begin{align*}
				\mathfrak{S}_{\left[g, q, \tau, \beta\right]}\left(x_{0}, r \right) \rightarrow 0 \quad \text{ as } r \rightarrow 0 \qquad  \text{ locally uniformly in } x_{0} \in \Omega.	
			\end{align*}
		\end{proposition}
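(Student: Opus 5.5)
The plan is to bound each term of $\mathfrak{S}_{[g,q,\tau,\beta]}(x_0,r)$ pointwise by the modulus of continuity, and then compare the resulting geometric sum with the $\log^{\beta}$-Dini integral. Fix $x_0$ with $B_{4r}(x_0)\subset\Omega$, $r\in(0,1/4)$, and write $r_j=\tau^j r$. For every $y\in B_{r_j}(x_0)$ and every $z\in B_{r_j}(x_0)$ we have $d_{\mathbb{E}}(y,z)\le 2r_j$; in the Heisenberg case this uses the quasi-triangle inequality for the Kor\`anyi gauge, which only costs a fixed constant $c_{\mathbb{E}}\ge 2$. Hence $|g(y)-(g)_{x_0,r_j}|\le \fint_{B_{r_j}}|g(y)-g(z)|\,\mathrm{d}z\le \omega_g(c_{\mathbb{E}}r_j)$. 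It follows that
\begin{align*}
\left(\fint_{B_{r_j}(x_0)}\left\lvert g-(g)_{x_0,r_j}\right\rvert^{q}\right)^{\frac1q}\le \omega_g(c_{\mathbb{E}} r_j),
\end{align*}
independently of $q$. This removes the dependence on $q$ entirely. Since $\omega_g$ is concave with $\omega_g(0)=0$, we also have $\omega_g(c_{\mathbb{E}}\rho)\le c_{\mathbb{E}}\omega_g(\rho)$. Here $\omega_g$ is taken on $\Omega$, or on a neighbourhood of $B_{4r}(x_0)$, which is enough.

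We are left with controlling $\sum_{j\ge0}\log^{\beta}(1/r_j)\,\omega_g(r_j)$. The key step is the standard dyadic-to-integral comparison. For $\rho\in[r_{j+1},r_j]$, monotonicity gives $\omega_g(r_j)\le \omega_g(\rho/\tau)\le \tau^{-1}\omega_g(\rho)$, using concavity again. Also $\log(1/r_j)\le \log(1/\rho)$, and $\int_{r_{j+1}}^{r_j}\mathrm{d}\rho/\rho=\log(1/\tau)$. Therefore
\begin{align*}
\log^{\beta}\left(\frac{1}{r_j}\right)\omega_g(r_j)\le \frac{1}{\tau\log(1/\tau)}\int_{r_{j+1}}^{r_j}\omega_g(\rho)\log^{\beta}\left(\frac1\rho\right)\frac{\mathrm{d}\rho}{\rho}.
\end{align*}
Summing over $j$ yields
\begin{align*}
\mathfrak{S}_{[g,q,\tau,\beta]}(x_0,r)\le c(\tau,\mathbb{E})\int_0^{r}\omega_g(\rho)\log^{\beta}\left(\frac1\rho\right)\frac{\mathrm{d}\rho}{\rho}.
\end{align*}
The right-hand side is finite by the $\log^{\beta}$-Dini hypothesis, provided $r\le\frac12\operatorname{diam}\Omega$. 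That condition holds because $B_{4r}(x_0)\subset\Omega$. Note that $r<1/4$ ensures $\log(1/\rho)>0$ on the range of integration.

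Finally, the bound just obtained does not depend on $x_0$, and it is the tail of a convergent integral. It therefore tends to $0$ as $r\to0$, uniformly in $x_0$ on any compact set $K$ such that $B_{4r}(x_0)\subset\Omega$ for small $r$. This gives the locally uniform convergence.

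The only mildly delicate point is the constant $c_{\mathbb{E}}$ coming from the quasi-metric in $\mathbb{H}_n$. Alternatively, one can use that the Kor\`anyi gauge is in fact a genuine metric, in which case $c_{\mathbb{E}}=2$. Either way, concavity of $\omega_g$ absorbs this constant, so there is no real obstacle.
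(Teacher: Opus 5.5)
Your proof is correct. It starts the same way as the paper's, by bounding each term pointwise by the modulus of continuity, independently of $q$. It then does the sum-to-integral comparison in the opposite direction.

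The paper bounds $\omega_g$ from below on $[r_{j+1},r_j]$ by its value at the left endpoint, $\omega_g(\rho)\ge\omega_g(r_{j+1})$, which uses only monotonicity. The leftover term $\omega_g(r_0)$ is recovered by integrating up to $2r$ instead of $r$. This gives $\mathfrak{S}_{[g,q,\tau,0]}(x_0,r)\le\frac{1}{\log 2}\int_0^{2r}\omega_g(\rho)\frac{\mathrm{d}\rho}{\rho}$. For $\beta=1$ this creates a mismatch between $\log(1/r_j)$ and $\log(1/r_{j+1})$. The paper handles it by writing $\log(1/r_j)=\log(1/r_{j+1})-\log(1/\tau)$, which produces a negative correction $-\log^2(1/\tau)\sum_j\omega_g(r_j)$. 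That correction is then absorbed with the $\beta=0$ bound and $\log(1/\rho)\ge\log 2$.

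You instead compare with the right endpoint, $\omega_g(r_j)\le\tau^{-1}\omega_g(\rho)$. This costs concavity of $\omega_g$, which is legitimate here because the paper builds concavity into its definition of the modulus of continuity. In return the weight satisfies $\log^\beta(1/r_j)\le\log^\beta(1/\rho)$ automatically. So $\beta=0$ and $\beta=1$, and indeed any nonincreasing weight, are handled in one line, with the integral only over $(0,r)$. The price is a constant $\tau^{-1}/\log(1/\tau)$ instead of the paper's $\frac{1}{\log 2}+\frac{\log^2(1/\tau)}{\log^3 2}$, which is harmless since $\tau$ is fixed.

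Two small remarks. First, for $j=0$ the point $\rho/\tau$ can exceed $\operatorname{diam}\Omega$, outside the domain of $\omega_g$. It is cleaner to write the step as $\omega_g(r_j)\le (r_j/\rho)\,\omega_g(\rho)\le\tau^{-1}\omega_g(\rho)$, using that $t\mapsto\omega_g(t)/t$ is nonincreasing. Second, your explicit factor from $d_{\mathbb{E}}(y,z)<2r_j$ is more careful than the paper. Its first line writes $\omega_g(r_j)$, whereas what one actually gets is $2\omega_g(r_j)$, for instance by comparing with $g(x_0)$, which needs no concavity.
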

		\begin{proof}
			By definition of the modulus of continuity, we have 
			\begin{align*}
				\mathfrak{S}_{\left[g, q, \tau, \beta\right]}\left(x_{0}, r \right) &\leq  \sum\limits_{j=0}^{\infty} \log^{\beta} \left(\frac{1}{r_{j}}\right)\omega_{g}\left(r_{j}\right).
			\end{align*}
			Now for the case $\beta =0$, we have 
			\begin{align*}
				\int_{0}^{2r} \omega_{g}\left(\rho\right)\frac{\mathrm{d} \rho}{\rho}
				&= \sum\limits_{j=0}^{\infty}\int_{r_{j+1}}^{r_{j}} \omega_{g}\left(\rho\right)\frac{\mathrm{d} \rho}{\rho} + \int_{r}^{2r} \omega_{g}\left(\rho\right)\frac{\mathrm{d} \rho}{\rho}\\
				&\ge \sum\limits_{j=0}^{\infty} \omega_{g}\left(r_{j+1}\right)\int_{r_{j+1}}^{r_{j}} \frac{\mathrm{d} \rho}{\rho} + \omega_{g}\left(r\right)\log 2 \\
				&=\log \left(\frac{1}{\tau}\right)\sum\limits_{j=0}^{\infty} \omega_{g}\left(r_{j+1}\right) + \omega_{g}\left(r\right)\log 2 \ge \log 2 \sum\limits_{j=0}^{\infty} \omega_{g}\left(r_{j}\right). 
			\end{align*}
			This implies the estimate
			\begin{align*}
				\mathfrak{S}_{\left[g, q, \tau, 0\right]}\left(x_{0}, r \right) \le \frac{1}{\log 2}\int_{0}^{2r} \omega_{g}\left(\rho\right)\frac{\mathrm{d} \rho}{\rho},
			\end{align*}
			from which the result follows. For $\beta=1,$ we estimate 
			\begin{align*}
				\int_{0}^{2r}& \omega_{g}\left(\rho\right)\log \left(\frac{1}{\rho}\right)\frac{\mathrm{d} \rho}{\rho} \\
				&= \sum\limits_{j=0}^{\infty}\int_{r_{j+1}}^{r_{j}} \omega_{g}\left(\rho\right)\log \left(\frac{1}{\rho}\right)\frac{\mathrm{d} \rho}{\rho} + \int_{r}^{2r} \omega_{g}\left(\rho\right)\log \left(\frac{1}{\rho}\right)\frac{\mathrm{d} \rho}{\rho}\\
				&\ge \sum\limits_{j=0}^{\infty} \omega_{g}\left(r_{j+1}\right) \log \left(\frac{1}{r_{j}}\right)\int_{r_{j+1}}^{r_{j}}\frac{\mathrm{d} \rho}{\rho} + \omega_{g}\left(r\right)\log \left(\frac{1}{2r}\right)\int_{r}^{2r} \frac{\mathrm{d} \rho}{\rho}\\
				&\ge \log \left(\frac{1}{\tau}\right)\sum\limits_{j=0}^{\infty} \omega_{g}\left(r_{j+1}\right)\log \left(\frac{1}{r_{j}}\right) + \omega_{g}\left(r\right)\log \left(\frac{1}{2r}\right)\log 2\\
				&\begin{aligned}[t]
					\ge \log \left(\frac{1}{\tau}\right)\sum\limits_{j=0}^{\infty} \omega_{g}\left(r_{j+1}\right)&\left[ \log \left(\frac{1}{r_{j+1}}\right) -\log \left(\frac{1}{\tau}\right) \right] \\ &\quad+ \log 2\cdot  \omega_{g}\left(r\right)\left[\log \left(\frac{1}{r}\right) - \log 2\right]
				\end{aligned}\\
				&\ge\log 2\sum\limits_{j=0}^{\infty} \omega_{g}\left(r_{j}\right) \log \left(\frac{1}{r_{j}}\right) - \log^{2} \left(\frac{1}{\tau}\right)\sum\limits_{j=0}^{\infty} \omega_{g}\left(r_{j}\right). 
			\end{align*}
			Using the estimate for $\beta=0$ for the last term, we obtain 
			\begin{align*}
				\mathfrak{S}_{\left[g, q, \tau, 1\right]}\left(x_{0}, r \right) &\le \frac{1}{\log 2}\int_{0}^{2r} \omega_{g}\left(\rho\right)\log \left(\frac{1}{\rho}\right)\frac{\mathrm{d} \rho}{\rho} + \frac{\log^{2} \left(\frac{1}{\tau}\right)}{\log^{2} 2}  \int_{0}^{2r} \omega_{g}\left(\rho\right)\frac{\mathrm{d} \rho}{\rho} \\
				&\le \left[ \frac{1}{\log 2} +\frac{\log^{2} \left(\frac{1}{\tau}\right)}{\log^{3} 2}\right]\int_{0}^{2r} \omega_{g}\left(\rho\right)\log \left(\frac{1}{\rho}\right)\frac{\mathrm{d} \rho}{\rho},
			\end{align*}
			where we have used $ \log 2 \le \log \left(1/\rho\right)$ for all $0 < \rho \le 2r < 1/2.$ The claimed result now easily follows from this. 
		\end{proof}
		On the other hand, some continuity properties are implied by the local uniform convergence of these sums. 
		\begin{lemma}\label{local vanishing log holder}
			Let $\Omega \subset \mathbb{E}^{n}$ be open and bounded and let $\beta \in \left\lbrace 0, 1 \right\rbrace$. Let $g \in L^{Q_{\mathbb{E}}}\left(\Omega\right)$ be such that for some $1 < q < \infty$ and some $\tau \in (0, 1/4),$ we have 
			\begin{align*}
				\mathfrak{S}_{\left[g, q, \tau, \beta\right]}\left(x_{0}, r \right) \rightarrow 0 \quad \text{ as } r \rightarrow 0 \qquad  \text{ locally uniformly in } x_{0} \in \Omega.	
			\end{align*}
			Then $g$ is continuous in $\Omega.$ Furthermore, for any compact set $K \subset \Omega,$ we have 
			\begin{align*}
				\lim\limits_{\bar{r} \rightarrow 0} \sup\limits_{0 < r \le \bar{r}}\ \omega_{g_{K}}\left(r\right)\log^{\beta} \left(\frac{1}{r}\right) = 0, 
			\end{align*}
			where $g_{K}$ is the uniformly continuous restriction of $g$ to $K$.  
		\end{lemma}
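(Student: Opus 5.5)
The strategy is a Campanato-type argument: telescope the ball averages of $g$ along the geometric sequence of radii. Fix a compact $K\subset\Omega$ and a compact neighbourhood $K'$ with $K\subset\subset K'\subset\subset\Omega$. By hypothesis there is $\bar r_0>0$ such that for $0<r\le\bar r_0$ and all $x\in K'$ the series $\mathfrak{S}_{[g,q,\tau,\beta]}(x,r)$ is finite, and it tends to zero uniformly in $x$ as $r\to0$.

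For consecutive radii $r_{j+1}=\tau r_j$ we have $B_{r_{j+1}}(x)\subset B_{r_j}(x)$ and $|B_{r_j}|/|B_{r_{j+1}}|=\tau^{-Q_{\mathbb{E}}}$ by \eqref{measure of balls}. Hence
\begin{align*}
\left\lvert (g)_{x,r_{j+1}}-(g)_{x,r_j}\right\rvert\le \tau^{-Q_{\mathbb{E}}/q}\left(\fint_{B_{r_j}(x)}\lvert g-(g)_{x,r_j}\rvert^{q}\right)^{\frac1q}.
\end{align*}
Since $\log(1/r_j)\ge\log(1/r)\ge 1$ for small $r$, this gives, for every $k\ge 0$,
\begin{align*}
\log^{\beta}\!\left(\tfrac{1}{r}\right)\sum_{j\ge k}\left\lvert (g)_{x,r_{j+1}}-(g)_{x,r_j}\right\rvert\le \tau^{-Q_{\mathbb{E}}/q}\,\mathfrak{S}_{[g,q,\tau,\beta]}(x,r).
\end{align*}
So $(g)_{x,r_j}$ is Cauchy. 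Its limit $\tilde g(x)$ equals $g(x)$ at every Lebesgue point, hence almost everywhere. Moreover
\begin{align*}
\log^{\beta}(1/r)\,\lvert \tilde g(x)-(g)_{x,r}\rvert\le c(\tau,Q_{\mathbb{E}},q)\,\mathfrak{S}(x,r)=:c\,\sigma(r),
\end{align*}
where $\sigma(r):=\sup_{x\in K'}\mathfrak{S}_{[g,q,\tau,\beta]}(x,r)\to0$.

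Next compare the averages at two nearby points. Take $x,y\in K$ with $d_{\mathbb{E}}(x,y)=\rho$ small, and set $r=2\rho$ (quasi-triangle constant in the Heisenberg case absorbed). Then $B_{\rho}(y)\subset B_{r}(x)$ and the two balls have comparable measure, so
\begin{align*}
\lvert (g)_{y,\rho}-(g)_{x,r}\rvert\le c\fint_{B_r(x)}\lvert g-(g)_{x,r}\rvert\le c\,\mathfrak{S}(x,r)/\log^{\beta}(1/r).
\end{align*}
The term $\mathfrak{S}(y,\rho)$ controls $\lvert\tilde g(y)-(g)_{y,\rho}\rvert$. Combining the three estimates gives
\begin{align*}
\lvert\tilde g(x)-\tilde g(y)\rvert\log^{\beta}(1/\rho)\le c\,[\sigma(2\rho)+\sigma(\rho)],
\end{align*}
using $\log(1/(2\rho))\ge\tfrac12\log(1/\rho)$ for small $\rho$. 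This shows that $\tilde g$ is continuous; since $g=\tilde g$ almost everywhere, $g$ is continuous on $K$, and hence on $\Omega$. It also gives $\omega_{g_K}(\rho)\log^{\beta}(1/\rho)\le c\sup_{s\le 2\rho}\sigma(s)\to 0$ as $\rho\to0$. Pairs at distance larger than $\bar r_0/2$ play no role in the limit $\bar r\to0$, so the claimed limit follows.

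The main obstacle is that the hypothesis controls the series only along the fixed geometric sequence $\tau^j r$, while the distance $\rho$ is arbitrary. This is resolved by letting the starting radius $r$ itself vary continuously (we choose $r=2\rho$ or $r=\rho$ directly), and the uniform convergence is assumed for all small $r$. The only remaining care is the containment and comparability constants for Korányi balls. Those follow from the quasi-triangle inequality and \eqref{measure of balls}, at the cost of replacing $2$ by a fixed constant. The assumption $g\in L^{Q_{\mathbb{E}}}$ only ensures that the averages are finite.
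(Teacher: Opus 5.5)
Your proof is correct and is essentially the paper's Campanato-type argument: you telescope the averages along $\tau^{j}r$, identify the limit with $g$ at Lebesgue points, and compare averages at nearby points through an enclosing ball. The only real difference is that you take the starting radius to be $d_{\mathbb{E}}(x,y)$ and $2d_{\mathbb{E}}(x,y)$, instead of fixing $r$ and choosing $j$ with $\tau^{j+1}r\le\bar r<\tau^{j}r$; this lets you dispense with the paper's monotone envelope $\widetilde{\mathfrak{S}}$ and the comparison $\log(1/r_{j-1})\ge\frac12\log(1/r_{j+1})$. Because you vary the starting radius, you should state explicitly that $\tilde g(x)=\lim_{j}(g)_{x,\tau^{j}r}$ does not depend on $r$. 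This holds since for $r'\in(\tau r,r]$ one has $\lvert(g)_{x,\tau^{j}r'}-(g)_{x,\tau^{j}r}\rvert\le\tau^{-Q_{\mathbb{E}}/q}\bigl(\fint_{B_{\tau^{j}r}(x)}\lvert g-(g)_{x,\tau^{j}r}\rvert^{q}\bigr)^{1/q}\to0$.
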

		\begin{proof} First, observe that $\mathfrak{S}_{\left[g, q, \tau, \beta\right]}(x_{0}, r)$ is not necessarily monotone in $r$ when $\beta=1.$ However, if we set 
			\begin{align*}
				\widetilde{\mathfrak{S}}_{\left[g, q, \tau, \beta \right]}(x_{0}, \bar{r}) : =\sup\limits_{0 < r \le \bar{r}} \mathfrak{S}_{\left[g, q, \tau, \beta  \right]}(x_{0}, r),
			\end{align*}
			then $\bar{r} \mapsto 	\widetilde{\mathfrak{S}}_{\left[g, q, \tau, \beta \right]}(x_{0}, \bar{r})$ is nondecreasing by construction. Also, it is easy to see that our hypothesis implies 
			\begin{align*}
				\widetilde{\mathfrak{S}}_{\left[g, q, \tau, \beta\right]}\left(x_{0}, \bar{r} \right) \rightarrow 0 \quad \text{ as } \bar{r} \rightarrow 0 \qquad  \text{ locally uniformly in } x_{0} \in \Omega.	
			\end{align*}
			Thus, replacing $\mathfrak{S}_{\left[g, q, \tau, \beta \right]}\left(x_{0}, r \right)$ by $\widetilde{\mathfrak{S}}_{\left[g, q, \tau, \beta\right]}\left(x_{0}, r \right),$ we can assume without loss of generality that $r \mapsto \mathfrak{S}_{\left[g, q, \tau, \beta\right]}\left(x_{0}, r \right)$ is nondecreasing. Similarly,  we can assume $r \mapsto \omega_{g_{K}}\left(r\right)\log^{\beta} \left(\frac{1}{r}\right)$ is nondecreasing and just prove 
			\begin{align*}
				\lim\limits_{\bar{r} \rightarrow 0} \omega_{g_{K}}\left(\bar{r}\right)\log^{\beta} \left(\frac{1}{\bar{r}}\right) = 0. 
			\end{align*}
            Now we prove the continuity of $g$. For $r < 1/4 < 1/e,$ we trivially have 
			\begin{align*}
				\mathfrak{S}_{\left[g, q, \tau, 0\right]}\left(x_{0}, r \right) \le \mathfrak{S}_{\left[g, q, \tau, 1\right]}\left(x_{0}, r \right). 
			\end{align*} Fix $x \in \Omega$ and choose a compact subset $K \subset \subset \Omega$ such that $x \in K. $ 
			Now we have 
			\begin{align*}
				\left\lvert \left(g\right)_{x, \tau^{j}r} - \left(g\right)_{x, \tau^{m}r}\right\rvert &\le \sum\limits_{i=j}^{m-1} \left\lvert \left(g\right)_{x, \tau^{i}r} - \left(g\right)_{x, \tau^{i+1}r}\right\rvert \\
				&\le \sum\limits_{i=j}^{m-1}\fint_{B_{\tau^{i+1}r}(x)} \left\lvert g - \left(g\right)_{x, \tau^{i}r} \right\rvert \\
				&\le \sum\limits_{i=j}^{m-1}\left(\fint_{B_{\tau^{i+1}r}(x)} \left\lvert g - \left(g\right)_{x, \tau^{i}r} \right\rvert^{q}\right)^{\frac{1}{q}} \\
				&\le \tau^{-\frac{Q_{\mathbb{E}}}{q}}\sum\limits_{i=j}^{m-1}\left(\fint_{B_{\tau^{i}r}(x)} \left\lvert g - \left(g\right)_{x, \tau^{i}r} \right\rvert^{q}\right)^{\frac{1}{q}} \\
				&\le \tau^{-\frac{Q_{\mathbb{E}}}{q}}\sum\limits_{i=j}^{\infty}\left(\fint_{B_{\tau^{i}r}(x)} \left\lvert g - \left(g\right)_{x, \tau^{i}r} \right\rvert^{q}\right)^{\frac{1}{q}} \hspace{-3.3pt}\le \tau^{-\frac{Q_{\mathbb{E}}}{q}}\mathfrak{S}_{\left[g, q, \tau, 0\right]}(x, r_{j}),  
			\end{align*}
			which goes to $0$, as $j,m \rightarrow \infty$ uniformly in $x \in K.$ By the continuity of translation in $L^{1}$, the maps $x \mapsto \left(g\right)_{x, \tau^{j}r}$ are continuous. Hence, by Lebesgue differentation theorem, $g$ agrees a.e. with the uniform limit of continuous maps and thus admits a continuous representative. Now fix $K \subset \Omega$ compact and choose $r \in (0, 1/4)$ such that $B_{4r}(x) \subset \Omega$ for all $x \in K.$  Letting $m \rightarrow \infty$ in the last display, we deduce  
			\begin{align*}
				\left\lvert \left(g\right)_{x, \tau^{j}r} - g(x)\right\rvert \le \tau^{-\frac{Q_{\mathbb{E}}}{q}}\mathfrak{S}_{\left[g, q, \tau, 0\right]}(x, r_{j}) \le \tau^{-\frac{Q_{\mathbb{E}}}{q}}\frac{1}{\log^{\beta} \left(\frac{1}{r_{j}}\right)}\mathfrak{S}_{\left[g, q, \tau, \beta\right]}(x, r_{j}),
			\end{align*}
			for all $x \in K.$ Let $x, y \in K$ with $d_{\mathbb{E}}(x,y) \le \bar{r}$, where $0 < \bar{r} < \tau^{4}r/4.$ Let $j \geq 4$ be an integer such that $\tau^{j+1}r \le \bar{r} < \tau^{j}r.$ Then we have 
			\begin{multline}
			    \label{add substract averages}
				\left\lvert g(x) - g(y)\right\rvert  \le  	\left\lvert g(x) - \left(g\right)_{x, \tau^{j+1}r}\right\rvert + 	\left\lvert \left(g\right)_{x, \tau^{j+1}r} - \left(g\right)_{y, \tau^{j+1}r}\right\rvert \\+ 	\left\lvert g(y) - \left(g\right)_{y, \tau^{j+1}r}\right\rvert.
			\end{multline} 
			Using the assumption that $r \mapsto \mathfrak{S}_{\left[g, q, \tau, \beta \right]}(x, r)$ is monotone nondecreasing, we have 
			\begin{align}\label{estimate for 1st term}
				\left\lvert g(x) - \left(g\right)_{x, \tau^{j+1}r}\right\rvert &\le \frac{\tau^{-\frac{Q_{\mathbb{E}}}{q}}}{\log^{\beta} \left(\frac{1}{r_{j+1}}\right)}\mathfrak{S}_{\left[g, q, \tau, \beta\right]}(x, r_{j+1}) \notag \\
				&\le \frac{\tau^{-\frac{Q_{\mathbb{E}}}{q}}}{\log^{\beta} \left(\frac{1}{\bar{r}}\right)}\mathfrak{S}_{\left[g, q, \tau, \beta \right]}(x, r_{j+1}) \le \frac{\tau^{-\frac{Q_{\mathbb{E}}}{q}}}{\log^{\beta} \left(\frac{1}{\bar{r}}\right)}\mathfrak{S}_{\left[g, q, \tau, \beta \right]}(x, \bar{r}).
			\end{align}
			Similarly, we have 
			\begin{align}\label{estimate for 3rd term}
				\left\lvert g(x) - \left(g\right)_{y, \tau^{j+1}r}\right\rvert \le \tau^{-\frac{Q_{\mathbb{E}}}{q}}\frac{1}{\log^{\beta} \left(\frac{1}{\bar{r}}\right)}\mathfrak{S}_{\left[g, q, \tau, \beta \right]}(y, \bar{r}). 
			\end{align} 
			For the middle term, we have 
			\begin{align}\label{middle term esti}
				&\left\lvert \left(g\right)_{x, \tau^{j+1}r} - \left(g\right)_{y, \tau^{j+1}r}\right\rvert \notag \\ 
				&\hspace{5pt}\le \fint_{B_{\tau^{j+1}r}(x)} \left\lvert g - \left(g\right)_{x, \tau^{j-1}r}\right\rvert + \fint_{B_{\tau^{j+1}r}(y)} \left\lvert g - \left(g\right)_{x, \tau^{j-1}r}\right\rvert \notag\\
				&\hspace{5pt}\le \frac{\left\lvert B_{\tau^{j-1}r}(x)\right\rvert}{\left\lvert B_{\tau^{j+1}r}(x)\right\rvert}\fint_{B_{\tau^{j-1}r}(x)} \left\lvert g - \left(g\right)_{x, \tau^{j-1}r}\right\rvert + \frac{\left\lvert B_{\tau^{j-1}r}(x)\right\rvert}{\left\lvert B_{\tau^{j+1}r}(y)\right\rvert}\fint_{B_{\tau^{j-1}r}(x)} \left\lvert g - \left(g\right)_{x, \tau^{j-1}r}\right\rvert
				 \notag\\
				&\hspace{5pt}\le 2\tau^{-2Q_{\mathbb{E}}} \left( \fint_{B_{\tau^{j-1}r}(x)} \left\lvert g - \left(g\right)_{x, \tau^{j-1}r}\right\rvert^{q} \right)^{\frac{1}{q}}  \notag \\
				&\hspace{5pt}\le 2\tau^{-2Q_{\mathbb{E}} -\frac{Q_{\mathbb{E}}}{q}}\frac{1}{\log^{\beta} \left(\frac{1}{r_{j-1}}\right)}\mathfrak{S}_{\left[g, q, \tau, \beta\right]}(x, r_{j-1}). 
			\end{align}
			Since $j \geq 4$ and $r<1,$ we have $\tau^{j+1}r \le \tau^{4}$ and thus $\log \left(\frac{1}{r_{j+1}}\right) \ge 4 \log \left(\frac{1}{\tau}\right).$ Hence 
			\begin{align*}
				\log \left(\frac{1}{r_{j-1}}\right) = 	\log\left(\frac{1}{r_{j+1}}\right) - 2	\log \left(\frac{1}{\tau}\right) \geq \frac{1}{2} \log \left(\frac{1}{r_{j+1}}\right).
			\end{align*}
			Combining this with \eqref{middle term esti}, we arrive at 
			\begin{align}\label{estimate for 2nd term}
				\left\lvert \left(g\right)_{x, \tau^{j+1}r} - \left(g\right)_{y, \tau^{j+1}r}\right\rvert &\le 4\tau^{-2Q_{\mathbb{E}} -\frac{Q_{\mathbb{E}}}{q}}\frac{1}{\log^{\beta} \left(\frac{1}{r_{j+1}}\right)}\mathfrak{S}_{\left[g, q, \tau, \beta \right]}(x, r_{j-1}) \notag \\
				&\le 4\tau^{-2Q_{\mathbb{E}} -\frac{Q_{\mathbb{E}}}{q}}\frac{1}{\log^{\beta} \left(\frac{1}{\bar{r}}\right)}\mathfrak{S}_{\left[g, q, \tau, \beta \right]}(x, r_{j-1}).  
			\end{align}
			Combining \eqref{estimate for 1st term}, \eqref{estimate for 3rd term} and \eqref{estimate for 2nd term} with \eqref{add substract averages} and taking supremum over all $x, y \in K$ with $d_{\mathbb{E}}(x,y) \le \bar{r}$, we deduce 
			\begin{align*}
				\omega_{g_{K}}\left(\bar{r}\right)\log^{\beta} \left(\frac{1}{\bar{r}}\right) \le C\left(\tau, q, Q_{\mathbb{E}}\right) \sup\limits_{x \in K} \left[ \mathfrak{S}_{\left[g, q, \tau, \beta\right]}(x, \bar{r}) + \mathfrak{S}_{\left[g, q, \tau, \beta \right]}(x, r_{j-1})\right]. 
			\end{align*} Noting that $j \rightarrow \infty$ as $\bar{r} \rightarrow 0,$ we have our result. 
		\end{proof}
		\subsection{Weak solutions and known estimates}
		\subsubsection{Weak solutions and minimization}
		First we define local weak solutions.
		\begin{definition}
			Let $\mathfrak{p}:\Omega \rightarrow [\gamma_{1}, \gamma_{2}]$ be locally log-H\"older continuous and $\mathfrak{a}:\Omega \rightarrow [\nu, L]$ be a measurable function with $1 < \gamma_{1} \leq \gamma_{2} < \infty$ and $ 0 < \nu < L < \infty.$ Assume $f \in L^{Q_{\mathbb{E}}}\left( \Omega\right).$ 
			We say $u \in \mathbb{E}W_{\text{loc}}^{1,p\left(\cdot\right)} \left(\Omega\right)$ is a \emph{local weak solution} to the equation 	\begin{align*}
				\operatorname{div}_{\mathbb{E}}\left[ \mathfrak{a}(x) \lvert \nabla_{\mathbb{E}} u \rvert^{\mathfrak{p}\left(x\right)-2} \nabla_{\mathbb{E}} u\right]    = f   &&\text{ in } \Omega, 
			\end{align*}
			if for any open set $\tilde{\Omega} \subset \subset \Omega$  and any $\phi \in \mathbb{E}W_{0}^{1,p\left(\cdot\right)} \left(\tilde{\Omega} \right)$,  we have 
			\begin{align}\label{local weak formulation}
				\int_{\tilde{\Omega}} \left\langle \mathfrak{a}(x) \lvert \nabla_{\mathbb{E}} u \rvert^{\mathfrak{p}\left(x\right)-2} \nabla_{\mathbb{E}} u, \nabla_{\mathbb{E}} \phi\right\rangle +  \int_{\tilde{\Omega}} f\phi = 0.
			\end{align}
			Furthermore, if $\mathfrak{p} \in \mathcal{P}^{\log}_{[\gamma_{1}, \gamma_{2}]}\left(\Omega \right)$ and $u \in \mathbb{E}W^{1,p\left(\cdot\right)} \left(\Omega\right),$ then \eqref{local weak formulation} is equivalent to 
			\begin{align}\label{energy weak formulation}
				\int_{\Omega} \left\langle \mathfrak{a}(x) \lvert \nabla_{\mathbb{E}} u \rvert^{\mathfrak{p}\left(x\right)-2} \nabla_{\mathbb{E}} u, \nabla_{\mathbb{E}} \phi\right\rangle +  \int_{\Omega} f\phi = 0  
			\end{align}
			for all $\phi \in \mathbb{E}W_{0}^{1,p\left(\cdot\right)}\left(\Omega\right)$.
		\end{definition}
		\begin{proposition}\label{existence of minimizers for variable exponent}
			Let $\mathfrak{p}:\Omega \rightarrow [\gamma_{1}, \gamma_{2}]$ and $\mathfrak{a}:\Omega \rightarrow [\nu, L]$ be measurable functions  with $1 < \gamma_{1} \leq \gamma_{2} < \infty$ and $ 0 < \nu < L < \infty.$ Let $\mathfrak{p}$ be log-H\"{o}lder continuous in $\Omega$. Assume $f \in L^{Q_{\mathbb{E}}}\left( \Omega\right)$ and $u_{0} \in \mathbb{E}W^{1,p\left(\cdot\right)} \left(\Omega\right).$ Then the minimization problem 
			\begin{align}
				m:= \inf \left\lbrace \int_{\Omega} \left[ \frac{\mathfrak{a}(x)}{\mathfrak{p}(x)}\left\lvert \nabla_{\mathbb{E}} u \right\rvert^{\mathfrak{p}(x)} +  f u \right]\ \mathrm{d}x: u \in u_{0} + \mathbb{E}W^{1,p(\cdot)}_{0}\left( \Omega\right)\right\rbrace 
			\end{align}
			admits a unique minimizer $\bar{u} \in \mathbb{E}W^{1,p(\cdot)}\left( \Omega\right)$, which uniquely solves  
			\begin{align*}
				\left\lbrace \begin{aligned}
					\operatorname{div}_{\mathbb{E}}\left[ \mathfrak{a}(x) \lvert \nabla_{\mathbb{E}} \bar{u} \rvert^{\mathfrak{p}\left(x\right)-2} \nabla_{\mathbb{E}} \bar{u}\right] &= f &&\text{ in } \Omega, \\
					\bar{u} &=u_{0} &&\text{ on } \partial\Omega. 
				\end{aligned}\right. 
			\end{align*} 
		\end{proposition}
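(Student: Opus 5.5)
We will prove Proposition \ref{existence of minimizers for variable exponent} by the direct method of the calculus of variations in the reflexive space $\mathbb{E}W^{1,\mathfrak{p}(\cdot)}\left(\Omega\right)$. Write
\begin{align*}
F[u] := \int_{\Omega} \left[ \frac{\mathfrak{a}(x)}{\mathfrak{p}(x)}\left\lvert \nabla_{\mathbb{E}} u \right\rvert^{\mathfrak{p}(x)} + f u \right]\ \mathrm{d}x
\end{align*}
for the functional on the affine class $u_{0} + \mathbb{E}W^{1,\mathfrak{p}(\cdot)}_{0}\left(\Omega\right)$. The log-H\"older continuity of $\mathfrak{p}$ is used only for two facts from variable exponent theory, both valid on $\mathbb{H}_{n}$ as in the Euclidean case (cf. \cite{Diening_et_al_variable_exponent}, \cite{Mallick_Sil_continuityvariablegrowth}). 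First, smooth functions are dense, so that $\mathbb{E}W^{1,\mathfrak{p}(\cdot)}_{0}(\Omega)$ is the natural closed subspace. Second, there is a Poincar\'e inequality $\lVert v \rVert_{L^{\mathfrak{p}(\cdot)}(\Omega)} \le C \lVert \nabla_{\mathbb{E}} v \rVert_{L^{\mathfrak{p}(\cdot)}(\Omega)}$ for $v \in \mathbb{E}W^{1,\mathfrak{p}(\cdot)}_{0}(\Omega)$.

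\textbf{Step 1 (well-posedness and coercivity).} Since $\Omega$ is bounded and $\gamma_{1}>1$, every $v \in \mathbb{E}W^{1,\mathfrak{p}(\cdot)}_{0}(\Omega)$ lies in $\mathbb{E}W^{1,\gamma_{1}}_{0}(\Omega)$. The Sobolev inequality (Proposition \ref{poincaresobolevwithmeans}, applied to zero extensions) together with $f \in L^{Q_{\mathbb{E}}}$ and H\"older's inequality gives the bound
\begin{align*}
\left\lvert \int_{\Omega} f v \right\rvert \le C \lVert f \rVert_{L^{Q_{\mathbb{E}}}} \lVert \nabla_{\mathbb{E}} v \rVert_{L^{\gamma_{1}}}.
\end{align*}
If $\gamma_{1}\ge Q_{\mathbb{E}}$, we use instead that $v \in L^{s}$ for every finite $s$, with norm controlled by the gradient. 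The same bound holds with $u_{0}$ in place of $v$, so $F$ is finite on the class. The modular $\int_{\Omega} \lvert\nabla_{\mathbb{E}} u\rvert^{\mathfrak{p}(x)}$ dominates $\lVert \nabla_{\mathbb{E}} u \rVert_{L^{\mathfrak{p}(\cdot)}}^{\gamma_{1}} - 1$, and $\lVert\cdot\rVert_{L^{\gamma_{1}}} \le C\lVert\cdot\rVert_{L^{\mathfrak{p}(\cdot)}}$. Hence, writing $u = u_{0}+v$, we get
\begin{align*}
F[u] \ge \frac{\nu}{\gamma_{2}}\left(\lVert \nabla_{\mathbb{E}} u\rVert_{L^{\mathfrak{p}(\cdot)}}^{\gamma_{1}} - 1\right) - C\left(1+\lVert \nabla_{\mathbb{E}} u\rVert_{L^{\mathfrak{p}(\cdot)}}\right).
\end{align*}
Because $\gamma_{1}>1$, this shows $m>-\infty$. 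It also shows that minimizing sequences are bounded in $\mathbb{E}W^{1,\mathfrak{p}(\cdot)}$, using the Poincar\'e inequality for $u-u_{0}$.

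\textbf{Step 2 (existence).} By reflexivity, a subsequence converges weakly. Weak limits stay in $u_{0} + \mathbb{E}W^{1,\mathfrak{p}(\cdot)}_{0}$, since this is a closed convex, hence weakly closed, set. The integrand $\xi \mapsto \mathfrak{a}(x)\lvert\xi\rvert^{\mathfrak{p}(x)}/\mathfrak{p}(x)$ is convex and nonnegative, so the gradient part is weakly lower semicontinuous: it is convex and strongly lower semicontinuous by Fatou's lemma. The linear term $v\mapsto\int f v$ is continuous by Step 1, hence weakly continuous. This gives a minimizer $\bar u$. Uniqueness follows from strict convexity of $\xi\mapsto\lvert\xi\rvert^{\mathfrak{p}(x)}$. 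If $u_{1}, u_{2}$ are two minimizers, then $\frac{u_{1}+u_{2}}{2}$ is admissible and gives a strictly smaller value unless $\nabla_{\mathbb{E}} u_{1}=\nabla_{\mathbb{E}} u_{2}$ a.e. In that case Poincar\'e applied to $u_{1}-u_{2}\in \mathbb{E}W_{0}^{1,\mathfrak{p}(\cdot)}$ gives $u_{1}=u_{2}$.

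\textbf{Step 3 (Euler--Lagrange equation and uniqueness of solutions).} For $\phi \in \mathbb{E}W^{1,\mathfrak{p}(\cdot)}_{0}(\Omega)$ we differentiate $t\mapsto F[\bar u + t\phi]$ at $t=0$. The difference quotients of the integrand are dominated, for $\lvert t\rvert\le1$, by $C(\lvert\nabla_{\mathbb{E}}\bar u\rvert+\lvert\nabla_{\mathbb{E}}\phi\rvert)^{\mathfrak{p}(x)-1}\lvert\nabla_{\mathbb{E}}\phi\rvert$. This majorant is integrable by Young's inequality, since $\lvert z\rvert^{\mathfrak{p}-1}\lvert w\rvert \le \lvert z\rvert^{\mathfrak{p}}+\lvert w\rvert^{\mathfrak{p}}$. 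Dominated convergence then yields \eqref{energy weak formulation}; note that the sign convention of the $f\phi$ term matches the definition. Conversely, any weak solution $w$ in the same class satisfies \eqref{energy weak formulation}. Subtracting this identity for $w$ and $\bar u$ and testing with $\phi=\bar u-w$, the monotonicity inequality \eqref{monotonicity}, applied pointwise with $p=\mathfrak{p}(x)$, forces $\nabla_{\mathbb{E}}\bar u=\nabla_{\mathbb{E}} w$. Here the constants stay uniform by Remark \ref{p uniform constants Sobolev inequalities}-type reasoning over $[\gamma_{1},\gamma_{2}]$. Then $\bar u=w$ by Poincar\'e.

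The main obstacle is Step 1: controlling the linear term by the variable exponent norm and having a Poincar\'e inequality in $\mathbb{E}W^{1,\mathfrak{p}(\cdot)}_{0}$ on Heisenberg groups. This is exactly where log-H\"older continuity and the cited variable exponent theory enter. Everything else is a routine application of the direct method.
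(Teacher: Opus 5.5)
Your direct-method argument is the route the paper has in mind; the paper gives no proof beyond noting that $\frac{\mathfrak{a}(x)}{\mathfrak{p}(x)}t^{\mathfrak{p}(x)}$ is a generalized uniformly convex N-function and appealing to standard variational theory. Your Steps 2 and 3 are fine.

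\textbf{The failing step.} In Step 1 you assert that the bound $\lvert\int_\Omega f v\rvert\le C\lVert f\rVert_{L^{Q_{\mathbb{E}}}}\lVert\nabla_{\mathbb{E}}v\rVert_{L^{\gamma_1}}$ ``holds with $u_0$ in place of $v$, so $F$ is finite on the class.'' That bound depends on zero boundary values. It is false for $u_0\equiv 1$ and any $f$ with $\int_\Omega f\neq 0$. Worse, for $u_0\in\mathbb{E}W^{1,\mathfrak{p}(\cdot)}(\Omega)$ on an arbitrary bounded open set, even finiteness of $\int_\Omega f u_0$ can fail when $\gamma_1<Q_{\mathbb{E}}/(Q_{\mathbb{E}}-1)$. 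The reason is that the embedding $\mathbb{E}W^{1,\gamma_1}(\Omega)\hookrightarrow L^{Q_{\mathbb{E}}/(Q_{\mathbb{E}}-1)}(\Omega)$ can fail on irregular domains.

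\textbf{A counterexample.} In $\mathbb{R}^2$ take $\Omega=\{(x,y):0<x<1,\ \lvert y\rvert<x^{6}\}$, $\mathfrak{p}\equiv 3/2$, $\mathfrak{a}\equiv 1$, $u_0=x^{-18/5}$ and $f=x^{-69/20}$.
\begin{itemize}
\item Since the $y$-sections have length $2x^6$, one checks directly that $u_0\in W^{1,3/2}(\Omega)$ and $f\in L^{2}(\Omega)$.
\item However, $\int_\Omega fu_0=+\infty$.
\item Meanwhile $\int_\Omega fv$ is finite for every $v\in W_0^{1,3/2}(\Omega)$.
\end{itemize}
Hence $F\equiv+\infty$ on $u_0+W_0^{1,3/2}(\Omega)$, $m=+\infty$, and every admissible function is a ``minimizer.'' Three parts of your argument silently use $\lvert\int_\Omega f u_0\rvert<\infty$: the coercivity estimate (which bounds $\lvert\int_\Omega f u\rvert$ by $C(1+\lVert\nabla_{\mathbb{E}}u\rVert_{L^{\mathfrak{p}(\cdot)}})$), the finiteness of $m$, and the strict-convexity uniqueness argument. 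Note that this is a defect of the statement for general bounded $\Omega$, which your proof papers over with a false claim.

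\textbf{The repair.} The fix is cheap, because $u_0$ enters the linear term only through this constant. Minimize instead
\[
\tilde F[u]=\int_\Omega\Bigl[\frac{\mathfrak{a}}{\mathfrak{p}}\lvert\nabla_{\mathbb{E}}u\rvert^{\mathfrak{p}(x)}+f\,(u-u_0)\Bigr].
\]
On the admissible class the linear part is $\int_\Omega fv$ with $v=u-u_0\in\mathbb{E}W_0^{1,\mathfrak{p}(\cdot)}(\Omega)$, which your Sobolev bound does control. So your coercivity, lower semicontinuity, strict-convexity and Euler--Lagrange arguments go through verbatim for $\tilde F$. They give existence and uniqueness of the weak solution of the boundary value problem in full generality.

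$\tilde F$ and $F$ differ by the constant $\int_\Omega fu_0$ whenever that is finite, and then the proposition holds as stated. This is the case, for example, if $\gamma_1\ge Q_{\mathbb{E}}/(Q_{\mathbb{E}}-1)$ or if $\Omega$ is a Sobolev extension domain. In the paper the proposition is only invoked on balls with $f\equiv 0$, where the issue does not arise.
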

		This can be proved using standard variational methods. Note that the function $$\phi(x, t) = \frac{\mathfrak{a}(x)}{\mathfrak{p}(x)}t^{\mathfrak{p}(x)} \qquad x \in \Omega, t\le 0,$$ is a  generalized uniformly convex N-function (see \cite[Chapter 2]{Diening_et_al_variable_exponent}). 
		\subsubsection{\texorpdfstring{$p$}{p}-harmonic functions}
		For constant coefficient homogeneous case, i.e. when $a\left(x\right) \equiv a$ for some $a>0$ and $f \equiv 0,$ we have the following fundamental estimate. 
		\begin{theorem}\label{Uhlenbeck estimate}
			Let $1<p < \infty$ and $v \in \mathbb{E}W^{1,p}_{\text{loc}}\left(\Omega\right)$ be a local weak solution to 
			\begin{align*}
				\operatorname{div}_{\mathbb{E}}\left( \left\lvert \nabla_{\mathbb{E}} v\right\rvert^{p-2}\nabla_{\mathbb{E}} v\right) &=0 &&\text{ in } \Omega.
			\end{align*}
			Then $\nabla_{\mathbb{E}} v$ is locally H\"{o}lder continuous in $\Omega$ and there exist constants $C_{1}, C_{2} \ge 1$ and an exponent $0 < \beta < 1$, all depending only on $Q_{\mathbb{E}}$ and $p$, such that  we have 
			\begin{align}\label{sup estimate constant homogeneous}
				\sup\limits_{B_{R/2}(x_{0})} \left\lvert \nabla_{\mathbb{E}} v \right\rvert &\le C_{1}  \fint_{B_{R}(x_{0})} \left\lvert \nabla_{\mathbb{E}} v \right\rvert   \\ \intertext{ and }
				\label{oscillation estimate constant homogeneous} 
				\sup\limits_{x, y \in B_{\tau R}(x_{0})} \left\lvert \nabla_{\mathbb{E}} v\left(x\right) - \nabla_{\mathbb{E}} v\left(y\right) \right\rvert &\le C_{2}\tau^{\beta} \left( \fint_{B_{R}(x_{0})} \left\lvert \nabla_{\mathbb{E}} v \right\rvert^{p}\right)^{\frac{1}{p}},  
			\end{align}
			whenever $x_{0} \in \Omega$, $\tau \in (0, 1/2)$, $R >0$ and $B_{R}(x_{0}) \subset \Omega$. 
			 Furthermore, for any $1 \leq q \leq 2, $ there exists a constant $C_{q}= C_{q}\left( n, p , q\right)\ge 1$, such that  
			\begin{align}\label{excess decay}
				\left( \fint_{B_{\tau R}(x_{0})} \left\lvert \nabla_{\mathbb{E}} v  - \left( \nabla_{\mathbb{E}} v\right)_{B_{\tau R}(x_{0})}\right\rvert^{q}\right)^{\frac{1}{q}}  \leq C_{q}\tau^{\beta} \left( \fint_{B_{R}(x_{0})} \left\lvert \nabla_{\mathbb{E}} v - \left( \nabla_{\mathbb{E}} v\right)_{B_{R}(x_{0})}\right\rvert^{q}\right)^{\frac{1}{q}},    
			\end{align}
			whenever $x_{0} \in \Omega$, $\tau \in (0, 1/2)$, $R >0$ and $B_{R}(x_{0}) \subset \Omega$. 
		\end{theorem}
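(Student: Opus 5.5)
The plan is to assemble the three estimates from the known regularity theory for $p$-harmonic functions, treating the cases $\mathbb{E}^{n}=\mathbb{R}^{n}$ and $\mathbb{E}^{n}=\mathbb{H}_{n}$ separately but in parallel. I will first reduce to the normalized situation $x_{0}=0$, $R=1$. The equation $\operatorname{div}_{\mathbb{E}}(\lvert \nabla_{\mathbb{E}} v\rvert^{p-2}\nabla_{\mathbb{E}} v)=0$ is invariant under translations and under the rescaling $v_{R}(y)=R^{-1}v(x_{0}\cdot \delta_{R}y)$. Here the translation is the group left translation and $\delta_{R}$ the anisotropic dilation when $\mathbb{E}^{n}=\mathbb{H}_{n}$; both preserve the horizontal gradient and map metric balls onto metric balls. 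All three inequalities are invariant under this rescaling (since $\nabla_{\mathbb{E}} v_{R}(y)=\nabla_{\mathbb{E}} v(x_{0}\cdot\delta_{R}y)$ and averages are scale invariant by \eqref{measure of balls}), so the constants depend only on $Q_{\mathbb{E}}$ and $p$.

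For \eqref{sup estimate constant homogeneous} and \eqref{oscillation estimate constant homogeneous}, I will invoke the classical results. In $\mathbb{R}^{n}$ these are the local $C^{1,\beta}$ estimates of Uraltceva, Uhlenbeck, Evans, DiBenedetto, Lewis and Tolksdorf. In $\mathbb{H}_{n}$ they are due to Zhong \cite{zhong2018regularityvariationalproblemsheisenberg} for $p\ge 2$ and Mukherjee--Zhong \cite{Mukherjee_Zhong} for $1<p<2$. These works give the Lipschitz bound in the form $\sup_{B_{1/2}}\lvert\nabla_{\mathbb{E}} v\rvert\le C(\fint_{B_{1}}\lvert\nabla_{\mathbb{E}} v\rvert^{p})^{1/p}$, together with the Hölder oscillation bound, which is exactly \eqref{oscillation estimate constant homogeneous}. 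To lower the exponent from $p$ to $1$ in \eqref{sup estimate constant homogeneous}, I will use the standard covering/iteration trick. Applying the $L^{p}$ version on balls $B_{(t-s)}(y)$ with $y\in B_{s}$, $1/2\le s<t\le 1$, gives
\[
\sup_{B_{s}}\lvert\nabla_{\mathbb{E}} v\rvert\le \tfrac12\sup_{B_{t}}\lvert\nabla_{\mathbb{E}} v\rvert + C(t-s)^{-Q_{\mathbb{E}}}\fint_{B_{1}}\lvert\nabla_{\mathbb{E}} v\rvert,
\]
via Young's inequality (or via the $L^{p}$ bound with $\lvert\nabla_{\mathbb{E}} v\rvert^{p}\le \sup\lvert\nabla_{\mathbb{E}} v\rvert^{p-1}\lvert\nabla_{\mathbb{E}} v\rvert$ when $p\ge1$). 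The classical iteration lemma then absorbs the first term on the right.

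For the excess decay \eqref{excess decay}, in $\mathbb{H}_{n}$ I will cite the excess-to-excess estimate of \cite{Mallick_Sil_ExcessDecay}; in $\mathbb{R}^{n}$ I will cite the corresponding estimates of Hamburger \cite{hamburgerregularity}, DiBenedetto--Manfredi \cite{DiBenedettoManfredi} and Lieberman \cite{Lieberman_expoentdecreasing}. Should only one exponent $q$ be available in those references, I will transfer to the full range $1\le q\le 2$ as follows. By Hölder's inequality, it suffices to bound the $L^{2}$ (or $L^{\infty}$) excess on $B_{\tau}$ by $\tau^{\beta}$ times the $L^{1}$ excess on $B_{1}$. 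For this, I will apply \eqref{oscillation estimate constant homogeneous} and \eqref{sup estimate constant homogeneous} not to $v$ but to the excess directly. This is done via the known estimate
\[
\operatorname*{osc}_{B_{\tau}}\nabla_{\mathbb{E}} v\le C\tau^{\beta}\fint_{B_{1}}\lvert\nabla_{\mathbb{E}} v-\xi\rvert \qquad \text{for all }\xi\in\mathbb{R}^{m_{\mathbb{E}}},
\]
combined with \eqref{minimality of mean}.

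The main obstacle is precisely this last displayed estimate. Since $v-\langle\xi,x\rangle$ is not $p$-harmonic, one cannot simply subtract the mean. In the Euclidean case this is handled by the degenerate/nondegenerate alternative of DiBenedetto--Manfredi. If $\lvert\xi\rvert$ dominates the excess, then $\nabla_{\mathbb{E}} v$ stays close to $\xi$ and the linearized equation is uniformly elliptic, so linear estimates apply. Otherwise the excess controls $\sup\lvert\nabla_{\mathbb{E}} v\rvert$ by \eqref{sup estimate constant homogeneous}, and \eqref{oscillation estimate constant homogeneous} applies directly. In the Heisenberg case the nondegenerate branch is exactly the difficulty resolved in \cite{Mallick_Sil_ExcessDecay}, and I will rely on that result rather than reprove it.
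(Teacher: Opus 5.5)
Your proposal is correct and agrees with the paper, which gives no independent proof but assembles the theorem from the same sources: the classical Euclidean $C^{1,\beta}$ theory, Zhong and Mukherjee--Zhong in $\mathbb{H}_{n}$, and DiBenedetto--Manfredi together with \cite{Mallick_Sil_ExcessDecay} for \eqref{excess decay}. Since the paper credits the full range $1 \le q \le 2$ in both settings to \cite{Mallick_Sil_ExcessDecay}, your transfer argument via the degenerate/nondegenerate alternative, along with your scaling reduction and the $L^{p}$-to-$L^{1}$ iteration for \eqref{sup estimate constant homogeneous}, are harmless standard supplements that the paper leaves implicit.
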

			\begin{remark}\label{p uniform constant decay estimates}
			Note that while the constants in the estimates \eqref{sup estimate constant homogeneous}, \eqref{oscillation estimate constant homogeneous} and \eqref{excess decay} depend on $p$, but tracking the constants in the proofs reveal that the constants are explicitly quantifiable. Thus, if $1 < \gamma_{1} \leq p \leq \gamma_{2}< \infty,$ the constants can all be chosen (by replacing them with larger or smaller constants if necessary) to depend only $\gamma_{1}, \gamma_{2}$ such that the estimates are uniform in $p \in [\gamma_{1}, \gamma_{2}].$ See also Remark \ref{p uniform constants Sobolev inequalities}. Observe that though the proof of \eqref{excess decay}, when $\mathbb{E}= \mathbb{H}_n$, in \cite{Mallick_Sil_ExcessDecay} uses a contradiction argument, it does not use a `contradiction-compactness argument' and thus the `contradiction argument' is, in effect, merely a case by case analysis, where the constants are quantifiable for each case. 
		\end{remark}
		The estimates \eqref{sup estimate constant homogeneous} and \eqref{oscillation estimate constant homogeneous} in the Euclidean case is due to Uraltceva \cite{Uralceva},  Uhlenbeck \cite{uhlenbecknonlinearelliptic} Evans \cite{Evans_pLaplacian}, DiBenedetto \cite{DiBenedetto-C1-alpha-83}, Lewis \cite{Lewis83}, Tolksdorff \cite{Tolksdorf_regularity} (see also Giaquinta-Modica \cite{Giaquinta_Modica_Uhlenbeck_result}, Hamburger \cite{hamburgerregularity}).  due to implies a `excess-to-energy decay estimate'. When $\mathbb{E}^{n}= \mathbb{H}_{n},$ the estimates \eqref{sup estimate constant homogeneous} and \eqref{oscillation estimate constant homogeneous} are established in Zhong \cite{zhong2018regularityvariationalproblemsheisenberg} and Mukherjee-Zhong \cite{Mukherjee_Zhong}.

		However, for our purposes, \emph{the most crucial estimate is the fundamental `excess-to-excess decay estimate'} \eqref{excess decay}.  When $\mathbb{E}^{n}= \mathbb{R}^{n},$ a different version of this estimate involving the nonlinear quantity $V\left(\nabla u\right)$ was obtained in \cite{Giaquinta_Modica_Uhlenbeck_result} and \cite{hamburgerregularity}. In the stated form, this was obtained for $q=2$ by Dibenedetto-Manfredi\cite{DiBenedettoManfredi}. However, the additional flexibility to choose any $q \in [1,2]$ will be pivotal for us here. In this form, the estimate was obtained by the present authors in \cite{Mallick_Sil_ExcessDecay}.  When $\mathbb{E}^{n}= \mathbb{H}_{n}$, no estimate of this kind was known before \cite{Mallick_Sil_ExcessDecay}. \emph{This is the key ingredient which allows us to treat the subelliptic case by Euclidean perturbation techniques. Once this estimate is available, the effect of noncommutativity completely disappears. }.   

		\subsubsection{\texorpdfstring{$p\left(x\right)$}{p(x)}-harmonic functions}
		Let $\Omega \subset \mathbb{E}^{n}$ be open and bounded. If $\mathfrak{p}:\Omega \rightarrow [\gamma_{1}, \gamma_{2}]$ be a measurable function  with $1 < \gamma_{1} \leq \gamma_{2} < \infty, $ such that for every $\tau \in (0, 1/4)$ and every $1< q< \infty,$ we have 
		\begin{align*}
			\lim\limits_{r \rightarrow 0}	\sup\limits_{x \in K} &\sum\limits_{j=0}^{\infty}\log \left(\frac{1}{\tau^{j}r}\right)\left(\fint_{B_{\tau^{j}r}(x)} \left\lvert \mathfrak{p} - \left(\mathfrak{p}\right)_{x, \tau^{j}r} \right\rvert^{q}\right)^{\frac{1}{q}} = 0, 
		\end{align*}
		whenever $K \subset \subset \Omega$ is a compact subset, then by Lemma \ref{local vanishing log holder}, $\mathfrak{p}$ is log-H\"{o}lder continuous on every compact subset $K \subset \subset \Omega$. Let $\Lambda^{K}_{\log}\left(\mathfrak{p}\right) >0$ be any number greater than or equal to the the log-H\"{o}lder constant of $\mathfrak{p}$ restricted to $K$, i.e.  
		\begin{align}
			\Lambda^{K}_{\log}\left(\mathfrak{p}\right) \ge 	\lim\limits_{R \rightarrow 0} \omega_{\mathfrak{p}_{K}}\left(R\right)\log \left(\frac{1}{R}\right),
		\end{align}
		where $\mathfrak{p}_{K}$ denotes the restriction of $\mathfrak{p}$ to $K$. Furthermore, by our assumption, for any compact subset $K \subset \subset \Omega,$ there exists a function $\Theta_{\mathfrak{p}, K}:(0,1)\times (0, 1/4)\times (1, \infty) \rightarrow (0, 1/16)$ such that 
		\begin{align*}
		\sup\limits_{x \in K}	\sum\limits_{j=0}^{\infty}\log \left(\frac{1}{\tau^{j}r}\right)\left(\fint_{B_{\tau^{j}r}(x)} \left\lvert \mathfrak{p} - \left(\mathfrak{p}\right)_{x, \tau^{j}r} \right\rvert^{q}\right)^{\frac{1}{q}} < \varepsilon
		\end{align*}
		whenever $0 < r < \Theta_{\mathfrak{p}, K}\left(\varepsilon, \tau, q\right).$ With these notations, we can now state the following result. 
		
		\begin{theorem}\label{homogenous eqn estimate}
			Let $\Omega \subset \mathbb{E}^{n}$ be open and bounded. Let $\mathfrak{p}:\Omega \rightarrow [\gamma_{1}, \gamma_{2}]$ be a measurable function  with $1 < \gamma_{1} \leq \gamma_{2} < \infty, $ such that for every $\tau \in (0, 1/4)$ and every $1< q< \infty,$ we have 
			\begin{align*}
				\lim\limits_{r \rightarrow 0}	\sup\limits_{x \in K_0} &\sum\limits_{j=0}^{\infty}\log \left(\frac{1}{\tau^{j}r}\right)\left(\fint_{B_{\tau^{j}r}(x)} \left\lvert \mathfrak{p} - \left(\mathfrak{p}\right)_{x, \tau^{j}r} \right\rvert^{q}\right)^{\frac{1}{q}} = 0, 
			\end{align*}
			whenever $K_0 \subset \subset \Omega$ is a compact subset. Let $\Omega' \subset \subset \bar{K} \subset \subset \Omega,$ where $\Omega'$ is open and $\bar{K}$ is compact. If $w \in \mathbb{E}W^{1,p\left(\cdot\right)} \left(\Omega'\right)$ is a local weak solution to the equation 
			\begin{align*}
				\operatorname{div}_{\mathbb{E}}\left[\lvert \nabla_{\mathbb{E}} w \rvert^{\mathfrak{p}\left(x\right)-2} \nabla_{\mathbb{E}} w\right]    = 0   &&\text{ in } \Omega',  
			\end{align*}	
			then $\nabla_{\mathbb{E}} w$ is continuous in $\Omega'.$ Moreover, if  $\mathcal{K}_{0} >1$ is a real number such that  
			\begin{align}\label{energy bound w by u}
				\mathcal{K}_{0} \ge 1 + \int_{\Omega'} \left( 1 + \left\lvert  \nabla_{\mathbb{E}} w \right\rvert \right)^{\mathfrak{p}\left(x\right)} \ \mathrm{d}x,  
			\end{align} then  
			\begin{enumerate}[(i)]
				\item There exists a constant $C_{3} = C_{3}\left(Q_{\mathbb{E}}, \gamma_{1}, \gamma_{2}, \Lambda^{\bar{K}}_{\log}\left(\mathfrak{p}\right)\right) \geq 1$ and a positive radius $$R_{3} = R_{3}\left( Q_{\mathbb{E}}, \gamma_{1}, \gamma_{2}, \Lambda^{\bar{K}}_{\log}\left(\mathfrak{p}\right) , \Theta_{\mathfrak{p}, \bar{K}},  \mathcal{K}_{0}, \operatorname*{dist}\left(\bar{K}, \partial\Omega\right) \right) >0$$ such that if $0 < R \leq R_{3},$ then we have 
				\begin{align}\label{sup bound homogeneous}
					\sup\limits_{B\left(x, R/2\right)} \left\lvert \nabla_{\mathbb{E}} w \right\rvert \leq C_{3}\fint_{B\left(x, R\right)}\left( 1 +  \left\lvert \nabla_{\mathbb{E}} w \right\rvert \right)  
				\end{align}
				holds whenever $B\left(x, R\right) \subset  \Omega'$.  
				\item For any $\varepsilon \in (0,1)$ and any $A \geq 1,$ there exists a positive constant $\tau_{1}  \in (0, 1/4),$ depending only on 
				\begin{align*}
					Q_{\mathbb{E}}, \gamma_{1}, \gamma_{2}, \Lambda^{\bar{K}}_{\log}\left(\mathfrak{p}\right) , \Theta_{\mathfrak{p}, \bar{K}},  \mathcal{K}_{0}, \operatorname{dist}\left(\bar{K}, \partial\Omega\right), A \text{ and } \varepsilon, 
				\end{align*} such that  
				\begin{align}
					\sup\limits_{B\left(x_{0}, R/2\right)} \left\lvert \nabla_{\mathbb{E}} w \right\rvert \leq A\lambda  \implies \sup\limits_{x, y \in B\left(x_{0}, \tau R\right)} \left\lvert \nabla_{\mathbb{E}} w \left( x\right) - \nabla_{\mathbb{E}} w \left( y\right) \right\rvert \leq \varepsilon \lambda, 
				\end{align}
				for all $0 < \tau \leq \tau_{1},$ whenever $0 < R < R_{3}$, $B\left(x_0, R\right) \subset \Omega'$ and $\lambda >1$ is a positive constant.  
			\end{enumerate}
		\end{theorem}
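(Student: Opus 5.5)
The plan is a perturbation argument in the spirit of Acerbi--Mingione, with the $p(x)$-harmonic $w$ frozen on each small ball to a constant-exponent $p$-harmonic function. By Lemma \ref{local vanishing log holder}, $\mathfrak{p}$ is continuous on $\bar K$. Moreover $\omega_{\mathfrak{p}_{\bar K}}(r)\log(1/r)\to 0$, so $\mathfrak{p}$ is vanishing log-H\"older there. The first ingredient is a higher integrability estimate for $w$: there is $\sigma_0>0$, depending on $Q_{\mathbb{E}},\gamma_1,\gamma_2,\Lambda^{\bar K}_{\log}(\mathfrak{p})$, such that
\begin{align*}
\left(\fint_{B_{R/2}}(1+|\nabla_{\mathbb{E}}w|)^{\mathfrak{p}(x)(1+\sigma_0)}\right)^{\frac{1}{1+\sigma_0}}\le c\fint_{B_R}(1+|\nabla_{\mathbb{E}}w|)^{\mathfrak{p}(x)}.
\end{align*}
This follows from the Caccioppoli inequality, the Sobolev--Poincar\'e inequality (Proposition \ref{poincaresobolevwithmeans}) and Gehring's lemma. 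The log-H\"older condition together with the energy bound $\mathcal{K}_0$ gives $R^{-(p_2-p_1)}\le c(\mathcal{K}_0)$ on small balls, where $p_1=\inf_{B_R}\mathfrak{p}$ and $p_2=\sup_{B_R}\mathfrak{p}$. This is the standard way $R_3$ picks up its dependence on $\mathcal{K}_0$ and $\operatorname{dist}(\bar K,\partial\Omega)$.

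Next, on $B_R(x_0)$ let $v$ solve $\operatorname{div}_{\mathbb{E}}(|\nabla_{\mathbb{E}} v|^{p_0-2}\nabla_{\mathbb{E}} v)=0$ with $v=w$ on $\partial B_R$, where $p_0=\mathfrak{p}(x_0)$ (or $p_2$). Existence is given by Proposition \ref{existence of minimizers for variable exponent}. Testing both equations with $w-v$ and using the monotonicity \eqref{monotonicity}, the error term is $\int_{B_R}\big||\nabla_{\mathbb{E}}w|^{\mathfrak{p}(x)-2}-|\nabla_{\mathbb{E}}w|^{p_0-2}\big||\nabla_{\mathbb{E}}w||\nabla_{\mathbb{E}}(w-v)|$. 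We bound it pointwise by $|\mathfrak{p}(x)-p_0|\,|\nabla_{\mathbb{E}} w|^{p_0-1}(1+|\nabla_{\mathbb{E}} w|)^{\omega(R)}\log(e+|\nabla_{\mathbb{E}} w|)$. The extra logarithm and small power are absorbed by the higher integrability, via Lemma \ref{Llog beta L lemma}, and we need the same for $v$, obtained by a boundary higher-integrability estimate. The result is
\begin{align*}
\fint_{B_R}|V_{p_0}(\nabla_{\mathbb{E}}w)-V_{p_0}(\nabla_{\mathbb{E}}v)|^2\le c\,\omega_{\mathfrak{p}}(R)\log\!\left(\tfrac1R\right)\fint_{B_R}(1+|\nabla_{\mathbb{E}}w|)^{\mathfrak{p}(x)}.
\end{align*}
The right-hand side is small relative to the energy because $\mathfrak{p}$ is vanishing log-H\"older. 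Lemma \ref{prop of V}, applied separately for $p_0\ge2$ and for $p_0<2$ using \eqref{v estimate p less 2}, converts this into smallness of $\fint|\nabla_{\mathbb{E}}w-\nabla_{\mathbb{E}}v|$.

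For (i), apply \eqref{sup estimate constant homogeneous} to $v$, with constants uniform in $p_0\in[\gamma_1,\gamma_2]$ by Remark \ref{p uniform constant decay estimates}. Combine this with the comparison estimate and iterate over the dyadic balls $B_{\tau^j R}(x)$ around each Lebesgue point, controlling the gradient averages by an excess-type iteration that uses \eqref{excess decay}. Summing the errors $\omega_{\mathfrak{p}}(\tau^jR)\log(1/\tau^jR)$ requires their summability. This is exactly the hypothesis on $\mathfrak{p}$: the proof of Lemma \ref{local vanishing log holder} shows that the log-weighted mean oscillation series dominates these errors. The outcome is a pointwise bound at Lebesgue points, hence the $L^\infty$ bound \eqref{sup bound homogeneous}. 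The same iteration shows that the averages $(\nabla_{\mathbb{E}} w)_{x,\tau^jR}$ form a Cauchy sequence locally uniformly in $x$, which gives continuity of $\nabla_{\mathbb{E}} w$ as in Lemma \ref{local vanishing log holder}.

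For (ii), work under the assumption $\sup_{B_{R/2}}|\nabla_{\mathbb{E}}w|\le A\lambda$, and redo the comparison at the intrinsic scale $\lambda$. With $|\nabla_{\mathbb{E}}w|\le A\lambda$ the comparison error becomes $\le c(A)\,\omega(R)\log(1/R)\,\lambda$, uniformly in $\lambda>1$; the $\lambda^{\omega(R)}$ factors are handled via log-H\"older and $\mathcal{K}_0$. Then \eqref{oscillation estimate constant homogeneous} for $v$ gives $\operatorname{osc}_{B_{\tau R}}\nabla_{\mathbb{E}} v\le C_2\tau^\beta A\lambda$. We choose $\tau_1$ with $C_2\tau_1^\beta A\le\varepsilon/4$ and make the $w$--$v$ errors on the chain of shrinking balls $\le\varepsilon\lambda/4$ using the vanishing series (through $\Theta_{\mathfrak{p},\bar K}$). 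The main obstacle is passing from $L^1$-closeness of $\nabla_{\mathbb{E}} w$ and $\nabla_{\mathbb{E}} v$ to a pointwise oscillation bound. I would handle it by the Campanato-type argument: the excess decay \eqref{excess decay} plus the comparison on every ball $B_{\tau^j\rho}(y)$, $y\in B_{\tau R}$, gives uniform control of $|\nabla_{\mathbb{E}} w(y)-(\nabla_{\mathbb{E}} w)_{y,\rho}|$, and hence of the oscillation.
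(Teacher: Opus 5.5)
\textbf{The decisive gap is the summation step.} You freeze the exponent at the point value $p_0=\mathfrak{p}(x_0)$ (or at $p_2$) and use $|\mathfrak{p}(x)-p_0|\le\omega_{\mathfrak{p}}(R)$. So the error at scale $\tau^jR$ is governed by $\delta_j:=\omega_{\mathfrak{p}}(\tau^jR)\log(1/(\tau^jR))$, and your iterations for (i), for continuity and for (ii) all need $\sum_j\delta_j<\infty$. That is log-Dini continuity of $\mathfrak{p}$, which the hypothesis does not give.

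Lemma \ref{local vanishing log holder} does not say what you attribute to it. Its proof bounds each \emph{single} $\delta_j$ by $\sup_x\mathfrak{S}_{[\mathfrak{p},q,\tau,1]}(x,\rho)$ with $\rho$ comparable to $\tau^jR$, i.e.\ by a whole tail of the series. Summing in $j$ therefore counts the $i$-th oscillation term about $i$ times, which is an uncontrolled extra logarithm. The loss is real. In $\mathbb{R}^n$, take a radial $\mathfrak{p}=c+\phi(|x|)$ with $\phi'(t)=1/(t\log^3(1/t))$ near $0$. Then $\nabla\mathfrak{p}\in L^{(n,1)}\log L$, so $\mathfrak{p}$ satisfies the hypothesis by Proposition \ref{derivative to summability}. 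Yet $\omega_{\mathfrak{p}}(r)\ge\frac12\log^{-2}(1/r)$, so $\mathfrak{p}$ is not log-Dini.

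Keeping $|\mathfrak{p}(x)-p_0|$ inside the integral does not rescue point-freezing either. The quantity $\fint_{B_R}|\mathfrak{p}-\mathfrak{p}(x_0)|^q$ contains $|(\mathfrak{p})_{x_0,R}-\mathfrak{p}(x_0)|$, which again is only controlled by a tail. As written, your argument proves the result only under Ok's log-Dini assumption.

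\textbf{The missing idea} is what the paper relies on. It invokes the proof of \cite[Theorem 2]{Mallick_Sil_continuityvariablegrowth}, whose comparison step is Lemma \ref{comparison lemma 2}. On each ball you freeze at the \emph{average} of $\mathfrak{p}$ over that ball, a constant that changes from ball to ball. You keep $|\mathfrak{p}(x)-(\mathfrak{p})_R|$ inside the integral and apply H\"older with exponents $\frac{2(1+\sigma)}{\sigma}$ and $1+\sigma$ against higher integrability, with Lemma \ref{Llog beta L lemma} absorbing the logarithm and producing the factor $\log(1/R)$. The error is then measured by $\mathfrak{P}_{R,\sigma}$, which is exactly the $j$-th term of $\mathfrak{S}_{[\mathfrak{p},q,\tau,1]}$. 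The varying exponents are harmless, since \eqref{sup estimate constant homogeneous}, \eqref{oscillation estimate constant homogeneous} and \eqref{excess decay} hold uniformly in $p\in[\gamma_1,\gamma_2]$.

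\textbf{A second, independent gap} is the passage from the $V$-estimate to errors that are \emph{linear} in the oscillation, which summation requires.
\begin{enumerate}
\item Your $V$-estimate has $\delta$ to the first power on the right, so it only yields $\fint|\nabla_{\mathbb{E}}w-\nabla_{\mathbb{E}}v|\lesssim\delta^{1/2}\lambda$.
\item Young's inequality upgrades the right-hand side to $\delta^2\lambda^{p_0}$, as in Lemma \ref{comparison lemma 1}. Via \eqref{v estimate p less 2}, this suffices when $p_0\le 2$.
\item For $p_0>2$, Lemma \ref{prop of V} gives only $|z_1-z_2|^{p_0}\lesssim|V_{p_0}(z_1)-V_{p_0}(z_2)|^2$. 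That is closeness of order $\delta^{2/p_0}\lambda$, and $\sum_j\delta_j^{2/p_0}$ can diverge while $\sum_j\delta_j$ converges.
\end{enumerate}

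The upper bound $|\nabla_{\mathbb{E}}w|\le A\lambda$ you use in (ii) does not cure this. You need a \emph{lower} bound $|\nabla_{\mathbb{E}}w|\gtrsim\lambda$ on the relevant ball, so that $|z_1-z_2|^2\lesssim\lambda^{2-p_0}|V_{p_0}(z_1)-V_{p_0}(z_2)|^2$ makes the error linear in $\delta$. This must be combined with a Kuusi--Mingione exit-time argument, which ensures comparison is only invoked in this nondegenerate regime. When the averages are small relative to $\lambda$, nothing needs to be compared. This is the structure of Lemma \ref{linearized comparison} and of Steps 1 and 2 in the proof of Theorem \ref{unified main them}. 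Your excess-decay iteration for (i) and your Campanato-type argument for (ii) have to be organized the same way.
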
	
		This result was established by the present authors in \cite[Theorem 2]{Mallick_Sil_continuityvariablegrowth} for $\mathbb{H}_{n}$ under the assumption that $\X \mathfrak{p} \in L^{\left(Q_{\mathbb{E}},1\right)}\log L \left(\Omega; \mathbb{R}^{2n}\right)$. However, it is not difficult to see that the proof really used our assumption on $\mathfrak{p}$ here. The condition on the horizontal gradient of $\mathfrak{p}$ was used only to obtain our condition via the estimates in Proposition \ref{derivative to summability}.

		\section{Higher integrability results}\label{Higher integrability}
		In this section, we prove the higher integrability results that we need. For this section, we only assume that the exponent function $\mathfrak{p}:\Omega \subset \mathbb{E}^{n} \rightarrow \left[\gamma_{1}, \gamma_{2}\right]$ satisfies the log-H\"{o}lder condition 
		\begin{align}\label{log holder bound Global HI}
			\sup_{R \in (0,1]} \omega_{\mathfrak{p}}\left(R\right)\log \left(\frac{1}{R}\right) \le\Lambda_{\log} < \infty 
		\end{align} and the coefficient function $\mathfrak{a}: \Omega \subset \mathbb{E}^{n} \rightarrow \left[\nu, L\right]$ is measurable.  The notation \texttt{data} will stand for the set of constants $$ \texttt{data} := \left\lbrace Q_{\mathbb{E}}, \gamma_{1}, \gamma_{2}, \nu, L, \Lambda_{\log} \right\rbrace. $$   
		We assume $\Lambda_{\log} = L/2,$ as this can always be achieved by increasing $L$.
		\subsection{Local higher integrability} 
		A version of the following higher integrability result goes back to Zhikov \cite{Zhikov_higherintegrability} for $\mathbb{E}^n= \mathbb{R}^n$. However, the additional flexibility offered by the exponent $\bar{\mathfrak{p}}$ is crucial for us. For $f=0$, a similar result is proved in Baroni \cite[Theorem 2.4]{Baroni_pxStein}. \cite{Mallick_Sil_continuityvariablegrowth} has a version with $f$ in divergence form for $\mathbb{E}^n= \mathbb{H}_n$.  
		\begin{theorem}[Higher integrability]\label{higher integrability f}
			Let $f \in L^{Q_{\mathbb{E}}} \left(\Omega\right)$ and $u \in \mathbb{E}W^{1,\mathfrak{p}\left(\cdot\right)} \left(\Omega\right)$ be a weak solution to the equation 
			\begin{align}\label{p(x) Laplace homogeneous diveregnce form f}
				\operatorname{div}_{\mathbb{E}}( \mathfrak{a}(x) \lvert \nabla_{\mathbb{E}} u \rvert^{\mathfrak{p}\left(\cdot\right)-2} \nabla_{\mathbb{E}}u) )  = f    &&\text{ in } \Omega.
			\end{align} Let $\gamma$ be as in \eqref{def gamma} and $\mathcal{K}_{1} >1$ be a real number such that  
			\begin{align}\label{energy bound f}
				\mathcal{K}_{1} \ge 1 + \int_{\Omega} \left( 1 + \left\lvert  \nabla_{\mathbb{E}} u \right\rvert \right)^{\mathfrak{p}\left(x\right)} \ \mathrm{d}x + \int_{\Omega} \left( 1 + \left\lvert f \right\rvert  \right)^{Q_{\mathbb{E}}} \ \mathrm{d}x.   
			\end{align}	
			 There exist $C_{4} \equiv C_{4} \left( \textnormal{\texttt{data}} \right)\ge 1$, $\sigma_{1} \equiv \sigma_{1}\left(\textnormal{\texttt{data}}\right) \in (0,1)$, $\theta = \theta \left(Q_{\mathbb{E}}, \gamma, \sigma_{1}\right)$ with
			\begin{align}\label{qdef}
			 1 < \theta \le \frac{\gamma Q_{\mathbb{E}}}{\gamma Q_{\mathbb{E}} -Q_{\mathbb{E}} + \gamma}\left(1 + \sigma_{1}\right) < Q_{\mathbb{E}} 
			\end{align} and a radius $R_{1} = R_{1}\left(\textnormal{\texttt{data}}, \mathcal{K}_{1}\right)>0$ with $R_1 \mathcal{K}_1 <1$, such that for any ball $B_{4R}\left( x_{0}\right) \subset \Omega$ with $0 < R \leq R_{1},$  any $ 0 < \sigma \leq \sigma_{1}$ and any $\bar{x} \in \overline{B_{R}\left( x_{0}\right)},$ we have 
			\begin{align}\label{higher integrability estimate f}
				&\left( \fint_{B_{R}}\left\lvert \nabla_{\mathbb{E}} u \right\rvert^{\mathfrak{p}\left(x\right)\left(1 + \sigma\right)}\ \mathrm{d}x\right)^{\frac{1}{ 1+\sigma }}\notag \\
				 &\hspace{40pt}\leq C_4 \left[ \left( \fint_{B_{2R}}\left\lvert \nabla_{\mathbb{E}} u \right\rvert^{\gamma}\ \mathrm{d}x \right)^{\frac{1}{\gamma}}  +  \left( R^{\theta} \fint_{B_{2R}}\left\lvert f \right\rvert^{\theta}\ \mathrm{d}x \right)^{\frac{1}{\theta\left(\mathfrak{p}_{0}-1\right)}} + 1 \right]^{\bar{\mathfrak{p}}},
			\end{align} 
			where $\mathfrak{p}_{0}:= \mathfrak{p}(x_{0}),$ $\bar{\mathfrak{p}} := \mathfrak{p}\left(\bar{x}\right)$, $B_{R} = B_{R}(x_{0})$ and $B_{2R} = B_{2R}(x_{0})$. 
		\end{theorem}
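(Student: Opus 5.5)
We follow the Zhikov--Acerbi--Mingione scheme: a Caccioppoli inequality, then a Sobolev--Poincaré reverse Hölder inequality with exponent $\gamma$ on the right, then a Gehring-type lemma. The log-Hölder bound \eqref{log holder bound Global HI} lets us freeze the exponent on small balls. Finally, the energy bound \eqref{energy bound f} lets us replace the local exponent by $\mathfrak{p}_0$ or $\bar{\mathfrak{p}}$ at the cost of a constant.

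\textbf{Caccioppoli inequality.} Fix $B_{\rho}(y)\subset B_{2R}(x_0)$ and a cutoff $\eta\in C^\infty_c(B_\rho(y))$ with $\eta\equiv1$ on $B_{\rho/2}(y)$ and $|\nabla_{\mathbb{E}}\eta|\le c/\rho$. Test \eqref{energy weak formulation} with $\phi=\eta^{\gamma_2}(u-(u)_{y,\rho})$; this needs only horizontal derivatives, so the Heisenberg case is identical to the Euclidean one. Young's inequality with the variable exponent, together with $\mathfrak{a}\in[\nu,L]$, gives
\begin{align*}
\fint_{B_{\rho/2}}|\nabla_{\mathbb{E}} u|^{\mathfrak{p}(x)}\le c\fint_{B_\rho}\Bigl|\frac{u-(u)_\rho}{\rho}\Bigr|^{\mathfrak{p}(x)}+c\fint_{B_\rho}|f||u-(u)_\rho|+c.
\end{align*}
Set $p_1=\inf_{B_\rho}\mathfrak{p}$ and $p_2=\sup_{B_\rho}\mathfrak{p}$. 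We bound $|(u-(u)_\rho)/\rho|^{\mathfrak{p}(x)}\le 1+|(u-(u)_\rho)/\rho|^{p_2}$.

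\textbf{Reverse Hölder inequality.} Apply Proposition \ref{poincaresobolevwithmeans} with the exponent $s=p_1\gamma^{-1}\cdots$; more precisely, pass from $L^{p_2}$ of $(u-(u)_\rho)/\rho$ to $L^{p_1 Q_{\mathbb{E}}/(Q_{\mathbb{E}}+p_1)}$-type norms of $\nabla_{\mathbb{E}} u$, and then to the $L^\gamma$ power. The key is the standard estimate: since $p_2-p_1\le\omega_{\mathfrak{p}}(2\rho)$ and $\rho\le R_1$ is small,
\begin{align*}
\Bigl(\int_{B_\rho}(1+|\nabla_{\mathbb{E}} u|)^{\mathfrak{p}}\Bigr)^{p_2-p_1}\le \mathcal{K}_1^{\omega_{\mathfrak{p}}(2\rho)}\le C(\texttt{data}),
\end{align*}
and $\rho^{-Q_{\mathbb{E}}(p_2-p_1)}\le e^{Q_{\mathbb{E}}\Lambda_{\log}}$. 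This yields
\begin{align*}
\fint_{B_{\rho/2}}|\nabla_{\mathbb{E}} u|^{\mathfrak{p}}\le c\Bigl(\fint_{B_\rho}|\nabla_{\mathbb{E}} u|^{\mathfrak{p}/\tilde\gamma}\Bigr)^{\tilde\gamma}+c\fint_{B_\rho}(|f|\rho)^{\mathfrak{p}'}+c
\end{align*}
for some $\tilde\gamma>1$.

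The $f$-term is treated as follows. Write $\int |f||u-(u)_\rho|$ and bound it by Hölder with the Sobolev exponent $s^*$ of $s=\gamma$; this is why $\gamma\le 2Q_{\mathbb{E}}/(2Q_{\mathbb{E}}-1)$ and the exponent $\theta$ in \eqref{qdef}, which is dual to $\gamma^*$ up to the $(1+\sigma_1)$ margin, appear. Then use Young's inequality to absorb a small multiple of $\fint|\nabla_{\mathbb{E}} u|^{\mathfrak{p}}$. The remaining term is $(\rho^\theta\fint|f|^\theta)^{\mathfrak{p}'/\theta}$-like. The smallness $R_1\mathcal{K}_1<1$ and $f\in L^{Q_{\mathbb{E}}}$ ensure $\rho\|f\|_{L^{Q_{\mathbb{E}}}}$-type quantities are bounded, so these remainder terms are controlled.

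\textbf{Gehring lemma and conversion.} Apply Gehring's lemma on the doubling metric measure space $\mathbb{E}^n$ (valid in $\mathbb{H}_n$ with the Korányi balls) to $g=|\nabla_{\mathbb{E}} u|^{\mathfrak{p}(x)/\tilde\gamma}$. This gives $\sigma_1$ and the bound of $\fint_{B_R}|\nabla_{\mathbb{E}} u|^{\mathfrak{p}(1+\sigma)}$ by $\fint_{B_{2R}}|\nabla_{\mathbb{E}} u|^{\mathfrak{p}}$ plus $f$-terms. Finally we convert to the form \eqref{higher integrability estimate f}, and we expect this to be \textbf{the main obstacle}. The task is to bound $\fint_{B_{2R}}|\nabla_{\mathbb{E}} u|^{\mathfrak{p}}$ by $\bigl[(\fint|\nabla_{\mathbb{E}} u|^\gamma)^{1/\gamma}+\cdots+1\bigr]^{\bar{\mathfrak{p}}}$ with arbitrary $\bar x\in\overline{B_R}$. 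We would do this by applying the reverse Hölder inequality once more (Caccioppoli plus Sobolev at level $\gamma$) on $B_{2R}$ to reach $\bigl(\fint|\nabla_{\mathbb{E}} u|^\gamma\bigr)^{p_2/\gamma}$. Then we replace $p_2$ by $\bar{\mathfrak{p}}$ using $|p_2-\bar{\mathfrak{p}}|\le\omega(4R)$ and the energy bound $\fint_{B_{2R}}|\nabla_{\mathbb{E}} u|^\gamma\le cR^{-Q_{\mathbb{E}}}\mathcal{K}_1$, so that $(R^{-Q_{\mathbb{E}}}\mathcal{K}_1)^{\omega(4R)}\le C$ by log-Hölder continuity and $R_1\mathcal{K}_1<1$. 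In the same way, the $f$-term exponent $\mathfrak{p}'$ is rewritten as $\bar{\mathfrak{p}}/(\mathfrak{p}_0-1)$. The care is in checking that all the exponent swaps cost only constants depending on \texttt{data}.
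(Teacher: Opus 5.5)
Your Caccioppoli, Sobolev--Poincar\'e and Gehring steps follow the same outline as the paper. Your exponent swaps $p_2\to\mathfrak{p}_0\to\bar{\mathfrak{p}}$ via $(R^{-Q_{\mathbb{E}}}\mathcal{K}_1)^{\omega_{\mathfrak{p}}(4R)}\le C$ are also how the paper finishes.

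The gap is in the step you flag yourself: going from $\fint_{B_{2R}}|\nabla_{\mathbb{E}}u|^{\mathfrak{p}(x)}$ down to the $L^{\gamma}$ norm. One more Caccioppoli plus Sobolev--Poincar\'e step ``at level $\gamma$'' cannot do this.
\begin{itemize}
\item Caccioppoli leaves you with $\fint_{B_{2R}}|(u-(u)_{2R})/R|^{p_2}$.
\item Sobolev at level $\gamma$ controls $u-(u)_{2R}$ only in $L^{\gamma^*}$, with $\gamma^*=Q_{\mathbb{E}}\gamma/(Q_{\mathbb{E}}-\gamma)\le 2Q_{\mathbb{E}}/(2Q_{\mathbb{E}}-3)$.
\item Moreover $\gamma\to 1$ as $\gamma_2\to\infty$, while $p_2$ can be as large as $\gamma_2$.
\end{itemize}
So in general $\gamma^*<p_2$. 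A single Caccioppoli--Sobolev step only lowers the exponent from $\mathfrak{p}$ to about $Q_{\mathbb{E}}\mathfrak{p}/(Q_{\mathbb{E}}+\mathfrak{p})$. It cannot simply be repeated, because Caccioppoli only controls the $L^{\mathfrak{p}(\cdot)}$ energy of $\nabla_{\mathbb{E}}u$, not lower-exponent quantities.

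The missing idea is that a reverse H\"older inequality also self-improves in the \emph{lower} exponent. The Gehring lemma the paper uses (Zatorska-Goldstein's version on metric balls) gives, for every $t\in(0,1]$,
\[
\fint_{B_R}|\nabla_{\mathbb{E}}u|^{\mathfrak{p}(x)(1+\sigma)}\le c_t\Bigl[\Bigl(\fint_{B_{2R}}|\nabla_{\mathbb{E}}u|^{\mathfrak{p}(x)t}\Bigr)^{\frac{1+\sigma}{t}}+(\text{$f$-terms})+1\Bigr].
\]
You can also derive this from the reverse H\"older inequality you already have, using H\"older interpolation, Young's inequality and the usual absorption lemma on concentric balls.

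The paper chooses $t=\gamma/\gamma_2$, so that $\mathfrak{p}(x)t\le\gamma$ pointwise. Then
\[
\Bigl(\fint|\nabla_{\mathbb{E}}u|^{\mathfrak{p}(x)\gamma/\gamma_2}\Bigr)^{\gamma_2/\gamma}\le\Bigl(\fint(1+|\nabla_{\mathbb{E}}u|)^{\gamma}\Bigr)^{p_2/\gamma}.
\]
From there your exponent swaps complete the argument, together with the analogous ones for the $f$-term, which use $p_1'\ge\mathfrak{p}_0'$ and $f\in L^{Q_{\mathbb{E}}}$.
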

		\begin{proof}
			Fix $x_0 \in \Omega$ and choose $R_1$ sufficiently small such that $B_{4R_1}(x_0) \subset \Omega$ and 
			\begin{align}
				R_{1} &\leq \min \left\lbrace 1/\mathcal{K}_1, 1/4 \right\rbrace, \label{radius dependence on K0} \\
              2Q_{\mathbb{E}}\, \omega_{\mathfrak{p}}(4R_1) &\le s:= \min \left\lbrace \gamma_{1}, \frac{2Q_{\mathbb{E}}}{2Q_{\mathbb{E}}-1}\right\rbrace, \label{radius dependence on gamma_1}
                \end{align} 
                \begin{align}\omega_{\mathfrak{p}}\left( R\right)\log \left(\frac{1}{R}\right) &\leq L  \qquad \qquad \text{ for all } 0 < R \leq R_{1}/16. \label{choice of radius-nondiv}
			\end{align} 
			Note that these choices imply that for every $0 < R \leq R_{1}/16$, we have the bounds  
			\begin{align*}
				R^{-\omega_{\mathfrak{p}}(R)}, \mathcal{K}_{1}^{\omega_{\mathfrak{p}}(R)} \le C \left( \texttt{data}\right) \qquad \text{ and } \qquad \frac{\mathfrak{p}_{1}}{\mathfrak{p}_{2}} \geq 1-\frac{s}{2Q_{\mathbb{E}}}, \qquad \text{ where }
			\end{align*}
			\begin{align*}
				\mathfrak{p}_{2}:= \max\limits_{x \in \overline{B_{2R}(x_{0})}} \mathfrak{p} (x) \qquad \text{ and } \qquad \mathfrak{p}_{1}:= \min\limits_{x \in \overline{B_{2R}(x_{0})}} \mathfrak{p} (x).
			\end{align*}
			Observe that by our definition of $s$, $\mathbb{E}W^{1,\mathfrak{p}_{1}/s }$ embeds into $L^{\mathfrak{p}_{2}}$	by Sobolev embedding for $\mathbb{E}^{n}$ (see for e.g. \cite[equation (48)]{Mallick_Sil_continuityvariablegrowth})
			From now on, all balls would be centered at $x_{0}$ and we omit writing the center of the balls and $0 < R \le R_{0}.$ Now we choose a cut-off function $\eta \in C_{c}^{\infty}\left(B_{2R}\right)$ with 
			\begin{align*}
				0 \leq \eta \leq 1 \text{ in  } B_{2R}, \qquad \eta \equiv 1 \text{ in } B_{R} \qquad \text{ and } \left\lvert \nabla_{\mathbb{E}} \eta \right\rvert \leq C/R.   
			\end{align*}
			Then using Young's inequality with $\varepsilon>0,$ and Sobolev inequality we derive 
			\begin{align*}
				\int_{B_{2R}}& \left \lvert f \eta^{\mathfrak{p}_{2}}  \left( u - \left(u\right)_{2R}\right) \right\rvert \\ 
				& \leq \left( \int_{B_{2R}} \left\lvert \eta^{\mathfrak{p}_{2}}  \left( u - \left(u\right)_{2R}\right) \right\rvert^{\gamma_{1}^{\ast}} \right)^{\frac{1}{\gamma_{1}^{\ast}}}\left( \int_{B_{2R}} \left\lvert f \right\rvert^{\left(\gamma_{1}^{\ast}\right)^{'}} \right)^{\frac{1}{\left(\gamma_{1}^{\ast}\right)^{'}}} \\
				&\leq c \left( \int_{B_{2R}} \left\lvert \nabla_{\mathbb{E}} \left[ \eta^{\mathfrak{p}_{2}}  \left( u - \left(u\right)_{2R}\right) \right] \right\rvert^{\gamma_{1}} \right)^{\frac{1}{\gamma_{1}}}\left( \int_{B_{2R}} \left\lvert f \right\rvert^{\left(\gamma_{1}^{\ast}\right)^{'}} \right)^{\frac{1}{\left(\gamma_{1}^{\ast}\right)^{'}}} \\
				&\leq c R^{Q_{\mathbb{E}}\left( \frac{1}{\gamma_{1}} - \frac{1}{\mathfrak{p}_{1}}\right)}\left( \int_{B_{2R}} \left\lvert \nabla_{\mathbb{E}} \left[ \eta^{\mathfrak{p}_{2}}  \left( u - \left(u\right)_{2R}\right) \right] \right\rvert^{\mathfrak{p}_{1}} \right)^{\frac{1}{\mathfrak{p}_{1}}}\left( \int_{B_{2R}} \left\lvert f \right\rvert^{\left(\gamma_{1}^{\ast}\right)^{'}} \right)^{\frac{1}{\left(\gamma_{1}^{\ast}\right)^{'}}} \\
				&\leq \varepsilon \int_{B_{2R}} \left\lvert \nabla_{\mathbb{E}} \left[ \eta^{\mathfrak{p}_{2}}  \left( u - \left(u\right)_{2R}\right) \right] \right\rvert^{\mathfrak{p}_{1}} + C_{\varepsilon}R^{Q_{\mathbb{E}}\mathfrak{p}_{1}^{'}\left( \frac{1}{\gamma_{1}} - \frac{1}{\mathfrak{p}_{1}}\right)}\left( \int_{B_{2R}} \left\lvert f \right\rvert^{\left(\gamma_{1}^{\ast}\right)^{'}} \right)^{\frac{\mathfrak{p}_{1}^{'}}{\left(\gamma_{1}^{\ast}\right)^{'}}} \\
				&\begin{aligned}
				\leq \varepsilon \int_{B_{2R}} \eta^{\mathfrak{p}_{2}} \left\lvert \nabla_{\mathbb{E}} u \right\rvert^{\mathfrak{p}(x)} &+ c \int_{B_{2R}} \left\lvert \frac{\left( u - \left(u\right)_{2R}\right)}{2R}\right\rvert^{\mathfrak{p}_{2}} \\
				&+ c \left\lvert B_{2R} \right\rvert   + C_{\varepsilon}R^{Q_{\mathbb{E}}\mathfrak{p}_{1}^{'}\left( \frac{1}{\gamma_{1}} - \frac{1}{\mathfrak{p}_{1}}\right)}\left( \int_{B_{2R}} \left\lvert f \right\rvert^{\left(\gamma_{1}^{\ast}\right)^{'}} \right)^{\frac{\mathfrak{p}_{1}^{'}}{\left(\gamma_{1}^{\ast}\right)^{'}}}. 
				\end{aligned}
			\end{align*}
			Plugging $\phi = \eta^{\mathfrak{p}_{2}} \left( u - \left(u\right)_{2R}\right)  \in \mathbb{E}W_{0}^{1, \mathfrak{p}\left(\cdot\right)}\left( B_{2R}\right)$ in the weak formulation of \eqref{p(x) Laplace homogeneous diveregnce form f}, and using the above estimate we deduce the Caccioppoli inequality 
			\begin{align}\label{higher int f rev holder}
				\fint_{B_{R}}\left\lvert \nabla_{\mathbb{E}} u  \right\rvert^{\mathfrak{p}\left(x\right)} \leq C \fint_{B_{2R}}\left\lvert \frac{\left( u - \left(u\right)_{2R}\right)}{2R} \right\rvert^{\mathfrak{p}_{2}} + c  + C R^{\mathfrak{p}_{1}^{'}}\left( \fint_{B_{2R}} \left\lvert f \right\rvert^{\left(\gamma_{1}^{\ast}\right)^{'}} \right)^{\frac{\mathfrak{p}_{1}^{'}}{\left(\gamma_{1}^{\ast}\right)^{'}}}. 
			\end{align}
			Here again we refer the readers to the derivation of \cite[equation (53)]{Mallick_Sil_continuityvariablegrowth} for details. Now, by Poincar\'e-Sobolev \eqref{poincaresobolevineqwithmeans} and H\"older inequality, we have 
			\begin{align*}
				\fint_{B_{2R}}\left\lvert \frac{\left( u - \left(u\right)_{2R}\right)}{2R} \right\rvert^{\mathfrak{p}_{2}} &\leq c \left( 1 + \fint_{B_{2R}} \left\lvert \nabla_{\mathbb{E}} u \right\rvert^{\frac{\mathfrak{p}\left(x\right)}{s}}\right)^{\frac{\mathfrak{p}_{2}s}{\mathfrak{p}_{1}}} \notag \\&\leq c \left( 1 + \fint_{B_{2R}} \left\lvert \nabla_{\mathbb{E}} u \right\rvert^{\frac{\mathfrak{p}\left(x\right)}{s}}\right)^{ \frac{s(\mathfrak{p}_{2} -\mathfrak{p}_1)}{\mathfrak{p}_{1}} } \left( 1 + \fint_{B_{2R}} \left\lvert \nabla_{\mathbb{E}} u \right\rvert^{\frac{\mathfrak{p}\left(x\right)}{s}}\right)^{s} . 
			\end{align*}
			Now since $0 < \left( \frac{\mathfrak{p}_{2}}{\mathfrak{p}_{1}} -1 \right)s < 1$ and $s >1,$ which follows from \eqref{radius dependence on gamma_1}, we have 
			\begin{align*}
				\left( 1 + \fint_{B_{2R}} \left\lvert\nabla_{\mathbb{E}} u \right\rvert^{\frac{\mathfrak{p}\left(x\right)}{s}}\right)^{\left( \frac{\mathfrak{p}_{2}}{\mathfrak{p}_{1}} -1 \right)s} &\leq   1 + \left( \fint_{B_{2R}} \left\lvert \nabla_{\mathbb{E}} u \right\rvert^{\mathfrak{p}\left(x\right)} \right)^{\left( \frac{\mathfrak{p}_{2}}{\mathfrak{p}_{1}} -1 \right)}\\
				&\leq 1 + cR^{-Q_{\mathbb{E}}\left( \frac{\mathfrak{p}_{2}}{\mathfrak{p}_{1}} -1 \right)} \mathcal{K}_{1}^{\left( \frac{\mathfrak{p}_{2}}{\mathfrak{p}_{1}} -1 \right)} \le 1 + C \left( \texttt{data}\right).  
			\end{align*}
			Combining the last two displays, we have 
			\begin{align}\label{PS for higher int f final}
				\fint_{B_{2R}}\left\lvert \frac{\left( u - \left(u\right)_{2R}\right)}{2R} \right\rvert^{\mathfrak{p}_{2}} \leq c \left( 1 + \fint_{B_{2R}} \left\lvert \nabla_{\mathbb{E}} u \right\rvert^{\frac{\mathfrak{p}\left(x\right)}{s}}\right)^{s} . 
			\end{align}
			Plugging  \eqref{PS for higher int f final} in \eqref{higher int f rev holder} we arrive at the reverse H\"{o}lder inequality 
			\begin{align*}
				\fint_{B_{R}}\left\lvert \nabla_{\mathbb{E}} u  \right\rvert^{\mathfrak{p}\left(x\right)} \leq c \left( \fint_{B_{2R}} \left\lvert \nabla_{\mathbb{E}} u \right\rvert^{\frac{\mathfrak{p}\left(x\right)}{s}}\right)^{s} + c\fint_{B_{2R}}\left\lvert \tilde{f} \right\rvert^{\left(\gamma_{1}^{\ast}\right)^{'}} + c, 
			\end{align*}
			where 	
			\begin{align}\label{tilde f def}
				\tilde{f}:= \left[ R^{\mathfrak{p}_{1}^{'}}\left( \fint_{B_{2R}} \left\lvert f \right\rvert^{\left(\gamma_{1}^{\ast}\right)^{'}} \right)^{\frac{\mathfrak{p}_{1}^{'}}{\left(\gamma_{1}^{\ast}\right)^{'}}-1}\right]^{\frac{1}{\left(\gamma_{1}^{\ast}\right)^{'}}} f. 
			\end{align}
			By a standard Gehring lemma type argument (see \cite[Theorem 3.3]{GheringZatorska-Goldstein}), this implies the existence of a constant $\sigma_{1}>0$ such that for any $t \in (0,1],$ there exists a constant $c_{t} >0,$ and  which depends on $t$ and $\texttt{data}$  we have the estimate 
			\begin{align}\label{higher integrability tilde f}
				\fint_{B_{R}}\left\lvert \nabla_{\mathbb{E}} u  \right\rvert^{\mathfrak{p}\left(x\right)\left(1 + \sigma \right)}  \leq c_{t}\left[  \left( \fint_{B_{2R}} \left\lvert \nabla_{\mathbb{E}} u \right\rvert^{\mathfrak{p}\left(x\right)t}\right)^{\frac{1+\sigma}{t}} + \fint_{B_{2R}}\left\lvert \tilde{f} \right\rvert^{\left(\gamma_{1}^{\ast}\right)^{'}\left(1+\sigma\right)} + 1 \right] ,  
			\end{align}
			for all $ 0 < \sigma\leq \sigma_1.$	
			By the definition of $\theta$ in \eqref{qdef}, we have $\left(\gamma_1^{\ast}\right)' \leq \left(\gamma_1^{\ast}\right)'(1+\sigma)  \leq \theta$. This, coupled with the definition of  $\tilde{f}$ in \eqref{tilde f def} and H\"older's inequality yields 
			\begin{align*}
				\fint_{B_{2R}}&\left\lvert \tilde{f} \right\rvert^{\left(\gamma_{1}^{\ast}\right)^{'}\left(1+\sigma\right)} \leq \left[ R^{\mathfrak{p}_{1}^{'}}\left( \fint_{B_{2R}}\left\lvert f \right\rvert^{\theta} \right)^{\frac{\mathfrak{p}_{1}^{'}}{\theta}}\right]^{(1+\sigma)}. 
			\end{align*}   
			Plugging this in \eqref{higher integrability tilde f}, we deduce 
			\begin{align*}
				\left( \fint_{B_{R}}\left\lvert \nabla_{\mathbb{E}} u  \right\rvert^{\mathfrak{p}\left(x\right)\left(1 + \sigma \right)} \right)^{\frac{1}{1+ \sigma}} 
				\hspace{-5pt}\leq c_{t} \left(  \left( \fint_{B_{2R}} \left\lvert \nabla_{\mathbb{E}} u \right\rvert^{\mathfrak{p}\left(x\right)t}\right)^{\frac{1}{t}} + R^{\mathfrak{p}_{1}^{'}}\left( \fint_{B_{R}}\left\lvert f \right\rvert^{\theta} \right)^{\frac{\mathfrak{p}_{1}^{'}}{\theta}} + 1 \right) . 
			\end{align*}
			Now, we take $t= \gamma/\gamma_{2}$ and estimate 
			\begin{align*}
				\left( \fint_{B_{2R}} \left\lvert \nabla_{\mathbb{E}} u \right\rvert^{\mathfrak{p}\left(x\right)\frac{\gamma}{\gamma_{2}}}\right)^{\frac{\gamma_{2}}{\gamma}} &\leq \left( \fint_{B_{2R}} \left\lvert \nabla_{\mathbb{E}} u \right\rvert^{\mathfrak{p}\left(x\right)\frac{\gamma}{\mathfrak{p}_{2}}}\right)^{\frac{\mathfrak{p}_{2}}{\gamma}}\notag \\
				&= \left( \fint_{B_{2R}} \left\lvert \nabla_{\mathbb{E}} u \right\rvert^{\mathfrak{p}\left(x\right)\frac{\gamma}{\mathfrak{p}_{2}}}\right)^{\frac{\mathfrak{p}_{0}}{\gamma}}\left( \fint_{B_{2R}} \left\lvert \nabla_{\mathbb{E}} u \right\rvert^{\mathfrak{p}\left(x\right)\frac{\gamma}{\mathfrak{p}_{2}}}\right)^{\frac{\mathfrak{p}_{2} - \mathfrak{p}_{0}}{\gamma}}.
			\end{align*}
			But the rightmost term is bounded by $C \left( \texttt{data}\right)$ and for the other we have 
			\begin{align*}
				\left( \fint_{B_{2R}} \left\lvert \nabla_{\mathbb{E}} u \right\rvert^{\mathfrak{p}\left(x\right)\frac{\gamma}{\mathfrak{p}_{2}}}\right)^{\frac{\mathfrak{p}_{0}}{\gamma}} &\leq c\left( \fint_{B_{2R}} \left( 1 + \left\lvert \nabla_{\mathbb{E}} u \right\rvert\right)^{\gamma}\right)^{\frac{\mathfrak{p}_{0}}{\gamma}} \leq c \left[ 1 + \left( \fint_{B_{2R}} \left\lvert \nabla_{\mathbb{E}} u \right\rvert^{\gamma}\right)^{\frac{\mathfrak{p}_{0}}{\gamma}} \right]. 
			\end{align*}
			Similarly, noting that $\mathfrak{p}_{1}^{'} \geq \mathfrak{p}_{0}^{'}$ and $Q_{\mathbb{E}}>\theta$, we can estimate 
			\begin{align*}
				\left( R^{\theta} \fint_{B_{2R}}\left\lvert f \right\rvert^{\theta} \right)^{\frac{\mathfrak{p}_{1}^{'}}{\theta}} &\leq \left( R^{\theta} \fint_{B_{2R}}\left\lvert f \right\rvert^{\theta} \right)^{\frac{\mathfrak{p}_{0}^{'}}{\theta}}\left( R^{\theta} \fint_{B_{2R}}\left\lvert f \right\rvert^{\theta} \right)^{\frac{\mathfrak{p}_{1}^{'}-\mathfrak{p}_{0}^{'}}{\theta}} \\
				&\leq \left( R^{\theta} \fint_{B_{2R}}\left\lvert f \right\rvert^{\theta} \right)^{\frac{\mathfrak{p}_{0}^{'}}{\theta}}\left( R^{Q_{\mathbb{E}}} \fint_{B_{2R}}\left\lvert f \right\rvert^{Q_{\mathbb{E}}} \right)^{\frac{\mathfrak{p}_{1}^{'}-\mathfrak{p}_{0}^{'}}{Q_{\mathbb{E}}}} \\
				&\leq  C \left( \texttt{data}\right)\left( R^{\theta} \fint_{B_{2R}}\left\lvert f \right\rvert^{\theta} \right)^{\frac{\mathfrak{p}_{0}^{'}}{\theta}}. 
			\end{align*}
			%
			In view of these estimates, we have 
			\begin{align*}
				\left( \fint_{B_{R}}\left\lvert \nabla_{\mathbb{E}} u  \right\rvert^{\mathfrak{p}\left(x\right)\left(1 + \sigma \right)} \right)^{\frac{1}{1+ \sigma}} \leq c \left[  \left( \fint_{B_{2R}} \left\lvert \nabla_{\mathbb{E}} u \right\rvert^{\gamma}\right)^{\frac{\mathfrak{p}_{0}}{\gamma}} + \left( R^{\theta} \fint_{B_{2R}}\left\lvert f \right\rvert^{\theta} \right)^{\frac{\mathfrak{p}_{0}^{'}}{\theta}} + 1 \right] .
			\end{align*}
			This implies 
			\begin{align*}
				\left( \fint_{B_{R}}\left\lvert \nabla_{\mathbb{E}} u  \right\rvert^{\mathfrak{p}\left(x\right)\left(1 + \sigma \right)} \right)^{\frac{1}{1+ \sigma}} \notag &\leq c \left[  \left( \fint_{B_{2R}} \left\lvert \nabla_{\mathbb{E}} u \right\rvert^{\gamma}\right)^{\frac{1}{\gamma}} + \left( R^{q} \fint_{B_{2R}}\left\lvert f \right\rvert^{\theta} \right)^{\frac{1}{\theta\left(\mathfrak{p}_{0}-1\right)}} + 1 \right]^{\mathfrak{p}_{0}} \notag \\
				& \leq c \left[  \left( \fint_{B_{2R}} \left\lvert \nabla_{\mathbb{E}} u \right\rvert^{\gamma}\right)^{\frac{1}{\gamma}} + \left( R^{q} \fint_{B_{2R}}\left\lvert f \right\rvert^{\theta} \right)^{\frac{1}{\theta\left(\mathfrak{p}_{0}-1\right)}} + 1 \right]^{\mathfrak{p}_{2}}. 
			\end{align*}
			By fairly straightforward estimates using the fact that $R < 1$, alongside the bounds $R^{-\omega_{\mathfrak{p}}(R)}, \mathcal{K}_{1}^{\omega_{\mathfrak{p}}(R)} \le C \left( \texttt{data}\right)$, we have 
			\begin{align*}
				\left[  \left( \fint_{B_{2R}} \left\lvert \nabla_{\mathbb{E}} u \right\rvert^{\gamma}\right)^{\frac{1}{\gamma}} + \left( R^{q} \fint_{B_{2R}}\left\lvert f \right\rvert^{\theta} \right)^{\frac{1}{\theta\left(\mathfrak{p}_{0}-1\right)}} + 1 \right]^{\mathfrak{p}_{2} -\bar{\mathfrak{p}}} \leq C \left( \texttt{data}\right). 
			\end{align*}
			Combining the last two displays, we have the desired result. 
		\end{proof}
		\subsection{Global higher integrability}
		\begin{theorem}\label{global_higher_integrability}
			Let $u\in \mathbb{E}W^{1, \mathfrak{p}(\cdot)}(\Omega)$ and let $\mathcal{K}_{2} >1$ be a real number such that  
			\begin{align}\label{energy bound f Global HI}
			\mathcal{K}_{2} \ge	1 + \int_{\Omega} \left(  1+ \left\lvert \nabla_{\mathbb{E}} u \right\rvert \right)^{\mathfrak{p}\left(x\right)} \ \mathrm{d}x.
			\end{align}
			Let $R>0$ be such that $B_{4R} \subset \Omega$ and  assume $u\in \mathbb{E}W^{1,\mathfrak{p}(\cdot)(1+\tilde{\sigma})}(B_{2R})$ for some $\tilde{\sigma}>0$. Let $w \in u+\mathbb{E}W_0^{1,\mathfrak{p}(\cdot)}\left(B_{R}\right)$ be the unique minimizer of 
			\begin{align*}
				\inf \left\lbrace \int_{B_{R}} \frac{1}{\mathfrak{p}(x)}\left\lvert \nabla_{\mathbb{E}} w\right\rvert^{\mathfrak{p}\left(x\right)}\ \mathrm{d}x: w \in u + \mathbb{E}W_{0}^{1,\mathfrak{p}(\cdot)}\left(B_{R}\right) \right\rbrace.  
			\end{align*}
			Then there exist constants $R_{2} \equiv R_{2}\left(\textnormal{\texttt{data}}, \mathcal{K}_{2} \right)$, $C_{5} \equiv C_{5} \left(\textnormal{\texttt{data}} \right) \ge 1$ and $\sigma_{2} \equiv \sigma_{2}\left(\textnormal{\texttt{data}}, \tilde{\sigma} \right) \in [0,\tilde{\sigma})$ such that for all $0 < \sigma \leq \sigma_{2},$ we have 
			\begin{align}\label{Global HI}
				\fint_{B_{R}} \left\lvert \nabla_{\mathbb{E}} w \right\rvert^{\mathfrak{p}(x)\left(1 + \sigma\right)} \leq C_{5} \left( 1+  \fint_{B_{2R}} \left\lvert \nabla_{\mathbb{E}} u \right\rvert^{\mathfrak{p}(x)\left(1 + \sigma\right)}\right), 
			\end{align}
			for any $0 < R < R_{2}$, whenever $B_{4R} \subset \Omega$. 
		\end{theorem}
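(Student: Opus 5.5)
The plan is to prove a boundary (global) reverse Hölder inequality for $\nabla_{\mathbb{E}} w$ on $B_{R}$, extended by $\nabla_{\mathbb{E}} u$ outside $B_R$, and then apply a Gehring-type lemma with boundary data, in the spirit of Zhikov and Acerbi--Mingione. First I record the basic energy comparison. Since $w$ minimizes and $u$ is admissible,
\begin{align*}
\int_{B_R} \left\lvert \nabla_{\mathbb{E}} w\right\rvert^{\mathfrak{p}(x)} \le \frac{\gamma_2}{\gamma_1}\int_{B_R}\left\lvert \nabla_{\mathbb{E}} u\right\rvert^{\mathfrak{p}(x)} \le c\,\mathcal{K}_2 .
\end{align*}
Next I extend $w$ by $u$ on $B_{2R}\setminus B_R$, so that $w-u\in \mathbb{E}W^{1,\mathfrak{p}(\cdot)}_0(B_R)$ and $w\in\mathbb{E}W^{1,\mathfrak{p}(\cdot)}(B_{2R})$. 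I choose $R_2$ exactly as in the proof of Theorem \ref{higher integrability f}: $R_2\mathcal{K}_2<1$, $2Q_{\mathbb{E}}\omega_{\mathfrak{p}}(4R_2)\le s$, and the log-Hölder bound. These choices make $\rho^{-\omega_{\mathfrak{p}}(\rho)}$ and $\mathcal{K}_2^{\omega_{\mathfrak{p}}(\rho)}$ bounded by $C(\texttt{data})$ on all small balls.

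The core step is a Caccioppoli inequality on every small ball $B_\rho(y)$ with $B_{2\rho}(y)\subset B_{2R}$. I split into two cases.
\begin{itemize}
\item \emph{Interior case}, $B_{2\rho}(y)\subset B_R$. I test the Euler--Lagrange equation with $\eta^{\mathfrak{p}_2}(w-(w)_{y,2\rho})$. Repeating the argument leading to \eqref{higher int f rev holder} and \eqref{PS for higher int f final} with $f=0$ gives
\begin{align*}
\fint_{B_{\rho}(y)} \left\lvert \nabla_{\mathbb{E}} w\right\rvert^{\mathfrak{p}(x)} \le c\left(\fint_{B_{2\rho}(y)}\left\lvert \nabla_{\mathbb{E}} w\right\rvert^{\mathfrak{p}(x)/s}\right)^{s} + c.
\end{align*}
\item \emph{Boundary case}, $B_{2\rho}(y)\not\subset B_R$. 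I use the minimality of $w$ against the competitor $w-\eta^{\mathfrak{p}_2}(w-u)$, which is admissible because $w-u$ vanishes outside $B_R$. This gives a Caccioppoli inequality for $w-u$:
\begin{align*}
\fint_{B_{\rho}(y)} \left\lvert \nabla_{\mathbb{E}} w\right\rvert^{\mathfrak{p}(x)} \le c\fint_{B_{2\rho}(y)}\left\lvert \frac{w-u}{\rho}\right\rvert^{\mathfrak{p}_2} + c\fint_{B_{2\rho}(y)}\left\lvert \nabla_{\mathbb{E}} u\right\rvert^{\mathfrak{p}(x)} + c .
\end{align*}
\end{itemize}
For the first term in the boundary case I need a Sobolev--Poincaré inequality without subtracting means. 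This holds because $w-u$ vanishes on $B_{2\rho}(y)\setminus B_R$, and that set occupies a fixed positive proportion of $B_{2\rho}(y)$ (possibly after enlarging to $B_{4\rho}(y)$); this uses the uniform measure-density of the complement of metric balls, which holds for Korányi balls. The variable-exponent mismatch is then absorbed exactly as in \eqref{PS for higher int f final}, using $\mathcal{K}_2$ and the choice of $R_2$.

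Combining the two cases, the extended functions $g:=\lvert\nabla_{\mathbb{E}} w\rvert^{\mathfrak{p}(x)/s}$ and $h:=\lvert\nabla_{\mathbb{E}} u\rvert^{\mathfrak{p}(x)/s}+1$ satisfy, on all small balls in $B_{2R}$,
\begin{align*}
\fint_{B_\rho} g^{s} \le c\left(\fint_{B_{2\rho}} g\right)^{s} + c\fint_{B_{2\rho}} h^{s}.
\end{align*}
Since $h^s\in L^{1+\tilde\sigma}(B_{2R})$ by hypothesis, Gehring's lemma in the doubling metric setting (\cite[Theorem 3.3]{GheringZatorska-Goldstein}) yields some $\sigma_2<\tilde\sigma$ depending on data and $\tilde\sigma$ such that, for $0<\sigma\le\sigma_2$, $g^s\in L^{1+\sigma}$ with the corresponding estimate. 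On $B_{2R}\setminus B_R$ we have $w=u$, so the estimate on the larger ball immediately gives \eqref{Global HI}; the lower-order term $(\fint g)^{s(1+\sigma)}$ is controlled by $1+\fint_{B_{2R}}\lvert\nabla_{\mathbb{E}}u\rvert^{\mathfrak{p}(x)(1+\sigma)}$ through the energy comparison and Hölder's inequality.

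I expect the main obstacle to be the boundary Sobolev--Poincaré step with a variable exponent. It requires controlling $\mathfrak{p}_2/\mathfrak{p}_1$ on balls that straddle $\partial B_R$, using the fat complement in the Heisenberg case, and absorbing the factor $\left(\fint\lvert\nabla_{\mathbb{E}}(w-u)\rvert^{\mathfrak{p}/s}\right)^{s(\mathfrak{p}_2/\mathfrak{p}_1-1)}$ via the global energy bound. This is why $R_2$ must depend on $\mathcal{K}_2$.
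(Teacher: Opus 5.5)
Your proposal is correct and is essentially the paper's proof. Both arguments extend $w$ by $u$ to $B_{2R}$, prove a reverse H\"older inequality with exponent $1/s$ from an interior Caccioppoli estimate and a boundary one, absorb the exponent mismatch using $\mathcal{K}_2$ and the choice of $R_2$, and conclude with the metric Gehring lemma with $q=s$; the boundary Poincar\'e step uses that $w-u$ vanishes on a fixed fraction of balls centred on or near $\partial B_R$. The differences are cosmetic: the paper obtains the boundary Caccioppoli inequality by testing the Euler--Lagrange equation with $\eta^{\mathfrak{p}_2}(w-u)$ rather than using the competitor $w-\eta^{\mathfrak{p}_2}(w-u)$, and it reduces to balls centred on $\partial B_R$ by a covering argument instead of enlarging to $B_{4\rho}(y)$. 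If you keep the competitor, write its gradient as the convex combination $(1-\eta^{\mathfrak{p}_2})\nabla_{\mathbb{E}} w+\eta^{\mathfrak{p}_2}[\nabla_{\mathbb{E}} u-\mathfrak{p}_2\eta^{-1}(w-u)\nabla_{\mathbb{E}}\eta]$ so that no hole-filling step is needed.
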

		\begin{proof}
			When $\mathbb{E}^{n} = \mathbb{R}^{n}$ and $\mathfrak{p}(x) \equiv p,$ this is just \cite[Theorem 6.8]{Giusti_DCV}. However, since the argument with cubes cannot work in $\mathbb{H}^{n},$ we need to adapt the proof using the Gehring lemma proved in \cite{GheringZatorska-Goldstein}. First we set $0<R_{2}<1/\mathcal{K}_2$ so small that  
			\begin{align}\label{smallness of wp Global HI}
				\omega_{\mathfrak{p}}(4R)< \min \left \lbrace \gamma_1/ Q_{\mathbb{E}}, 2/\left(2Q_{\mathbb{E}}-1\right)\right \rbrace \qquad \text{ for all } 0 < R < R_{2}.
			\end{align}
			Observe that $w \in u+\mathbb{E}W_0^{1,\mathfrak{p}(\cdot)}\left(B_{R}\right)$ satisfies 
			\begin{align}\label{equation frozen Global HI}
				\int_{B_{R}} \left\langle \left\lvert \nabla_{\mathbb{E}} w \right\rvert^{\mathfrak{p}\left(x\right)-2} \nabla_{\mathbb{E}} w , \nabla_{\mathbb{E}} \phi \right\rangle\ \mathrm{d}x = 0 \qquad \text{ for all } \phi \in \mathbb{E}W_{0}^{1,\mathfrak{p}(\cdot)}\left(B_{R}\right).  
			\end{align} Also, since $w$ achieves the minimum, we have 
			\begin{align}\label{minimality of w Global HI}
				\int_{B_{R}}\left\lvert \nabla_{\mathbb{E}} w \right\rvert^{\mathfrak{p}\left(x\right)}\ \mathrm{d}x \leq c(\gamma_1,\gamma_2) \int_{B_{R}}\left\lvert \nabla_{\mathbb{E}} u \right\rvert^{\mathfrak{p}\left(x\right)}\ \mathrm{d}x. 
			\end{align} Now we set 
			\begin{align*}
				\tilde{w}(x): = \left\lbrace \begin{aligned}
					&w(x) &&\text{ in } B_{R},\\
					&u(x) &&\text{ in } B_{2R}\setminus B_{R}.
				\end{aligned}\right. 
			\end{align*}
			We claim the following reverse H\"{o}lder inequality: 
			\begin{align}\label{rev holder}
				\fint_{B_{\rho}\left(x_{1}\right)} \left\lvert \nabla_{\mathbb{E}} \tilde{w} \right\rvert^{\mathfrak{p}(x)} \leq c\left( 1 + \fint_{B_{2\rho}\left(x_{1}\right)} \left\lvert \nabla_{\mathbb{E}} u \right\rvert^{\mathfrak{p}(x)} +\left(\fint_{B_{2\rho}\left(x_{1}\right)}  \left\lvert \nabla_{\mathbb{E}} \tilde{w} \right\rvert^{\frac{\mathfrak{p}\left(x\right)}{s}}\right)^{s}\right). 
			\end{align}
			whenever $x_{1} \in \overline{B_{R}}$ and $B_{2\rho}(x_{1}) \subset B_{2R},$ and for $s = \min \left\lbrace \gamma_{1}, 2Q_{\mathbb{E}}/(2Q_{\mathbb{E}} -1)\right\rbrace.$ Once this is established, using \cite[Theorem 3.3]{GheringZatorska-Goldstein} with `$q=s$'  we conclude that there exist a positive constant $c =c \left(\textnormal{\texttt{data}} \right),$ and $\sigma_{2} = \sigma_{2}\left(\textnormal{\texttt{data}}, \tilde{\sigma} \right) \in [0,\tilde{\sigma})$ such that for all $0 \leq \sigma \leq \sigma_{2},$ we have  
			\begin{align*}
				\left( \fint_{B_{R}} \left\lvert \nabla_{\mathbb{E}} \tilde{w} \right\rvert^{\mathfrak{p}(x)(1+\sigma)} \right)^{\frac{1}{(1+\sigma)}}  \hspace{-3pt}\leq c \fint_{B_{2R}}\left\lvert  \nabla_{\mathbb{E}} \tilde{w} \right\rvert^{\mathfrak{p}(x)} + c \left(1+ \fint_{B_{2R}} \left\lvert \nabla_{\mathbb{E}} u \right\rvert^{\mathfrak{p}(x)(1+\sigma)}\right)^{\frac{1}{(1+\sigma)}}.   
			\end{align*}
			By definition of $\tilde{w}$ and using \eqref{minimality of w Global HI}, we arrive at 
			\begin{align*}
				&\left( \fint_{B_{R}} \left\lvert \nabla_{\mathbb{E}} w \right\rvert^{\mathfrak{p}(x)(1+\sigma)} \right)^{\frac{1}{(1+\sigma)}} \\
				&\hspace{25pt}\leq c\fint_{B_{R}}\left\lvert  \nabla_{\mathbb{E}} w  \right\rvert^{\mathfrak{p}(x)}+ c\fint_{B_{2R}}\left\lvert  \nabla_{\mathbb{E}}  u \right\rvert^{\mathfrak{p}(x)} + c \left( 1+\fint_{B_{2R}} \left\lvert \X u \right\rvert^{\mathfrak{p}(x)(1+\sigma)}\right)^{\frac{1}{(1+\sigma)}} \\
				&\hspace{25pt}\leq c\fint_{B_{R}}\left\lvert  \nabla_{\mathbb{E}}  u \right\rvert^{\mathfrak{p}(x)} + c \left( 1+\fint_{B_{2R}} \left\lvert \nabla_{\mathbb{E}}  u \right\rvert^{\mathfrak{p}(x)(1+\sigma)}\right)^{\frac{1}{(1+\sigma)}}.
			\end{align*}
			This implies our result by H\"{o}lder inequality. Thus it remains to establish \eqref{rev holder}. By standard covering arguments, it is enough to show that estimates for the two cases: (i) when $x_{1} \in B_{R}$ with $\rho < \operatorname{dist}\left(x_{1}, \partial B_{R}\right)$ and (ii) when $x_{1} \in \partial B_{R}.$ We only show the last case, as the first one is analogous and simpler. 
			
			Let $x_1\in \partial B_{R}$ and $\rho >0$ be such that $B_{2\rho}(x_1) \subset B_{2R}$. We define 
			\begin{align*}
				\mathfrak{p}_{2}:= \max\limits_{x \in \overline{B_{2\rho}(x_{1})}} \mathfrak{p} (x) \qquad \text{ and } \qquad \mathfrak{p}_{1}:= \min\limits_{x \in \overline{B_{2\rho}(x_{1})}} \mathfrak{p} (x).
			\end{align*} Note that \eqref{smallness of wp Global HI} implies 
			\begin{align}\label{bound on p1 by p2 Global HI}
				\frac{\mathfrak{p}_{1}}{\mathfrak{p}_{2}} \geq 1-\frac{s}{2Q_{\mathbb{E}}}.
			\end{align}
			We set
			\begin{align}\label{p*1/s-nondiv Global HI}
				\left(\frac{\mathfrak{p}_1}{s}\right)^*:= \begin{cases}
					\frac{Q_{\mathbb{E}}\mathfrak{p}_1}{Q_{\mathbb{E}}s-\mathfrak{p}_1} \mbox{ if } \mathfrak{p}_1<Q_{\mathbb{E}}s, \\ 
					\mathfrak{p}_2+1 \mbox{ else .}
				\end{cases}
			\end{align}
			Then clearly, $\left(\frac{\mathfrak{p}_1}{s}\right)^*\geq \mathfrak{p}_2$. Indeed, in the case $\mathfrak{p}_1<Q_{\mathbb{E}}s$ this follows from the computation below. 
			\begin{align*}
				\left(\frac{\mathfrak{p}_1}{s}\right)^* \stackrel{\eqref{bound on p1 by p2 Global HI}}{\geq}  \mathfrak{p}_2\frac{Q_{\mathbb{E}}\left(1-\frac{s}{2Q_{\mathbb{E}}}\right)}{Q_{\mathbb{E}}s-s} \geq \mathfrak{p}_2,
			\end{align*}
			where we have used the bound $s \leq \min \left\lbrace \gamma_1, 2Q_{\mathbb{E}}/ (2Q_{\mathbb{E}}-1)\right\rbrace $.
			
			\par Now fix $0< \rho \leq R/2.$ Choose a cut off function $\eta \in C_{c}^{\infty}\left(B_{2\rho}\left(x_{1}\right)\right)$ such that $0 \leq \eta \leq 1,$ $\eta \equiv 1$ in $B_{\rho}\left(x_{1}\right)$ and $\left\lvert \nabla_{\mathbb{E}} \eta \right\rvert \leq c/\rho$ for some constant $c = c(n)>1.$ Now observe that as $2\rho \le R$ and $u=w$ on $\partial B_{R}$, we have $\phi = \eta^{\mathfrak{p}_2}\left( w - u \right) \in \mathbb{E}W_{0}^{1,\mathfrak{p}(\cdot)}\left(B_{R}\right)$. Thus, plugging $\phi$ in \eqref{equation frozen Global HI}, we deduce 
			\begin{align*}
				\int_{B_{2\rho}\left(x_{1}\right)\cap B_{R}} &\eta^{\mathfrak{p}_2}\left \langle \left \lvert \nabla_{\mathbb{E}} w \right\rvert^{\mathfrak{p}(x)-2} \nabla_{\mathbb{E}} w, \nabla_{\mathbb{E}} w -\nabla_{\mathbb{E}} u  \right\rangle\ \mathrm{d}x  \\
				&= - \int_{B_{2\rho}\left(x_{1}\right)\cap B_{R}} \left\langle \left\lvert \nabla_{\mathbb{E}} w \right\rvert^{\mathfrak{p}(x)-2} \nabla_{\mathbb{E}} w , \mathfrak{p}_2 \left( w - u \right) \eta^{\mathfrak{p}_2-1} \nabla_{\mathbb{E}} \eta \right\rangle\ \mathrm{d}x.  
			\end{align*}
			A fairly straightforward estimate using Young's inequality with $0<\varepsilon<1$ and using the estimates $\varepsilon^{\mathfrak{p}'(x)}, \varepsilon^{\mathfrak{p}_2} \leq \varepsilon^{\gamma_2'}$ and $\varepsilon^{-\mathfrak{p}(x)}, \varepsilon^{-\mathfrak{p}_2} \leq \varepsilon^{-\gamma_2}$, for any $x\in \Omega$,  we derive, 
			\begin{align*}
				&\int_{B_{2\rho}\left(x_{1}\right)\cap B_{R}} \eta^{\mathfrak{p}_2}\left\lvert \nabla_{\mathbb{E}} w \right\rvert^{\mathfrak{p}(x)}\ \mathrm{d}x\\
				&\quad\leq \frac{1}{\varepsilon^{\gamma_2}\gamma_1} \int_{B_{2 \rho}\left(x_{1}\right)\cap B_{R}} \eta^{\mathfrak{p}_2}\left\lvert \nabla_{\mathbb{E}} u \right\rvert^{\mathfrak{p}(x)}\ \mathrm{d}x + c\varepsilon^{\gamma'_2} \int_{B_{2\rho}\left(x_{1}\right)\cap B_{R}} \eta^{\mathfrak{p}_2}\left\lvert \nabla_{\mathbb{E}} w \right\rvert^{\mathfrak{p}(x)}\ \mathrm{d}x \\
				&\hspace{3cm}+ c \left \lvert B_{2\rho}(x_1)\cap B_R \right\rvert\ \mathrm{d}x  + \frac{\gamma_2}{\varepsilon^{\gamma_2}\gamma_1} \int_{B_{2\rho}\left(x_{1}\right)\cap B_{R}} \frac{\left\lvert  w - u \right\rvert^{\mathfrak{p}_2}}{\rho^{\mathfrak{p}_2}}\ \mathrm{d}x.	  
			\end{align*}
			Choosing $\varepsilon>0$ small enough we deduce 
			\begin{align*}
				\int_{B_{\rho}\left(x_{1}\right)\cap B_{R}} \left\lvert \nabla_{\mathbb{E}} w \right\rvert^{\mathfrak{p}(x)}
				\leq c \int_{B_{2\rho}\left(x_{1}\right)\cap B_{R}} \left\lvert \nabla_{\mathbb{E}} u \right\rvert^{\mathfrak{p}(x)}  &+ c \int_{B_{2\rho}\left(x_{1}\right)\cap B_{R}} \frac{\left\lvert  w - u \right\rvert^{\mathfrak{p}_2}}{\rho^{\mathfrak{p}_2}} \\ 
				&+  c \left \lvert B_{2\rho}(x_1)\cap B_R \right\rvert. 
			\end{align*}
			This implies 
			\begin{align}\label{boundary rev holder eq1}
				\fint_{B_{\rho}\left(x_{1}\right)\cap B_{R}} \left\lvert \nabla_{\mathbb{E}} w \right\rvert^{\mathfrak{p}(x)}\ \mathrm{d}x 
				&\leq c\left(1+ \fint_{B_{2\rho}\left(x_{1}\right)\cap B_{R}} \left\lvert \nabla_{\mathbb{E}} u \right\rvert^{\mathfrak{p}(x)}\ \mathrm{d}x\right)\notag \\ & \hspace{32pt}+  c \fint_{B_{2\rho}\left(x_{1}\right)\cap B_{R}}\frac{\left\lvert  w - u \right\rvert^{\mathfrak{p}_2}}{\rho^{\mathfrak{p}_2}}\ \mathrm{d}x. 
			\end{align} 
			For the last term on the right, we set $\psi= \tilde{w}-u$ and estimate 
			\begin{align}
				\fint_{B_{2\rho}\left(x_{1}\right)\cap B_{R}}\frac{\left\lvert  w - u \right\rvert^{\mathfrak{p}_2}}{\rho^{\mathfrak{p}_2}} &= \frac{1}{\left\lvert B_{2\rho}\left(x_{1}\right)\cap B_{R}\right\rvert}\int_{B_{2\rho}\left(x_{1}\right)\cap B_{R}}\left\lvert\frac{ w - u }{\rho}\right\rvert^{\mathfrak{p}_2}\ \mathrm{d}x \notag\\
				&=  \frac{\left\lvert B_{2\rho}\left(x_{1}\right)\right\rvert}{\left\lvert B_{2\rho}\left(x_{1}\right)\cap B_{R}\right\rvert}\fint_{B_{2\rho}\left(x_{1}\right)}\left\lvert\frac{ \psi}{\rho}\right\rvert^{\mathfrak{p}_2}\leq  c \fint_{B_{2\rho}\left(x_{1}\right)}\left\lvert\frac{ \psi}{\rho}\right\rvert^{\mathfrak{p}_2}, \label{boundary rev holder eq2}
			\end{align}
			where we have used the fact that $\tilde{w}-u=0$ outside $B_{R}$ and the obvious geometric fact that the ratio of the measures are bounded below by a constant which depends only on $n$, since there is a ball of radius $\rho$ which is contained inside $B_{2\rho}\left(x_{1}\right)\cap B_{R}.$ 
			Now we have the obvious estimate 
			\begin{align*}
				\left( \fint_{B_{2\rho}\left(x_{1}\right)}\left\lvert\frac{  \psi }{\rho}\right\rvert^{\mathfrak{p}_2}\right)^{\frac{1}{\mathfrak{p}_2}} \leq 	\left( \fint_{B_{2\rho}\left(x_{1}\right)}\left\lvert\frac{  \psi - \left(\psi\right)_{B_{2\rho}\left(x_{1}\right)} }{\rho}\right\rvert^{\mathfrak{p}_2}\right)^{\frac{1}{\mathfrak{p}_2}} + \left\lvert\frac{ \left(\psi\right)_{B_{2\rho}\left(x_{1}\right)} }{\rho}\right\rvert. 
			\end{align*}
			Since $\psi=0$ outside $B_{R},$ by H\"{o}lder inequality, we deduce 
			\begin{align*}
				\left\lvert\frac{ \left(\psi\right)_{B_{2\rho}\left(x_{1}\right)} }{\rho}\right\rvert &\leq \fint_{B_{2\rho}\left(x_{1}\right)}\left\lvert\frac{ \psi}{\rho}\right\rvert \\&=\frac{1}{\left\lvert B_{2\rho}\left(x_{1}\right)\right\rvert}\int_{B_{2\rho}\left(x_{1}\right)\cap B_{R}}\left\lvert\frac{ \psi}{\rho}\right\rvert \\
				&\leq  \frac{1}{\left\lvert B_{2\rho}\left(x_{1}\right)\right\rvert}\left(\int_{B_{2\rho}\left(x_{1}\right)\cap B_{R}}\left\lvert\frac{ \psi }{\rho}\right\rvert^{\mathfrak{p}_2} \right)^{\frac{1}{\mathfrak{p}_2}} \left\lvert B_{2\rho}\left(x_{1}\right)\cap B_{R}\right\rvert^{1-\frac{1}{\mathfrak{p}_2}} \\
				&=\left( \frac{\left\lvert B_{2\rho}\left(x_{1}\right)\cap B_{R}\right\rvert}{\left\lvert B_{2\rho}\left(x_{1}\right)\right\rvert}\right)^{1-\frac{1}{\mathfrak{p}_2}}\left(\fint_{B_{2\rho}\left(x_{1}\right)}\left\lvert\frac{ \psi }{\rho}\right\rvert^{\mathfrak{p}_2} \right)^{\frac{1}{\mathfrak{p}_2}}  \\
				&\leq \left( \frac{1}{2}\right)^{1-\frac{1}{\mathfrak{p}_2}}\left(\fint_{B_{2\rho}\left(x_{1}\right)}\left\lvert\frac{ \psi }{\rho}\right\rvert^{\mathfrak{p}_2} \right)^{\frac{1}{\mathfrak{p}_2}},
			\end{align*}
			where we have used the elementary estimate $\left\lvert B_{2\rho}\left(x_{1}\right)\cap B_{R}\right\rvert \leq \frac{1}{2}\left\lvert B_{2\rho}\left(x_{1}\right)\right\rvert$ for metric balls which is valid since $x_{1} \in \partial B_{R}$. Combining the last two displays  we obtain 
			\begin{align*}
				\fint_{B_{2\rho}\left(x_{1}\right)}\left\lvert\frac{  \psi }{\rho}\right\rvert^{\mathfrak{p}_2} \leq c  \fint_{B_{2\rho}\left(x_{1}\right)}\left\lvert\frac{  \psi - \left(\psi\right)_{B_{2\rho}\left(x_{1}\right)} }{\rho}\right\rvert^{\mathfrak{p}_2}.
			\end{align*}
			From this, using the Poincar\'e-Sobolev inequality, the equation \eqref{boundary rev holder eq1}, and the assumption $0<R<1/\mathcal{K}_1​$ combined with \eqref{log holder bound Global HI}, we can deduce that
			\begin{align*}
				\fint_{B_{2\rho}\left(x_{1}\right)}\left\lvert\frac{  \psi }{\rho}\right\rvert^{\mathfrak{p}_2} 
				&\leq c\left( 1 + \fint_{B_{2\rho}} \left\lvert \nabla_{\mathbb{E}} \psi \right\rvert^{\frac{\mathfrak{p}\left(x\right)}{s}}\right)^{s} \\
				&\leq c\left( 1 + \fint_{B_{2\rho}} \left\lvert \nabla_{\mathbb{E}} u \right\rvert^{\mathfrak{p}(x)} +\left(\fint_{B_{2\rho}}  \left\lvert \nabla_{\mathbb{E}} \tilde{w} \right\rvert^{\frac{\mathfrak{p}\left(x\right)}{s}}\right)^{s}\right). 						
			\end{align*}
			Also, note that 
			\begin{align*}
				\fint_{B_{\rho}(x_{1})} \lvert \nabla_{\mathbb{E}} \tilde{w} \rvert^{\mathfrak{p}(x)}
				&\leq\fint_{B_{\rho}(x_1)\cap B_R} \left\lvert \nabla_{\mathbb{E}} w \right\rvert^{\mathfrak{p}(x)} +   \frac{1}{\left \lvert B_\rho(x_1) \right \rvert} \int_{B_{\rho}(x_1)\cap[B_{2R}\setminus B_R]} \left\lvert  \nabla_{\mathbb{E}} u \right\rvert^{\mathfrak{p}(x)} \\
				&\leq\fint_{B_{\rho}(x_1)\cap B_R} \left\lvert \nabla_{\mathbb{E}} w \right\rvert^{\mathfrak{p}(x)} +  c \fint_{B_{2\rho}(x_1)} \left\lvert  \nabla_{\mathbb{E}} u \right\rvert^{\mathfrak{p}(x)} .
			\end{align*}
			Combining these two estimates with \eqref{boundary rev holder eq1} and \eqref{boundary rev holder eq2}, and the fact that the ratio  $\left \lvert B_{2\rho}(x_1) \right \rvert$ and $\left \lvert B_{2\rho}(x_1) \cap B_R\right \rvert$  is bounded by a constant depending only on $Q_{\mathbb{E}}$, we deduce  \eqref{rev holder}. This completes the proof. \end{proof}
		
		\section{Comparison estimates} 
		\subsection{Basic setup}\label{basic setup for comparison}
		For this entire section, we shall always assume that $\Omega \subset \mathbb{E}^{n}$ is open and bounded and $\mathfrak{p}:\Omega \rightarrow [\gamma_{1}, \gamma_{2}]$, $\mathfrak{a}:\Omega \rightarrow [\nu, L]$ are measurable functions  with $1 < \gamma_{1} \leq \gamma_{2} < \infty$ and $ 0 < \nu < L < \infty,$ such that for all $\sigma \in (0, 1/4)$ and for all $1 < q< \infty,$ we have 
		\begin{align*}
			\mathfrak{S}_{\left[\mathfrak{a}, q, \sigma, 0\right]}\left(x_{0}, r \right), \mathfrak{S}_{\left[\mathfrak{p}, q, \sigma, 1\right]}\left(x_{0}, r \right) &\rightarrow 0	 \quad \text{ as } \bar{r} \rightarrow 0 \quad  \text{ locally uniformly in } x_{0} \in \Omega.	
		\end{align*}
		Note that, this implies $p$ satisfies \eqref{log holder bound Global HI} for any subdomain $\Omega' \subset \subset \Omega.$ 
		\subsubsection{Setup for equation with data: } Let $u \in 	\mathbb{E}W_{\text{loc}}^{1, \mathfrak{p}(\cdot)}\left(\Omega\right)$ be a weak solution to the following: 
		\begin{align}\label{main equation stein}
			\operatorname{div}_{\mathbb{E}}\left[ \mathfrak{a}\left(x\right)\left\lvert \nabla_{\mathbb{E}} u \right\rvert^{\mathfrak{p}\left(x\right)-2} \nabla_{\mathbb{E}} u\right]  &= f &&\text{ in } \Omega,
		\end{align}
		where $f \in L_{\text{loc}}^{\left(Q_{\mathbb{E}}, 1\right)}\left(\Omega\right).$ Since all the estimate we would prove are local, we can assume that $u \in \mathbb{E}W^{1, \mathfrak{p}(\cdot)}\left(\Omega\right)$ is a weak solution of \eqref{main equation stein}, $f \in L^{\left(Q_{\mathbb{E}}, 1\right)}\left(\Omega\right)$ and $\mathfrak{p}$ is log-H\"{o}lder continuous in $\Omega.$ Let $\gamma$ and $\mathcal{K}_{1}$ be as in \eqref{def gamma} and  \eqref{energy bound f} respectively. Let $\sigma_{1}, R_{1}$ be given by Theorem \ref{higher integrability f}. Let $\sigma_2$, $ R_2$ be given by Theorem \ref{global_higher_integrability} with the choice $\mathcal{K}_{2} = \mathcal{K}_{1}$ and $\tilde{\sigma} = \sigma_{1}$. We set 
		\begin{align}
			\bar{\sigma}:= \min \left\lbrace \gamma_1-1,  \sigma_{1}, \sigma_{2} \right\rbrace \label{sigma bar and R bar} \qquad \text{ and } \qquad \bar{R} := \min \left\lbrace R_{1}, R_{2} \right\rbrace. 
		\end{align} Now fix 
		\begin{align}\label{theta def}
		1 < \theta \le \frac{Q_{\mathbb{E}}\gamma}{Q_{\mathbb{E}}\gamma -Q_{\mathbb{E}} + \gamma}\left(1 + \bar{\sigma}\right) = \left(\gamma^{\ast}\right)'\left(1+\bar{\sigma}\right) <Q_{\mathbb{E}}. 
		\end{align}
		Now fix a point $x_{0} \in \Omega$ such that $B_{4\bar{R}}(x_{0}) \subset  \Omega.$ From now on, all balls would be centered at $x_{0}$ if the center is not specified and we omit writing the center of the balls. 
		\par Note that, by Theorem \ref{higher integrability f}, $u\in \mathbb{E}W^{1,\mathfrak{p}(\cdot)(1+\sigma_{1})}(B_{\bar{R}})$ and hence satisfies all the assumptions of Theorem \ref{global_higher_integrability} with $\tilde{\sigma} = \bar{\sigma}$, $\mathcal{K}_{2}= \mathcal{K}_{1}$ and $R=\bar{R}/64$.  
		For any $R >0,$ we would use the following shorthands. 
		\begin{align}
			\mathfrak{A}_{R, \sigma}&:=  \left(\fint_{B_{R}} \left\lvert \mathfrak{a} - \left(\mathfrak{a}\right)_{R} \right\rvert^{\frac{2(1+\sigma)}{\sigma}}\right)^{\frac{\sigma}{2(1+\sigma)}}, \label{mathfrak a}\\   \mathfrak{P}_{R, \sigma}&:= \log \left(\frac{1}{R}\right)\left(\fint_{B_{R}} \left\lvert \mathfrak{p} - \left(\mathfrak{p}\right)_{R} \right\rvert^{\frac{2(1+\sigma)}{\sigma}}\right)^{\frac{\sigma}{2(1+\sigma)}},  \label{mathfrak p} \\ \mathfrak{F}_{R, \theta}&:= R \left( \fint_{B_{R}} \left\lvert f \right\rvert^{\theta} \right)^{\frac{1}{\theta}}. \label{mathfrak f} 
		\end{align}
		Now, choose a radius $ 0 < R_{4}\le  \bar{R}/2^{16}$ so small such that 
		\begin{align}\label{smallness of wp}
			\omega_{\mathfrak{p}}\left(R_{4}\right) \leq \min \left\lbrace \frac{ \bar{\sigma}}{4}, \frac{\gamma \bar{\sigma}}{4\gamma^{'}\left(2 + \bar{\sigma}\right)}, \frac{\gamma_1}{2Q_{\mathbb{E}}}\right\rbrace 
		\end{align}
		and  
		\begin{align}\label{smallness of mathfrak a+mathfrak p}
			\mathfrak{A}_{R, \bar{\sigma}} +   \mathfrak{P}_{R, \bar{\sigma}} \leq 1 \qquad \text{ for all } 0 < R \leq \frac{R_{4}}{4}.
		\end{align}
		Clearly $R_4$ depends on $Q_{\mathbb{E}}, \gamma_{1},$ $\gamma_{2}, \nu, L, \Lambda_{\log}$ and $\mathcal{K}_{1}$, where $ \Lambda_{\log} >0$ is any number satisfying \eqref{log holder bound Global HI}. 
		\subsubsection{Setup for equation with frozen coefficient: }Set $\mathfrak{p}_{0}:= \mathfrak{p}(x_{0}).$ Pick $0 < R < R_4/32$ for now and let $w \in u+\mathbb{E}W_{0}^{1, \mathfrak{p}(\cdot)}\left(B_{R}\right)$ be the unique minimizer of 
		\begin{align*}
			\inf \left\lbrace \mathfrak{a}_{R}\int_{B_{R}} \frac{1}{\mathfrak{p}(x)}\left\lvert \nabla_{\mathbb{E}} w\right\rvert^{\mathfrak{p}\left(x\right)}\ \mathrm{d}x: w \in u + \mathbb{E}W_{0}^{1, \mathfrak{p}(\cdot)}\left(B_{R}\right) \right\rbrace,  
		\end{align*}
		where $$\mathfrak{a}_{R}:= \fint_{B_{R}} \mathfrak{a}. $$ Note that $w$ is a weak solution to 
			\begin{align}\label{equation frozen}
				\operatorname{div}_{\mathbb{E}}\left[\left\lvert \nabla_{\mathbb{E}} w \right\rvert^{\mathfrak{p}\left(x\right)-2} \nabla_{\mathbb{E}} w \right]  &= 0 &&\text{ in } B_{R}. 
		\end{align} 
		Clearly by minimality of $w$, we have from some constant $c(\gamma_1, \gamma_2)>1$ 
		\begin{align}\label{minimality of w}
			\int_{B_{R}}\left\lvert \nabla_{\mathbb{E}} w \right\rvert^{\mathfrak{p}\left(x\right)}\ \mathrm{d}x \leq c(\gamma_1, \gamma_2) 	\int_{B_{R}}\left\lvert \nabla_{\mathbb{E}} u \right\rvert^{\mathfrak{p}\left(x\right)}\ \mathrm{d}x. 
		\end{align} 
		Also, by Theorem \ref{global_higher_integrability} we have 
		\begin{align}\label{higher integrability estimate f frozen and without gamma Global HI}
			\left( \fint_{B_{R}}\left\lvert \nabla_{\mathbb{E}} w \right\rvert^{\mathfrak{p}\left(x\right)\left(1 + \sigma\right)}\ \mathrm{d}x\right)^{\frac{1}{ 1+\sigma }}
			\leq c \left[ \left(\fint_{B_{2R}}\left\lvert \nabla_{\mathbb{E}} u \right\rvert^{\mathfrak{p}(x)(1+\sigma)}\ \mathrm{d}x\right)^{\frac{1}{1+\sigma}}  + 1 \right],
		\end{align}
		for any $0\leq \sigma \leq \bar{\sigma}$. 
		\subsubsection{Setup for equation with frozen exponent and coefficient: }Clearly, the smallness in \eqref{smallness of wp} implies that 
		$$\mathfrak{p}_{R/2} - \inf_{B_{R/2}} \mathfrak{p} (\cdot) \leq \omega_{\mathfrak{p}}(\bar{R}) \leq  \bar{\sigma} \leq \gamma_1 \bar{\sigma} \implies \mathfrak{p}_{R/2} \leq \mathfrak{p}(x) (1+\bar{\sigma}), \mbox{ for all } x\in \overline{B_{R/2}}.$$  
		Here $\mathfrak{p}_{R/2}:= \fint_{B_{R/2}} \mathfrak{p}. $
		Thus by \eqref{higher integrability estimate f frozen and without gamma Global HI}, $w\in \mathbb{E}W^{1,\mathfrak{p}_{R/2}}\left(B_{R/2}\right)$.  Hence, let $v \in w+\mathbb{E}W^{1,\mathfrak{p}_{R/2}}\left(B_{R/2}\right)$ be the unique minimizer of 
		\begin{align*}
			\inf \left\lbrace \int_{B_{R/2}} \frac{1}{\mathfrak{p}_{R/2}}\left\lvert \nabla_{\mathbb{E}} w\right\rvert^{\mathfrak{p}_{R/2}}\ \mathrm{d}x: v \in w + \mathbb{E}W_{0}^{1,\mathfrak{p}_{R/2}}\left(B_{R/2}\right) \right\rbrace.  
		\end{align*}
		Note that $v$ is a weak solution to 
		\begin{align}\label{equation double frozen}
			\operatorname{div}_{\mathbb{H}}\left(\left\lvert \nabla_{\mathbb{E}} v \right\rvert^{\mathfrak{p}_{R/2}-2} \nabla_{\mathbb{E}} v \right) &= 0 &&\text{ in } B_{R/2}
		\end{align}	
		and $v-w$ is an admissible test function.

		\subsection{Preliminary comparison estimates}
		
		We begin with the following comparison estimate.	
		\begin{lemma}[Comparison Lemma I]\label{comparison lemma 1}
			Let $u$ and $w$ be as defined in \eqref{main equation stein} and \eqref{equation frozen} respectively. Also, let  $R_3$, $\theta$, $\gamma$ and $\bar{\sigma}$ be as in \eqref{smallness of wp} and \eqref{smallness of mathfrak a+mathfrak p}, \eqref{theta def}, \eqref{def gamma} and  \eqref{sigma bar and R bar}, respectively. For any $\bar{x} \in \overline{B_{R/2}},$ let us set $\bar{\mathfrak{p}}:= \mathfrak{p}\left( \bar{x}\right)$ and let $\lambda_{1} $ be defined by  
			\begin{align}\label{def lambda1}
			\lambda_1\coloneqq 1 + \left(  \fint_{B_{4R}}\left\lvert  \nabla_{\mathbb{E}} u \right\rvert^{\gamma} \right)^{\frac{1}{\gamma}} +  \mathfrak{F}_{4R, \theta}^{\frac{1}{\mathfrak{p}_{0}-1}}. 
			\end{align} 
			Then there exists a constant $ C_{6} \equiv C_{6} \left(Q_{\mathbb{E}}, \gamma_{1}, \gamma_{2}, \nu, L, \Lambda_{\log} \right) \ge 1$ for which  following estimates are true for any $0<R\leq R_3/64$. 
			\begin{enumerate}[(i)]
				\item For any $\bar{\mathfrak{p}}$, we have \begin{align}\label{comparison lemma 1 main estimate}
					\fint_{B_R} &\left\lvert V_{\bar{\mathfrak{p}}}\left( \nabla_{\mathbb{E}} u\right) - V_{\bar{\mathfrak{p}}}\left( \nabla_{\mathbb{E}} w\right)\right\rvert^{2} \notag\\
					&\leq 	C_{6} \mathfrak{F}_{R, \theta}\left(\fint_{B_{R}}\left\lvert \nabla_{\mathbb{E}}u -\nabla_{\mathbb{E}} w\right\rvert^{\gamma} \right)^{\frac{1}{\gamma}}  + C_{6}\left( \mathfrak{A}_{R, \bar{\sigma}}^{2} + \mathfrak{P}_{R, \bar{\sigma}}^{2} \right) \lambda_{1}^{\bar{\mathfrak{p}}}.
				\end{align}
				\item If $\bar{\mathfrak{p}} >2,$ we have 
				\begin{align}\label{comparison lemma 1 bar p bigger than 2}
					\fint_{B_{R}} \left\lvert  \nabla_{\mathbb{E}} u - \nabla_{\mathbb{E}} w\right\rvert^{\bar{\mathfrak{p}}} \leq 	C_{6} \left( \mathfrak{F}_{R, \theta} \right)^{\frac{\bar{\mathfrak{p}}}{\left( \bar{\mathfrak{p}}-1\right)}}  + C_{6}\left( \mathfrak{A}_{R, \bar{\sigma}} + \mathfrak{P}_{R, \bar{\sigma}} \right)^{2} \lambda_{1}^{\bar{\mathfrak{p}}}.
				\end{align}
				\item If $\bar{\mathfrak{p}} \leq 2,$ we have 
				\begin{align}\label{comparison lemma 1 bar p smaller than 2}
					\fint_{B_{R}} \left\lvert  \nabla_{\mathbb{E}} u - \nabla_{\mathbb{E}} w\right\rvert^{\bar{\mathfrak{p}}} \leq 	C_{6} \lambda_{1}^{\bar{\mathfrak{p}}\left( 2-\bar{\mathfrak{p}}\right)}\left( \mathfrak{F}_{R, \theta} \right)^{\bar{\mathfrak{p}}}  + C_{6}\left( \mathfrak{A}_{R, \bar{\sigma}} + \mathfrak{P}_{R, \bar{\sigma}} \right)^{\bar{\mathfrak{p}}} \lambda_{1}^{\bar{\mathfrak{p}}}.
				\end{align}
			\end{enumerate}
		\end{lemma}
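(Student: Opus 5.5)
We test the difference of the two weak formulations with $\phi = u - w \in \mathbb{E}W_0^{1,\mathfrak{p}(\cdot)}(B_R)$. Since $u$ solves \eqref{main equation stein} and $w$ solves \eqref{equation frozen}, with the frozen constant $\mathfrak{a}_R$ multiplying the operator, we get
\begin{align*}
\fint_{B_R} \mathfrak{a}_R\left\langle |\nabla_{\mathbb{E}} u|^{\mathfrak{p}(x)-2}\nabla_{\mathbb{E}} u - |\nabla_{\mathbb{E}} w|^{\mathfrak{p}(x)-2}\nabla_{\mathbb{E}} w, \nabla_{\mathbb{E}} u - \nabla_{\mathbb{E}} w\right\rangle
= \fint_{B_R} (\mathfrak{a}_R-\mathfrak{a}) \left\langle |\nabla_{\mathbb{E}} u|^{\mathfrak{p}(x)-2}\nabla_{\mathbb{E}} u, \nabla_{\mathbb{E}} u - \nabla_{\mathbb{E}} w\right\rangle - \fint_{B_R} f(u-w).
\end{align*}
By \eqref{monotonicity} and \eqref{constant cv}, applied pointwise with exponent $\mathfrak{p}(x)$, the left side controls $\fint_{B_R}|V_{\mathfrak{p}(x)}(\nabla_{\mathbb{E}} u)-V_{\mathfrak{p}(x)}(\nabla_{\mathbb{E}} w)|^2$. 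The constants are uniform in $\mathfrak{p}(x)\in[\gamma_1,\gamma_2]$.

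\textbf{Step 1: from $V_{\mathfrak{p}(x)}$ to $V_{\bar{\mathfrak{p}}}$.} We must replace $V_{\mathfrak{p}(x)}$ by $V_{\bar{\mathfrak{p}}}$. Pointwise, $|V_{\bar{\mathfrak{p}}}(z_1)-V_{\bar{\mathfrak{p}}}(z_2)|^2 \approx (|z_1|+|z_2|)^{\bar{\mathfrak{p}}-2}|z_1-z_2|^2$, and this differs from the $\mathfrak{p}(x)$-version by the factor $(|z_1|+|z_2|)^{\bar{\mathfrak{p}}-\mathfrak{p}(x)}$. The idea is to split $|\bar{\mathfrak{p}}-\mathfrak{p}(x)|\le |\bar{\mathfrak{p}}-\mathfrak{p}_R|+|\mathfrak{p}(x)-\mathfrak{p}_R|$. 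The first part is at most $\omega_{\mathfrak{p}}(R)$, and it is harmless on sets where $|z|\lesssim R^{-Q}$ or so, by log-H\"older continuity and the bound $\mathcal{K}_1 R\le 1$. The second part is handled in $L^q$ through $\mathfrak{P}_{R,\bar\sigma}$. Concretely, we write
\[
t^{\bar{\mathfrak{p}}}-t^{\mathfrak{p}(x)} \le |\bar{\mathfrak{p}}-\mathfrak{p}(x)|\,(1+t)^{\max}\log(e+t),
\]
so the error terms are of the form
\[
\fint |\mathfrak{p}-\mathfrak{p}_R|\,(1+|\nabla_{\mathbb{E}} u|+|\nabla_{\mathbb{E}} w|)^{\mathfrak{p}(x)(1+\sigma/2)}\log(\cdot).
\]
These are bounded by H\"older with exponents $\frac{2(1+\bar\sigma)}{\bar\sigma}$ and its conjugate, by Lemma \ref{Llog beta L lemma} (which produces the $\log(1/R)$ that makes $\mathfrak{P}_{R,\bar\sigma}$ appear), and finally by the higher integrability results. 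Here Theorem \ref{higher integrability f} applies to $u$ on $B_{2R}$ (with $B_{4R}$ available), and Theorem \ref{global_higher_integrability} in the form \eqref{higher integrability estimate f frozen and without gamma Global HI} applies to $w$; both give bounds of the type $C\lambda_1^{\bar{\mathfrak{p}}}$. The exponent $\bar{\mathfrak{p}}$ is allowed to float because Theorem \ref{higher integrability f} holds with arbitrary $\bar x\in\overline{B_R}$. The smallness \eqref{smallness of wp} guarantees that the exponents stay inside the higher-integrability range.

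\textbf{Step 2: the coefficient and data terms.} The coefficient term is estimated by H\"older:
\[
\Big(\fint|\mathfrak{a}-\mathfrak{a}_R|^{\frac{2(1+\bar\sigma)}{\bar\sigma}}\Big)^{\frac{\bar\sigma}{2(1+\bar\sigma)}} \Big(\fint(|\nabla_{\mathbb{E}} u|+|\nabla_{\mathbb{E}} w|)^{\mathfrak{p}(x)\frac{2(1+\bar\sigma)}{2+\bar\sigma}}\Big)^{\cdots} \lesssim \mathfrak{A}_{R,\bar\sigma}\lambda_1^{\bar{\mathfrak{p}}}.
\]
To get the square $\mathfrak{A}^2$, we instead write $|\nabla_{\mathbb{E}} u|^{\mathfrak{p}-1}|\nabla_{\mathbb{E}} u-\nabla_{\mathbb{E}} w|$ in terms of the $V$-difference times $(|\nabla_{\mathbb{E}} u|+|\nabla_{\mathbb{E}} w|)^{\mathfrak{p}/2}$, and then use Young's inequality to absorb the $V$-difference into the left side. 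This leaves $\mathfrak{A}^2\lambda_1^{\bar{\mathfrak{p}}}$. The same absorption yields $\mathfrak{P}^2$ in Step 1. For the data term, we use the Sobolev embedding with exponent $\gamma$ (recall $\gamma<Q$ and $\theta\ge(\gamma^*)'$), Poincar\'e for $u-w\in W_0$, and H\"older in $f$:
\[
\fint|f||u-w| \le c\,R\Big(\fint|f|^{\theta}\Big)^{1/\theta}\Big(\fint|\nabla_{\mathbb{E}} u-\nabla_{\mathbb{E}} w|^{\gamma}\Big)^{1/\gamma}.
\]
This is exactly the first term of (i).

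\textbf{Step 3: parts (ii) and (iii).} For (ii), with $\bar{\mathfrak{p}}>2$, we have $|z_1-z_2|^{\bar{\mathfrak{p}}}\le c|V_{\bar{\mathfrak{p}}}(z_1)-V_{\bar{\mathfrak{p}}}(z_2)|^2$. Then we apply (i) and Young's inequality on $\mathfrak{F}\,\|\nabla_{\mathbb{E}} u-\nabla_{\mathbb{E}} w\|_{L^\gamma}$, using $\gamma\le\bar{\mathfrak{p}}$ and exponents $\bar{\mathfrak{p}}$ and $\bar{\mathfrak{p}}'$, and absorb. For (iii) we use \eqref{v estimate p less 2}, or equivalently H\"older with weight $(|\nabla_{\mathbb{E}} u|+|\nabla_{\mathbb{E}} w|)^{(2-\bar{\mathfrak{p}})\bar{\mathfrak{p}}/2}$:
\[
\fint|\nabla_{\mathbb{E}} u-\nabla_{\mathbb{E}} w|^{\bar{\mathfrak{p}}} \le \Big(\fint|V-V|^2\Big)^{\bar{\mathfrak{p}}/2}\lambda_1^{\bar{\mathfrak{p}}(2-\bar{\mathfrak{p}})/2}.
\]
Inserting (i) and using Young again gives the $\lambda_1^{\bar{\mathfrak{p}}(2-\bar{\mathfrak{p}})}\mathfrak{F}^{\bar{\mathfrak{p}}}$ term, together with $(\mathfrak{A}+\mathfrak{P})^{\bar{\mathfrak{p}}}\lambda_1^{\bar{\mathfrak{p}}}$.

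\textbf{Main obstacle.} The main obstacle is Step 1. Without pointwise control on $\mathfrak{p}$, the exponent mismatch has to be paid for only in an averaged sense. The $\log$ factor arising from $t^{\mathfrak{p}(x)-\mathfrak{p}_R}$ must be converted, via Lemma \ref{Llog beta L lemma} and higher integrability, into exactly one factor $\log(1/R)$ times the $L^{2(1+\bar\sigma)/\bar\sigma}$ oscillation. At the same time, all energy quantities must be normalized to $\lambda_1^{\bar{\mathfrak{p}}}$ using $R^{-\omega_{\mathfrak{p}}(R)}\le C$.
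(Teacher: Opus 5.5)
\textbf{Where the argument breaks.} Your overall plan follows the paper's proof. You test with $u-w$ and use a pointwise Young inequality on the coefficient term, which gives $\mathfrak{A}_{R,\bar\sigma}^{2}\lambda_1^{\bar{\mathfrak p}}$ via Theorem \ref{higher integrability f}. You use Sobolev for the $f$-term. You convert $V_{\mathfrak p(x)}$ to $V_{\bar{\mathfrak p}}$ additively, via the mean value theorem, H\"older, Lemma \ref{Llog beta L lemma} and higher integrability. You finish (ii) and (iii) with Young/H\"older. The gap is at the one point where you go beyond the paper, namely the splitting $\lvert\bar{\mathfrak p}-\mathfrak p(x)\rvert\le\lvert\bar{\mathfrak p}-\mathfrak p_R\rvert+\lvert\mathfrak p(x)-\mathfrak p_R\rvert$. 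The first piece is not ``harmless''. Fed through the same H\"older/Lemma \ref{Llog beta L lemma} mechanism, it produces an additive error $c\,(\omega_{\mathfrak p}(2R)\log(1/R))^{2}\lambda_1^{\bar{\mathfrak p}}$. This is bounded in terms of $\Lambda_{\log}$, but it is not dominated by $\mathfrak P_{R,\bar\sigma}^{2}\lambda_1^{\bar{\mathfrak p}}$. For example, let $\mathfrak p$ be constant on $B_R$ except for a bump of height $h$ and width $\rho\ll R$ peaking at $\bar x$. Then $\lvert\bar{\mathfrak p}-\mathfrak p_R\rvert\approx h$, while the $L^{q}$-oscillation in $\mathfrak P_{R,\bar\sigma}$ (with $q=2(1+\bar\sigma)/\bar\sigma$) is only about $h(\rho/R)^{Q_{\mathbb E}/q}$.

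Reading your remark multiplicatively does not help. Put $\Phi=\lvert\nabla_{\mathbb E}u\rvert+\lvert\nabla_{\mathbb E}w\rvert$. Log-H\"older continuity gives $\Phi^{\bar{\mathfrak p}-\mathfrak p_R}\le C$ on $\{R^{c}\le\Phi\le R^{-c}\}$. Off that set, the bound $\Phi^{\bar{\mathfrak p}-2}\lvert\nabla_{\mathbb E}u-\nabla_{\mathbb E}w\rvert^{2}\le\Phi^{\bar{\mathfrak p}}$ plus higher integrability gives at best a positive power of $R$. That is again not controlled by $\mathfrak A_{R,\bar\sigma}^{2}+\mathfrak P_{R,\bar\sigma}^{2}$.

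The point is not cosmetic. The lemma is applied at every scale $r_j$ and the errors are summed in $j$ (Lemma \ref{linearized comparison} and Step 1c of the main proof). A per-scale error of order $\omega_{\mathfrak p}(r_j)\log(1/r_j)$ would amount to a log-Dini assumption on $\mathfrak p$, which the mean-oscillation hypotheses are designed to avoid.

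\textbf{How the paper handles this step.} The paper does not split. After \eqref{esti in term of J} it bounds $\log^{2}(1/R)(\fint_{B_R}\lvert\bar{\mathfrak p}-\mathfrak p\rvert^{2(1+\sigma)/\sigma})^{\sigma/(1+\sigma)}$ directly by $c\,\mathfrak P_{R,\bar\sigma}^{2}$. By Jensen, that step tacitly requires $\log(1/R)\lvert\bar{\mathfrak p}-\mathfrak p_R\rvert\lesssim\mathfrak P_{R,\bar\sigma}$, which is exactly the control you are missing. So you have found the delicate point correctly. However, neither ``harmless'' nor that one-line identification settles it for an arbitrary pointwise value.

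\textbf{What does work.}
\begin{itemize}
\item The identification is legitimate when $\bar{\mathfrak p}$ is an average over a comparable concentric ball, e.g.\ the exponent $\mathfrak p_{R/2}$ frozen in $v$. This is because $\lvert\mathfrak p_{R/2}-\mathfrak p_R\rvert\le 2^{Q_{\mathbb E}}\fint_{B_R}\lvert\mathfrak p-\mathfrak p_R\rvert$.
\item For $\bar{\mathfrak p}=\mathfrak p(\bar x)$, the telescoping argument in the proof of Lemma \ref{local vanishing log holder} only gives $\log(1/R)\lvert\mathfrak p(\bar x)-(\mathfrak p)_{\bar x,R/2}\rvert\le\tau^{-Q_{\mathbb E}/q}\mathfrak S_{[\mathfrak p,q,\tau,1]}(\bar x,R/2)$. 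This is a multiscale quantity centred at $\bar x$.
\item To close Step 1 you must therefore either restrict $\bar{\mathfrak p}$ to such averages, or carry this extra term on the right-hand side.
\end{itemize}

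\textbf{A smaller bookkeeping point.} To get the square $\mathfrak P^{2}$, you must put $\lvert\bar{\mathfrak p}-\mathfrak p\rvert^{2}$ in $L^{(1+\sigma)/\sigma}$ and $\Phi^{\mathfrak p(x)+2\omega_{\mathfrak p}(R)}\log^{2}(e+\Phi)$ in $L^{1+\sigma}$. The latter fits inside the $L^{\mathfrak p(x)(1+\bar\sigma)}$ higher integrability only for $\sigma<\bar\sigma$, as in \eqref{smallness of sigma}. The oscillation that actually appears is therefore the $L^{2(1+\sigma)/\sigma}$ one, not the exponent of $\mathfrak P_{R,\bar\sigma}$. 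This is why the main proof works with $\Omega_{j,\sigma}$, $\sigma=\bar\sigma/4$.
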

		\begin{proof} 
			We begin by estimating $\lambda_1$. Let $\mathcal{K}_{1}>1$ be as in \eqref{energy bound f}. We use $R<1$, $q<Q_{\mathbb{E}}$, $1<\gamma < \mathfrak{p}(x)$ for any $x\in \Omega$ and H\"older inequality to derive  
			\begin{align}\label{estimate of lambda1}
				\lambda_1 \leq c\left( 1 + \mathcal{K}_1^\frac{1}{\gamma}R^{-\frac{Q_{\mathbb{E}}}{\gamma}} + \mathcal{K}_1^{\frac{1}{Q_{\mathbb{E}}(\mathfrak{p}_0-1)}}\right) \leq c\mathcal{K}_1 R^{-Q_{\mathbb{E}}}.   
			\end{align}
			Using this estimate for $\lambda_1$ and \eqref{higher integrability estimate f frozen and without gamma Global HI} we have
			\begin{align}
				\left( \fint_{B_{R}}\left\lvert \nabla_{\mathbb{E}} w \right\rvert^{\mathfrak{p}\left(x\right)\left(1 + \sigma\right)}\ \mathrm{d}x\right)^{\frac{1}{ 1+\sigma }}
				&\leq c \left[ 1 + \left( \fint_{B_{2R}}\left\lvert \nabla_{\mathbb{E}} u \right\rvert^{\mathfrak{p}(x)(1+\sigma)}\ \mathrm{d}x \right)^{\frac{1}{1+\sigma}}  \right]\notag	\\
				&\stackrel{\eqref{higher integrability estimate f}}{\leq} c \lambda_1^{\bar{\mathfrak{p}}}\leq \left( cR^{-Q_{\mathbb{E}}}\mathcal{K}_{1} \right)^{\bar{\mathfrak{p}}} \leq c R^{-\bar{\mathfrak{p}}\left(Q_{\mathbb{E}}+1\right)}, \label{estimate for w in terms of u}
			\end{align} 
			where to get the last estimate we used the bound $\mathcal{K}_{1}< 1/R$. Note that, $w-u \in \mathbb{E}W_0^{1, \mathfrak{p}(\cdot)}(B_R)$. So, plugging in $w-u$ in the weak formulations of \eqref{equation frozen} and \eqref{main equation stein} and subtracting, we deduce 
			\begin{align*}
				\int_{B_{R}} \left\langle \mathfrak{a}\left(x\right)\left\lvert \nabla_{\mathbb{E}} u \right\rvert^{\mathfrak{p}\left(x\right)-2} \nabla_{\mathbb{E}} u - \mathfrak{a}_{R}\left\lvert \nabla_{\mathbb{E}} w \right\rvert^{\mathfrak{p}\left(x\right)-2} \nabla_{\mathbb{E}} w, \nabla_{\mathbb{E}} u -\nabla_{\mathbb{E}} w \right\rangle   = \int_{B_{R}} f \left( u -w\right). 
			\end{align*}
			Hence using the definition of $V$ and uniform bounds on $\mathfrak{a}$ we have 
			\begin{align}\label{I1 and I2 in first comparison}
				\fint_{B_{R}} &\left\lvert V_{\mathfrak{p}(\cdot)}\left( \nabla_{\mathbb{E}} u\right) - V_{\mathfrak{p}(\cdot)}\left( \nabla_{\mathbb{E}} w\right)\right\rvert^{2}\notag  \\&\leq c \fint_{B_{R}} \left\langle \mathfrak{a}_{R} \left\lvert \nabla_{\mathbb{E}} u \right\rvert^{\mathfrak{p}\left(x\right)-2} \nabla_{\mathbb{E}} u - \mathfrak{a}_{R}\left\lvert \nabla_{\mathbb{E}} w \right\rvert^{\mathfrak{p}\left(x\right)-2} \nabla_{\mathbb{E}} w, \nabla_{\mathbb{E}} u -\nabla_{\mathbb{E}} w \right\rangle \notag
				\\&\leq  c\fint_{B_{R}}  \left\lvert  \mathfrak{a}(x) - \mathfrak{a}_{R}\right\rvert  \left\lvert \nabla_{\mathbb{E}} u \right\rvert^{\mathfrak{p}\left(x\right)-1} \left\lvert \nabla_{\mathbb{E}} u -\nabla_{\mathbb{E}} w \right\rvert +c \fint_{B_{R}} \left\lvert f \right\rvert \left\lvert u -w\right\rvert :=I_{1} + I_{2}. 
			\end{align}
			Let $\varepsilon>0$ and fix $x\in B_{R}$. We first estimate $I_{1}.$  
			If $\mathfrak{p}(x) \geq 2,$ then we have 
			\begin{align*}
				\left\lvert  \mathfrak{a}(x) - \mathfrak{a}_{R}\right\rvert & \left\lvert \nabla_{\mathbb{E}} u \right\rvert^{\mathfrak{p}\left(x\right)-1} \left\lvert \nabla_{\mathbb{E}} u -\nabla_{\mathbb{E}} w \right\rvert\\
				&\leq \left\lvert  \mathfrak{a}(x) - \mathfrak{a}_{R}\right\rvert \left\lvert \nabla_{\mathbb{E}} u \right\rvert^{\frac{\mathfrak{p}\left(x\right)}{2}} \left(  \left\lvert \nabla_{\mathbb{E}} w \right\rvert +   \left\lvert \nabla_{\mathbb{E}} u \right\rvert  \right)^{\frac{\mathfrak{p}\left(x\right)-2}{2}}\left\lvert \nabla_{\mathbb{E}} u -\nabla_{\mathbb{E}} w \right\rvert 
				\\&\leq \varepsilon \left\lvert V_{\mathfrak{p}(x)}\left( \nabla_{\mathbb{E}} u\right) - V_{\mathfrak{p}(x)}\left( \nabla_{\mathbb{E}} w\right)\right\rvert^{2} + c 	\left\lvert  \mathfrak{a}(x) - \mathfrak{a}_{R}\right\rvert^{2} \left\lvert \nabla_{\mathbb{E}} u \right\rvert^{\mathfrak{p}\left(x\right)}. 
			\end{align*}	
			If $\mathfrak{p}(x) < 2,$ then we use \eqref{v estimate p less 2} and the uniform bounds on $\mathfrak{p}$ to deduce 
			\begin{align*}
				\left\lvert  \mathfrak{a}(x) - \mathfrak{a}_{R}\right\rvert  &\left\lvert \nabla_{\mathbb{E}} u \right\rvert^{\mathfrak{p}\left(x\right)-1} \left\lvert \nabla_{\mathbb{E}} u -\nabla_{\mathbb{E}} w \right\rvert \\
				&\begin{aligned}[t]
					\leq c\left\lvert  \mathfrak{a}(x) - \mathfrak{a}_{R}\right\rvert&\left\lvert \nabla_{\mathbb{E}} u \right\rvert^{\mathfrak{p}\left(x\right)-1}\left( \left\lvert V_{\mathfrak{p}(x)}\nabla_{\mathbb{E}} u\right) - V_{\mathfrak{p}(x)}\left( \nabla_{\mathbb{E}} w\right)\right\rvert^{\frac{2}{\mathfrak{p}(x)}} \\ &+ c\left\lvert  \mathfrak{a}(x) - \mathfrak{a}_{R}\right\rvert\left\lvert \nabla_{\mathbb{E}} u \right\rvert^{\frac{\mathfrak{p}\left(x\right)}{2}}\left\lvert V_{\mathfrak{p}(x)}\left( \nabla_{\mathbb{E}} u\right) - V_{\mathfrak{p}(x)}\left( \nabla_{\mathbb{E}} w\right)\right\rvert  
				\end{aligned}
				\\&\begin{multlined}
					\leq \varepsilon \left\lvert V_{\mathfrak{p}(x)}\left( \nabla_{\mathbb{E}} u\right) - V_{\mathfrak{p}(x)}\left( \nabla_{\mathbb{E}} w\right)\right\rvert^{2} + c 	\left\lvert  \mathfrak{a}(x) - \mathfrak{a}_{R}\right\rvert^{2} \left\lvert \nabla_{\mathbb{E}} u \right\rvert^{\mathfrak{p}\left(x\right)} \\+ c \left\lvert  \mathfrak{a}(x) - \mathfrak{a}_{R}\right\rvert^{\frac{\mathfrak{p}(x)}{\mathfrak{p}(x)-1}} \left\lvert \nabla_{\mathbb{E}} u \right\rvert^{\mathfrak{p}\left(x\right)}.
				\end{multlined} 
			\end{align*}
			Now since $\mathfrak{p}(x) < 2,$ we have $\frac{\mathfrak{p}(x)}{\mathfrak{p}(x)-1} >2 $	and thus we have 
			\begin{align*}
				\left\lvert  \mathfrak{a}(x) - \mathfrak{a}_{R}\right\rvert^{\frac{\mathfrak{p}(x)}{\mathfrak{p}(x)-1}} \left\lvert \nabla_{\mathbb{E}} u \right\rvert^{\mathfrak{p}\left(x\right)} 
				&\leq \left\lVert \mathfrak{a} \right\rVert_{L^{\infty}}^{\left(\frac{\mathfrak{p}(x)}{\mathfrak{p}(x)-1} -2\right)}\left\lvert  \mathfrak{a}(x) - \mathfrak{a}_{R}\right\rvert^{2} \left\lvert \nabla_{\mathbb{E}} u \right\rvert^{\mathfrak{p}\left(x\right)}\\ 
				&\leq \left\lVert a \right\rVert_{L^{\infty}}^{\left(\frac{\gamma_{1}}{\gamma_{1}-1} -2\right)}\left\lvert  a(x) - a_{R}\right\rvert^{2} \left\lvert \nabla_{\mathbb{E}} u \right\rvert^{\mathfrak{p}\left(x\right)}. 
			\end{align*}
			Thus, in either case, for any $x\in B_{R}$, we have the pointwise inequality 	
			\begin{align*}
				\left\lvert  \mathfrak{a}(x) - \mathfrak{a}_{R}\right\rvert & \left\lvert \nabla_{\mathbb{E}} u \right\rvert^{\mathfrak{p}\left(x\right)-1} \left\lvert \nabla_{\mathbb{E}} u -\nabla_{\mathbb{E}} w \right\rvert\\
				&\leq \varepsilon \left\lvert V_{\mathfrak{p}(x)}\left( \nabla_{\mathbb{E}} u\right) - V_{\mathfrak{p}(x)}\left( \nabla_{\mathbb{E}} w\right)\right\rvert^{2} + c 	\left\lvert  \mathfrak{a}(x) - \mathfrak{a}_{R}\right\rvert^{2} \left\lvert \nabla_{\mathbb{E}} u \right\rvert^{\mathfrak{p}\left(x\right)}, 
			\end{align*}
			where $c$ depends only on $\varepsilon>0$ and $\gamma_{1}, \gamma_{2},$ but not on $x.$ Integrating, we deduce 
			\begin{align*}
				I_{1} \leq \varepsilon	\fint_{B_{R}} \left\lvert V_{\mathfrak{p}(\cdot)}\left( \nabla_{\mathbb{E}} u\right) - V_{\mathfrak{p}(\cdot)}\left( \nabla_{\mathbb{E}} w\right)\right\rvert^{2}  + c 	\fint_{B_{R}}\left\lvert  \mathfrak{a}(x) - \mathfrak{a}_{R}\right\rvert^{2} \left\lvert \nabla_{\mathbb{E}} u \right\rvert^{\mathfrak{p}\left(x\right)}. 
			\end{align*}
			Next, we estimate $I_{2}.$ We use H\"older's inequality, \eqref{theta def} and Sobolev's inequality to derive
			\begin{align*}
				I_{2}= \fint_{B_{R}} \left\lvert f \right\rvert \left\lvert u -w\right\rvert &\leq \left(\fint_{B_{R}} \left\lvert f \right\rvert^{\left(\gamma^{\ast}\right)^{'}} \right)^{\frac{1}{\left(\gamma^{\ast}\right)^{'}}} \left(\fint_{B_{R}}\left\lvert u -w\right\rvert^{\gamma^{\ast}} \right)^{\frac{1}{\gamma^{\ast}}} \\
				&\leq  cR\left(\fint_{B_{R}} \left\lvert f \right\rvert^{\left(\gamma^{\ast}\right)^{'}} \right)^{\frac{1}{\left(\gamma^{\ast}\right)^{'}}} \left(\fint_{B_{R}}\left\lvert \nabla_{\mathbb{E}}u -\nabla_{\mathbb{E}} w\right\rvert^{\gamma} \right)^{\frac{1}{\gamma}} \\
				&\leq c \mathfrak{F}_{R, \theta} \left(\fint_{B_{R}}\left\lvert \nabla_{\mathbb{E}}u -\nabla_{\mathbb{E}} w\right\rvert^{\gamma} \right)^{\frac{1}{\gamma}} . 
			\end{align*}	
			Combing the estimates of $I_1$ and $I_2$ in \eqref{I1 and I2 in first comparison} and choosing $\varepsilon>0$ small enough we deduce  
			\begin{align} \label{before smallness of a est}
				\fint_{B_{R}}& \left\lvert V_{\mathfrak{p}(\cdot)}\left( \nabla_{\mathbb{E}} u\right) - V_{\mathfrak{p}(\cdot)}\left( \nabla_{\mathbb{E}} w\right)\right\rvert^{2} \notag \\ 
				&\leq c\mathfrak{F}_{R, \theta} \left(\fint_{B_{R}}\left\lvert \nabla_{\mathbb{E}}u -\nabla_{\mathbb{E}} w\right\rvert^{\gamma} \right)^{\frac{1}{\gamma}} + c \fint_{B_{R}}\left\lvert  \mathfrak{a}(x) - \mathfrak{a}_{R}\right\rvert^{2} \left\lvert \nabla_{\mathbb{E}} u \right\rvert^{\mathfrak{p}\left(x\right)}. 
			\end{align}
			Now, using H\"older's inequality, we have 
			\begin{align*}
				\fint_{B_{R}}&\left\lvert  \mathfrak{a}(x) - \mathfrak{a}_{R}\right\rvert^{2} \left\lvert \nabla_{\mathbb{E}} u \right\rvert^{\mathfrak{p}\left(x\right)} \\
				&\leq \left(  \fint_{B_{R}}\left\lvert  \mathfrak{a}(x) - \mathfrak{a}_{R}\right\rvert^{\frac{2 \left(1+\bar{\sigma}\right)}{\bar{\sigma}}} \right)^{\frac{\bar{\sigma}}{1+\bar{\sigma}}} \left(  \fint_{B_{R}}\left\lvert \nabla_{\mathbb{E}} u \right\rvert^{\mathfrak{p}\left(x\right)\left(1+\bar{\sigma}\right)} \right)^{\frac{1}{1+\bar{\sigma}}}. 	\end{align*}
                We use \eqref{higher integrability estimate f} to estimate  
                \begin{align*}
                  \fint_{B_{R}}&\left\lvert  \mathfrak{a}(x) - \mathfrak{a}_{R}\right\rvert^{2} \left\lvert \nabla_{\mathbb{E}} u \right\rvert^{\mathfrak{p}\left(x\right)} \\
				&\leq \mathfrak{A}_{R, \bar{\sigma}}^{2}\left(  \fint_{B_{R}}\left\lvert \nabla_{\mathbb{E}} u \right\rvert^{\mathfrak{p}\left(x\right)\left(1+\bar{\sigma}\right)} \right)^{\frac{1}{1+\bar{\sigma}}} \\  
				&\leq c \mathfrak{A}_{R, \bar{\sigma}}^{2}\left[ \left(\fint_{B_{2R}}\left\lvert \nabla_{\mathbb{E}}u \right\rvert^{\gamma} \right)^{\frac{1}{\gamma}} + \left( R^{\theta}\fint_{B_{2R}} \left\lvert f \right\rvert^{\theta} \right)^{\frac{1}{\theta\left(\mathfrak{p}(x_{0}) -1\right)}}+1\right]^{\bar{\mathfrak{p}}} \leq c \mathfrak{A}_{R, \bar{\sigma}}^{2}\lambda_{1}^{\bar{\mathfrak{p}}}.
			\end{align*}
			Thus, from \eqref{before smallness of a est}, we arrive at 
			\begin{align}\label{V px estimate}
				\fint_{B_{R}} &\left\lvert V_{\mathfrak{p}(\cdot)}\left( \nabla_{\mathbb{E}} u\right) - V_{\mathfrak{p}(\cdot)}\left( \nabla_{\mathbb{E}} w\right)\right\rvert^{2}  \notag \\&\qquad \qquad \qquad \leq c \mathfrak{F}_{R, \theta}\left(\fint_{B_{R}}\left\lvert \nabla_{\mathbb{E}}u -\nabla_{\mathbb{E}} w\right\rvert^{\gamma} \right)^{\frac{1}{\gamma}}  + c\mathfrak{A}_{R, \bar{\sigma}}^{2}\lambda_{1}^{\bar{\mathfrak{p}}}. 
			\end{align}
			Now, fix $x\in \overline{B_R}$ and use the temporarily shorthand $\Phi:= \left\lvert \nabla_{\mathbb{E}}u\left(x\right) \right\rvert + \left\lvert \nabla_{\mathbb{E}} w\left(x\right) \right\rvert. $ Then using the triangle inequality and Young's inequality, we have 
			\begin{align}
	&\Phi^{\bar{\mathfrak{p}}-2}\left\lvert \nabla_{\mathbb{E}}u\left(x\right) - \nabla_{\mathbb{E}} w\left(x\right)\right\rvert^{2} \notag \\
	&= \left[  \Phi^{\mathfrak{p}(x)-2} + \left( \Phi^{\bar{\mathfrak{p}}-2} - \Phi^{\mathfrak{p}(x)-2}\right) \right]\left\lvert \nabla_{\mathbb{E}}u\left(x\right) - \nabla_{\mathbb{E}} w\left(x\right)\right\rvert^{2} \notag\\ 
	&= \left[  \Phi^{\mathfrak{p}(x)-2} + \Phi^{\mathfrak{p}(x)-2}\left( \Phi^{\bar{\mathfrak{p}}-\mathfrak{p}(x)} - 1\right) \right]\left\lvert \nabla_{\mathbb{E}}u\left(x\right) - \nabla_{\mathbb{E}} w\left(x\right)\right\rvert^{2} \notag\\
				& \leq \Phi^{\mathfrak{p}(x)-2}\left\lvert \nabla_{\mathbb{E}}u\left(x\right) - \nabla_{\mathbb{E}} w\left(x\right)\right\rvert^{2} + \Phi^{\mathfrak{p}(x)-2}\left\lvert \Phi^{\bar{\mathfrak{p}}-\mathfrak{p}(x)} - 1\right\rvert \left\lvert \nabla_{\mathbb{E}}u\left(x\right) - \nabla_{\mathbb{E}} w\left(x\right)\right\rvert^{2}
				\notag \\
				&\leq \Phi^{\mathfrak{p}(x)-2}\left\lvert \nabla_{\mathbb{E}}u\left(x\right) - \nabla_{\mathbb{E}} w\left(x\right)\right\rvert^{2} + \Phi^{\frac{\mathfrak{p}(x)}{2}}\left\lvert \Phi^{\bar{\mathfrak{p}}-\mathfrak{p}(x)} - 1\right\rvert  \Phi^{\frac{\mathfrak{p}(x)}{2}-1} \left\lvert \nabla_{\mathbb{E}}u\left(x\right) - \nabla_{\mathbb{E}} w\left(x\right)\right\rvert
				\notag \\
				&\leq \frac{3}{2}\Phi^{\mathfrak{p}(x)-2}\left\lvert \nabla_{\mathbb{E}}u\left(x\right) - \nabla_{\mathbb{E}} w\left(x\right)\right\rvert^{2} + \frac{1}{2}\Phi^{\mathfrak{p}(x)}\left\lvert \Phi^{\bar{\mathfrak{p}}-\mathfrak{p}(x)} - 1\right\rvert^{2}. \label{estimate of phi}
			\end{align}
			
			Now note that $0 < R < \bar{R}/4$. Set 
			\begin{align*}
				\mathfrak{p}_{1}:= \min_{\overline{B_{R}}}\mathfrak{p}\left(x\right) \quad \text{ and } \quad \mathfrak{p}_{2}:= \max\limits_{\overline{B_{R}}}\mathfrak{p}\left(x\right). 
			\end{align*}
			Note that by our choice of parameters, and the fact that $\bar{\sigma} \leq \mathfrak{p_1}-1$ we have 
			\begin{align}\label{comparison p2 by px}
				\bar{\mathfrak{p}}\left( 1 + \frac{\bar{\sigma}}{4}\right) \leq \mathfrak{p}_{2} \left( 1 + \frac{\bar{\sigma}}{4}\right) 
				&= \left( \mathfrak{p}_{1} + \omega_{\mathfrak{p}}\left(R\right) \right)\left( 1 + \frac{\bar{\sigma}}{4}\right)  \notag \\
				&= \mathfrak{p}_{1}\left( 1 + \frac{\bar{\sigma}}{4}\right) + \omega_{\mathfrak{p}}\left(R\right) \left( 1 + \frac{\bar{\sigma}}{4}\right)\notag  \\
				&\leq \mathfrak{p}_{1}\left( 1 + \frac{\bar{\sigma}}{4}\right) + \omega_{\mathfrak{p}}\left(R\right) \left( 1 + \bar{\sigma}\right) \notag \\		
				&\leq \mathfrak{p}_{1} \left(  1+ \frac{\bar{\sigma}}{4} + \omega_{\mathfrak{p}}\left(R\right) \right) \notag \\ 
				&\stackrel{\eqref{smallness of wp}}{\leq} \mathfrak{p}\left(x\right) \left(  1+ \frac{{\bar{\sigma}}}{4} + \omega_{\mathfrak{p}}\left(R\right) \right) \leq \mathfrak{p}\left(x\right) \left(  1+ \bar{\sigma} \right)
			\end{align}
			for all $x \in \overline{B_{R}}.$ Thus, by Theorem \ref{higher integrability f} and \ref{global_higher_integrability} we have $\nabla_{\mathbb{E}} u, \nabla_{\mathbb{E}} w \in L^{\bar{\mathfrak{p}}}\left(B_{R}\right).$ 
            
            Using Lemma \ref{prop of V}, integrating \eqref{estimate of phi} we deduce
			\begin{align}\label{V p bar estimate}
				\fint_{B_{R}} \left\lvert V_{\bar{\mathfrak{p}}}\left( \nabla_{\mathbb{E}} u\right) - V_{\bar{\mathfrak{p}}}\left( \nabla_{\mathbb{E}} w\right)\right\rvert^{2} &\leq c\fint_{B_{R}} 	\Phi^{\bar{\mathfrak{p}}-2}\left\lvert \nabla_{\mathbb{E}}u\left(x\right) - \nabla_{\mathbb{E}} w\left(x\right)\right\rvert^{2} \notag \\ 
				&\leq c \fint_{B_{R}} \left\lvert V_{\mathfrak{p}(\cdot)}\left( \nabla_{\mathbb{E}} u\right) - V_{\mathfrak{p}(\cdot)}\left( \nabla_{\mathbb{E}} w\right)\right\rvert^{2} \notag \\
				 & \hspace{60pt}+ c \fint_{B_{R}}\Phi^{\mathfrak{p}(x)}\left\lvert \Phi^{\bar{\mathfrak{p}}-\mathfrak{p}(x)} - 1\right\rvert^{2}.  
			\end{align}
			Now we plan to estimate the last term on the right. By mean value theorem, we have 
			\begin{align*}
				\left\lvert \Phi^{\bar{\mathfrak{p}}-\mathfrak{p}(x)} - 1\right\rvert &\leq \left\lvert\bar{\mathfrak{p}}-\mathfrak{p}(x)\right\rvert \Phi^{\lambda_{x}\left(\bar{\mathfrak{p}}-\mathfrak{p}(x)\right)}\left\lvert \log \Phi\right\rvert,
			\end{align*}
			for some $\lambda_{x} \in (0,1).$ Now we estimate $\Phi^{\mathfrak{p}(x) + 2\lambda_{x}\left(\bar{\mathfrak{p}}-\mathfrak{p}(x)\right)}\left\lvert \log \Phi\right\rvert^{2}.$ If $\Phi \in (0,e),$ we have 
			\begin{align*}
				\Phi^{\mathfrak{p}(x) + 2\lambda_{x}\left(\bar{\mathfrak{p}}-\mathfrak{p}(x)\right)}\left\lvert \log \Phi\right\rvert^{2} \leq c(\gamma_{1}, \gamma_{2}),
			\end{align*}
			due to the elementary estimate 
			\begin{align*}
				\sup\limits_{t \in (0, e)} t^{\sigma}\left\lvert \log t \right\rvert \leq \frac{1}{ae} + e^b, \qquad \text{ for all } \sigma \in [a,b] 
			\end{align*} with $0 < a < b$ and the estimate $\left\lvert \lambda_{x}\left(\bar{\mathfrak{p}}-\mathfrak{p}(x)\right)\right\rvert \leq \omega_{\mathfrak{p}}\left(R\right) \leq \gamma_{1}/4.$	If $\Phi \geq e,$ we have 
			\begin{align*}
				\Phi^{\mathfrak{p}(x) + 2\lambda_{x}\left(\bar{\mathfrak{p}}-\mathfrak{p}(x)\right)}\left\lvert \log \Phi\right\rvert^{2} \leq \Phi^{\mathfrak{p}(x) + 2\omega_{\mathfrak{p}}\left(R\right)} \log^{2} \left( e + \Phi^{\mathfrak{p}(x) + 2\omega_{\mathfrak{p}}\left(R\right)} \right). 
			\end{align*}
			Thus, in either case, we have 
			\begin{align*}
				\Phi^{\mathfrak{p}(x)}\left\lvert \Phi^{\bar{\mathfrak{p}}-\mathfrak{p}(x)} - 1\right\rvert^{2} \leq c\left(\gamma_{1}, \gamma_{2}\right)\left\lvert\bar{\mathfrak{p}}-\mathfrak{p}(x)\right\rvert^{2}
				&\left( 1 + \Phi^{\mathfrak{p}(x) + 2\omega_{\mathfrak{p}}\left(R\right)}\right) \notag \\ 
				&\times\log^{2} \left( e + 1 + \Phi^{\mathfrak{p}(x) + 2\omega_{\mathfrak{p}}\left(R\right)}\right). 
			\end{align*}
			Using H\"{o}lder inequality, this implies for $\sigma \in (0, \bar{\sigma}],$ we have 
			\begin{align}\label{esti in term of J}
				\fint_{B_{R}}&\Phi^{\mathfrak{p}(x)}\left\lvert \Phi^{\bar{\mathfrak{p}}-\mathfrak{p}(x)} - 1\right\rvert^{2} \leq c\left(\gamma_{1}, \gamma_{2}\right) \left( \fint_{B_{R}} \left\lvert\bar{\mathfrak{p}}-\mathfrak{p}(x)\right\rvert^{\frac{2(1+\sigma)}{\sigma}} \right)^{\frac{\sigma}{1+\sigma}} J^{\frac{1}{1+\sigma}}, 
			\end{align}
			where 
			\begin{align}\label{def J}
				J:=  \fint_{B_{R}}\left( 1 + \Phi^{\mathfrak{p}(x) + 2\omega_{\mathfrak{p}}\left(R\right)}\right)^{1+\sigma} \log^{2(1+\sigma)} \left( e + 1 + \Phi^{\mathfrak{p}(x) + 2\omega_{\mathfrak{p}}\left(R\right)}\right)\ \mathrm{d}x. 
			\end{align}
			We use \eqref{smallness of wp} and choose $\sigma \in (0,\bar{\sigma}]$ small, depending only on $\bar{\sigma}$ such that  
			\begin{align}\label{smallness of sigma}
				\left( 1 + \frac{2\omega_{\mathfrak{p}}\left(R\right)}{\gamma} \right)\left( 1+ \sigma \right)
				&\leq \left( 1 + \frac{2\omega_{\mathfrak{p}}\left(R\right)}{\gamma} \right)\left( 1+ \sigma \right)^{2} \leq 	\left( 1 + \frac{\bar{\sigma}}{2} \right)\left( 1+ \sigma \right) \leq 1 + \bar{\sigma}.  
			\end{align}
			Thus we estimate 
			\begin{align*}
				\fint_{B_{R}}\left( 1 + \Phi^{\mathfrak{p}(x) + 2\omega_{\mathfrak{p}}\left(R\right)}\right)^{1+\sigma}\ \mathrm{d}x 
				&\leq c \fint_{B_{R}}\left( 1 + \Phi^{\mathfrak{p}(x)\left( 1 + \frac{2\omega_{\mathfrak{p}}\left(R\right)}{\gamma}\right)}\right)^{1+\sigma}\ \mathrm{d}x \\				&\leq c \fint_{B_{R}} \left( 1 + \Phi^{\mathfrak{p}(x)\left( 1 + \bar{\sigma}\right)}\right)\ \mathrm{d}x.
			\end{align*}
			Observe that, by Theorem \ref{higher integrability f} and the estimate \eqref{estimate of lambda1}, we have 
			\begin{align*}
				\fint_{B_{R}} \left\lvert \nabla_{\mathbb{E}} u \right\rvert^{\mathfrak{p}(x)\left( 1 + \bar{\sigma}\right)} \ \mathrm{d}x \leq cR^{-{\bar{\mathfrak{p}}\left(Q_{\mathbb{E}}+1\right)\left( 1 + \bar{\sigma}\right)}} .
			\end{align*}
			This, combined with \eqref{estimate for w in terms of u} gives us the estimate  
			\begin{align*}
				\fint_{B_{R}} \left( 1 + \Phi^{\mathfrak{p}(x)\left( 1 + \bar{\sigma}\right)}\right)\ \mathrm{d}x &\leq \fint_{B_{R}} \left( 1 + \left\lvert \nabla_{\mathbb{E}} u \right\rvert^{\mathfrak{p}(x)\left( 1 + \bar{\sigma}\right)} + \left\lvert \nabla_{\mathbb{E}} w \right\rvert^{\mathfrak{p}(x)\left( 1 + \bar{\sigma}\right)}\right)\ \mathrm{d}x \\
				&\leq c R^{-\bar{\mathfrak{p}}\left(Q_{\mathbb{E}}+1\right)\left( 1 + \bar{\sigma}\right)} . 
			\end{align*}
			Thus, we arrive at 
			\begin{align}\label{final esti for phi in terms of R}
				\fint_{B_{R}}\left( 1 + \Phi^{\mathfrak{p}(x) + 2\omega_{\mathfrak{p}}\left(R\right)}\right)^{1+\sigma}\ \mathrm{d}x \leq c R^{-\gamma_2\left(Q_{\mathbb{E}}+1\right)\left( 1 + \bar{\sigma}\right)} . 
			\end{align}
			Now, recalling \eqref{def J} and applying Lemma \ref{Llog beta L lemma} with the choices 
			\begin{gather*}
				f= \left( 1 + \Phi^{\mathfrak{p}(x) + 2\omega_{\mathfrak{p}}\left(R\right)}\right)^{1+\sigma}, 
				\zeta = 1 + \sigma,\  \beta = 2\left( 1+\sigma\right),\ \delta = 1/\left( 1 + \sigma\right)
			\end{gather*} and  $$\tau = \gamma_2\left(Q_{\mathbb{E}}+1\right)\left( 1 + \bar{\sigma}\right) -Q_{\mathbb{E}},$$ we estimate 
			\begin{align*}
				J \leq c &\left[ 1 + R^{\gamma_2\left(Q_{\mathbb{E}}+1\right)\left( 1 + \bar{\sigma}\right) -Q_{\mathbb{E}}}  \int_{B_{R}}\left( 1 + \Phi^{\mathfrak{p}(x) + 2\omega_{\mathfrak{p}}\left(R\right)}\right)^{1+\sigma}\ \mathrm{d}x \right]^{2\left( 1+ \sigma\right)} \\
				&\qquad\times \log^{2\left( 1+ \sigma\right)} \left( \frac{2}{R}\right)
				\left[ \fint_{B_{R}}\left( 1 + \Phi^{\mathfrak{p}(x) + 2\omega_{\mathfrak{p}}\left(R\right)}\right)^{\left(1+\sigma\right)^{2}}\ \mathrm{d}x\right]^{\frac{1}{1+\sigma}}. 
			\end{align*}
			In view of \eqref{final esti for phi in terms of R}, this implies 
			\begin{align*}
				J \leq c \log^{2\left( 1+ \sigma\right)} \left( \frac{1}{R}\right)
				\left[ \fint_{B_{R}}\left( 1 + \Phi^{\mathfrak{p}(x) + 2\omega_{\mathfrak{p}}\left(R\right)}\right)^{\left(1+\sigma\right)^{2}}\ \mathrm{d}x\right]^{\frac{1}{1+\sigma}}. 
			\end{align*}
			Thus, plugging this estimate in \eqref{esti in term of J}, we arrive at 
			\begin{align}\label{estimate of the Phi term}
				\fint_{B_{R}}\Phi^{\mathfrak{p}(x)}\left\lvert \Phi^{\bar{\mathfrak{p}}-\mathfrak{p}(x)} - 1\right\rvert^{2} &\leq c \left( \fint_{B_{R}} \left\lvert\bar{\mathfrak{p}}-\mathfrak{p}(x)\right\rvert^{\frac{2(1+\sigma)}{\sigma}} \right)^{\frac{\sigma}{1+\sigma}} \notag\\ & \hspace{6pt}\times \log^{2} \left( \frac{1}{R}\right) \times \left[ \fint_{B_{R}}\left( 1 + \Phi^{\mathfrak{p}(x) + 2\omega_{\mathfrak{p}}\left(R\right)}\right)^{\left(1+\sigma\right)^{2}}\ \mathrm{d}x\right]^{\frac{1}{\left(1+\sigma\right)^{2}}}
			\end{align}
			
			Hence using \eqref{smallness of sigma} we can estimate 
			\begin{align}\label{estimate of 1+ sigma whole square term}
				&\fint_{B_{R}}\left( 1 + \Phi^{\mathfrak{p}(x) + 2\omega_{\mathfrak{p}}\left(R\right)}\right)^{\left(1+\sigma\right)^{2}}\ \mathrm{d}x \notag \\
				&\qquad \quad \leq c\fint_{B_{R}}\left( 1 + \Phi^{\mathfrak{p}(x)\left( 1 + \frac{2\omega_{\mathfrak{p}}\left(R\right)}{\gamma} \right)}\right)^{\left(1+\sigma\right)^{2}}\ \mathrm{d}x \notag \\
				&\qquad \quad \leq c\left( \fint_{B_{R}}\left( 1 + \Phi^{\mathfrak{p}(x)\left( 1 + \bar{\sigma}\right)}\right) \ \mathrm{d}x \right)^{\left( 1 + \frac{2\omega_{\mathfrak{p}}\left(R\right)}{\gamma} \right)\frac{\left(1+\sigma\right)^{2}}{\left(1+ \bar{\sigma}\right)}}.
			\end{align}
			Now using \eqref{higher integrability estimate f frozen and without gamma Global HI}  we have 
			\begin{align}\label{estimate of w in terms of u minimality and higher integrability}
				\fint_{B_{R}} \left\lvert \nabla_{\mathbb{E}} w \right\rvert^{\mathfrak{p}\left(x\right)\left( 1 + \bar{\sigma}\right)}\ \mathrm{d}x
				&\leq c \left( 1 +  \fint_{B_{2R}}\left\lvert \nabla_{\mathbb{E}} u \right\rvert^{\mathfrak{p}\left(x\right)\left( 1 + \bar{\sigma}\right)}\ \mathrm{d}x\right). 
			\end{align}
			Thus, using the definition of $\lambda_{1},$ we estimate,
			\begin{align}
				\fint_{B_{R}}\left( 1 + \Phi^{\mathfrak{p}(x)\left( 1 + \bar{\sigma}\right)}\right) \ \mathrm{d}x &\leq c \fint_{B_{R}}\left( 1 + \left\lvert \nabla_{\mathbb{E}} u \right\rvert^{\mathfrak{p}(x)\left( 1 + \bar{\sigma}\right)} + \left\lvert \nabla_{\mathbb{E}} w \right\rvert^{\mathfrak{p}(x)\left( 1 + \bar{\sigma}\right)}\right) \ \mathrm{d}x \notag \\
				&\stackrel{\eqref{estimate of w in terms of u minimality and higher integrability}}{\leq} c \fint_{B_{2R}}\left( 1 + \left\lvert \nabla_{\mathbb{E}} u \right\rvert^{\mathfrak{p}(x)\left( 1 + \bar{\sigma}\right)} \right) \ \mathrm{d}x \stackrel{\eqref{higher integrability estimate f}}{\leq} c\lambda_{1}^{\bar{\mathfrak{p}}(1+\bar{\sigma})}. 
			\end{align}
			Combining this with \eqref{estimate of the Phi term} and \eqref{estimate of 1+ sigma whole square term} we deduce 
			\begin{align}
				\fint_{B_{R}}\Phi^{\mathfrak{p}(x)}\left\lvert \Phi^{\bar{\mathfrak{p}}-\mathfrak{p}(x)} - 1\right\rvert^{2} &\leq c \log^{2} \left( \frac{1}{R}\right) \left( \fint_{B_{R}} \left\lvert \bar{\mathfrak{p}}-\mathfrak{p}(x)\right\rvert^{\frac{2(1+\sigma)}{\sigma}} \right)^{\frac{\sigma}{1+\sigma}}  \times \lambda_{1}^{\bar{\mathfrak{p}}\left( 1 + \frac{2\omega_{\mathfrak{p}}\left(R\right)}{\gamma} \right)} \notag \\
				&\leq c \mathfrak{P}_{R, \bar{\sigma}}^{2}	\lambda_{1}^{\bar{\mathfrak{p}}\left( 1 + \frac{2\omega_{\mathfrak{p}}\left(R\right)}{\gamma} \right)}  
				\leq c \mathfrak{P}_{R, \bar{\sigma}}^{2}	\lambda_{1}^{\bar{\mathfrak{p}}}, 
			\end{align}
			where we have used \eqref{estimate of lambda1}, the bound $\mathcal{K}_1<1/R$ and  \eqref{log holder bound Global HI} to deduce 
			$$ \lambda_{1}^{\frac{2\omega_{\mathfrak{p}}\left(R\right)}{\gamma}} \leq c \left( Q_{\mathbb{E}}, \gamma_{1}, \gamma_{2},\nu, L, L_{1} \right). $$ Now, combining this with \eqref{V p bar estimate} and \eqref{V px estimate}, we arrive at \eqref{comparison lemma 1 main estimate}. \smallskip 
			
			Now if $\bar{\mathfrak{p}}>2,$ we deduce 
			\begin{align*}
				&\fint_{B_{R}} \left\lvert  \nabla_{\mathbb{E}} u - \nabla_{\mathbb{E}} w\right\rvert^{\bar{\mathfrak{p}}} \\
				& \qquad \quad \leq \fint_{B_{R}} \left\lvert V_{\bar{\mathfrak{p}}}\left( \nabla_{\mathbb{E}} u\right) - V_{\bar{\mathfrak{p}}}\left( \nabla_{\mathbb{E}} w\right)\right\rvert^{2} \\
				&\qquad \quad \leq C \left( R^{\theta}\fint_{B_{R}} \left\lvert f \right\rvert^{\theta} \right)^{\frac{1}{\theta}}\left(\fint_{B_{R}}\left\lvert \nabla_{\mathbb{E}}u -\nabla_{\mathbb{E}} w\right\rvert^{\gamma} \right)^{\frac{1}{\gamma}}  + C\left( \mathfrak{A}_{R, \bar{\sigma}}^{2} + \mathfrak{P}_{R, \bar{\sigma}}^{2} \right) \lambda_{1}^{\bar{\mathfrak{p}}} \\
				&\qquad \quad \leq C \left( R^{\theta}\fint_{B_{R}} \left\lvert f \right\rvert^{\theta} \right)^{\frac{1}{\theta}}\left(\fint_{B_{R}}\left\lvert \nabla_{\mathbb{E}}u -\nabla_{\mathbb{E}} w\right\rvert^{\bar{\mathfrak{p}}} \right)^{\frac{1}{\bar{\mathfrak{p}}}}  + C\left( \mathfrak{A}_{R, \bar{\sigma}} + \mathfrak{P}_{R, \bar{\sigma}} \right)^{2} \lambda_{1}^{\bar{\mathfrak{p}}}. 
			\end{align*}
			Thus \eqref{comparison lemma 1 bar p bigger than 2} follows from Young's inequality with $\varepsilon>0$ small enough with exponents $\left( \bar{\mathfrak{p}}, \bar{\mathfrak{p}}/\left( \bar{\mathfrak{p}}-1\right) \right).$ On the other hand, if $\bar{\mathfrak{p}}\leq 2,$ we have 
			\begin{align}\label{penultimate est}
				&\fint_{B_{R}} \left\lvert  \nabla_{\mathbb{E}} u - \nabla_{\mathbb{E}} w\right\rvert^{\bar{\mathfrak{p}}} \notag\\
				&\qquad \quad \leq c\fint_{B_{R}}\left( \left\lvert  \nabla_{\mathbb{E}} u\right\rvert^{2} + \left\lvert  \nabla_{\mathbb{E}} w\right\rvert^{2} \right)^{\frac{\bar{\mathfrak{p}}\left(\bar{\mathfrak{p}}-2\right)}{2}} \left\lvert  \nabla_{\mathbb{E}} u - \nabla_{\mathbb{E}} w\right\rvert^{\bar{\mathfrak{p}}}\left( \left\lvert  \nabla_{\mathbb{E}} u\right\rvert^{2} + \left\lvert  \nabla_{\mathbb{E}} w\right\rvert^{2}\right)^{\frac{\bar{\mathfrak{p}}\left(2- \bar{\mathfrak{p}}\right)}{2}} \notag\\
				&\qquad \quad \leq c\fint_{B_{R}}\left\lvert V_{\bar{\mathfrak{p}}}\left( \nabla_{\mathbb{E}} u\right) - V_{\bar{\mathfrak{p}}}\left( \nabla_{\mathbb{E}} w\right)\right\rvert^{\bar{\mathfrak{p}}}\left( \left\lvert  \nabla_{\mathbb{E}} u\right\rvert^{2} + \left\lvert  \nabla_{\mathbb{E}} w\right\rvert^{2}\right)^{\frac{\bar{\mathfrak{p}}\left(2- \bar{\mathfrak{p}}\right)}{2}} \notag\\
				&\qquad \quad \leq c\left( \fint_{B_{R}}\left\lvert V_{\bar{\mathfrak{p}}}\left( \nabla_{\mathbb{E}} u\right) - V_{\bar{\mathfrak{p}}}\left( \nabla_{\mathbb{E}} w\right)\right\rvert^{2}\right)^{\frac{\bar{\mathfrak{p}}}{2}}\left( \fint_{B_{R}}\left( \left\lvert  \nabla_{\mathbb{E}} u\right\rvert^{2} + \left\lvert  \nabla_{\mathbb{E}} w\right\rvert^{2}\right)^{\frac{\bar{\mathfrak{p}}}{2}}\right)^{\frac{2-\bar{\mathfrak{p}}}{2}} \notag\\
				&\qquad \quad \leq c\left( \fint_{B_{R}}\left\lvert V_{\bar{\mathfrak{p}}}\left( \nabla_{\mathbb{E}} u\right) - V_{\bar{\mathfrak{p}}}\left( \nabla_{\mathbb{E}} w\right)\right\rvert^{2}\right)^{\frac{\bar{\mathfrak{p}}}{2}}\left( \fint_{B_{R}}\left( \left\lvert  \nabla_{\mathbb{E}} u\right\rvert^{\bar{\mathfrak{p}}} + \left\lvert  \nabla_{\mathbb{E}} w\right\rvert^{\bar{\mathfrak{p}}}\right)\right)^{\frac{2-\bar{\mathfrak{p}}}{2}}.
			\end{align}
			Now observe that by using Theorem \ref{global_higher_integrability} we estimate 
			\begin{align}\label{estimate of p bar energy of w}
				\fint_{B_{R}} \left\lvert \nabla_{\mathbb{E}} w  \right\rvert^{\bar{\mathfrak{p}}}
				 &\leq \left(\fint_{B_{R}} \left\lvert \nabla_{\mathbb{E}} w  \right\rvert^{\bar{\mathfrak{p}}\left(1+ \bar{\sigma}\right)}\right)^\frac{1}{1+\bar{\sigma}} \notag\\ 			& \leq c \left(1+ \fint_{B_{2R}}\left\lvert \nabla_{\mathbb{E}} u  \right\rvert^{\mathfrak{p}\left(x\right)(1+\bar{\sigma})}\ \mathrm{d}x \right)^{\frac{1}{1+\bar{\sigma}}} \stackrel{\eqref{higher integrability estimate f}}{\leq} c \lambda_1^{\bar{\mathfrak{p}}}. 
			\end{align}
			
			Similarly, using Theorem \ref{higher integrability f} we can estimate $\fint_{B_{R}} \left\lvert \nabla_{\mathbb{E}} u  \right\rvert^{\bar{\mathfrak{p}}}$ to conclude 
			\begin{align*}
				\fint_{B_{R}}\left( \left\lvert  \nabla_{\mathbb{E}} u\right\rvert^{\bar{\mathfrak{p}}} + \left\lvert  \nabla_{\mathbb{E}} w\right\rvert^{\bar{\mathfrak{p}}}\right) \leq c \lambda_{1}^{\bar{\mathfrak{p}}}.
			\end{align*}
			Plugging this in \eqref{penultimate est}, we arrive at 
			\begin{align*}
				&\fint_{B_{R}} \left\lvert  \nabla_{\mathbb{E}} u - \nabla_{\mathbb{E}} w\right\rvert^{\bar{\mathfrak{p}}} \\
				&\qquad\leq c\lambda_{1}^{\frac{\bar{\mathfrak{p}} \left( 2-\bar{\mathfrak{p}}\right)}{2}}\left( \fint_{B_{R}}\left\lvert V_{\bar{\mathfrak{p}}}\left( \nabla_{\mathbb{E}} u\right) - V_{\bar{\mathfrak{p}}}\left( \nabla_{\mathbb{E}} w\right)\right\rvert^{2}\right)^{\frac{\bar{\mathfrak{p}}}{2}} \\		
			&\qquad\stackrel{\eqref{comparison lemma 1 main estimate}}{\leq} c\left( \mathfrak{A}_{R, \bar{\sigma}} + \mathfrak{P}_{R, \bar{\sigma}} \right)^{\bar{\mathfrak{p}}} \lambda_{1}^{\bar{\mathfrak{p}}} + c \lambda_{1}^{\frac{\bar{\mathfrak{p}} \left( 2-\bar{\mathfrak{p}}\right)}{2}}\left( R^{\theta}\fint_{B_{R}} \left\lvert f \right\rvert^{\theta} \right)^{\frac{\bar{\mathfrak{p}}}{2\theta}}\left(\fint_{B_{R}}\left\lvert \nabla_{\mathbb{E}}u -\nabla_{\mathbb{E}} w\right\rvert^{\gamma} \right)^{\frac{\bar{\mathfrak{p}}}{2\gamma}} \\
				&\qquad\leq c\left( \mathfrak{A}_{R, \bar{\sigma}} + \mathfrak{P}_{R, \bar{\sigma}} \right)^{\bar{\mathfrak{p}}} \lambda_{1}^{\bar{\mathfrak{p}}} + c \lambda_{1}^{\frac{\bar{\mathfrak{p}} \left( 2-\bar{\mathfrak{p}}\right)}{2}}\left( R^{\theta}\fint_{B_{R}} \left\lvert f \right\rvert^{\theta} \right)^{\frac{\bar{\mathfrak{p}}}{2\theta}}\left(\fint_{B_{R}}\left\lvert \nabla_{\mathbb{E}}u -\nabla_{\mathbb{E}} w\right\rvert^{\bar{\mathfrak{p}}} \right)^{\frac{1}{2}}.
			\end{align*}
			Now \eqref{comparison lemma 1 bar p smaller than 2} follows by a simple application of Young's inequality.
		\end{proof}
		
		\begin{corollary}\label{homogeneous comparison at p0}
			Consider the same setup as in Lemma \ref{comparison lemma 1}. If $2<\bar{\mathfrak{p}}\leq \mathfrak{p}_0=\mathfrak{p}\left(x_0\right),$ then we have 
			\begin{align}\label{homogeneous comparison bar p bigger than 2}
				\fint_{B_{R}} \left\lvert  \nabla_{\mathbb{E}} u - \nabla_{\mathbb{E}} w\right\rvert^{\bar{\mathfrak{p}}} \leq 	C \left( R^{\theta}\fint_{B_{R}} \left\lvert f \right\rvert^{\theta} \right)^{\frac{\bar{\mathfrak{p}}}{\theta\left( \mathfrak{p}_0-1\right)}}  + C\left( \mathfrak{A}_{R, \bar{\sigma}} + \mathfrak{P}_{R, \bar{\sigma}} \right)^{2} \lambda_{1}^{\bar{\mathfrak{p}}}.
			\end{align}
			for some constant $ C \equiv C \left(Q_{\mathbb{E}}, \gamma_{1}, \gamma_{2}, \nu, L, \Lambda_{\log} \right)>0$.
		\end{corollary}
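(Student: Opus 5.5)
We start from estimate \eqref{comparison lemma 1 bar p bigger than 2} of Lemma \ref{comparison lemma 1}, which under $\bar{\mathfrak{p}}>2$ gives
\begin{align*}
\fint_{B_{R}} \left\lvert  \nabla_{\mathbb{E}} u - \nabla_{\mathbb{E}} w\right\rvert^{\bar{\mathfrak{p}}} \leq C_{6} \mathfrak{F}_{R, \theta}^{\frac{\bar{\mathfrak{p}}}{\bar{\mathfrak{p}}-1}}  + C_{6}\left( \mathfrak{A}_{R, \bar{\sigma}} + \mathfrak{P}_{R, \bar{\sigma}} \right)^{2} \lambda_{1}^{\bar{\mathfrak{p}}},
\end{align*}
so the only task is to replace the exponent $\frac{\bar{\mathfrak{p}}}{\bar{\mathfrak{p}}-1}$ on $\mathfrak{F}_{R,\theta}$ by $\frac{\bar{\mathfrak{p}}}{\mathfrak{p}_0-1}$. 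Since $\bar{\mathfrak{p}}\le \mathfrak{p}_0$ we have $\frac{1}{\bar{\mathfrak{p}}-1}\ge\frac{1}{\mathfrak{p}_0-1}$, so we split
\begin{align*}
\mathfrak{F}_{R, \theta}^{\frac{\bar{\mathfrak{p}}}{\bar{\mathfrak{p}}-1}} = \mathfrak{F}_{R, \theta}^{\frac{\bar{\mathfrak{p}}}{\mathfrak{p}_0-1}}\, \mathfrak{F}_{R, \theta}^{\bar{\mathfrak{p}}\left(\frac{1}{\bar{\mathfrak{p}}-1}-\frac{1}{\mathfrak{p}_0-1}\right)} .
\end{align*}
Note $\mathfrak{F}_{R,\theta}=(R^\theta\fint_{B_R}|f|^\theta)^{1/\theta}$, which is exactly the quantity appearing in the claim.

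The plan is to show the second factor is bounded by a constant depending only on the data. If $\mathfrak{F}_{R,\theta}\le 1$, the factor is at most $1$ since the exponent is nonnegative. Otherwise, we bound $\mathfrak{F}_{R,\theta}$ from above exactly as in the proof of Theorem \ref{higher integrability f}: by H\"older with $\theta<Q_{\mathbb{E}}$,
\begin{align*}
\mathfrak{F}_{R,\theta}\le \Big(R^{Q_{\mathbb{E}}}\fint_{B_R}|f|^{Q_{\mathbb{E}}}\Big)^{\frac{1}{Q_{\mathbb{E}}}}\le c\,\mathcal{K}_1^{\frac{1}{Q_{\mathbb{E}}}}\le c R^{-1},
\end{align*}
using \eqref{energy bound f} and $\mathcal{K}_1<1/R$ (cf. \eqref{estimate of lambda1}). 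The exponent satisfies $0\le \frac{1}{\bar{\mathfrak{p}}-1}-\frac{1}{\mathfrak{p}_0-1}=\frac{\mathfrak{p}_0-\bar{\mathfrak{p}}}{(\bar{\mathfrak{p}}-1)(\mathfrak{p}_0-1)}\le \frac{\omega_{\mathfrak{p}}(R)}{(\gamma_1-1)^2}$, because $\bar{x}\in\overline{B_{R/2}}$ and $x_0$ is the center. Hence the factor is at most $(cR^{-1})^{c\,\omega_{\mathfrak{p}}(R)}\le C$, by the log-H\"older bound \eqref{log holder bound Global HI}, which gives $R^{-\omega_{\mathfrak{p}}(R)}\le e^{\Lambda_{\log}}$.

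Combining yields \eqref{homogeneous comparison bar p bigger than 2} with $C$ depending only on $Q_{\mathbb{E}},\gamma_1,\gamma_2,\nu,L,\Lambda_{\log}$. The only delicate point is this last step: $\mathfrak{F}_{R,\theta}$ must be controlled by a negative power of $R$ uniformly (through $\mathcal{K}_1<1/R$, guaranteed by the choice of $R_1$), so that the log-H\"older continuity absorbs the exponent gap. Everything else is a direct citation of Lemma \ref{comparison lemma 1}.
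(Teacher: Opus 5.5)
Your proposal is correct and follows essentially the same route as the paper. Like the paper, you start from \eqref{comparison lemma 1 bar p bigger than 2}, split off the factor $\mathfrak{F}_{R,\theta}^{(\mathfrak{p}_0-\bar{\mathfrak{p}})/((\bar{\mathfrak{p}}-1)(\mathfrak{p}_0-1))}$, bound $\mathfrak{F}_{R,\theta}$ by $\mathcal{K}_1^{1/Q_{\mathbb{E}}}$ via H\"older's inequality, and then use $\mathcal{K}_1<1/R$ together with log-H\"older continuity to absorb the exponent gap $\omega_{\mathfrak{p}}(R)$; your separate treatment of the case $\mathfrak{F}_{R,\theta}\le 1$ is unnecessary but harmless.
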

		\begin{proof}
			Note that, $q<Q_{\mathbb{E}}$. So, by H\"older's inequality and \eqref{energy bound f} we have $\mathfrak{F}_{R,\theta}^{Q_{\mathbb{E}}} <\mathcal{K}_1$. But by our choice of $R_0$ in Theorem \ref{higher integrability f} we have $\mathcal{K}_1< 1/R$, for any $R< R_0$.   Thus, using $2< \bar{\mathfrak{p}}\leq \mathfrak{p}_0$ and $\mathfrak{p}_0-\bar{\mathfrak{p}} \leq \omega_\mathfrak{p}(R)$ we estimate 
			\begin{align}\label{estimate of mathfrac}
				\mathfrak{F}_{R,\theta}^{\frac{1}{\bar{\mathfrak{p}}-1}} = \mathfrak{F}_{R,\theta}^{\frac{1}{\mathfrak{p}_0-1}}\mathfrak{F}_{R,\theta}^{\frac{\mathfrak{p}_0-\bar{\mathfrak{p}}}{\left(\bar{\mathfrak{p}}-1\right) \left(\mathfrak{p}_0-1\right)}}
				&\stackrel{\eqref{energy bound f}}{\leq} \mathfrak{F}_{R,\theta}^{\frac{1}{\mathfrak{p}_0-1}} \mathcal{K}_1^{{\frac{\mathfrak{p}_0-\bar{\mathfrak{p}}}{Q_{\mathbb{E}}\left(\bar{\mathfrak{p}}-1\right) \left(\mathfrak{p}_0-1\right)}}} \notag\\
				&\leq \mathfrak{F}_{R,\theta}^{\frac{1}{\mathfrak{p}_0-1}} \mathcal{K}_1^{\frac{\omega_\mathfrak{p}(R)}{Q_{\mathbb{E}}\left(\gamma-1\right)^2}}  
				 \leq \mathfrak{F}_{R,\theta}^{\frac{1}{\mathfrak{p}_0-1}} \left(\frac{1}{R}\right)^{\frac{\omega_\mathfrak{p}(R)}{Q_{\mathbb{E}}\left(\gamma-1\right)^2}} \stackrel{\eqref{log holder bound Global HI}}{\leq} c \mathfrak{F}_{R,\theta}^{\frac{1}{\mathfrak{p}_0-1}}. 
			\end{align}
			Therefore, combining \eqref{estimate of mathfrac} and \eqref{comparison lemma 1 bar p bigger than 2} we conclude \eqref{homogeneous comparison bar p bigger than 2}.
		\end{proof}

		\begin{lemma}[Comparison Lemma II]\label{comparison lemma 2}
			Let $w$ and $v$ solve \eqref{equation frozen} and \eqref{equation double frozen}, respectively. Consider the same setup as in Lemma \ref{comparison lemma 1}. Also, let $\mathfrak{p}_{R/2}$ be as defined in Section \ref{basic setup for comparison} and  $\lambda_{1}$ be as defined in Lemma \ref{comparison lemma 1}.  Then we have the following estimates.
			\begin{enumerate}[(i)]
				\item \begin{align}\label{comparison lemma 2 main estimate}
					\fint_{B_{R/2}} \left\lvert V_{\mathfrak{p}_{R/2}}\left( \nabla_{\mathbb{E}} w\right) - V_{\mathfrak{p}_{R/2}}\left( \nabla_{\mathbb{E}} v\right)\right\rvert^{2} \leq 	C_{7} \mathfrak{P}_{R, \bar{\sigma}}^{2} \lambda_{1}^{\mathfrak{p}_{R/2}}.
				\end{align}
				\item If $\mathfrak{p}_{R/2} >2,$ we have 
				\begin{align}\label{comparison lemma 2 bar p bigger than 2}
					\fint_{B_{R/2}} \left\lvert  \nabla_{\mathbb{E}} w - \nabla_{\mathbb{E}} v\right\rvert^{\mathfrak{p}_{R/2}} \leq  C_{7} \mathfrak{P}_{R, \bar{\sigma}} ^{2} \lambda_{1}^{\mathfrak{p}_{R/2}}.
				\end{align}
				\item If $\mathfrak{p}_{R/2} \leq 2,$ we have 
				\begin{align}\label{comparison lemma 2 bar p smaller than 2}
					\fint_{B_{R/2}} \left\lvert  \nabla_{\mathbb{E}} w - \nabla_{\mathbb{E}} v\right\rvert^{\mathfrak{p}_{R/2}} \leq C_{7} \mathfrak{P}_{R, \bar{\sigma}} ^{\mathfrak{p}_{R/2}} \lambda_{1}^{\mathfrak{p}_{R/2}}.
				\end{align}
			\end{enumerate}
			Here $ C_{7} = C_{7} \left(Q_{\mathbb{E}}, \gamma_{1}, \gamma_{2}, \nu, L, \Lambda_{\log} \right) \ge 1$ is a positive constant.
		\end{lemma}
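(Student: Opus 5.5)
We follow the scheme of Lemma \ref{comparison lemma 1}, with the simplification that there is no right-hand side and no coefficient oscillation. The only source of error is now the freezing of the exponent from $\mathfrak{p}(x)$ to the constant $\mathfrak{p}_{R/2}$.

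\emph{Step 1: the basic identity.} Since $v-w\in\mathbb{E}W_0^{1,\mathfrak{p}_{R/2}}(B_{R/2})$, we test \eqref{equation frozen} and \eqref{equation double frozen} with $w-v$ and subtract. After adding and subtracting $\lvert\nabla_{\mathbb{E}}w\rvert^{\mathfrak{p}_{R/2}-2}\nabla_{\mathbb{E}}w$, the monotonicity estimate \eqref{monotonicity} and \eqref{constant cv} give
\begin{align*}
\fint_{B_{R/2}}\left\lvert V_{\mathfrak{p}_{R/2}}(\nabla_{\mathbb{E}} w)-V_{\mathfrak{p}_{R/2}}(\nabla_{\mathbb{E}} v)\right\rvert^{2}\le c\fint_{B_{R/2}}\left\lvert \lvert\nabla_{\mathbb{E}} w\rvert^{\mathfrak{p}(x)-1}-\lvert\nabla_{\mathbb{E}} w\rvert^{\mathfrak{p}_{R/2}-1}\right\rvert\,\lvert\nabla_{\mathbb{E}} w-\nabla_{\mathbb{E}} v\rvert .
\end{align*}
The mean value theorem bounds the integrand factor pointwise:
\[
\left\lvert \lvert z\rvert^{\mathfrak{p}(x)-1}-\lvert z\rvert^{\mathfrak{p}_{R/2}-1}\right\rvert\le \lvert\mathfrak{p}(x)-\mathfrak{p}_{R/2}\rvert\,\lvert z\rvert^{\mathfrak{p}_{R/2}-1\pm\omega_{\mathfrak{p}}(R)}\lvert\log\lvert z\rvert\rvert .
\]

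\emph{Step 2: absorbing the $v$-dependence.} As in the treatment of $I_1$ in Lemma \ref{comparison lemma 1}, we split into the cases $\mathfrak{p}_{R/2}\ge2$ and $\mathfrak{p}_{R/2}<2$; in the second case we use \eqref{v estimate p less 2}. Young's inequality with $\varepsilon$ then absorbs the $\lvert V(\nabla w)-V(\nabla v)\rvert^2$ term on the left. What remains is
\[
c\fint_{B_{R/2}}\lvert\mathfrak{p}(x)-\mathfrak{p}_{R/2}\rvert^{2}\left(1+\lvert\nabla_{\mathbb{E}} w\rvert^{\mathfrak{p}(x)+2\omega_{\mathfrak{p}}(R)}\right)\log^{2}\left(e+\lvert\nabla_{\mathbb{E}} w\rvert\right)
\]
together with a harmless $\lvert\mathfrak{p}-\mathfrak{p}_{R/2}\rvert^{\mathfrak{p}_{R/2}'}$-type term in the singular case. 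Since $\lvert\mathfrak{p}-\mathfrak{p}_{R/2}\rvert\le1$ and $\mathfrak{p}_{R/2}'>2$ when $\mathfrak{p}_{R/2}<2$, that term is dominated by the quadratic one. Replacing $\mathfrak{p}(x)$ by $\mathfrak{p}_{R/2}$ in exponents costs only a factor $\lvert\nabla w\rvert^{\pm\omega_{\mathfrak{p}}(R)}$; this is controlled exactly as $\Phi$ was in Lemma \ref{comparison lemma 1}.

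\emph{Step 3: the log-term (main obstacle).} This step copies \eqref{esti in term of J}--\eqref{estimate of 1+ sigma whole square term} with $\Phi$ replaced by $\lvert\nabla_{\mathbb{E}}w\rvert$. First apply H\"older with exponents $\frac{1+\sigma}{\sigma}$ and $1+\sigma$. Then apply Lemma \ref{Llog beta L lemma}; its prefactor is controlled by the crude bound \eqref{estimate for w in terms of u}, i.e. $\fint_{B_R}\lvert\nabla w\rvert^{\mathfrak{p}(x)(1+\bar\sigma)}\le cR^{-\gamma_2(Q_{\mathbb{E}}+1)(1+\bar\sigma)}$. This produces $\log^2(1/R)$ times the oscillation $\left(\fint_{B_{R/2}}\lvert\mathfrak{p}-\mathfrak{p}_{R/2}\rvert^{2(1+\sigma)/\sigma}\right)^{\sigma/(1+\sigma)}$. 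The oscillation is compared via \eqref{minimality of mean} and the doubling of $B_{R/2}\subset B_R$ to the one with mean $(\mathfrak{p})_R$, which gives $c\,\mathfrak{P}^2_{R,\bar\sigma}$; here we take $\sigma\le\bar\sigma$ as in \eqref{smallness of sigma} and use H\"older on the exponent. The remaining factor is a power of $\fint_{B_{R/2}}(1+\lvert\nabla w\rvert^{\mathfrak{p}(x)(1+\bar\sigma)})$. By \eqref{higher integrability estimate f frozen and without gamma Global HI} and Theorem \ref{higher integrability f} (with $\bar x$ chosen so that $\mathfrak{p}(\bar x)=\mathfrak{p}_{R/2}$, possible by continuity, or else absorbing the difference via log-H\"older), this factor is bounded by $c\lambda_1^{\mathfrak{p}_{R/2}(1+2\omega_{\mathfrak{p}}(R)/\gamma)}$. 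Finally, \eqref{estimate of lambda1} and $\mathcal{K}_1<1/R$ give $\lambda_1^{c\,\omega_{\mathfrak{p}}(R)}\le c$, which yields \eqref{comparison lemma 2 main estimate}. The delicate point is keeping track of these exponent shifts so that every integrability exponent stays below $\mathfrak{p}(x)(1+\bar\sigma)$; this is guaranteed by \eqref{smallness of wp}.

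\emph{Step 4: deriving (ii) and (iii).} For $\mathfrak{p}_{R/2}>2$, \eqref{comparison lemma 2 bar p bigger than 2} is immediate from $\lvert z_1-z_2\rvert^{p}\le c\lvert V_p(z_1)-V_p(z_2)\rvert^2$. For $\mathfrak{p}_{R/2}\le2$, we repeat \eqref{penultimate est}: apply H\"older with $(2/\mathfrak{p}_{R/2},\,2/(2-\mathfrak{p}_{R/2}))$. The energy factor is bounded by $\fint(\lvert\nabla w\rvert^{\mathfrak{p}_{R/2}}+\lvert\nabla v\rvert^{\mathfrak{p}_{R/2}})\le c\lambda_1^{\mathfrak{p}_{R/2}}$, using minimality of $v$ together with \eqref{estimate of p bar energy of w}. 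Combining with (i) gives $c\,\mathfrak{P}^{\mathfrak{p}_{R/2}}_{R,\bar\sigma}\lambda_1^{\mathfrak{p}_{R/2}}$.
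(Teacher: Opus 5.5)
Your route is the natural one and matches what the paper has in mind. The paper defers the proof to an earlier lemma, and your Steps 2--3 reproduce the mean-value/Young/$L\log L$ mechanism used for the $\Phi$-term in Lemma~\ref{comparison lemma 1}. Two steps, however, do not work as written.

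\emph{The singular case.} The Young term produced by $\lvert V(\nabla_{\mathbb{E}}w)-V(\nabla_{\mathbb{E}}v)\rvert^{2/\mathfrak{p}_{R/2}}$ is
$\lvert\mathfrak{p}-\mathfrak{p}_{R/2}\rvert^{\mathfrak{p}'_{R/2}}\lvert\nabla_{\mathbb{E}}w\rvert^{\mathfrak{p}_{R/2}\pm\omega_{\mathfrak{p}}(R)\mathfrak{p}'_{R/2}}\lvert\log\lvert\nabla_{\mathbb{E}}w\rvert\rvert^{\mathfrak{p}'_{R/2}}$,
so the logarithm also carries the power $\mathfrak{p}'_{R/2}>2$.
\begin{itemize}
\item Lemma~\ref{Llog beta L lemma} then produces $\log^{\mathfrak{p}'_{R/2}}(1/R)$.
\item The bound $\lvert\mathfrak{p}-\mathfrak{p}_{R/2}\rvert\le 1$ only gives $\log^{\mathfrak{p}'_{R/2}-2}(1/R)\,\mathfrak{P}^{2}$, which blows up as $R\to 0$.
\item So the term is not dominated by the quadratic one for the reason you give.
\end{itemize}
The missing input is log-H\"older continuity. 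On $B_{R/2}$ one has $\lvert\mathfrak{p}(x)-\mathfrak{p}_{R/2}\rvert\le\omega_{\mathfrak{p}}(R)$, so by \eqref{log holder bound Global HI},
$\bigl(\lvert\mathfrak{p}(x)-\mathfrak{p}_{R/2}\rvert\log(1/R)\bigr)^{\mathfrak{p}'_{R/2}-2}\le\Lambda_{\log}^{\mathfrak{p}'_{R/2}-2}$.
This reduces the term to the quadratic one. The larger exponent shift $\omega_{\mathfrak{p}}(R)\mathfrak{p}'_{R/2}$ on $\lvert\nabla_{\mathbb{E}}w\rvert$ still fits within the room provided by \eqref{smallness of wp}, since $\mathfrak{p}'_{R/2}\le\gamma'$.

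\emph{The exponent comparison.} Your ``H\"older on the exponent'' step goes the wrong way.
\begin{itemize}
\item Since \eqref{smallness of sigma} forces $\sigma<\bar\sigma$ strictly, you have $\frac{2(1+\sigma)}{\sigma}>\frac{2(1+\bar\sigma)}{\bar\sigma}$.
\item H\"older therefore bounds $\mathfrak{P}_{R,\bar\sigma}$ by the $\sigma$-oscillation, not conversely.
\item Splitting directly with $\bar\sigma$ is not an option either: it would require integrability of $\nabla_{\mathbb{E}}w$ beyond the exponent $\mathfrak{p}(x)(1+\bar\sigma)$ supplied by Theorem~\ref{global_higher_integrability}.
\end{itemize}
What your argument actually proves is (i)--(iii) with $\mathfrak{P}_{R,\sigma}$ in place of $\mathfrak{P}_{R,\bar\sigma}$, for a small $\sigma$ depending only on $\bar\sigma$ (e.g.\ $\sigma=\bar\sigma/4$).

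This weaker version is harmless. The hypotheses on $\mathfrak{p}$ hold for every exponent, and the proof of Theorem~\ref{unified main them} works with $\sigma=\bar\sigma/4$ by \eqref{sigma choice}. The paper's own proof of Lemma~\ref{comparison lemma 1} ends with the same unjustified passage to $\mathfrak{P}_{R,\bar\sigma}$. You should therefore state the lemma with the smaller $\sigma$ rather than reproduce that step.
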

		This is almost exactly  \cite[Lemma 32]{Mallick_Sil_continuityvariablegrowth} and thus we skip the proof.
		\subsection{Linearized comparison estimates}
		Let $\lambda \geq 1$. Consider,  the radius $R_{4}$ which is defined by \eqref{smallness of wp} and \eqref{smallness of mathfrak a+mathfrak p}.
		Let $\tau \in (0,1/4)$, $s>0$ and for $j=0,1, 2, \dots $, define the following quantities 
		\begin{align}
			r_j: &= \tau^j \frac{R_{4}}{2}, \quad B_j:= B_{r_j}\left(x_0\right), \quad sB_j:= B_{sr_j} (x_0) \label{seq of balls} \\
			\mathfrak{p}_0: &= \mathfrak{p}(x_0), \quad \mathfrak{p}_{r_j/2}: = \fint_{B_{r_j/2}} \mathfrak{p} , \quad \mathfrak{p}_{-}: =\inf_{B_0} \mathfrak{p}(\cdot), \quad \mathfrak{p}_{+} := \sup_{B_0} \mathfrak{p}(\cdot). \label{exponents}
		\end{align}
		Also, let $w_j \in u+\mathbb{E}W_0^{1,\mathfrak{p}(\cdot)}(B_j)$ and $v_j\in w_j+ \mathbb{E}W_0^{1, \mathfrak{p}_{r_j/2}}(\frac{1}{2}B_j)$ solve \eqref{equation frozen} and \eqref{equation double frozen} respectively with $R$ being replaced by $r_j$. We have the following linearized comparison.
		\begin{lemma}\label{linearized comparison}
			Let $j\ge 2$ and  $\theta$ and $\bar{\sigma}$ be as in 
			\eqref{theta def} and \eqref{sigma bar and R bar}, respectively. Suppose  we have  
			\begin{equation}\label{upper bound of f}
				\left( \fint_{4B_{j-1}} \left \lvert \nabla_{\mathbb{E}} u \right \rvert ^\gamma \right)^{\frac{1}{\gamma}} \leq \lambda \quad \text{and} \quad 4r_{j-1} \left( \fint_{4B_{j-1}}\lvert f \rvert^{q} \right)^{\frac{1}{q}} \leq \lambda^{\mathfrak{p}_0-1}
			\end{equation}
			and for some constant $A \geq 1,$ the estimates 
			\begin{equation}\label{both side bounds on w}
				\sup\limits_{\frac{1}{2}B_{j}} \left\lvert \nabla_{\mathbb{E}} w_{j} \right\rvert \leq A\lambda \qquad \text{ and } \qquad \frac{\lambda}{4A} \leq \left\lvert \nabla_{\mathbb{E}} w_{j-1} \right\rvert 
				\leq A\lambda \quad \text{ in } B_{j}
			\end{equation}
			hold. Then there exists a constant $C_8 \equiv C_{8}\left( Q_{\mathbb{E}}, \gamma_1, \gamma_2,\nu, L, \Lambda_{\log}, A, \tau \right) \ge 1$ such that 
			\begin{align}\label{grad u -grad v pbigger2}
				\left( \fint_{\frac{1}{2}B_{j}} \left\lvert \nabla_{\mathbb{E}}  u - \nabla_{\mathbb{E}}  v_{j} \right\rvert^{\gamma}\right)^{\frac{1}{\gamma}} \leq C_{8}\left( \mathfrak{A}_{r_{j-1}, \bar{\sigma}} + \mathfrak{P}_{r_{j-1}, \bar{\sigma}} \right)\lambda 
				+ C_{8}\lambda^{2-\mathfrak{p}_0}\mathfrak{F}_{r_{j-1}, \theta}.
			\end{align}
		\end{lemma}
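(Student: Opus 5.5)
The plan is to split the difference through the intermediate comparison functions and to linearize each piece using the two-sided bounds \eqref{both side bounds on w}. I would write
\begin{align*}
\nabla_{\mathbb{E}} u - \nabla_{\mathbb{E}} v_j = \left(\nabla_{\mathbb{E}} u - \nabla_{\mathbb{E}} w_{j-1}\right) + \left(\nabla_{\mathbb{E}} w_{j-1} - \nabla_{\mathbb{E}} w_{j}\right) + \left(\nabla_{\mathbb{E}} w_{j} - \nabla_{\mathbb{E}} v_j\right)
\end{align*}
on $\frac12 B_j$ and estimate the $L^{\gamma}$ average of each term separately.

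\textbf{Raw comparison bounds.} First I record what Lemma \ref{comparison lemma 1} and Lemma \ref{comparison lemma 2} give.
\begin{itemize}
\item Applied on $B_{j-1}$, with $\bar{\mathfrak p}$ frozen at a suitable point of $\overline{B_{r_{j-1}/2}}$, Lemma \ref{comparison lemma 1} controls $\fint_{B_{j-1}}|V_{\bar{\mathfrak p}}(\nabla_{\mathbb{E}} u)-V_{\bar{\mathfrak p}}(\nabla_{\mathbb{E}} w_{j-1})|^2$ by $\mathfrak F_{r_{j-1},\theta}(\fint|\nabla_{\mathbb{E}} u-\nabla_{\mathbb{E}} w_{j-1}|^\gamma)^{1/\gamma}+(\mathfrak A_{r_{j-1},\bar\sigma}^2+\mathfrak P_{r_{j-1},\bar\sigma}^2)\lambda_1^{\bar{\mathfrak p}}$.
\item By \eqref{upper bound of f}, $\lambda_1\le c\lambda$.
\item The same holds for $u$ versus $w_j$ on $B_j\subset B_{j-1}$. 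Passing from radius $r_j$ to $r_{j-1}$ in $\mathfrak A,\mathfrak P,\mathfrak F$ only costs a factor $\tau^{-Q_{\mathbb{E}}}$, which is why $C_8$ depends on $\tau$.
\item Lemma \ref{comparison lemma 2} gives $\fint_{\frac12B_j}|V_{\mathfrak p_{r_j/2}}(\nabla_{\mathbb{E}} w_j)-V_{\mathfrak p_{r_j/2}}(\nabla_{\mathbb{E}} v_j)|^2\le c\,\mathfrak P^2\lambda^{\mathfrak p_{r_j/2}}$.
\end{itemize}
The task is to convert these quadratic-in-$V$ estimates into $L^\gamma$ bounds that are \emph{linear} in $(\mathfrak A+\mathfrak P)\lambda$. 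This is the point of working at the $L^\gamma$ scale.

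\textbf{Case $p<2$ (for $\bar{\mathfrak p}$, resp.\ $\mathfrak p_{r_j/2}$).} I write $|z_1-z_2|^\gamma\le c|V_p(z_1)-V_p(z_2)|^\gamma(|z_1|+|z_2|)^{\gamma(2-p)/2}$. Hölder's inequality with exponents $2/\gamma$ and $2/(2-\gamma)$ then bounds the $L^\gamma$ average by $(\fint|V\text{-diff}|^2)^{1/2}$ times a power of the $p$-energy. Since $\gamma<p$, that energy is controlled by $\lambda^{p}$ through Theorem \ref{higher integrability f}, \eqref{higher integrability estimate f frozen and without gamma Global HI} and \eqref{estimate of p bar energy of w}. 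The result is $(\mathfrak A+\mathfrak P)\lambda^{p/2}\lambda^{(2-p)/2}=(\mathfrak A+\mathfrak P)\lambda$. The $f$-term is handled the same way: $(\mathfrak F\, X)^{1/2}\lambda^{(2-p)/2}$, with $X$ the $L^\gamma$ norm of the difference. Young's inequality absorbs $X$ and leaves $\lambda^{2-p}\mathfrak F$. The discrepancy between $\lambda^{2-\bar{\mathfrak p}}$ and $\lambda^{2-\mathfrak p_0}$, and between $\lambda_1^{\bar{\mathfrak p}}$ and $\lambda^{\bar{\mathfrak p}}$, is harmless. Here I would use $\lambda\le cR^{-Q_{\mathbb{E}}-1}$ (as in \eqref{estimate of lambda1}) together with the log-Hölder bound, exactly as in Corollary \ref{homogeneous comparison at p0}.

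\textbf{Case $p\ge 2$: the main obstacle.} Here I need a \emph{lower} bound on the weight $(|z_1|+|z_2|)^{p-2}$.
\begin{itemize}
\item For $u-w_{j-1}$ on $B_j$, I use $(|\nabla_{\mathbb{E}} u|+|\nabla_{\mathbb{E}} w_{j-1}|)^{p-2}\ge(\lambda/4A)^{p-2}$ from \eqref{both side bounds on w}. This gives $\fint_{B_j}|\nabla_{\mathbb{E}} u-\nabla_{\mathbb{E}} w_{j-1}|^2\le c\lambda^{2-p}\fint|V\text{-diff}|^2$, hence an $L^\gamma$ bound $\le c(\mathfrak A+\mathfrak P)\lambda+c(\lambda^{2-p}\mathfrak F X)^{1/2}$, and Young's inequality finishes as before.
\item For $w_{j-1}-w_j$, I subtract the two $u$-comparisons. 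Since $u-w_j$ carries no lower bound, I would instead compare both $u$ and $w_j$ against $w_{j-1}$.
\item For $w_j-v_j$, the lower bound for $\nabla_{\mathbb{E}} w_j$ is inherited on the good set $G=\{|\nabla_{\mathbb{E}} w_j-\nabla_{\mathbb{E}} w_{j-1}|\le\lambda/(8A)\}$, where $|\nabla_{\mathbb{E}} w_j|\ge\lambda/(8A)$; there the same linearization applies. On the bad set, $|\nabla_{\mathbb{E}} w_j-\nabla_{\mathbb{E}} v_j|^\gamma$ is controlled using the $L^{p}$ estimates \eqref{comparison lemma 1 bar p bigger than 2} and \eqref{comparison lemma 2 bar p bigger than 2}, together with $|\frac12B_j\setminus G|/|\frac12B_j|\le (8A/\lambda)^{p}\fint|\nabla_{\mathbb{E}} w_j-\nabla_{\mathbb{E}} w_{j-1}|^{p}$. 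Hölder's inequality should then give a bad-set contribution of order $(\mathfrak A+\mathfrak P)^{2/p}\cdots$, and ultimately $\le c(\mathfrak A+\mathfrak P)\lambda$ once one uses \eqref{smallness of mathfrak a+mathfrak p} and the exponent bookkeeping.
\end{itemize}
I expect this bad-set bookkeeping — getting exactly the first power of $(\mathfrak A+\mathfrak P)$ and of $\mathfrak F$, uniformly in the exponent jumps between $\bar{\mathfrak p}$, $\mathfrak p_{r_j/2}$ and $\mathfrak p_0$ — to be the delicate step. If the naive Chebyshev bound loses a power, I would first try to use $\sup_{\frac12B_j}|\nabla_{\mathbb{E}} w_j|\le A\lambda$ to bound the weight from above on the bad set, so that only the measure of the bad set enters.
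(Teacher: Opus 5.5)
There is a genuine gap in the first bullet of your case $\bar{\mathfrak p}>2$, where you linearize $u-w_{j-1}$ on $B_j$. The energy estimate for $u-w_{j-1}$ is Lemma \ref{comparison lemma 1} at radius $r_{j-1}$. Its $f$-term is therefore $\mathfrak F_{r_{j-1},\theta}X_{j-1}$ with $X_{j-1}=\bigl(\fint_{B_{j-1}}|\nabla_{\mathbb{E}}u-\nabla_{\mathbb{E}}w_{j-1}|^{\gamma}\bigr)^{1/\gamma}$, a norm on $B_{j-1}$. The lower bound $|\nabla_{\mathbb{E}}w_{j-1}|\ge\lambda/(4A)$ holds only in $B_j$, so your linearized left-hand side is a norm on $B_j$ and cannot absorb $X_{j-1}$. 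In other words, ``Young's inequality finishes as before'' fails. On $B_{j-1}\setminus B_j$ you only have \eqref{comparison lemma 1 bar p bigger than 2}, i.e.\ $X_{j-1}\lesssim\mathfrak F^{1/(\bar{\mathfrak p}-1)}+(\mathfrak A+\mathfrak P)^{2/\bar{\mathfrak p}}\lambda$. Insert this into $(\lambda^{2-\bar{\mathfrak p}}\mathfrak F X_{j-1})^{1/2}$ and set $\beta:=\lambda^{1-\bar{\mathfrak p}}\mathfrak F$. You get the terms $\lambda\beta^{\bar{\mathfrak p}'/2}$ and $\lambda\beta^{1/2}(\mathfrak A+\mathfrak P)^{1/\bar{\mathfrak p}}$, of total degree $\bar{\mathfrak p}'/2<1$ and $\frac12+\frac1{\bar{\mathfrak p}}<1$. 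These are not of the form \eqref{grad u -grad v pbigger2}. The hypotheses only make $\sum_j(\mathfrak A+\mathfrak P)$ and $\sum_j\mathfrak F_{r_j,\theta}$ small, not sums of such sublinear powers. Your second bullet does not rescue this: splitting $w_{j-1}-w_j$ through $u$ brings $u-w_j$ back, and ``compare both $u$ and $w_j$ against $w_{j-1}$'' is not yet a mechanism.

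The missing idea is never to linearize $u-w_{j-1}$. Instead, linearize $u-w_j$ on $B_j$, the ball of its own energy estimate, and use $|\nabla_{\mathbb{E}}w_{j-1}|$ only as a weight. The paper does this at the $L^{\bar{\mathfrak p}'}$ scale (Claim \ref{gen p bar}):
\[
\Bigl(\fint_{B_j}|\nabla_{\mathbb{E}}u-\nabla_{\mathbb{E}}w_j|^{\bar{\mathfrak p}'}\Bigr)^{\frac{1}{\bar{\mathfrak p}'}}\le c\lambda^{2-\bar{\mathfrak p}}\Bigl(\fint_{B_j}|\nabla_{\mathbb{E}}w_{j-1}|^{\bar{\mathfrak p}'(\bar{\mathfrak p}-2)}|\nabla_{\mathbb{E}}u-\nabla_{\mathbb{E}}w_j|^{\bar{\mathfrak p}'}\Bigr)^{\frac{1}{\bar{\mathfrak p}'}},
\]
and then splits $|\nabla_{\mathbb{E}}w_{j-1}|\le|\nabla_{\mathbb{E}}w_{j-1}-\nabla_{\mathbb{E}}w_j|+|\nabla_{\mathbb{E}}w_j|$.

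\emph{Cross term.} H\"older with exponents $\frac{\bar{\mathfrak p}}{\bar{\mathfrak p}'(\bar{\mathfrak p}-2)}$ and $\frac{\bar{\mathfrak p}}{\bar{\mathfrak p}'}$, together with the nonlinear $L^{\bar{\mathfrak p}}$ bounds, gives $c\lambda^{2-\bar{\mathfrak p}}\bigl(\mathfrak F^{\bar{\mathfrak p}'}+(\mathfrak A+\mathfrak P)^2\lambda^{\bar{\mathfrak p}}\bigr)^{1/\bar{\mathfrak p}'}$. This is linear in $\mathfrak F$ and superlinear in $\mathfrak A+\mathfrak P$, since $2/\bar{\mathfrak p}'>1$; the nonlinear estimates now enter only as a product and no longer hurt.

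\emph{Main term.} H\"older against $\fint_{B_j}|\nabla_{\mathbb{E}}w_j|^{\bar{\mathfrak p}}\lesssim\lambda^{\bar{\mathfrak p}}$ reduces it to the $V$-estimate on $B_j$. Its $f$-term involves $\bigl(\fint_{B_j}|\nabla_{\mathbb{E}}u-\nabla_{\mathbb{E}}w_j|^{\gamma}\bigr)^{1/\gamma}$, which is now absorbable because $\gamma\le\bar{\mathfrak p}'$ and everything lives on $B_j$.

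Alternatively, your own good-set device repairs the step if you apply it to $u-w_j$ directly. Take $G=B_j\cap\{|\nabla_{\mathbb{E}}u-\nabla_{\mathbb{E}}w_{j-1}|\le\lambda/(8A)\}$, on which $|\nabla_{\mathbb{E}}u|\ge\lambda/(8A)$. The Chebyshev bookkeeping then closes, since the exponents add up to at least one because $\gamma\le\min\{2,\bar{\mathfrak p}'\}$. The same holds for your $w_j-v_j$ step, which is a valid alternative to the paper's $I_3,I_4$ splitting.

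One further correction: $\lambda$ is only bounded from below, so $\lambda\le cR^{-Q_{\mathbb{E}}-1}$ is not available; that bound holds for $\lambda_1$. When $\mathfrak p_{r_j/2}<\mathfrak p_0$, handle the exponent mismatch as in Corollary \ref{homogeneous comparison at p0}: bound $\mathfrak F$ rather than $\lambda$, and then use $\mathfrak F\lesssim\lambda^{\mathfrak p_0-1}$.
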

		\begin{proof}
			We divide the proof in several cases depending on $\mathfrak{p}_0$ and $\mathfrak{p}_{r_j/2}$. Note that, by our choice $\gamma \leq \min\lbrace \mathfrak{p}_0,\mathfrak{p}_0', \mathfrak{p}_{r_j/2}, \mathfrak{p}'_{r_j/2} \rbrace$. Also, by the definitions of $\mathfrak{A}_{r_j, \bar{\sigma}}$, $\mathfrak{P}_{r_j, \bar{\sigma}}$ and $\mathfrak{F}_{r_j, \theta}$ in \eqref{mathfrak a}, \eqref{mathfrak p} and \eqref{mathfrak f} respectively,we have  
			\begin{align*}
				\mathfrak{A}_{r_j, \bar{\sigma}} \leq \tau^\frac{-Q_{\mathbb{E}}\bar{\sigma}}{2(1+\bar{\sigma})}\mathfrak{A}_{r_{j-1}, \bar{\sigma}},\quad \mathfrak{P}_{r_j, \bar{\sigma}} \leq   2\tau^\frac{-Q_{\mathbb{E}}\bar{\sigma}}{2(1+\bar{\sigma})}\mathfrak{P}_{r_{j-1}, \bar{\sigma}}, \   \mbox{  and  }\  \mathfrak{F}_{r_j, \theta} \leq \tau^\frac{\theta-Q_{\mathbb{E}}}{\theta}\mathfrak{F}_{r_{j-1},\theta}. 
			\end{align*}
			Now using the facts $0<\bar{\sigma}/2(1+\bar{\sigma})\le 1$, and  $0<\tau<1/4$, $1<\theta\leq Q_{\mathbb{E}}$, we deduce
            \begin{align}\label{estimates of j by j-1 for a, p, f}
				\mathfrak{A}_{r_j, \bar{\sigma}} \leq \tau^{-Q_{\mathbb{E}}}\mathfrak{A}_{r_{j-1}, \bar{\sigma}},\quad \mathfrak{P}_{r_j, \bar{\sigma}} \leq   2\tau^{-Q_{\mathbb{E}}}\mathfrak{P}_{r_{j-1}, \bar{\sigma}}, \   \mbox{  and  } \,\mathfrak{F}_{r_j, \theta} \leq \tau^{-Q_{\mathbb{E}}}\mathfrak{F}_{r_{j-1},\theta}. 
			\end{align}
			\begin{case}
				$\mathfrak{p}_0, \ \mathfrak{p}_{r_j/2} \leq 2.$
			\end{case}
			In this case, we use \eqref{comparison lemma 2 bar p smaller than 2}, \eqref{upper bound of f} and \eqref{comparison lemma 1 bar p smaller than 2} with $\bar{\mathfrak{p}}= \mathfrak{p}_0$ and $R= r_j$ along with H\"older inequality to arrive at \eqref{grad u -grad v pbigger2}.  
			
			\noindent Next, we prove a general estimate. 
			Let $\bar{x}\in \overline{\frac{1}{2}B_{j}}$ and $\bar{\mathfrak{p}}= \mathfrak{p}(\bar{x})>2$. Then we claim
			\begin{claim}\label{gen p bar}
            There exists $C \equiv C\left( Q_{\mathbb{E}}, \gamma_1, \gamma_2,\nu, L, L_1, A, B, \tau \right) >0$, such that
				\begin{align}\label{grad u -grad w pbigger2}
					\left( \fint_{\frac{1}{2}B_{j}} \left\lvert \nabla_{\mathbb{E}}  u - \nabla_{\mathbb{E}}  w_{j} \right\rvert^{\bar{\mathfrak{p}}'}\right)^{\frac{1}{\bar{\mathfrak{p}}'}} \leq C\left( \mathfrak{A}_{r_{j-1}, \bar{\sigma}} + \mathfrak{P}_{r_{j-1}, \bar{\sigma}} \right)\lambda 
					+ C\lambda^{2-\bar{\mathfrak{p}}}\mathfrak{F}_{r_{j-1}, \theta}.
				\end{align}
			\end{claim}
			\noindent \emph{Proof of Claim \ref{gen p bar}:} Applying \eqref{comparison lemma 1 bar p bigger than 2} on $B_{j-1}$ and $B_{j},$  and using \eqref{estimates of j by j-1 for a, p, f}, \eqref{upper bound of f} and \eqref{def lambda1} we deduce 
			\begin{align}\label{xwj-xwj-1 gen}
				\fint_{B_{j}} \lvert \nabla_{\mathbb{E}}  w_{j-1} - \nabla_{\mathbb{E}}  w_{j}\rvert^{\bar{\mathfrak{p}}} 
				&\leq c\tau^{-Q_{\mathbb{E}}}\fint_{B_{j-1}}\lvert \nabla_{\mathbb{E}} u  - \nabla_{\mathbb{E}} w_{j-1}\rvert^{\bar{\mathfrak{p}}} + c \fint_{B_{j}}\lvert \nabla_{\mathbb{E}} u  - \nabla_{\mathbb{E}} w_{j}\rvert^{\bar{\mathfrak{p}}} \notag \\
				&\leq c \tau^{-2Q_{\mathbb{E}} - \bar{\mathfrak{p}}'Q_{\mathbb{E}}} \left(\mathfrak{F}^{\bar{\mathfrak{p}}'}_{r_{j-1},\theta} + \left( \mathfrak{A}_{r_{j-1}, \bar{\sigma}} + \mathfrak{P}_{r_{j-1}, \bar{\sigma}} \right)^2\lambda^{\bar{\mathfrak{p}}}  \right).
			\end{align}
			
			Next, we estimate
			\begin{align*}
				&\left( \fint_{B_{j}} \left\lvert \nabla_{\mathbb{E}}  u - \nabla_{\mathbb{E}}  w_{j} \right\rvert^{\bar{\mathfrak{p}}'}\right)^{\frac{1}{\bar{\mathfrak{p}}'}}\\ 
				&\qquad\stackrel{\eqref{both side bounds on w}}{\leq} 
				\frac{c}{\lambda^{\bar{\mathfrak{p}}-2}}  \left( \fint_{B_{j}} \left\lvert \nabla_{\mathbb{E}} w_{j-1} \right\rvert^{\bar{\mathfrak{p}}'(\bar{\mathfrak{p}}-2)}\left\lvert \nabla_{\mathbb{E}} u - \nabla_{\mathbb{E}}  w_{j} \right\rvert^{\bar{\mathfrak{p}}'}\right)^{\frac{1}{\bar{\mathfrak{p}}'}} \\
				&\qquad\begin{aligned}
					\leq \frac{c}{\lambda^{\bar{\mathfrak{p}}-2}} &\left( \fint_{B_{j}} \left\lvert \nabla_{\mathbb{E}} w_{j} - \nabla_{\mathbb{E}} w_{j-1} \right\rvert^{\bar{\mathfrak{p}}'(\bar{\mathfrak{p}}-2)}\left\lvert \nabla_{\mathbb{E}} u - \nabla_{\mathbb{E}}  w_{j} \right\rvert^{\bar{\mathfrak{p}}'}\right)^{\frac{1}{\bar{\mathfrak{p}}'}}
					\\ &\qquad + \frac{c}{\lambda^{\bar{\mathfrak{p}}-2}}  \left( \fint_{B_{j}} \left\lvert \nabla_{\mathbb{E}} w_{j} \right\rvert^{\bar{\mathfrak{p}}'(\bar{\mathfrak{p}}-2)}\left\lvert \nabla_{\mathbb{E}} u - \nabla_{\mathbb{E}}  w_{j} \right\rvert^{\bar{\mathfrak{p}}'}\right)^{\frac{1}{\bar{\mathfrak{p}}'}}
					\eqqcolon I_{1} + I_{2}.
				\end{aligned}
			\end{align*}
			We handle the last two integrals separately. We use $\bar{\mathfrak{p}}' <2< \bar{\mathfrak{p}}$ to estimate
			\begin{align*}
				I_{1} &\leq \frac{c}{\lambda^{\bar{\mathfrak{p}}-2}}
				\left( \fint_{B_{j}} \left\lvert \nabla_{\mathbb{E}} w_{j} - \nabla_{\mathbb{E}} w_{j-1} \right\rvert^{\bar{\mathfrak{p}}}\right)^{\frac{\bar{\mathfrak{p}}-2}{\bar{\mathfrak{p}}}}
				\left( \fint_{B_{j}} \left\lvert \nabla_{\mathbb{E}}  u - \nabla_{\mathbb{E}}  w_{j} \right\rvert^{\bar{\mathfrak{p}}}\right)^{\frac{1}{\bar{\mathfrak{p}}}} \\
				&\stackrel{\eqref{xwj-xwj-1 gen},\eqref{comparison lemma 1 bar p bigger than 2}}{\leq} c\lambda^{2-\bar{\mathfrak{p}}} \left(\mathfrak{F}^{\bar{\mathfrak{p}}'}_{r_{j-1},\theta} + \left( \mathfrak{A}_{r_{j-1}, \bar{\sigma}} + \mathfrak{P}_{r_{j-1}, \bar{\sigma}} \right)^2\lambda^{\bar{\mathfrak{p}}}\right)^{\frac{1}{\bar{\mathfrak{p}}'}} \\
				&\stackrel{\eqref{smallness of mathfrak a+mathfrak p}}{\leq} C\left( \mathfrak{A}_{r_{j-1}, \bar{\sigma}} + \mathfrak{P}_{r_{j-1}, \bar{\sigma}} \right)\lambda + C\lambda^{2-\bar{\mathfrak{p}}}\mathfrak{F}_{r_{j-1}, \theta}.
			\end{align*}
			To estimate $I_2$, we note that 
			\begin{align*}
				&\left( \fint_{B_{j}} \left\lvert \nabla_{\mathbb{E}} w_{j} \right\rvert^{\bar{\mathfrak{p}}'(\bar{\mathfrak{p}}-2)}\left\lvert \nabla_{\mathbb{E}}  u - \nabla_{\mathbb{E}}  w_{j} \right\rvert^{\bar{\mathfrak{p}}'}\right)^{\frac{1}{\mathfrak{p}'}} \\
				&\qquad= \left( \fint_{B_{j}} \left\lvert \nabla_{\mathbb{E}} w_{j} \right\rvert^{\frac{\bar{\mathfrak{p}}'\left(\bar{\mathfrak{p}}-2\right)}{2}}\left\lvert \nabla_{\mathbb{E}}  u - \nabla_{\mathbb{E}}  w_{j} \right\rvert^{\bar{\mathfrak{p}}'} \left\lvert \nabla_{\mathbb{E}} w_{j} \right\rvert^{\frac{\bar{\mathfrak{p}}'\left(\bar{\mathfrak{p}}-2\right)}{2}}\right)^{\frac{1}{\bar{\mathfrak{p}}'}} \\
				&\qquad\stackrel{\text{H\"{o}lder}}{\leq} 
				\left( \fint_{B_{j}} \left\lvert \nabla_{\mathbb{E}} w_{j} \right\rvert^{\bar{\mathfrak{p}}-2}\left\lvert \nabla_{\mathbb{E}}  u - \nabla_{\mathbb{E}}  w_{j} \right\rvert^{2} \right)^{\frac{1}{2}}
				\left( \fint_{B_{j}} \left\lvert \nabla_{\mathbb{E}} w_{j} \right\rvert^{\bar{\mathfrak{p}}}\right)^{\frac{\bar{\mathfrak{p}}-2}{2\bar{\mathfrak{p}}}} .
			\end{align*}
			By \eqref{higher integrability estimate f frozen and without gamma Global HI} we also have 
			\begin{align*}
				\fint_{B_{j}} \left\lvert \nabla_{\mathbb{E}} w_{j} \right\rvert^{\bar{\mathfrak{p}}} 
				&\leq \left(\fint_{B_{j}} \left\lvert \nabla_{\mathbb{E}} w_{j} \right\rvert^{\bar{\mathfrak{p}}(1+\bar{\sigma})}\right)^{\frac{1}{1+\bar{\sigma}}}\\   
				&\leq c\left(1+	\left(\fint_{2B_{j}} \left\lvert \nabla_{\mathbb{E}} u \right\rvert^{\bar{\mathfrak{p}}(1+\bar{\sigma})}\right)^{\frac{1}{1+\bar{\sigma}}}\right)	\stackrel{\eqref{higher integrability estimate f},\eqref{upper bound of f}}{\leq} c\lambda^{\bar{\mathfrak{p}}}.				
			\end{align*}
			Thus combining the last two estimates and using $\gamma\leq \gamma_2' \leq \bar{\mathfrak{p}}'$ we deduce, 
			\begin{align*}
				I_{2}
				&\leq c\lambda^{\frac{2-\bar{\mathfrak{p}}}{2}} \left( \fint_{B_{j}} \left\lvert \nabla_{\mathbb{E}} w_{j} \right\rvert^{\bar{\mathfrak{p}}-2}\left\lvert \nabla_{\mathbb{E}}  u - \nabla_{\mathbb{E}}  w_{j} \right\rvert^{2} \right)^{\frac{1}{2}} \\
				&\leq c\lambda^{\frac{2-\bar{\mathfrak{p}}}{2}} 
				\left( \fint_{B_{j}} \left( \left\lvert \nabla_{\mathbb{E}} u \right\rvert + \left\lvert \nabla_{\mathbb{E}} w_{j} \right\rvert \right)^{\bar{\mathfrak{p}}-2}\left\lvert \nabla_{\mathbb{E}}  u - \nabla_{\mathbb{E}}  w_{j} \right\rvert^{2} \right)^{\frac{1}{2}} \\
				&\leq c\lambda^{\frac{2-\bar{\mathfrak{p}}}{2}} 
				\left(\fint_{B_j}\left\lvert V_{\bar{\mathfrak{p}}}\left( \nabla_{\mathbb{E}} u\right) - V_{\bar{\mathfrak{p}}}\left( \nabla_{\mathbb{E}} w_j\right)\right\rvert^{2}\right)^{\frac{1}{2}} 
				\end{align*}
Now using \eqref{comparison lemma 1 main estimate} and \eqref{estimates of j by j-1 for a, p, f} we have
\begin{align*}
			I_{2}	&\leq c\lambda^{\frac{2-\bar{\mathfrak{p}}}{2}}  \mathfrak{F}^{\frac{1}{2}}_{r_{j-1}, \theta}\left(\fint_{B_{j}}\left\lvert \nabla_{\mathbb{E}}u -\nabla_{\mathbb{E}} w_j\right\rvert^{\gamma} \right)^{\frac{1}{2\gamma}}  + c\left( \mathfrak{A}_{r_{j-1}, \bar{\sigma}} + \mathfrak{P}_{r_{j-1}, \bar{\sigma}} \right) \lambda \notag \\
				&\stackrel{\text{H\"{o}lder}}{\leq} c\lambda^{\frac{2-\bar{\mathfrak{p}}}{2}} \left( \fint_{B_{j}} \lvert \nabla_{\mathbb{E}} u - \nabla_{\mathbb{E}} w_{j}\rvert^{\bar{\mathfrak{p}}'}\right)^{\frac{1}{2\bar{\mathfrak{p}}'}} \mathfrak{F}^{\frac{1}{2}}_{r_{j-1}, \theta} + c\left( \mathfrak{A}_{r_{j-1}, \bar{\sigma}} + \mathfrak{P}_{r_{j-1}, \bar{\sigma}} \right) \lambda\\ 
				&= c \left[ \left( \fint_{B_{j}} \lvert \nabla_{\mathbb{E}} u - \nabla_{\mathbb{E}} w_{j}\rvert^{\bar{\mathfrak{p}}'}\right)^{\frac{1}{\bar{\mathfrak{p}}'}}\right]^{\frac{1}{2}}
				\left[ \lambda^{2-\bar{\mathfrak{p}}} \mathfrak{F}_{r_{j-1}, \theta} \right]^{\frac{1}{2}}+ c\left( \mathfrak{A}_{r_{j-1}, \bar{\sigma}} + \mathfrak{P}_{r_{j-1}, \bar{\sigma}} \right) \lambda.
			\end{align*}
			But this implies, by Young's inequality with $\varepsilon > 0, $
			\begin{align*}
				I_{2} 
				&\leq  \varepsilon \left( \fint_{B_{j}} \lvert \nabla_{\mathbb{E}} u - \nabla_{\mathbb{E}} w_{j}\rvert^{\bar{\mathfrak{p}}'}\right)^{\frac{1}{\bar{\mathfrak{p}}'}} + c\lambda^{2-\bar{\mathfrak{p}}}\mathfrak{F}_{r_{j-1}, \theta} + c\left( \mathfrak{A}_{r_{j-1}, \theta} + \mathfrak{P}_{r_{j-1}, \theta} \right) \lambda.
			\end{align*}
			Combining the estimates of $I_1$ and $I_2$ and choosing $\varepsilon > 0$ small enough, and  using 
			$$\left( \fint_{\frac{1}{2}B_{j}} \lvert \nabla_{\mathbb{E}} u - \nabla_{\mathbb{E}} w_{j}\rvert^{\bar{\mathfrak{p}}'}\right)^{\frac{1}{\bar{\mathfrak{p}}'}} \leq c \left( \fint_{B_{j}} \lvert \nabla_{\mathbb{E}} u - \nabla_{\mathbb{E}} w_{j}\rvert^{\bar{\mathfrak{p}}'}\right)^{\frac{1}{\bar{\mathfrak{p}}'}}$$
			we obtain \eqref{grad u -grad w pbigger2}, proving Claim \ref{gen p bar}. \bigskip
			
			\begin{case}
				$\mathfrak{p}_0\leq 2< \mathfrak{p}_{r_j/2}$  or $2< \mathfrak{p}_0 \leq \mathfrak{p}_{r_j/2}.$
			\end{case}
			In this case we linearize w.r.t.  $\mathfrak{p}_{r_j/2}$. First we claim the following. 
			\begin{claim}\label{linearize wrt pr/2 frozen-double frozen}
            There exists $C \equiv C\left(Q_{\mathbb{E}}, \gamma_1, \gamma_2,\nu, L, L_1, A, B, \tau \right) >0$, such that 
				\begin{align}\label{linearize estimate wrt pr/2 frozen-double frozen}
					\left( \fint_{\frac{1}{2}B_{j}} \left\lvert \nabla_{\mathbb{E}}  v_{j} - \nabla_{\mathbb{E}}  w_{j} \right\rvert^{\mathfrak{p}_{r_j/2}'}\right)^{\frac{1}{\mathfrak{p}_{r_j/2}'}} 
					\leq C\left(\mathfrak{A}_{r_{j-1}, \bar{\sigma}} + \mathfrak{P}_{r_{j-1}, \bar{\sigma}}\right)\lambda + C \lambda^{2-\mathfrak{p}_{r_j/2}}  \mathfrak{F}_{r_{j-1},\theta}. 
				\end{align}
				
			\end{claim}
			\emph{Proof of Claim \ref{linearize wrt pr/2 frozen-double frozen}}: We use the shorthand $\mathfrak{p}_{j}= \mathfrak{p}_{r_j/2}$ and estimate
			\begin{align*}
				&\left( \fint_{\frac{1}{2}B_{j}} \left\lvert \nabla_{\mathbb{E}}  v_{j} - \nabla_{\mathbb{E}}  w_{j} \right\rvert^{\mathfrak{p}_{j}'}\right)^{\frac{1}{\mathfrak{p}_{j}'}} \\
				&\qquad\stackrel{\eqref{both side bounds on w}}{\leq} 
				\frac{c}{\lambda^{\mathfrak{p}_{j}-2}}  \left( \fint_{\frac{1}{2}B_{j}} \left\lvert \nabla_{\mathbb{E}} w_{j-1} \right\rvert^{\mathfrak{p}_{j}'(\mathfrak{p}_{j}-2)}\left\lvert \nabla_{\mathbb{E}}  v_{j} - \nabla_{\mathbb{E}} w_{j} \right\rvert^{\mathfrak{p}_{j}'}\right)^{\frac{1}{\mathfrak{p}_{j}'}} \\
				&\qquad\begin{aligned}
					\leq \frac{c}{\lambda^{\mathfrak{p}_{j}-2}} &\left( \fint_{\frac{1}{2}B_{j}} \left\lvert \nabla_{\mathbb{E}} w_{j} - \nabla_{\mathbb{E}} w_{j-1} \right\rvert^{\mathfrak{p}_{j}'(\mathfrak{p}_{j}-2)}\left\lvert \nabla_{\mathbb{E}} v_{j} - \nabla_{\mathbb{E}}  w_{j} \right\rvert^{\mathfrak{p}_{j}'}\right)^{\frac{1}{\mathfrak{p}_{j}'}}
					\\ &\quad + \frac{c}{\lambda^{\mathfrak{p}_{j}-2}} \left( \fint_{\frac{1}{2}B_{j}} \left\lvert \nabla_{\mathbb{E}} w_{j} \right\rvert^{\mathfrak{p}_{j}'(\mathfrak{p}_{j}-2)}\left\lvert \nabla_{\mathbb{E}} v_{j} - \nabla_{\mathbb{E}} w_{j} \right\rvert^{\mathfrak{p}_{j}'}\right)^{\frac{1}{\mathfrak{p}_{j}'}} \eqqcolon I_3+I_4.
				\end{aligned}
			\end{align*}
			The first integral on the right i.e. $I_3$ can be estimated as 
			\begin{align*}
                I_3
				 &\leq \frac{c}{\lambda^{\mathfrak{p}_{j}-2}}  \left( \fint_{\frac{1}{2}B_{j}} \left\lvert \nabla_{\mathbb{E}} w_{j} - \nabla_{\mathbb{E}} w_{j-1} \right\rvert^{\mathfrak{p}_{j}}\right)^{\frac{\mathfrak{p}_{j}-2}{\mathfrak{p}_{j}}}
				\left( \fint_{\frac{1}{2}B_{j}} \left\lvert \nabla_{\mathbb{E}}  v_{j} - \nabla_{\mathbb{E}}  w_{j} \right\rvert^{\mathfrak{p}_{j}}\right)^{\frac{1}{\mathfrak{p}_{j}}} \\
				&\stackrel{\eqref{comparison lemma 2 bar p bigger than 2}, \eqref{upper bound of f},\eqref{estimates of j by j-1 for a, p, f}}{\leq}	\frac{c}{\lambda^{\mathfrak{p}_{j}-2}} \left( \fint_{\frac{1}{2}B_{j}} \left\lvert \nabla_{\mathbb{E}} w_{j} - \nabla_{\mathbb{E}} w_{j-1} \right\rvert^{\mathfrak{p}_{j}}\right)^{\frac{\mathfrak{p}_{j}-2}{\mathfrak{p}_{j}}}
				\left[\mathfrak{P}_{r_{j-1},\bar{\sigma}}\right]^{\frac{2}{\mathfrak{p}_{j}}}\lambda \\
				&\leq c\mathfrak{P}_{r_{j-1},\bar{\sigma}} \lambda + c \lambda^{1-\mathfrak{p}_{j}} \fint_{\frac{1}{2}B_{j}} \left\lvert \nabla_{\mathbb{E}} w_{j} - \nabla_{\mathbb{E}} w_{j-1} \right\rvert^{\mathfrak{p}_{j}}.
			\end{align*}
			Now using \eqref{xwj-xwj-1 gen} with $\bar{\mathfrak{p}}=\mathfrak{p}_j$, $\lambda\geq 1$ and $\mathfrak{p}_0-\mathfrak{p}_j\leq 0$, we deduce
			\begin{align*}
				I_3
				&\leq c\mathfrak{P}_{r_{j-1},\bar{\sigma}} \lambda + c \lambda^{1-\mathfrak{p}_{j}} \left(\mathfrak{F}^{\mathfrak{p}_j'}_{r_{j-1},\theta} + \left( \mathfrak{A}_{r_{j-1}, \bar{\sigma}} + \mathfrak{P}_{r_{j-1}, \bar{\sigma}} \right)^2\lambda^{\mathfrak{p}_j}\right)  \\ 
				&\stackrel{\eqref{smallness of mathfrak a+mathfrak p}}{\leq} c\left(\mathfrak{A}_{r_{j-1}, \bar{\sigma}} + \mathfrak{P}_{r_{j-1}, \bar{\sigma}}\right)\lambda + c \lambda^{1-\mathfrak{p}_j} \mathfrak{F}^{\frac{1}{\mathfrak{p}_j-1}}_{r_{j-1},\theta} \mathfrak{F}_{r_{j-1},\theta}\\
				&\stackrel{\eqref{upper bound of f}}{\leq} c\left(\mathfrak{A}_{r_{j-1}, \bar{\sigma}} + \mathfrak{P}_{r_{j-1}, \bar{\sigma}}\right)\lambda + c \lambda^{1-\mathfrak{p}_j} \lambda^{\frac{\mathfrak{p}_0-1}{\mathfrak{p}_j-1}} \mathfrak{F}_{r_{j-1},\theta}\\
				&\leq c\left(\mathfrak{A}_{r_{j-1}, \bar{\sigma}} + \mathfrak{P}_{r_{j-1}, \bar{\sigma}}\right)\lambda + c \lambda^{2-\mathfrak{p}_j}  \mathfrak{F}_{r_{j-1},\theta},
			\end{align*}
			We estimate $I_4$ as follows. 
			\begin{align*}
				I_4=&\frac{c}{\lambda^{\mathfrak{p}_{j}-2}} \left( \fint_{\frac{1}{2}B_{j}} \left\lvert \nabla_{\mathbb{E}} w_{j} \right\rvert^{\mathfrak{p}_{j}'(\mathfrak{p}_{j}-2)}\left\lvert \nabla_{\mathbb{E}}  v_{j} - \nabla_{\mathbb{E}}  w_{j} \right\rvert^{\mathfrak{p}_{j}'}\right)^{\frac{1}{\mathfrak{p}_{j}'}} \\
				&\stackrel{\eqref{both side bounds on w}}{\leq} \frac{c}{\lambda^{\mathfrak{p}_{j}-2}} \left( c\lambda \right)^{\frac{\mathfrak{p}_{j}-2}{2}} 
				\left( \fint_{\frac{1}{2}B_{j}} \left\lvert \nabla_{\mathbb{E}} w_{j} \right\rvert^{\frac{\mathfrak{p}_{j}'(\mathfrak{p}_{j}-2)}{2}}\left\lvert \nabla_{\mathbb{E}} v_{j} - \nabla_{\mathbb{E}}  w_{j} \right\rvert^{\mathfrak{p}_{j}'}\right)^{\frac{1}{\mathfrak{p}_{j}'}} \\
				&\leq c \lambda^{\frac{2-\mathfrak{p}_{j}}{2}} \left( \fint_{\frac{1}{2}B_{j}} \left\lvert \nabla_{\mathbb{E}} w_{j} \right\rvert^{\mathfrak{p}_{j}-2}\left\lvert \nabla_{\mathbb{E}} v_{j} - \nabla_{\mathbb{E}} w_{j} \right\rvert^{2}\right)^{\frac{1}{2}} \\
				&\leq c \lambda^{\frac{2-\mathfrak{p}_{j}}{2}} \left( \fint_{\frac{1}{2}B_{j}} \left( \left\lvert \nabla_{\mathbb{E}} v_{j} \right\rvert + \left\lvert \nabla_{\mathbb{E}} w_{j} \right\rvert \right)^{\mathfrak{p}_{j}-2}
				\left\lvert \nabla_{\mathbb{E}}  v_{j} - \nabla_{\mathbb{E}} w_{j} \right\rvert^{2}\right)^{\frac{1}{2}}. \end{align*}
			In view of \eqref{constant cv}, we can estimate the right hand side to obtain  
            \begin{align*}
                I_{4} \leq  c \lambda^{\frac{2-\mathfrak{p}_{j}}{2}} \left( \fint_{\frac{1}{2}B_{j}} \left\lvert V_{\mathfrak{p}_{j}}(\nabla_{\mathbb{E}} v_{j}) - V_{\mathfrak{p}_{j}}(\nabla_{\mathbb{E}} w_{j}) \right\rvert^{2}\right)^{\frac{1}{2}} 
				\stackrel{\eqref{comparison lemma 2 main estimate}, \eqref{estimates of j by j-1 for a, p, f}, \eqref{upper bound of f}}{\leq} c\mathfrak{P}_{r_{j-1}, \bar{\sigma}}\lambda . 
            \end{align*}
Combining these estimates for $I_3$ and $I_4$ we prove Claim \ref{linearize wrt pr/2 frozen-double frozen}. 
			
			\par To complete the proof in this case, we first apply H\"older's inequality, utilizing the fact that $\gamma \leq \mathfrak{p}_{r_j/2}$. Then combining \eqref{linearize estimate wrt pr/2 frozen-double frozen} and the general estimate \eqref{grad u -grad w pbigger2} with the choice of $\bar{\mathfrak{p}}= \mathfrak{p}_{r_j/2}$ we derive 
			$$\left( \fint_{\frac{1}{2}B_{j}} \left\lvert \nabla_{\mathbb{E}}  u - \nabla_{\mathbb{E}}  v_{j} \right\rvert^{\gamma}\right)^{\frac{1}{\gamma}} \leq C_{1}\left( \mathfrak{A}_{r_{j-1}, \bar{\sigma}} + \mathfrak{P}_{r_{j-1}, \bar{\sigma}} \right)\lambda 
			+ C_{1}\lambda^{2-\mathfrak{p}_{r_j/2}}\mathfrak{F}_{r_{j-1}, \theta}.$$
			
			Now, \eqref{grad u -grad v pbigger2} follows from the fact $\lambda\geq 1$ and $\mathfrak{p}_0-\mathfrak{p}_{r_j/2}\leq 0$. 
			\begin{case}
				$\mathfrak{p}_0>2$ and $\mathfrak{p}_{r_j/2}\leq 2$ 
			\end{case}
			Since $\gamma\leq \min\{\mathfrak{p}_0', \mathfrak{p}_{r_j/2}\}$, so using H\"older's inequaity and the estimate \eqref{grad u -grad w pbigger2} with the choice of $\bar{\mathfrak{p}}= \mathfrak{p}_{0}$  we have 
			$$\left( \fint_{\frac{1}{2}B_{j}} \left\lvert \nabla_{\mathbb{E}}  u - \nabla_{\mathbb{E}}  w_{j} \right\rvert^{\gamma}\right)^{\frac{1}{\gamma}} \leq C \left( \mathfrak{A}_{r_{j-1}, \bar{\sigma}} + \mathfrak{P}_{r_{j-1}, \bar{\sigma}} \right)\lambda 
			+ C \lambda^{2-\mathfrak{p}_{0}}\mathfrak{F}_{r_{j-1}, \theta}.$$
			On the other hand, again using H\"older's inequality and the estimates \eqref{comparison lemma 2 bar p smaller than 2} with $R= r_j$, \eqref{upper bound of f} and \eqref{estimates of j by j-1 for a, p, f}  we derive 
			$$\left( \fint_{\frac{1}{2}B_{j}} \left\lvert \nabla_{\mathbb{E}}  w_j - \nabla_{\mathbb{E}}  v_{j} \right\rvert^{\gamma}\right)^{\frac{1}{\gamma}} \leq C \mathfrak{P}_{r_{j-1}, \bar{\sigma}} \lambda.$$	
			Combining these two estimates we prove \eqref{grad u -grad v pbigger2} in this case. 	
			\begin{case}
				$\mathfrak{p}_0> \mathfrak{p}_{r_j/2}> 2$
			\end{case}
			Using \eqref{grad u -grad w pbigger2} with the choice of $\bar{\mathfrak{p}}= \mathfrak{p}_{0}$,  $\gamma\leq \mathfrak{p}_0'$ and H\"older's inequality, we have 
			\begin{align}\label{final case linearized frozen}
				\left( \fint_{\frac{1}{2}B_{j}} \left\lvert \nabla_{\mathbb{E}}  u - \nabla_{\mathbb{E}}  w_{j} \right\rvert^{\gamma}\right)^{\frac{1}{\gamma}} \leq C\left( \mathfrak{A}_{r_{j-1}, \bar{\sigma}} + \mathfrak{P}_{r_{j-1}, \bar{\sigma}} \right)\lambda 
				+ C\lambda^{2-\mathfrak{p}_{0}}\mathfrak{F}_{r_{j-1}, \theta}.
			\end{align}
			Denoting $p_j:= \mathfrak{p}_{r_j/2}$ for now, we proceed similarly as in the proof of Claim \ref{linearize wrt pr/2 frozen-double frozen} . 
			\begin{align*}
				&\left( \fint_{\frac{1}{2}B_{j}} \left\lvert \nabla_{\mathbb{E}}  v_{j} - \nabla_{\mathbb{E}}  w_{j} \right\rvert^{\mathfrak{p}_{j}'}\right)^{\frac{1}{\mathfrak{p}_{j}'}} \\
				&\qquad\stackrel{\eqref{both side bounds on w}}{\leq} 
				\frac{c}{\lambda^{\mathfrak{p}_{j}-2}}  \left( \fint_{\frac{1}{2}B_{j}} \left\lvert \nabla_{\mathbb{E}} w_{j-1} \right\rvert^{\mathfrak{p}_{j}'(\mathfrak{p}_{j}-2)}\left\lvert \nabla_{\mathbb{E}}  v_{j} - \nabla_{\mathbb{E}} w_{j} \right\rvert^{\mathfrak{p}_{j}'}\right)^{\frac{1}{\mathfrak{p}_{j}'}} \\
				&\qquad \begin{aligned}
					\leq \frac{c}{\lambda^{\mathfrak{p}_{j}-2}} &\left( \fint_{\frac{1}{2}B_{j}} \left\lvert \nabla_{\mathbb{E}} w_{j} - \nabla_{\mathbb{E}} w_{j-1} \right\rvert^{\mathfrak{p}_{j}'(\mathfrak{p}_{j}-2)}\left\lvert \nabla_{\mathbb{E}} v_{j} - \nabla_{\mathbb{E}}  w_{j} \right\rvert^{\mathfrak{p}_{j}'}\right)^{\frac{1}{\mathfrak{p}_{j}'}}
					\\ &\quad + \frac{c}{\lambda^{\mathfrak{p}_{j}-2}} \left( \fint_{\frac{1}{2}B_{j}} \left\lvert \nabla_{\mathbb{E}} w_{j} \right\rvert^{\mathfrak{p}_{j}'(\mathfrak{p}_{j}-2)}\left\lvert \nabla_{\mathbb{E}} v_{j} - \nabla_{\mathbb{E}} w_{j} \right\rvert^{\mathfrak{p}_{j}'}\right)^{\frac{1}{\mathfrak{p}_{j}'}} \eqqcolon I'_3+I'_4.
				\end{aligned}
			\end{align*}
			The estimate of $I_4'$ is same as the estimate of $I_4$ in Claim \ref{linearize wrt pr/2 frozen-double frozen} which is 
			\begin{align}\label{I4' estimate}
				I'_4=&\frac{c}{\lambda^{\mathfrak{p}_{j}-2}} \left( \fint_{\frac{1}{2}B_{j}} \left\lvert \nabla_{\mathbb{E}} w_{j} \right\rvert^{\mathfrak{p}_{j}'(\mathfrak{p}_{j}-2)}\left\lvert \nabla_{\mathbb{E}}  v_{j} - \nabla_{\mathbb{E}}  w_{j} \right\rvert^{\mathfrak{p}_{j}'}\right)^{\frac{1}{\mathfrak{p}_{j}'}} \leq c\mathfrak{P}_{r_{j-1}, \bar{\sigma}}\, \lambda.
			\end{align}
			We skip the details. Next we estimate $I'_3$. 
			\begin{align}\label{mid estimate of I3'}
				I_3':&= \frac{c}{\lambda^{\mathfrak{p}_{j}-2}}\left( \fint_{\frac{1}{2}B_{j}} \left\lvert \nabla_{\mathbb{E}} w_{j} - \nabla_{\mathbb{E}} w_{j-1} \right\rvert^{\mathfrak{p}_{j}'(\mathfrak{p}_{j}-2)}\left\lvert \nabla_{\mathbb{E}}  v_{j} - \nabla_{\mathbb{E}}  w_{j} \right\rvert^{\mathfrak{p}_{j}'}\right)^{\frac{1}{\mathfrak{p}_{j}'}} \notag \\
				&\leq \frac{c}{\lambda^{\mathfrak{p}_{j}-2}}  \left( \fint_{\frac{1}{2}B_{j}} \left\lvert \nabla_{\mathbb{E}} w_{j} - \nabla_{\mathbb{E}} w_{j-1} \right\rvert^{\mathfrak{p}_{j}}\right)^{\frac{\mathfrak{p}_{j}-2}{\mathfrak{p}_{j}}}
				\left( \fint_{\frac{1}{2}B_{j}} \left\lvert \nabla_{\mathbb{E}}  v_{j} - \nabla_{\mathbb{E}}  w_{j} \right\rvert^{\mathfrak{p}_{j}}\right)^{\frac{1}{\mathfrak{p}_{j}}} \notag \\
				&\stackrel{\eqref{comparison lemma 2 bar p bigger than 2}, \eqref{upper bound of f},\eqref{estimates of j by j-1 for a, p, f}}{\leq}	\frac{c}{\lambda^{\mathfrak{p}_{j}-2}} \left( \fint_{\frac{1}{2}B_{j}} \left\lvert \nabla_{\mathbb{E}} w_{j} - \nabla_{\mathbb{E}} w_{j-1} \right\rvert^{\mathfrak{p}_{j}}\right)^{\frac{\mathfrak{p}_{j}-2}{\mathfrak{p}_{j}}}
				\left[\mathfrak{P}_{r_{j-1},\bar{\sigma}}\right]^{\frac{2}{\mathfrak{p}_{j}}}\lambda \notag \\
				&\leq c\mathfrak{P}_{r_{j-1},\bar{\sigma}} \lambda + c \lambda^{1-\mathfrak{p}_{j}} \fint_{\frac{1}{2}B_{j}} \left\lvert \nabla_{\mathbb{E}} w_{j} - \nabla_{\mathbb{E}} w_{j-1} \right\rvert^{\mathfrak{p}_{j}}.
			\end{align}
			Since, $\mathfrak{p}_0> \mathfrak{p}_j>2$, so we can use Corollary \ref{homogeneous comparison at p0} with $\bar{\mathfrak{p}}=\mathfrak{p}_j$.  Thus using 	\eqref{homogeneous comparison bar p bigger than 2}, \eqref{estimates of j by j-1 for a, p, f}, \eqref{upper bound of f} and \eqref{def lambda1} we deduce
			\begin{align*}
				\fint_{B_{j}} \lvert \nabla_{\mathbb{E}}  w_{j-1} - \nabla_{\mathbb{E}}  w_{j}\rvert^{\mathfrak{p}_j} 
				&\leq c\tau^{-Q_{\mathbb{E}}}\fint_{B_{j-1}}\lvert \nabla_{\mathbb{E}} u  - \nabla_{\mathbb{E}} w_{j-1}\rvert^{\mathfrak{p}_j} + c \fint_{B_{j}}\lvert \nabla_{\mathbb{E}} u  - \nabla_{\mathbb{E}} w_{j}\rvert^{\mathfrak{p}_j} \notag \\
				&\leq \frac{c}{\tau^{2Q_{\mathbb{E}} + \bar{\mathfrak{p}}'Q_{\mathbb{E}}}} \left(\mathfrak{F}^{\frac{\mathfrak{p}_j}{\mathfrak{p}_0-1}}_{r_{j-1},\theta} + \left( \mathfrak{A}_{r_{j-1}, \bar{\sigma}} + \mathfrak{P}_{r_{j-1}, \bar{\sigma}} \right)^2\lambda^{\bar{\mathfrak{p}}}  \right).
			\end{align*}
			Finally, we use this estimate in \eqref{mid estimate of I3'} to derive 
			\begin{align*}
				I'_3&= \frac{c}{\lambda^{\mathfrak{p}_{j}-2}}\left( \fint_{\frac{1}{2}B_{j}} \left\lvert \nabla_{\mathbb{E}} w_{j} - \nabla_{\mathbb{E}} w_{j-1} \right\rvert^{\mathfrak{p}_{j}'(\mathfrak{p}_{j}-2)}\left\lvert \nabla_{\mathbb{E}}  v_{j} - \nabla_{\mathbb{E}}  w_{j} \right\rvert^{\mathfrak{p}_{j}'}\right)^{\frac{1}{\mathfrak{p}_{j}'}} \\ 
				&\leq c\mathfrak{P}_{r_{j-1},\bar{\sigma}} \lambda + c \lambda^{1-\mathfrak{p}_{j}} \left(\mathfrak{F}^{\frac{\mathfrak{p}_j}{\mathfrak{p}_0-1}}_{r_{j-1},\theta} + \left( \mathfrak{A}_{r_{j-1}, \bar{\sigma}} + \mathfrak{P}_{r_{j-1}, \bar{\sigma}} \right)^2\lambda^{\mathfrak{p}_j}\right)  \\ 
				&\stackrel{\eqref{smallness of mathfrak a+mathfrak p}}{\leq} c\left(\mathfrak{A}_{r_{j-1}, \bar{\sigma}} + \mathfrak{P}_{r_{j-1}, \bar{\sigma}}\right)\lambda + c \lambda^{1-\mathfrak{p}_j} \mathfrak{F}^{\frac{\mathfrak{p}_j}{\mathfrak{p}_0-1}-1}_{r_{j-1},\theta} \mathfrak{F}_{r_{j-1},\theta}\\
				&\stackrel{\eqref{smallness of wp}, \eqref{upper bound of f}}{\leq} c\left(\mathfrak{A}_{r_{j-1}, \bar{\sigma}} + \mathfrak{P}_{r_{j-1}, \bar{\sigma}}\right)\lambda + c \lambda^{1-\mathfrak{p}_j} \lambda^{\mathfrak{p}_j-\mathfrak{p}_0+1} \mathfrak{F}_{r_{j-1},\theta}\\
				&\leq c\left(\mathfrak{A}_{r_{j-1}, \bar{\sigma}} + \mathfrak{P}_{r_{j-1}, \bar{\sigma}}\right)\lambda + c \lambda^{2-\mathfrak{p}_0}  \mathfrak{F}_{r_{j-1},\theta}.
			\end{align*}
			Thus combining this estimates of $I'_3$ with the estimate of $I'_4$ in \eqref{I4' estimate} we prove 
			\begin{align}\label{final frozen-double frozen}
				\left( \fint_{\frac{1}{2}B_{j}} \left\lvert \nabla_{\mathbb{E}}  v_j - \nabla_{\mathbb{E}}  w_{j} \right\rvert^{\gamma}\right)^{\frac{1}{\gamma}} 
				&\leq \left( \fint_{\frac{1}{2}B_{j}} \left\lvert \nabla_{\mathbb{E}}  v_j - \nabla_{\mathbb{E}}  w_{j} \right\rvert^{\mathfrak{p}_j'}\right)^{\frac{1}{\mathfrak{p}_j'}}\notag\\
				&\leq  C\left( \mathfrak{A}_{r_{j-1}, \bar{\sigma}} + \mathfrak{P}_{r_{j-1}, \bar{\sigma}} \right)\lambda 
				+ C\lambda^{2-\mathfrak{p}_{0}}\mathfrak{F}_{r_{j-1}, \theta}.
			\end{align}
			Therefore, \eqref{final case linearized frozen} and  \eqref{final frozen-double frozen} together proves \eqref{grad u -grad v pbigger2} in this case. This completes the proof.
		\end{proof}
		\section{Proof of the main results} 
		\begin{theorem}\label{unified main them}
			Let $\Omega \subset \mathbb{E}^{n}$ be open and bounded. Let $\mathfrak{p}:\Omega \rightarrow [\gamma_{1}, \gamma_{2}]$ and $\mathfrak{a}:\Omega \rightarrow [\nu, L]$ be measurable functions  with $1 < \gamma_{1} \leq \gamma_{2} < \infty$ and $ 0 < \nu < L < \infty,$ such that for every $\tau \in (0, 1/4)$ and every $\sigma >0,$ we have 
			\begin{align*}
				\lim\limits_{r \rightarrow 0}	\sup\limits_{x \in K} &\sum\limits_{j=0}^{\infty}\left(\fint_{B_{\tau^{j}r}(x)} \left\lvert \mathfrak{a} - \left(\mathfrak{a}\right)_{x, \tau^{j}r} \right\rvert^{\frac{2(1+\sigma)}{\sigma}}\right)^{\frac{\sigma}{2(1+\sigma)}} = 0, \\
				\lim\limits_{r \rightarrow 0}	\sup\limits_{x \in K}&\sum\limits_{j=0}^{\infty}\log \left(\frac{1}{\tau^{j}r}\right)\left(\fint_{B_{\tau^{j}r}(x)} \left\lvert \mathfrak{p} - \left(\mathfrak{p}\right)_{x, \tau^{j}r} \right\rvert^{\frac{2(1+\sigma)}{\sigma}}\right)^{\frac{\sigma}{2(1+\sigma)}} = 0, 
			\end{align*}
			whenever $K \subset \subset \Omega$ is a compact subset.  If $u \in \mathbb{E}W_{\text{loc}}^{1,p\left(\cdot\right)} \left(\Omega\right)$ is a local weak solution to the equation 		\begin{align}\label{unifired main equation}
				\operatorname{div}_{\mathbb{E}}\left[ \mathfrak{a}(x) \lvert \nabla_{\mathbb{E}} u \rvert^{\mathfrak{p}\left(x\right)-2} \nabla_{\mathbb{E}} u\right]    = f   &&\text{ in } \Omega, 
			\end{align}
			for some $f \in L^{\left(Q_{\mathbb{E}},1\right)}_{\text{loc}}\left( \Omega\right),$ then $\nabla_{\mathbb{E}} u$ admits a continuous representative in $\Omega.$
		\end{theorem}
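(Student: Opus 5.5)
The plan is to run the Kuusi--Mingione exit-time argument. All the estimates it needs are already in place: Lemma \ref{linearized comparison}, the comparison Lemmas \ref{comparison lemma 1} and \ref{comparison lemma 2}, the excess decay \eqref{excess decay} for the double-frozen solutions $v_j$ (uniform in the exponent by Remark \ref{p uniform constant decay estimates}), and Theorem \ref{homogenous eqn estimate} for the frozen solutions $w_j$.

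We localize first. Fix $K\subset\subset\Omega$ and work near $x_0\in K$ with $B_{4\bar R}(x_0)\subset\Omega$. By Lemma \ref{local vanishing log holder}, $\mathfrak{p}$ is log-H\"older on compacts, so Theorems \ref{higher integrability f} and \ref{global_higher_integrability} apply with uniform constants. By Corollary \ref{vanishing corollary derivative version} applied to $f$ with exponent $\theta$, and by the hypotheses on $\mathfrak{a}$ and $\mathfrak{p}$ with $q=2(1+\bar\sigma)/\bar\sigma$, the sums
\begin{align*}
\sum_j \mathfrak{A}_{r_j,\bar\sigma},\qquad \sum_j \mathfrak{P}_{r_j,\bar\sigma},\qquad \sum_j \mathfrak{F}_{r_j,\theta}
\end{align*}
can be made smaller than any $\varepsilon$, uniformly in $x_0\in K$, by shrinking $R_4$.

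\textbf{Step 1: pointwise bound on $\nabla_{\mathbb{E}}u$.} Set $\lambda$ to be a large multiple of $1+(\fint_{B_0}|\nabla_{\mathbb{E}}u|^\gamma)^{1/\gamma}+\mathfrak{F}_{r_0,\theta}^{1/(\mathfrak{p}_0-1)}$. We show by induction that
\begin{align*}
(\fint_{B_j}|\nabla_{\mathbb{E}}u|^\gamma)^{1/\gamma}\le\lambda\quad\text{and}\quad |(\nabla_{\mathbb{E}}u)_{B_j}|\le\lambda
\end{align*}
for all $j$, which gives $|\nabla_{\mathbb{E}}u(x_0)|\le\lambda$ at Lebesgue points. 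The key quantity is the excess $E_j=(\fint_{B_j}|\nabla_{\mathbb{E}}u-(\nabla_{\mathbb{E}}u)_{B_j}|^\gamma)^{1/\gamma}$. Splitting $u=v_j+(u-v_j)$ and using \eqref{excess decay} for $v_j$ with $q=\gamma$, we get
\begin{align*}
E_{j+1}\le C\tau^{\beta}E_j+C\tau^{-Q_{\mathbb{E}}/\gamma}\Big(\fint_{\frac12B_j}|\nabla_{\mathbb{E}}u-\nabla_{\mathbb{E}}v_j|^\gamma\Big)^{1/\gamma}.
\end{align*}
The comparison term is controlled by Lemma \ref{linearized comparison} when its hypotheses \eqref{both side bounds on w} hold. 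These are the nondegenerate regime, where $|\nabla_{\mathbb{E}}w_{j-1}|\ge\lambda/(4A)$; the upper bounds come from Theorem \ref{homogenous eqn estimate}(i). Otherwise we use the cruder bounds of Lemmas \ref{comparison lemma 1} and \ref{comparison lemma 2}, which carry $\mathfrak{F}^{1/(\mathfrak{p}_0-1)}$ in place of $\lambda^{2-\mathfrak{p}_0}\mathfrak{F}$. Choosing $\tau$ with $C\tau^\beta\le1/4$ and summing, we get $\sum_j E_j\le C E_0+C\varepsilon\lambda$. Then $|(\nabla_{\mathbb{E}}u)_{B_{j+1}}-(\nabla_{\mathbb{E}}u)_{B_j}|\le CE_j$, which keeps the means below $\lambda$.

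\textbf{Step 2: the degenerate/nondegenerate alternative.} This is the genuinely delicate part. Linearization is only available once $|\nabla_{\mathbb{E}}w_{j-1}|$ is comparable to $\lambda$ on $B_j$. Following \cite{KuusiMingione_nonlinearStein}, we introduce the exit time $\bar j$ as the first index at which $|(\nabla_{\mathbb{E}}u)_{B_j}|$ exceeds $\lambda/A$. Before $\bar j$ we are in the degenerate regime. There the gradient is small relative to $\lambda$, so the $\varepsilon$-oscillation estimate of Theorem \ref{homogenous eqn estimate}(ii) on $w_j$, together with Lemma \ref{comparison lemma 1}, keeps everything below $\lambda$; in this regime no summability of $\mathfrak{F}$ at the linear scale is needed. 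After $\bar j$, the oscillation estimate of Theorem \ref{homogenous eqn estimate}(ii) with small $\varepsilon$ shows $|\nabla_{\mathbb{E}}w_{j-1}|\ge\lambda/(4A)$ on $B_j$. This gives \eqref{both side bounds on w}, and Lemma \ref{linearized comparison} applies from then on, with the linear gain $\lambda^{2-\mathfrak{p}_0}\mathfrak{F}$ making $\sum_j\mathfrak{F}_{r_j,\theta}$, and hence the $L^{(Q_{\mathbb{E}},1)}$ norm of $f$, the relevant quantity. I expect this bookkeeping to be the main obstacle: the varying exponents $\mathfrak{p}_{r_j/2}$, $\mathfrak{p}_0$ and the constants $A,\tau$ must be fixed in the right order, first $A$, then $\tau$, then $\varepsilon$, then $R_4$.

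\textbf{Step 3: continuity.} Rerun the same iteration at level $\varepsilon\lambda$ to show that the averages $(\nabla_{\mathbb{E}}u)_{x,r_j}$ form a Cauchy sequence uniformly for $x\in K$. The tail is bounded by
\begin{align*}
C\sum_{i\ge j}E_i\le C\tau^{\beta j}\lambda+C\lambda\sup_{x\in K}\Big(\sum_{i\ge j}\big(\mathfrak{A}_{r_i,\bar\sigma}+\mathfrak{P}_{r_i,\bar\sigma}\big)\Big)+C\lambda^{2-\mathfrak{p}_0}\sup_{x\in K}\sum_{i\ge j}\mathfrak{F}_{r_i,\theta},
\end{align*}
using the bound $\lambda$ from Step 1, which is uniform on $K$. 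The right-hand side tends to $0$ as $j\to\infty$ uniformly in $x\in K$ by the hypotheses and Lemma \ref{Lorentz series estimate lemma combined}. The maps $x\mapsto(\nabla_{\mathbb{E}}u)_{x,r_j}$ are continuous, so their uniform limit is continuous and coincides a.e. with $\nabla_{\mathbb{E}}u$, exactly as in the proof of Lemma \ref{local vanishing log holder}. Theorems \ref{elliptic main them} and \ref{subelliptic main them} then follow, since Hölder's inequality passes from arbitrary $q$ to $q=2(1+\sigma)/\sigma$.
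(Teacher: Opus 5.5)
Your architecture is the paper's: localize, linearize through Lemma \ref{linearized comparison} in the nondegenerate regime, sum the excess decay, then rerun at scale $\bar\varepsilon\lambda$ for continuity. The degenerate regime, however, is not handled correctly, and it is the heart of the argument.

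\textbf{Summing the crude bounds fails.} In Step 1 you sum the excess inequality over all scales, falling back on the crude bounds of Lemma \ref{comparison lemma 1} wherever \eqref{both side bounds on w} fails. For $\mathfrak p_0>2$, taking $\mathfrak p_0$-th roots in \eqref{comparison lemma 1 bar p bigger than 2} gives per-scale errors of size $\mathfrak F_{r_j,\theta}^{1/(\mathfrak p_0-1)}+(\mathfrak A_{r_j,\bar\sigma}+\mathfrak P_{r_j,\bar\sigma})^{2/\mathfrak p_0}\lambda$. The hypotheses only make the sums of first powers small. The series $\sum_j\mathfrak F_{r_j,\theta}^{1/(\mathfrak p_0-1)}$ is a Wolff-type potential and can diverge for $f\in L^{(Q_{\mathbb E},1)}$: take $f\approx|x|^{-1}\log^{-\alpha}(1/|x|)$ with $1<\alpha\le\mathfrak p_0-1$. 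So $\sum_jE_j\le CE_0+C\varepsilon\lambda$ does not follow.

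\textbf{The exit time in Step 2 is the wrong one.} Taking the first $j$ with $|(\nabla_{\mathbb E}u)_{B_j}|>\lambda/A$ does not make later scales nondegenerate, because the averages can fall below $\lambda/A$ again (e.g.\ if $\nabla_{\mathbb E}u(x_0)=0$). Two further problems:
\begin{itemize}
\item a small average before $\bar j$ says nothing about the energy there;
\item an oscillation bound at one scale bounds $|\nabla_{\mathbb E}w_{j-1}|$ from below only at that scale.
\end{itemize}
So ``Lemma \ref{linearized comparison} applies from then on'' is unjustified.

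\textbf{The missing idea is a last exit time for a quantity that controls the energy.} The paper uses
$\mathscr C_j=\bigl(\fint_{B_{j-1}}|\nabla_{\mathbb E}u|^\gamma\bigr)^{1/\gamma}+\bigl(\fint_{B_j}|\nabla_{\mathbb E}u|^\gamma\bigr)^{1/\gamma}+2\tau^{-Q_{\mathbb E}}\mathscr E_j$,
which satisfies $\mathscr C_1\le\lambda/1000$ by the choice of $H_1$.
\begin{itemize}
\item If $\mathscr C_j\le\lambda/1000$ for infinitely many $j$, the bound at a Lebesgue point is immediate.
\item Otherwise there is a last such index $j_e$. For every $j>j_e$, the lower bounds $\mathscr C_{j+k}>\lambda/1000$, combined with one unsummed use of Lemma \ref{comparison lemma 1} and Theorem \ref{homogenous eqn estimate}(ii), give \eqref{both side bounds on w} at every such scale.
\end{itemize}
Hence \eqref{excess decay beyond exit time estimate} holds for all $j\ge j_e$, only linear quantities are ever summed, and induction from $j_e$ gives $\mathscr A_j+\mathscr E_j\le\lambda$. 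Nothing needs proving for $j<j_e$.

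\textbf{Step 3 has the same defect.} Your tail bound with $C\tau^{\beta j}\lambda$ presumes linearized decay at all scales, which fails near points where the gradient degenerates. The paper instead uses the alternative $\mathrm{Ind}^*(j)$. If the energy on $B_{j+1}$ is below $\bar\varepsilon\lambda/50$, the excess and the average are trivially small. Otherwise \eqref{excess decay without exit time estimate} holds, and it is summed only along runs of consecutive indices outside the small-energy set $\mathcal L$.
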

		
\begin{proof}
	
	Since the result is local, we pick $z_{0} \in \Omega$ and show $\nabla_{\mathbb{E}} u$ admits a continuous representative in a neighborhood of $z_{0}.$ To this end, we 
	choose open subsets $K \subset \subset K_{1} \subset \subset K_{2} \subset \subset K_{3}\subset \subset \Omega$ such that $z_{0} \in K.$ 
	Let 
	\begin{align}\label{distance def}
		d:= \min \left\lbrace 1, \operatorname*{dist}\left( \bar{K}, \partial K_{1}\right), \operatorname*{dist}\left( \bar{K}_{1}, \partial K_{2}\right), \operatorname*{dist}\left( \bar{K}_{2}, \partial K_{3}\right), \operatorname*{dist}\left( \bar{K}_{3}, \partial \Omega\right) \right\rbrace. 
	\end{align}Set 
	\begin{align}\label{total energy def}
		\tilde{\mathcal{K}}:= 1 + \int_{K_3} \left(1 + \left\lvert \nabla_{\mathbb{E}}u\right\rvert\right)^{\mathfrak{p}(x)}\ \mathrm{d}x + \int_{K_3} \left(1 + \left\lvert f\right\rvert\right)^{Q_{\mathbb{E}}}\ \mathrm{d}x. 
	\end{align}
	Since $u \in \mathbb{E}W_{\text{loc}}^{1,p\left(\cdot\right)} \left(\Omega\right),$ we see that $\tilde{\mathcal{K}} < +\infty.$ We set 
	\begin{align}
		\mathcal{K}_{1}=\mathcal{K}_{2} =\tilde{\mathcal{K}}. 
	\end{align}
	Note that by \eqref{energy bound f} and \eqref{energy bound f Global HI}, $\mathcal{K}_{1}$ and $\mathcal{K}_{2}$ satisfies the hypotheses of Theorem \ref{higher integrability f} and Theorem \ref{global_higher_integrability}, respectively, with `$\Omega$ replaced by $K_3$'. 
	Observe that by our assumption and Lemma \ref{local vanishing log holder}, $\mathfrak{p}$ is log-H\"{o}lder in $K_{3}.$ Let $\Lambda_{\log} >0$ be any number greater than or equal to the log-H\"{o}lder constant of $\mathfrak{p}$ in $K_{3}.$ Now, using the set of parameters
	\begin{align}
		\texttt{data} := \left\lbrace Q_{\mathbb{E}}, \gamma_{1}, \gamma_{2}, \nu, L, \Lambda_{\log} \right\rbrace, 
	\end{align} we can determine the constants  $C_4$, $R_{1}$ and $\sigma_{1}$ by Theorem \ref{higher integrability f}.  Using the same parameters with the choice $\tilde{\sigma}= \sigma_{1}$ in Theorem \ref{global_higher_integrability}, we determine the constants $C_{5}$, $R_{2}$ and $\sigma_{2}$. Set $\bar{\sigma}$ and $\bar{R}$ by \eqref{sigma bar and R bar} and $\theta$ by \eqref{theta def}. Thus, we can now determine $R_{4}$ by \eqref{smallness of wp} and \eqref{smallness of mathfrak a+mathfrak p} and use Lemma \ref{comparison lemma 1}, Lemma \ref{comparison lemma 2} to determine the constants $C_{6}$ and $C_{7}$, respectively.   
	
For now, choose any $0 < R < \min \left\lbrace 1, R_{4}/128, d/256 \right\rbrace.$ By our choice of $R$, clearly $B(x_{0}, 16R) \subset \subset K_{3}$ for any $x_{0} \in K_{2}.$ Fix $x_{0} \in K_{2}$ and for $j\geq 0,$ we set 
\begin{align}\label{concentric balls}
		B_{j}:= B(x_{0}, r_{j}), \qquad r_{j}:= \tau^{j}R, \quad \tau \in (0,1/4).
	\end{align} 
 We denote 
	\begin{align*}
		\mathfrak{a}_{j} := \fint_{ \frac{1}{4}B_{j}} \mathfrak{a}, \qquad \text{ and } \qquad 	\mathfrak{p}_{j} := \fint_{ \frac{1}{8}B_{j}} \mathfrak{p}, \qquad \text{ and } \qquad \mathfrak{p}_{0}= \mathfrak{p}(x_0).  
	\end{align*}
\noindent For $j \geq 0,$ we define $w_{j} \in u+\mathbb{E}W_{0}^{1, \mathfrak{p}(\cdot)}\left(\frac{1}{4}B_{j}\right)$ be the unique minimizer of 
	\begin{align*}
		\inf \left\lbrace \mathfrak{a}_{j}\int_{B_{j}} \frac{1}{\mathfrak{p}(x)}\left\lvert \nabla_{\mathbb{E}} w\right\rvert^{\mathfrak{p}\left(x\right)}\ \mathrm{d}x: w \in u + \mathbb{E}W_{0}^{1, \mathfrak{p}(\cdot)}\left(\frac{1}{4}B_{j}\right) \right\rbrace.  
	\end{align*}
	Clearly, $w_{j} \in u+\mathbb{E}W_{0}^{1, \mathfrak{p}(\cdot)}\left(\frac{1}{4}B_{j}\right)$ is the unique solution of 
	\begin{align}\label{quasilinear homogeneous general}
		\left\lbrace \begin{aligned}
			\operatorname{div}_{\mathbb{E}}\left(   \lvert \nabla_{\mathbb{E}} w_{j} \rvert^{\mathfrak{p}(x)-2} \nabla_{\mathbb{E}} w_{j}) \right) &= 0   &&\text{ in } \frac{1}{4}B_{j},\\
			w_{j} &= u &&\text{  on } \partial \left( \frac{1}{4}B_{j}\right).
		\end{aligned} 
		\right. 
	\end{align}
	Similarly, we define $v_{j} \in w_{j}+\mathbb{E}W_{0}^{1,\mathfrak{p}_{j}}\left(\frac{1}{8}B_{j}\right)$ be the unique minimizer of 
	\begin{align*}
		\inf \left\lbrace \int_{\frac{1}{8}B_{j}} \frac{1}{\mathfrak{p}_{j}}\left\lvert \nabla_{\mathbb{E}} w\right\rvert^{\mathfrak{p}_{j}}\ \mathrm{d}x: v \in w_{j} + \mathbb{E}W_{0}^{1,\mathfrak{p}_{j}}\left(\frac{1}{8}B_{j}\right) \right\rbrace.  
	\end{align*}
	Note that $v_{j}\in w_{j} + \mathbb{E}W_{0}^{1,\mathfrak{p}_{j}}\left( \frac{1}{8}B_{j}\right)$ is the unique weak solution to 
	\begin{align}\label{quasilinear homogeneous frozen general}
		\left\lbrace \begin{aligned}
			\operatorname{div}_{\mathbb{H}}\left(\left\lvert \nabla_{\mathbb{E}} v_{j} \right\rvert^{\mathfrak{p}_{j}-2} \nabla_{\mathbb{E}} v_{j} \right) &= 0  &&\text{ in }  \frac{1}{8}B_{j},\\
			v_{j} &= w_{j} &&\text{  on } \partial \left(\frac{1}{8}B_{j}\right).
		\end{aligned} 
		\right.
	\end{align}	 
	For notational ease, for any nonnegative integer $j$, we denote 
	\begin{align}\label{notation for excess and average of u}
		\mathscr{A}_j:= \lvert ( \nabla_{\mathbb{E}} u)_{B_j} \rvert \quad \text{ and } \quad \mathscr{E}_j &:= \left( \fint_{B_j} \lvert \nabla_{\mathbb{E}} u- (\nabla_{\mathbb{E}} u)_{B_j} \rvert^{\gamma} \right)^\frac{1}{\gamma}.
	\end{align}
	For $j\in \mathbb{N}$ we define the composite quantity 
	
	\begin{equation}\label{average on consecutive balls and excess}
		\mathscr{C}_j:= \left ( \fint_{B_{j-1}} \lvert  \nabla_{\mathbb{E}}u \rvert^{\gamma} \right)^\frac{1}{\gamma} + \left ( \fint_{B_{j}} \lvert  \nabla_{\mathbb{E}}u \rvert^{\gamma} \right)^\frac{1}{\gamma} + 2\tau^{-Q_{\mathbb{E}}} \mathscr{E}_j.
	\end{equation}
Finally, for every integer $j\geq 1,$ we set 
	\begin{align}\label{combined a and p}
		\Omega_{j, \sigma} := \mathfrak{A}_{r_{j-1}, \sigma} + \mathfrak{P}_{r_{j-1}, \sigma},
	\end{align}
	where $\mathfrak{A}_{r_{j-1},\sigma}$ and $\mathfrak{P}_{r_{j-1},\sigma}$ are as defined in \eqref{mathfrak a} and \eqref{mathfrak a}, respectively, and 
	\begin{align}\label{sigma choice}
		\sigma = \bar{\sigma}/4.
	\end{align}
	Recalling \eqref{mathfrak a}, \eqref{mathfrak p} and \eqref{mathfrak f}, we see that this implies 
	\begin{align}
		\sum_{j=1}^\infty \Omega_{j, \sigma} 
		&= \sum_{j=1}^\infty \mathfrak{A}_{r_{j-1}, \sigma} + \sum_{j=1}^\infty \mathfrak{P}_{r_{j-1}, \sigma} \notag\\
		&= \mathfrak{S}_{\left[\mathfrak{a}, 2(1+\sigma)/\sigma, \tau, 0\right]}\left(x_{0}, R \right) + \mathfrak{S}_{\left[\mathfrak{p}, 2(1+\sigma)/\sigma, \tau, 1\right]}\left(x_{0}, R \right) \quad \text{ and }\label{notation of the lower bound of mean oscillation sums}\\
		\sum_{j=0}^\infty \mathfrak{F}_{r_{j}, \theta}&= \mathcal{S}_{\left[f, \theta, \tau, 0 \right]}\left(x_{0}, R \right).  	\label{notation of the lower bound of lorentz norm}
	\end{align}
We set 
\begin{align}\label{gamma0 def}
	\gamma_{0}:= \gamma_{1}\gamma_{2}\gamma \cdot \frac{\gamma_{1}}{\gamma_{1}-1}\cdot \frac{\gamma_{2}}{\gamma_{2}-1} \cdot \frac{\gamma}{\gamma-1}. 
\end{align}	
	
	We divide the rest of the proofs in two steps. 
	
	\textbf{Step 1:} \underline{Boundedness} We first show $\nabla_{\mathbb{E}} u \in L^{\infty}(\bar{K}).$ \smallskip 
%
%

 For this, we shall prove the following estimate: There exists $\tau \in (0, 1/4),$ $\theta \in (1, Q_{\mathbb{E}})$ and $R_{0} >0$ such that for any $0 < R < R_{0}/16,$ 		
		\begin{align}\label{sufficient estimate for pointwise gradient bound}
			\lvert \nabla_{\mathbb{E}} u (x_0) \rvert \leq \lambda:= H_1 \left(1+\left( \fint_{B(x_0,R)} \lvert \nabla_{\mathbb{E}} u \rvert^{\gamma}  \right)^\frac{1}{\gamma}\right) + H_2^{\gamma_{0}} \left[ 	\mathcal{S}_{\left[f, \theta, \tau, 0 \right]}\left(x_{0}, R \right) \right ]^\frac{1}{\mathfrak{p}_{0}-1}.
		\end{align}
for some large constants $H_{1}, H_{2} >1$, independent of $x_{0}\in \bar{K}$ and $R_{0}$, whenever $x_{0}$ is a Lebesgue point of $\nabla_{\mathbb{E}} u.$	Note that by a standard covering argument, this implies $\nabla_{\mathbb{E}} u \in L^{\infty}(\bar{K}).$ Also, we may assume that $\lambda>0$, otherwise there is nothing to prove.

By Lebesgue differentiation theorem, in order to establish \eqref{sufficient estimate for pointwise gradient bound} we only need to establish $\mathscr{A}_{j_{i}} \le \lambda$ for a subsequence $\left\lbrace j_i\right\rbrace_{i \in \mathbb{N}}$. We divide the proof in three substeps. 

\textbf{Step 1a: Choice of the constants and exit time.}		

We set 
\begin{equation}\label{constants choice 1}
	A:= 10^{10Q_{\mathbb{E}}}2^{2\gamma_2+6} C_1C_3C_4C_5 \qquad \text{ and } \qquad   \varepsilon:= 10^{-25}. 
\end{equation}
where $C_{1}, C_{3}$ are the constants given by Theorem \ref{Uhlenbeck estimate} and Theorem \ref{homogenous eqn estimate}, respectively. 

Now clearly, for any $j\geq 0$, by minimality of $w_j$ we have 
$$C(\gamma_1,\gamma_2,Q_{\mathbb{E}}) \left(1+ \int_{K_3} \left(1+ \left\lvert \nabla_{\mathbb{E}} u \right \rvert\right)^{p(x)}\, dx,\right) \geq 1+ \int_{\frac{1}{4}B_j} \left(1+ \left\lvert \nabla_{\mathbb{E}} w_j \right \rvert\right)^{p(x)}\, dx,$$
for some constant $C(\gamma_1,\gamma_2,Q_{\mathbb{E}})>1$. This implies $\mathcal{K}_0:= C(\gamma_1,\gamma_2,Q_{\mathbb{E}}) \tilde{\mathcal{K}}$ satisfy
\eqref{energy bound w by u} and hence Theorem \ref{homogenous eqn estimate} determines the radius $R_3$ (independent of $j$), and with $A$ and $\varepsilon$ fixed as above, also the parameter $\tau_{1}.$ Note that, $\eqref{def gamma}$ implies $1\leq \gamma\leq 2$. So, we use Theorem \ref{Uhlenbeck estimate} again to determine the constant $C_{\gamma}$, i.e. the constant $C_{q}$ in  estimate \eqref{excess decay} with the choice $q=\gamma$. Finally, let $\beta \in (0,1)$ and choose $\tau_{2} \in (0, 1/4)$ small enough such that 
\begin{align}\label{choice of tau2}
	C_{\gamma}\tau_{2}^{\beta} < 4^{-(4Q_{\mathbb{E}}+16)}. 
\end{align}
Set 
\begin{align}\label{choice of tau}
	\tau := \frac{1}{16}\min \left\lbrace \tau_{1}, \tau_{2} \right\rbrace.
\end{align} 
Note that, this choice implies that $B_{j+k} \subset 4^{-1}B_{j}$, for all $j\geq 0$ and any $k\geq 1$. Now we use Lemma \ref{linearized comparison} to determine the constant $C_{8}$ with $A$ and $\tau$ as defined above.  
Finally, we define 
\begin{equation}\label{order on C}
	C_0:= C_{1}\cdot C_{\gamma} \cdot \prod_{i=3}^8 C_i.
\end{equation}

We now make the choices of $H_1$, $H_2$ and $R_0$. We first define 

\begin{align}\label{choice of H1 and H2}
	\begin{cases}
		H_1 &:= 10^{10Q_{\mathbb{E}}}\tau^{-2Q_{\mathbb{E}}}\\
		H_2 &:= 10^{Q_{\mathbb{E}} + 20} \tau^{-7Q_{\mathbb{E}}} C_0 . 
	\end{cases}
\end{align}	
Now we choose $0 < R_{0} < \min \left\lbrace R_{1}, R_{2}, R_{3}, R_{4}, d \right\rbrace/256$, small enough such that 
\begin{align}\label{choice of R0}
	\sum_{j=1}^\infty \Omega_{j, \sigma} \leq \frac{\tau^{2Q_{\mathbb{E}}}}{10^{30} H_{2}^{\gamma_{0}}} \qquad \text{ for all }\qquad  0 < R < \frac{R_{0}}{16}.  
\end{align}	
By our choice of $H_{1}$ and $H_{2},$ we have 
		\begin{align*}
			\mathscr{C}_{1} \le 6 \tau^{-Q_{\mathbb{E}}-\frac{Q_{\mathbb{E}}}{\gamma}} \left( \fint_{B(x_0,R)} \lvert \nabla_{\mathbb{E}} u \rvert^{\gamma}  \right)^\frac{1}{\gamma} \le \frac{6 \tau^{-2Q_{\mathbb{E}}}\lambda}{H_{1}} \le \frac{\lambda}{1000}. 
		\end{align*}
		Now without loss of generality, we can assume that there exists an \emph{exit time} $j_{e} \geq 1$ such that 
		\begin{align}\label{exit time}
			\mathscr{C}_{j_{e}} \le \frac{\lambda}{1000} \qquad \text{ and } \qquad \mathscr{C}_{j} > \frac{\lambda}{1000} \quad \text{ for all } j > j_{e}. 
		\end{align}
		Indeed, if not, then there exists a subsequence $\{j_m\}_m$ such that $\mathscr{C}_{j_m} \leq \frac{\lambda}{1000}$  for all $m$. But this implies 
		
		\begin{equation*}
			\lvert \nabla_{\mathbb{E}}u(x_0) \rvert = \lim_{m \to \infty } \mathscr{A}_{j_m} \leq \limsup_{m\to \infty} \mathscr{C}_{j_m} \leq \lambda,  
		\end{equation*}
		whenever $x_0$ is a Lebesgue point of $\nabla_{\mathbb{E}} u$ and we have \eqref{sufficient estimate for pointwise gradient bound} follows. For the rest of the proof,  we assume \eqref{exit time} holds. 

%
		%
		%
		%
		
		We also have the estimate which trivially follows form the defintion of the quantity $\mathcal{S}_{\left[f, \theta, \tau, 0 \right]}\left(x_{0}, R \right)$, \eqref{sufficient estimate for pointwise gradient bound} and \eqref{notation of the lower bound of lorentz norm} 
		
		\begin{equation}\label{lorentz norm lambda upper bound}
			\mathfrak{F}_{r_{j}, \theta} \le \left(\frac{\lambda}{H_{2}^{\gamma_{0}}} \right)^{\mathfrak{p}_{0}-1}  \qquad \text{ for all } j\geq 0.
		\end{equation}
		We are now ready to proceed to the next substep. 
		
		\textbf{Step 1b: Upper and lower bounds, and a decay estimate.} For $j\geq j_e$ we now consider the following condition: 
		
		\begin{equation}\label{condition ind j}
			\rm{Ind}(j): \quad 1+  \max \left\{ \left ( \fint_{B_{j-1}} \lvert \nabla_{\mathbb{E}} u \rvert^{\gamma} \right)^\frac{1}{\gamma}, \left ( \fint_{B_{j}} \lvert \nabla_{\mathbb{E}} u \rvert^{\gamma} \right)^\frac{1}{\gamma} \right\} \leq \lambda.
		\end{equation}
		
		We now prove some consequences of \eqref{condition ind j}. 
		
		\begin{claim}\label{energy bound implies oscillation bound claim} 
			If $\rm{Ind}(j)$ holds, then the following estimates also hold.  
			\begin{equation}\label{upper and lower bound estimates}
				\sup_{\frac{1}{8} B_{j-1}} \lvert \nabla_{\mathbb{E}} w_{j-1} \rvert \leq A\lambda, \ \ \sup_{\frac{1}{8}B_j} \ \lvert \nabla_{\mathbb{E}} w_j \rvert \leq A\lambda, \ \ \frac{\lambda}{A} \leq \inf\limits_{\frac{1}{4}B_j} \  \lvert \nabla_{\mathbb{E}} w_{j-1} \rvert.
			\end{equation}
		\end{claim}
		
		\emph{Proof of Claim \ref{energy bound implies oscillation bound claim}: } 
		
		Let $\bar{x} \in \overline{\frac{1}{4}B_{j-1}}$ be the point such that $\bar{\mathfrak{p}}=\mathfrak{p}(\bar{x}) = \min\limits_{x \in \overline{\frac{1}{4}B_{j-1}}} \mathfrak{p}(x).$ We have 
		\begin{align*}
			\sup_{\frac{1}{8} B_{j-1}} \lvert \nabla_{\mathbb{E}} w_{j-1} \rvert &\stackrel{\eqref{sup bound homogeneous}}{\le} C_3 \fint_{ \frac{1}{4}B_{j-1}} \left(1 + \left\lvert \nabla_{\mathbb{E}} w_{j-1} \right\rvert\right) \\
			&\le C_3 \left(\fint_{ \frac{1}{4}B_{j-1}} \left(1 + \left\lvert \nabla_{\mathbb{E}} w_{j-1} \right\rvert\right)^{\bar{\mathfrak{p}}}\right)^{\frac{1}{\bar{\mathfrak{p}}}}\\
			&\le C_3 \left(\fint_{ \frac{1}{4}B_{j-1}} \left(1 + \left\lvert \nabla_{\mathbb{E}} w_{j-1} \right\rvert\right)^{\mathfrak{p}(x)}\right)^{\frac{1}{\bar{\mathfrak{p}}}}\\
			&\le C_3 \left(\fint_{ \frac{1}{4}B_{j-1}} \left(1 + \left\lvert \nabla_{\mathbb{E}} w_{j-1} \right\rvert\right)^{\mathfrak{p}(x)\left(1 + \sigma\right)}\right)^{\frac{1}{\bar{\mathfrak{p}}(1 + \sigma)}} 
		\end{align*}
        Using \eqref{Global HI}, we estimate  
		\begin{align*}
			\sup_{\frac{1}{8} B_{j-1}} \lvert \nabla_{\mathbb{E}} w_{j-1} \rvert 
			&\le C_3 \left(\fint_{ \frac{1}{4}B_{j-1}} \left(1 + \left\lvert \nabla_{\mathbb{E}} w_{j-1} \right\rvert\right)^{\mathfrak{p}(x)\left(1 + \sigma\right)}\right)^{\frac{1}{\bar{\mathfrak{p}}(1 + \sigma)}}\\
			&\stackrel{\eqref{Global HI}}{\le} C_{3}C_{5} 2^{2\gamma_2+1} \left( 1 + \fint_{ \frac{1}{2}B_{j-1}} \left\lvert \nabla_{\mathbb{E}} u \right\rvert^{\mathfrak{p}(x)\left(1 + \sigma\right)}\right)^{\frac{1}{\bar{\mathfrak{p}}(1 + \sigma)}} \\
			&\stackrel{\eqref{higher integrability estimate f}}{\le}C_{3}C_{4}C_{5} 2^{2\gamma_2+4}  \left[ \left( \fint_{B_{j-1}}\left\lvert \nabla_{\mathbb{E}} u \right\rvert^{\gamma}\ \mathrm{d}x \right)^{\frac{1}{\gamma}}  + \mathfrak{F}_{r_{j}, \theta}^{\frac{1}{\left(\mathfrak{p}_{0}-1\right)}} + 1 \right] \le A\lambda. 
		\end{align*}
		Similarly, we have $\sup_{\frac{1}{8} B_{j}} \lvert \nabla_{\mathbb{E}} w_{j} \rvert \le A\lambda. $ Now  observe that in view of \eqref{exit time}, we have 
		\begin{align*}
			\frac{\lambda}{1000} < \mathscr{C}_{j+k+1} \le \left ( \fint_{B_{j+k}} \lvert  \nabla_{\mathbb{E}}u \rvert^{\gamma} \right)^\frac{1}{\gamma}
		\end{align*}  
		for any $j > j_{e}$ and any $k \ge 1.$ Thus, we have 
		\begin{align}\label{lower bound on energy of u}
			\frac{\lambda}{10^3} \le \left ( \fint_{B_{j+k}} \lvert  \nabla_{\mathbb{E}}u \rvert^{\gamma} \right)^\frac{1}{\gamma} \le \left ( \fint_{B_{j+k}} \lvert  \nabla_{\mathbb{E}}w_{j-1} \rvert^{\gamma} \right)^\frac{1}{\gamma} + \left ( \fint_{B_{j+k}} \lvert  \nabla_{\mathbb{E}}u - \nabla_{\mathbb{E}}w_{j-1} \rvert^{\gamma} \right)^\frac{1}{\gamma}.
		\end{align}
		By \eqref{choice of R0}, we have $\mathfrak{A}_{r_{j-1}, \sigma} + \mathfrak{P}_{r_{j-1}, \sigma} < 10^{-6} H_{2}^{-\gamma_{0}}.$ Thus, if $\mathfrak{p}_{0}:= \mathfrak{p}(x_{0}) >2,$ we have 
		\begin{align*}
			\fint_{\frac{1}{4}B_{j-1}}  \lvert  \nabla_{\mathbb{E}}u -\nabla_{\mathbb{E}} w_{j-1}\rvert^{\mathfrak{p}_{0}} &\stackrel{\eqref{comparison lemma 1 bar p bigger than 2}}{\le} C_{6} \mathfrak{F}_{r_{j-1}, \theta}^{\frac{\mathfrak{p}_{0}}{\mathfrak{p}_{0}-1}} + \frac{C_{6}}{10^{12}H_{2}^{2\gamma_{0}}}\lambda^{\mathfrak{p}_{0}} \\
			&\stackrel{\eqref{lorentz norm lambda upper bound}}{\le} C_{6}\left(\left(\frac{\lambda}{H_{2}^{\gamma_{0}}}\right)^{\mathfrak{p}_{0}} + \left( \frac{\lambda}{H_{2}^{\gamma_{0}}}\right)^{\mathfrak{p}_{0}}\right) \le \left(\frac{\lambda}{10^{6}H_{2}}\right)^{\mathfrak{p}_{0}}. 
		\end{align*} 
		Similarly, if $\mathfrak{p}_{0}:= \mathfrak{p}(x_{0}) \le 2,$ we have 
		\begin{align*}
			\fint_{B_{j-1}}  \lvert  \nabla_{\mathbb{E}}u -\nabla_{\mathbb{E}} w_{j-1}\rvert^{\mathfrak{p}_{0}} 		
			&\stackrel{\eqref{comparison lemma 1 bar p smaller than 2}}{\le} C_{6}\left(2  \lambda\right)^{\mathfrak{p}_{0}\left(2 -\mathfrak{p}_{0}\right)}  \mathfrak{F}_{r_{j-1}, \theta}^{\mathfrak{p}_{0}} + C_{6}\left( \frac{2  \lambda}{H_{2}^{\gamma_{0}}} \right)^{\mathfrak{p}_{0}}\\
			&\stackrel{\eqref{lorentz norm lambda upper bound}}{\le} \left(\frac{\lambda}{10^{6}H_{2}}\right)^{\mathfrak{p}_{0}}.
		\end{align*} 
		Thus, in either case, since $\gamma < \mathfrak{p}_{0}$, by H\"{o}lder inequality, we have 
		\begin{align*}
				\left( \fint_{B_{j-1}}  \lvert  \nabla_{\mathbb{E}}u -\nabla_{\mathbb{E}} w_{j-1}\rvert^{\gamma} \right)^{\frac{1}{\gamma}} \le \left( 	\fint_{B_{j-1}}  \lvert  \nabla_{\mathbb{E}}u -\nabla_{\mathbb{E}} w_{j-1}\rvert^{\mathfrak{p}_{0}}\right)^{\frac{1}{\mathfrak{p}_{0}}}\le \frac{\lambda}{10^{6}H_{2}}. 
		\end{align*}
Hence, we arrive at 
\begin{align*}
	\left( \fint_{B_{j+2}}  \lvert  \nabla_{\mathbb{E}}u -\nabla_{\mathbb{E}} w_{j-1}\rvert^{\gamma} \right)^{\frac{1}{\gamma}} \le \tau^{-3Q_{\mathbb{E}}}\left( \fint_{B_{j-1}}  \lvert  \nabla_{\mathbb{E}}u -\nabla_{\mathbb{E}} w_{j-1}\rvert^{\gamma} \right)^{\frac{1}{\gamma}} \le \frac{\lambda}{10^{6}}. 
\end{align*}
Thus, in view of \eqref{lower bound on energy of u}, we have 
\begin{align*}
	\left ( \fint_{B_{j+2}} \lvert  \nabla_{\mathbb{E}}w_{j-1} \rvert^{\gamma} \right)^\frac{1}{\gamma} \ge \frac{\lambda}{10^{3}} - \frac{\lambda}{10^{6}} \ge \frac{\lambda}{10^{6}}. 
\end{align*}
Thus, we deduce 
\begin{align*}
	\sup\limits_{\frac{1}{4}B_j} \  \lvert \nabla_{\mathbb{E}} w_{j-1} \rvert \ge \sup\limits_{B_{j+2}} \  \lvert \nabla_{\mathbb{E}} w_{j-1} \rvert \ge 	\left ( \fint_{B_{j+2}} \lvert  \nabla_{\mathbb{E}}w_{j-1} \rvert^{\gamma} \right)^\frac{1}{\gamma} \ge \frac{\lambda}{10^{3}} - \frac{\lambda}{10^{6}} \ge \frac{\lambda}{10^{6}}.
\end{align*}
Now, in view of our choice of $\varepsilon$ and $\tau$ (in particular, the fact that $2\tau < \tau_{1}$) and $\sup\limits_{\frac{1}{8}B_{j-1}} \  \lvert \nabla_{\mathbb{E}} w_{j-1} \rvert \le A\lambda$, applying Theorem \ref{homogenous eqn estimate} to $w_{j-1}$, we have 
\begin{align*}
	\sup\limits_{x, y \in \frac{1}{4}B_{j}} \left\lvert \nabla_{\mathbb{E}} w_{j-1} \left( x\right) - \nabla_{\mathbb{E}} w_{j-1} \left( y\right) \right\rvert \leq  \frac{\lambda}{10^{25}}. 
\end{align*}
The lower bound for $\inf\limits_{\frac{1}{4}B_j} \  \lvert \nabla_{\mathbb{E}} w_{j-1} \rvert$ follows from this of the obvious estimate 
\begin{align*}
	\inf\limits_{\frac{1}{4}B_j} \  \lvert \nabla_{\mathbb{E}} w_{j-1} \rvert \ge \sup\limits_{\frac{1}{4}B_j} \  \lvert \nabla_{\mathbb{E}} w_{j-1} \rvert - \sup\limits_{x, y \in \frac{1}{4}B_{j}} \left\lvert \nabla_{\mathbb{E}} w_{j-1} \left( x\right) - \nabla_{\mathbb{E}} w_{j-1} \left( y\right) \right\rvert.  
\end{align*} This completes the proof of Claim \ref{energy bound implies oscillation bound claim}. \smallskip 

In view of the bounds in Claim \ref{energy bound implies oscillation bound claim}, Lemma \ref{linearized comparison}, Theorem \ref{Uhlenbeck estimate}, specifically \eqref{excess decay} (applied to $v_{j}$), and \eqref{choice of tau2}, by standard estimates, we deduce  
	\begin{align}\label{excess decay beyond exit time estimate}
		\rm{Ind}(j) \implies \mathscr{E}_{j+1} \le \frac{1}{4} \mathscr{E}_{j} +2C_{8}\left( \mathfrak{A}_{r_{j-1}, \bar{\sigma}} + \mathfrak{P}_{r_{j-1}, \bar{\sigma}} \right)\lambda 
		+ 2C_{8}\lambda^{2-\mathfrak{p}_0}\mathfrak{F}_{r_{j-1}, \theta}.
	\end{align}

\textbf{Step 1c: Final Induction.} In this step we will prove 

\begin{align}\label{final average plus excess bound}
	\mathscr{A}_{j} +  \mathscr{E}_{j} \leq \lambda , \quad \forall j \geq j_e.
\end{align}
Note that \eqref{final average plus excess bound} would complete the proof since   $\lvert \nabla_{\mathbb{E}} u(x_0) \rvert = \lim\limits_{j\to \infty} \mathscr{A}_{j}$ whenever $x_0$ is a Lebesgue point of $\nabla_{\mathbb{E}}u$. Clearly, \eqref{exit time} implies that \eqref{final average plus excess bound} holds for $j=j_e$. Thus we assume that \eqref{final average plus excess bound} holds for $j\in \{j_e, j_e+1, \dots, i\}$ and will prove that \eqref{final average plus excess bound} holds for $j=i+1$ as well. We now claim the following. 

\begin{claim}\label{Indj holds claim}
	\rm{Ind}(j) holds for all $j\in \{j_e, j_e+1, \dots, i\}$.
\end{claim}

\emph{Proof of Claim \ref{Indj holds claim}:} The validity of $\rm{Ind}(j_e)$ is a direct consequence of \eqref{exit time}. Now if $j>j_e$ then we can use \eqref{final average plus excess bound} and the following trivial estimates to prove that $\rm{Ind(j)}$ holds.

$$
\left ( \fint_{B_{m}} \lvert \nabla_{\mathbb{E}} u \rvert^{\gamma} \right)^\frac{1}{\gamma} \leq 	\mathscr{A}_{m} +  \mathscr{E}_{m} , \mbox{ for } m=j, j-1.
$$
This completes the proof of Claim \ref{Indj holds claim}. Now as a consequence of Claim \ref{Indj holds claim} we can use \eqref{excess decay beyond exit time estimate} for $j\in \{j_e, j_e+1, \dots, i\}.$  Summing up yields
\begin{align*}
\sum\limits_{j=j_e+1}^{i+1}	\mathscr{E}_{j} \le \frac{1}{2}  \sum\limits_{j=j_e}^{i}\mathscr{E}_{j} +2C_{8}\lambda \sum\limits_{j=j_e}^i\left( \mathfrak{A}_{r_{j-1}, \sigma} + \mathfrak{P}_{r_{j-1}, \sigma} \right) 
	+ 2C_{8}\lambda^{2-\mathfrak{p}_0} \sum\limits_{j=j_e}^{i}\mathfrak{F}_{r_{j-1}, \theta}.
\end{align*}
Hence we deduce 
\begin{align}\label{final estimate for pointwise bound}
	\sum\limits_{j=j_e}^{i+1} \mathscr{E}_{j} &\le 2 \mathscr{E}_{j_{e}}+ 4C_{8}\lambda \sum\limits_{j=0}^\infty \left( \mathfrak{A}_{r_{j}, \sigma} + \mathfrak{P}_{r_{j}, \sigma} \right)  + 4C_{8}\lambda^{2-\mathfrak{p}_0} \sum\limits_{j=0}^{\infty}\mathfrak{F}_{r_{j}, \theta}\notag\\
	&\stackrel{\eqref{notation for excess and average of u}, \eqref{choice of R0}}{\leq} \tau^{Q_{\mathbb{E}}} \mathscr{C}_{j_e} +\frac{\tau^{Q_{\mathbb{E}}}\lambda}{10^{25}}+ \frac{4C_8\lambda}{H_2^{\gamma_{0}(\mathfrak{p}_{0}-1)}} \stackrel{\eqref{exit time},\eqref{choice of H1 and H2}}{\leq} \frac{\tau^{Q_{\mathbb{E}}} \lambda}{500}.  
\end{align}
Finally, notice that 
\begin{align*}
	\mathscr{A}_{i+1} - 	\mathscr{A}_{j_e} = \sum\limits_{j=j_e}^i \left[ 	\mathscr{A}_{j+1} - 	\mathscr{A}_{j}\right]
	&\le \sum\limits_{j=j_e}^i \fint_{B_{j+1}} \lvert \nabla_{\mathbb{E}} u- (\nabla_{\mathbb{E}}u)_{B_j} \rvert \\
	&\le \tau^{-Q_{\mathbb{E}}} \sum\limits_{j=j_e}^i \mathscr{E}_{j} \stackrel{\eqref{final estimate for pointwise bound}}{\leq} \frac{\lambda}{500}
\end{align*}
and therefore \eqref{exit time} yields $\mathscr{A}_{i+1}  \le \mathscr{A}_{j_e} +\lambda/500 \leq \mathscr{C}_{j_e} +\lambda/500 \leq \lambda/2$. Combining the preceding estimate with \eqref{final estimate for pointwise bound} we establish $\mathscr{A}_{i+1} + 	\mathscr{E}_{i+1} \le \lambda$. This verifies the induction step and completes our proof of \eqref{final average plus excess bound}. As a consequence, we have established \eqref{sufficient estimate for pointwise gradient bound}, which in turn implies $\nabla_{\mathbb{E}}u \in L^{\infty}\left(K\right).$ \bigskip

\textbf{Step 2:} \underline{Continuity} We now prove $\nabla_{\mathbb{E}} u$ admits a continuous representative in $K$. Since the argument is very similar to Kuusi-Mingione \cite{KuusiMingione_nonlinearStein}, we only provide a brief sketch. Set
\begin{align}\label{lambda in stein theorem}
	\lambda:= \lvert \lvert \nabla_{\mathbb{E}}u\rvert \rvert_{L^\infty(K_{2})}+1.
\end{align}
We shall show 
$\nabla_{\mathbb{E}}u$  agrees a.e. with the uniform limit of a net of continuous functions defined via averages. For this, we show that given any $\bar{\varepsilon} >0,$ there exists  
\begin{equation}\label{radius r*}
	r_{\bar{\varepsilon}} \leq \operatorname{dist} \left( K_{1}, \partial K_{2}\right)/100 =: R_*, 
\end{equation}
depending only on $d, Q_{\mathbb{E}}, \mathfrak{p}, \mathfrak{a}, \gamma_{1}, \gamma_{2}, \nu, L $, $\lvert\lvert f\rvert\rvert_{L^{(Q_{\mathbb{E}},1)}}$ and $\bar{\varepsilon}$, such that 
\begin{equation}\label{uniform limit estimate}
	\lvert (\nabla_{\mathbb{E}}u)_{B_\rho(y_0)}- (\nabla_{\mathbb{E}}u)_{B_\varrho(y_0)} \rvert \le \lambda \bar{\varepsilon} , \mbox{ holds for every } \rho , \varrho \in (0,r_{\bar{\varepsilon}}], 
\end{equation}
whenever $y_0\in K_{1}$. By Lebesgue differentiation theorem, this proves that $\nabla_{\mathbb{E}}u$ agress a.e. with the uniform limit of continuous maps $y_0 \mapsto (\nabla_{\mathbb{E}}u)_{B_\rho(y_0)}$ and hence admits a continuous representative.

For the rest of the proof we fix $\bar{\varepsilon}$ and a point $y_{0} \in K_{1}.$ All the balls are now defined exactly as before, with the only exception being their common centers, which is now $y_{0} \in K_{1}.$ Hence $\mathfrak{p}_{0}= \mathfrak{p}(y_{0})$ now. We divide the proof in two substeps.\smallskip 

\textbf{Step 2a: Smallness of the excess. } First we begin by recalling that the constants $\mathcal{K}_{0},\mathcal{K}_{1}, \mathcal{K}_{2}, \tilde{\mathcal{K}}$ $\Lambda_{\log},$ $C_{1}, C_{3},$  $C_{4}, C_{5}, C_{4}, C_{6}, C_{7}, C_{\gamma}, \beta,  R_{1}, R_{2}, R_{3},$ $\bar{R}, R_{4}$ $ \sigma_{1}, \sigma_{2}, \bar{\sigma}$ and $\theta$ are all determined already. We set $\sigma$ as in \eqref{sigma choice} and $\gamma_{0}$ as in \eqref{gamma0 def}. Now we set 
\begin{equation}\label{constants choice continuity}
	A:= \frac{1}{\bar{\varepsilon}}10^{10Q_{\mathbb{E}}} \max\{C_1C_3\} \qquad \text{ and } \qquad   \varepsilon:= \bar{\varepsilon}\cdot 10^{-25}. 
\end{equation}
With $A$ and $\varepsilon$ fixed as above, Theorem \ref{homogenous eqn estimate} determines the parameter $\tau_{1}.$ We choose $\tau_{2} \in (0,1/4)$ small enough such that 
\begin{align}
	C_{\gamma}\tau_{2}^{\beta} < \bar{\varepsilon}\cdot 4^{-(4Q_{\mathbb{E}}+16)}. 
	\end{align}
	We set $\tau$ by \eqref{choice of tau} with this new $\tau_{1}, \tau_{2}.$ Now we use Lemma \ref{linearized comparison} to determine the constant $C_{8}$ with $A$ and $\tau$ as defined above. We set $C_{0}$ as before by \eqref{order on C}.  

Since $f \in L^{\left( Q_{\mathbb{E}}, 1\right)}\left(K_{3}\right)$, by Corollary \ref{vanishing corollary derivative version}, by choosing the radius small, we can make $\mathcal{S}_{\left[ f, \theta, \tau, 0\right]}\left(y_{0}, R\right)$ as small as we like. Similarly, by our assumptions on $\mathfrak{a}$ and $\mathfrak{p}$, the sums $\mathfrak{S}_{\left[ \mathfrak{a}, 2(1+\sigma)/\sigma, \tau, 0\right]}\left(y_{0}, R\right)$ and $\mathfrak{S}_{\left[ \mathfrak{p}, 2(1+\sigma)/\sigma, \tau, 1\right]}\left(y_{0}, R\right)$ converges to zero as $r$ goes to zero, uniformly in $K_{3}.$ Thus, we can choose $R_{\bar{\varepsilon}} \in (0, R_0/16)$ small enough such that 
\begin{align}
	\mathfrak{S}_{\left[ \mathfrak{a}, 2(1+\sigma)/\sigma, \tau, 0\right]}\left(y_{0}, R\right) &\le \bar{\varepsilon} \cdot \frac{ \tau^{12\gamma_{0}Q_{\mathbb{E}}}}{10^{\gamma_{0}\left(Q_{\mathbb{E}}+20\right)} C_{0}}\cdot \lambda^{\mathfrak{p}_{0}-1} \label{choice of smallness of f sum continuity} \intertext{ and } 
	\sum\limits_{j=1}^{\infty} \Omega_{j, \sigma} &\le \bar{\varepsilon} \cdot \frac{ \tau^{12\gamma_{0}Q_{\mathbb{E}}}}{10^{\gamma_{0}\left(Q_{\mathbb{E}}+20\right)} C_{0}} \label{choice of smallness of a and p sum continuity}, 
\end{align}
for all $0 < R < R_{\bar{\varepsilon}}.$ With all these choices fixed, the choice of $\lambda$ in \eqref{lambda in stein theorem} implies 
\begin{equation}\label{condition ind j st}
	\max \left\{ \left ( \fint_{B_{j-1}} \lvert  \nabla_{\mathbb{E}}u \rvert^{\gamma} \right)^\frac{1}{\gamma}, \left ( \fint_{B_{j}} \lvert  \nabla_{\mathbb{E}}u \rvert^{\gamma} \right)^\frac{1}{\gamma} \right\} \leq \lambda, \qquad \text{ for all } j \ge 1.
\end{equation}
Now for $j \ge 1,$ we define 
	\begin{equation}\label{condition indj*}
	\rm{Ind}^*(j) :  \quad  \left(\fint_{B_{j+1}} \lvert \nabla_{\mathbb{E}} u \rvert^{\gamma} \right)^\frac{1}{\gamma} \ge \frac{\lambda \bar{\varepsilon}}{50}.
\end{equation}
Now, arguing exactly the same way as in Step 1, but at scale $\bar{\varepsilon}$, where the condition \eqref{condition indj*} serves as a replacement of the exit time condition $\mathscr{C}_{j} > \lambda/1000$ in \eqref{exit time}, we can deduce the same upper and lower bounds as in Claim \ref{energy bound implies oscillation bound claim} (with the new choice of $A$) and this implies, in exactly the same manner, the following decay estimate. 
 \begin{align}\label{excess decay without exit time estimate}
 		\rm{Ind}^*(j) \implies \mathscr{E}_{j+1} \le \frac{\bar{\varepsilon}}{4} \mathscr{E}_{j} +2C_{8}\left( \mathfrak{A}_{r_{j-1}, \bar{\sigma}} + \mathfrak{P}_{r_{j-1}, \bar{\sigma}} \right)\lambda 
 	+ 2C_{8}\lambda^{2-\mathfrak{p}_0}\mathfrak{F}_{r_{j-1}, \theta}.
 \end{align}
But this implies that we have 
\begin{align}\label{smallness of the excess}
\mathscr{E}_{j+1} \le \bar{\varepsilon}\lambda \qquad \text{ for all } j \ge 1 \quad \text{ whenever }   0 < R < R_{\bar{\varepsilon}}.
\end{align}
Indeed, for any $j \geq 1,$ if $\rm{Ind}^*(j)$ does not hold, then by \eqref{minimality of mean}, we clearly have  
\begin{align*}
	\mathscr{E}_{j+1} \le 2 \left(\fint_{B_{j+1}} \lvert \nabla_{\mathbb{E}} u \rvert^{\gamma} \right)^\frac{1}{\gamma} < \frac{\lambda \bar{\varepsilon}}{25}. 
\end{align*}
On the other hand, if $\rm{Ind}^*(j)$ holds for some $j \ge 1,$ then summing up the excess decay estimate in \eqref{excess decay without exit time estimate} and using \eqref{choice of smallness of f sum continuity} and \eqref{choice of smallness of a and p sum continuity}, we again have \eqref{smallness of the excess}.

\textbf{Step 2b: Conclusion of the proof.} Now we finish the proof. Note that in Step 2a, $y_{0} \in K_{1}$ was arbitrary and the estimate \eqref{smallness of the excess} holds uniformly in $y_{0} \in K_{1}.$ Thus, we can choose a radius $0 < r_{\bar{\varepsilon}} < R_{\bar{\varepsilon}}$ such that we have 
\begin{align}\label{final excess decay st}
	\sup_{0<\varrho \leq r_{\bar{\varepsilon}}} \sup _{x\in K_{1}} \left( \fint_{B(x,\varrho)} \left\lvert \nabla_{\mathbb{E}} u - \left(\nabla_{\mathbb{E}} u\right)_{B(x,\varrho)}\right\rvert^{\gamma} \right)^{\frac{1}{\gamma}} \leq \frac{\tau^{16Q_{\mathbb{E}}} \lambda \bar{\varepsilon}}{10^{25}}.
\end{align} 
Now we choose any point $\bar{x} \in K.$ Now we consider shrinking balls $B_{j}$ with center $\bar{x}$, the same $\tau$ as Step 2a and the starting radius being equal to $r_{\bar{\varepsilon}}/16.$ We claim 
\begin{claim}\label{difference of average claim}
\begin{equation}\label{difference of average}
		\lvert ( \nabla_{\mathbb{E}} u)_{B_h}- (\nabla_{\mathbb{E}}  u )_{B_k} \rvert \leq \frac{\lambda \varepsilon}{12} \qquad \text{ holds whenever } 2\leq k\leq h.
	\end{equation}
\end{claim}
Claim \ref{difference of average claim} implies \eqref{uniform limit estimate} by a standard interpolation argument.  For Claim \ref{difference of average claim}, we only sketch the idea. We consider the set 
\begin{align*}
	\mathcal{L} := \left\{ j\in \mathbb{N}: \left(\fint_{B_j} \lvert  \nabla_{\mathbb{E}} u\rvert^{\gamma} \right)^\frac{1}{\gamma}< \frac{\lambda\varepsilon}{50} \right\}. 
\end{align*} 
Clearly, if $j \in \mathcal{L}$, then by H\"{o}lder inequality, we have 
\begin{align*}
	( \nabla_{\mathbb{E}} u)_{B_j} \le \left(\fint_{B_j} \lvert  \nabla_{\mathbb{E}} u\rvert^{\gamma} \right)^\frac{1}{\gamma} < \frac{\lambda\varepsilon}{50}. 
\end{align*}
This means the averages are themselves small each time we hit $\mathcal{L}.$ Indeed, if both $h,k \in \mathcal{L}$, then \eqref{difference of average} follows trivially by triangle inequality. On the other hand, if $j \notin \mathcal{L}$, then $\rm{Ind}^*(j)$ holds. If there are $m$ consecutive integers $i, i+1, \ldots, i+m-1$ such that none of them are in $\mathcal{L}$, then we have 
\begin{align*}
	\lvert ( \nabla_{\mathbb{E}} u)_{B_i}- (\nabla_{\mathbb{E}}  u )_{B_{i+m}} \rvert &\le \sum\limits_{l=i}^{i+m-1} \lvert ( \nabla_{\mathbb{E}} u)_{B_{l}}- (\nabla_{\mathbb{E}}  u )_{B_{l+1}} \rvert \\
	&\le \sum\limits_{l=i}^{i+m-1}\fint_{B_{l}} \left\lvert \nabla_{\mathbb{E}} u - (\nabla_{\mathbb{E}}  u )_{B_{l+1}}\right\rvert \\ &\le 2\sum\limits_{l=i}^{i+m-1}\fint_{B_{l}} \left\lvert \nabla_{\mathbb{E}} u - (\nabla_{\mathbb{E}}  u )_{B_{l}}\right\rvert \le 2 \sum\limits_{l=i}^{i+m-1}\mathscr{E}_{l}. 
\end{align*}
But since $\rm{Ind}^*(j)$ holds for every $j \in \left\lbrace i, i+1, \ldots, i+m-1\right\rbrace$, we can sum \eqref{excess decay without exit time estimate} from $j=i$ to $i+m-1$ to estimate the right hand side above. Thus, once we exit from $\mathcal{L}$ (or never enter $\mathcal{L}$), the averages remain close to each other (in $\bar{\varepsilon}$ scale) til we hit $\mathcal{L}$ again for the next time. Combining these arguments, a simple case by case analysis which we skip, establishes Claim \ref{difference of average claim}. This completes the proof.   
\end{proof}	

			\section*{Acknowledgment}  Both the authors acknowledge the support of the ANRF-ARG Project grant ANRF/ARG/2025/000348/MS. S. Sil's research is also partially supported by the ANRF-SERB MATRICS Project grant MTR/2023/000885. The Department of Mathematics, Indian Institute of Science, where this research was carried out, received support of DST FIST
			program-2021 [TPN-700661]. 
			


	\end{document}